\documentclass{article}

\usepackage{mathtools}
\usepackage{graphicx}
\usepackage{etoolbox}
\usepackage{amsmath, amssymb, amsthm}


\newcommand{\dR}{\mathbb{R}}

\newcommand{\dN}{\mathbb{N}}

\newcommand{\dE}{\mathbb{E}}
\newcommand{\dP}{\mathbb{P}}

\newcommand{\cG}{\mathcal{G}}

\newcommand{\ind}{\mathbf{1}}


\providecommand{\given}{}
\DeclarePairedDelimiterXPP{\Pb}[1]{\mathbb{P}}{\lparen}{\rparen}{}{\renewcommand{\given}{\nonscript{}\:\delimsize\vert\nonscript{}\:\mathopen{}} #1}
\DeclarePairedDelimiterXPP{\E}[1]{\mathbb{E}}[]{}{\renewcommand{\given}{\nonscript{}\:\delimsize\vert\nonscript{}\:\mathopen{}} #1}
\DeclarePairedDelimiterX{\Set}[1]\lbrace\rbrace{\renewcommand{\given}{\nonscript{}\:\delimsize\vert\nonscript{}\:\mathopen{}} #1}


\DeclareMathOperator{\diag}{diag}
\DeclareMathOperator{\tr}{tr}

\DeclareMathOperator{\img}{im}

\DeclarePairedDelimiterX{\norm}[1]\lVert\rVert{\ifblank{#1}{\: \cdot \:}{#1}}

\DeclareMathOperator{\Poi}{Poi}


\newcommand{\bilingualcommand}[3]{%
	\newcommand{#1}[1][\ ]{%
		##1%
		\iflanguage{english}{\text{#2}}{%
			\iflanguage{french}{\text{#3}}{}%
		}%
		##1%
	}%
}

\bilingualcommand{\where}{where}{où}
\bilingualcommand{\textif}{if}{si}
\bilingualcommand{\textand}{and}{et}
\bilingualcommand{\textiff}{if and only if}{si et seulement si}
\bilingualcommand{\otherwise}{otherwise}{sinon}


\newcommand{\eps}{\varepsilon}

\newcommand{\quand}{\quad \textand{} \quad}
\newcommand{\qquand}{\qquad \textand{} \qquad}

\newtheorem{theorem}{Theorem}
\newtheorem{lemma}{Lemma}
\newtheorem{proposition}{Proposition}
\newtheorem{definition}{Definition}

\newtheorem{remark}{Remark}
\newtheorem{example}{Example}


\usepackage{xcolor}
\usepackage{plain}
\usepackage{algorithmicx, algpseudocode,float}
\usepackage{algorithm}
\usepackage{fullpage}
\usepackage[colorlinks, citecolor=blue, unicode]{hyperref}

\usepackage{graphicx}
\usepackage{etoolbox}
\usepackage{amsmath, amssymb, amsthm}
\usepackage{tikz}
\usetikzlibrary{shapes}
\usetikzlibrary{topaths,calc}
\tikzstyle{vertex} = [fill,shape=circle]
\tikzstyle{edge} = [fill,line cap=round, line join=round, line width=25pt]
\tikzstyle{elabel} =  [fill,shape=circle,node distance=30pt, fill opacity=0.5]
\pgfdeclarelayer{background}
\pgfsetlayers{background,main}

\algrenewcommand\algorithmicrequire{\textbf{Input:}}
\algrenewcommand\algorithmicensure{\textbf{Output:}}

\newtheorem{assumption}{Assumption}

\numberwithin{equation}{section}

\title{Sparse random hypergraphs: Non-backtracking spectra and community detection\footnote{A preliminary version of this article was presented at the 63rd IEEE Symposium on Foundations of Computer Science (FOCS 2022).}}
\date{\today}
\author{{
\sc Ludovic Stephan}\\[2pt]
Information, Learning and Physics (IdePHICS) Lab\\
École Polytechnique Fédérale de Lausanne (EPFL) \\
Route Cantonale, 1015 Lausanne, Switzerland \\
{\texttt{ludovic.stephan@epfl.ch}}\\[6pt]
{\sc and}\\[6pt]
{\sc Yizhe Zhu} \\[2pt]
Department of Mathematics\\
University of California, Irvine \\
Irvine, CA, USA 92697\\
\texttt{yizhe.zhu@uci.edu}}

\begin{document}
\maketitle 

\begin{abstract}
{We consider the community detection problem in a sparse $q$-uniform hypergraph $G$, assuming that $G$ is generated according to the   Hypergraph Stochastic Block Model (HSBM).  We prove that a spectral method based
on the non-backtracking operator for hypergraphs works with high probability down to the generalized Kesten-Stigum detection threshold conjectured by Angelini et al. (2015). We characterize the spectrum of the non-backtracking operator for the sparse HSBM and provide an efficient dimension reduction procedure using the Ihara-Bass formula for hypergraphs. As a result, community detection for the sparse HSBM on $n$ vertices can be reduced to an eigenvector problem of a $2n\times 2n$ non-normal matrix constructed from the adjacency matrix and the degree matrix of the hypergraph. To the best of our knowledge, this is the first provable and efficient spectral algorithm that achieves the conjectured threshold for HSBMs with $r$ blocks generated according to a general symmetric probability tensor.}
\end{abstract}




\section{Introduction}

The stochastic block model (SBM), first introduced in \cite{holland1983stochastic}, is a generative model for random graphs with a community structure. It serves as a useful benchmark for clustering algorithms on graph data. When the random graph generated by an SBM is sparse with bounded expected degrees, a phase transition has been observed around the so-called Kesten-Stigum threshold: in particular, above this threshold, a wealth of algorithms are known to achieve partial reconstruction \cite{mossel2018proof,abbe2018proof,coja2018information,hopkins2017efficient,ding2022robust}. Most relevant to this line of work are spectral algorithms that use the eigenvectors of a matrix associated with the graph $G$ to perform the reconstruction. In the sparse case, examples include the self-avoiding \cite{massoulie2014community}, non-backtracking \cite{decelle2011asymptotic, krzakala2013spectral, bordenave2018nonbacktracking}, graph powering \cite{abbe2020graph} or distance \cite{stephan2019robustness} matrices. We refer interested readers to the survey \cite{abbe2018community} for more references, including a more in-depth discussion of the Kesten-Stigum threshold.

As a generalization of graphs,  hypergraphs are well-studied objects in combinatorics and theoretical computer science. They are often used as a convenient way to represent higher-order relationships among objects, including co-authorship and citation networks, with more flexibility compared to graph representations \cite{zhou2007learning,benson2016higher,yin2018higher,aksoy2020hypernetwork,chodrow2021generative}. 
In this paper, we consider a higher-order SBM called the hypergraph stochastic block model (HSBM), which was first studied in \cite{ghoshdastidar2017consistency,ghoshdastidar2017uniform}. 
Many results in the SBM translate quite well to the HSBM: for example, the phase transition for exact recovery also happens in the regime of logarithmic degrees \cite{chien2018minimax}. The exact threshold was given in \cite{kim2018stochastic,zhang2021exact} by a generalization of the techniques in \cite{abbe2015community,abbe2016exact}. A faithful representation of a hypergraph is its adjacency tensor and several detection algorithms based on tensor methods do exist \cite{ghoshdastidar2017uniform,ke2020community,zhen2022community}. However, most computations involving tensors are NP-hard \cite{hillar2013most}, and they do not pair well with linear algebra techniques. Instead, many efficient algorithms were studied based on a certain matrix representation of the hypergraph.  These include spectral methods based on the weighted adjacency matrix \cite{chien2018minimax,ahn2019community,cole2020exact,zhang2021exact,dumitriu2021partial}, and semidefinite programming methods \cite{kim2018stochastic,lee2020robust}.

Although there have been rich results in recent years on  HSBMs with various settings, still not much is known about the bounded degree regime.
Angelini et al. \cite{angelini2015spectral} conjectured a detection threshold in the regime of constant average degree based on the performance of the belief propagation algorithm. Also, they conjectured that a spectral algorithm based on non-backtracking operators on hypergraphs could reach the same threshold. Partial recovery in this regime was studied in \cite{ahn2018hypergraph,dumitriu2021partial}, but the conjectured threshold can not be achieved by their methods. It was shown in \cite{Pal_2021} that a spectral algorithm based on the self-avoiding walks on hypergraphs could achieve the threshold for the 2-block case with equal size and a probability tensor given by two parameters.  However, constructing such a self-avoiding matrix is not very efficient and is hard to implement in practice. Below the conjectured threshold, unlike in the graph case \cite{mossel2015reconstruction,banks2016information,gulikers2018impossibility,coja2018information,gu2020non}, no impossibility result for HSBM is known.  In a different direction, hypothesis testing for HSBM in the bounded degree regime was considered in \cite{yuan2021testing}.

In this paper, for an $r$-block HSBM with not necessarily equal block sizes and general parameters given by a probability tensor, we prove that a spectral algorithm based on the non-backtracking operator of hypergraphs can achieve the conjectured  Kesten-Stigum threshold in \cite{angelini2015spectral} efficiently. The detailed setting is given in Section \ref{sec:def_HSBM}.

The non-backtracking operator and its connection to the Zeta function on hypergraphs were studied in \cite{storm2006zeta} for deterministic hypergraphs. The spectral gap of the operator for random regular hypergraphs was shown in \cite{dumitriu2021spectra}. 
We give a detailed analysis of the top eigenvalues and eigenvectors of the non-backtracking operator for inhomogeneous random hypergraphs, which are non-regular. Combined with the Ihara-Bass formula for hypergraphs, it yields the first provably efficient algorithm that works for the very sparse regime where each vertex has a bounded expected degree down to the conjectured computational threshold.
An advantage of the method is that
it works without any prior knowledge of the probability tensor, and it can be used to obtain an estimation of the parameters in the generative model \cite{angelini2015spectral}. 

Another notable feature of our spectral algorithm is that it works without the knowledge of the adjacency tensor ({see Definition \ref{def:adjacency_tensor} in Section \ref{sec:definition}}).  Algorithm \ref{alg:provable_algorithm} is based on the spectral analysis of a $2n\times 2n$ non-normal matrix $\tilde B$, which is closely related to the non-backtracking operator $B$ due to the Ihara-Bass formula. To construct $\tilde B$, all we need to know is the adjacency matrix (also called the similarity matrix, {see Definition \ref{def:adjacency_matrix}}) $A$ that counts the number of hyperedges between different pairs of vertices. A detailed dimension reduction procedure based on $A$ is given in Section \ref{sec:ihara_tildeB}. 

It has been shown that given the information of the adjacency tensor, the exact recovery threshold could be different from the one where only $A$  is observed \cite{kim2018stochastic, chien2018minimax,zhang2021exact}, based on the analysis of SDP and spectral algorithms in \cite{gaudio2022}. Surprisingly, it was shown in a subsequent work \cite{gu2023weak} that when $q=3,4$, our non-backtracking spectral method with only the information from $A$ is optimal: below the  Kesten-Stigum threshold, it is information-theoretically impossible to achieve detection. It was also shown in \cite{gu2023weak} that for $q\geq 7$, detection is still possible below the Kesten-Stigum threshold. It is an interesting open problem to see if any tensor method (even with exponential running time) can achieve detection in this regime.

 In recent years, we have seen tremendous development from random matrix theory in the study of spectra of random graphs, and the non-backtracking operator is a key ingredient to the proof of many new results  \cite{bordenave2018nonbacktracking,bordenave2020detection,stephan2020non, bordenave2020new,bordenave2019eigenvalues, brito2021spectral,benaych2020spectral,alt2021extremal,alt2021delocalization,dumitriu2022extreme}.
Sparse random hypergraphs are fundamental objects in probabilistic combinatorics, but their spectral properties (adjacency tensors or adjacency matrices) have not yet been well understood. Eigenvalues and the spectral norm for adjacency tensors were considered in \cite{friedman1995second,jain2014provable,zhou2021sparse,lei2020consistent,cooper2020adjacency}, and the eigenvalue distributions and concentration of the adjacency matrices were studied in \cite{feng1996spectra,lu2012loose,dumitriu2021spectra,dumitriu2020global,lee2020robust,dumitriu2021partial}. 
We believe the non-backtracking operator for hypergraphs could be a promising tool in the study of sparse random tensors and random hypergraphs, including tensor completion \cite{jain2014provable,montanari2018spectral,cai2021nonconvex,harris2021}, spiked tensor model \cite{richard2014statistical,arous2019landscape,ding2020estimating,auddy2021estimating}, and planted constraint satisfaction problems \cite{krzakala2009hiding,feldman2015subsampled,florescu2016spectral}.

\subsection{Definitions and notations for hypergraphs}\label{sec:definition}

Before a formal definition of HSBM, we introduce several definitions and notations for hypergraphs that will be used throughout the paper.  See Figure \ref{fig:hypergraph} for an example of a hypergraph. 

\begin{definition}[Hypergraph]
 A  \textit{hypergraph} 
 	$G$ is a pair $G=(V,H)$ where $V$ is a set of vertices and $H$ is the set of non-empty subsets of $V$ called \textit{hyperedges}. If any hyperedge $e\in H$ is a set of $q$ elements of $V$, we call $G$ \textit{$q$-uniform}.   In particular, a $2$-uniform hypergraph is an ordinary graph.
  
  The \textit{degree} of a vertex $x\in V$ is the number of hyperedges in $G$ that contains $x$. A $q$-uniform hypergraph is complete if any set of $q$ vertices is a hyperedge. 
\end{definition}

\begin{figure}
    \centering
\begin{tikzpicture}
    \node[vertex, label=right:$1$] (v1) at (0,2) {};
    \node[vertex, label=right:$2$] (v2) at (1.5,3) {};
    \node[vertex, label=left:$3$] (v3) at (4,2.5) {};
    \node[vertex, label=right:$4$] (v4) at (1,0.2) {};
    \node[vertex, label=below right:$5$] (v5) at (3.5,0) {};

    \begin{pgfonlayer}{background}
    \begin{scope}[transparency group,opacity=.5]
    \draw[edge,color=red] (v2) -- (v3) -- (v5) -- (v2);
    \fill[edge,color=red] (v2.center) -- (v3.center) -- (v5.center) -- (v2.center);
    \end{scope}
    \begin{scope}[transparency group,opacity=.5]
    \draw[edge,color=yellow] (v1) -- (v2) -- (v3) -- (v1);
    \fill[edge,color=yellow] (v1.center) -- (v2.center) -- (v3.center) -- (v1.center);
    \end{scope}
    \begin{scope}[transparency group,opacity=.5]
    \draw[edge,color=green] (v5) -- (v4) -- (v1) -- (v5);
    \fill[edge,color=green] (v5.center) -- (v4.center) -- (v1.center) -- (v5.center);
    \end{scope}
    \end{pgfonlayer}
    \node[elabel,color=yellow,label=right:\(e_1\)]  (e1) at (-3, 2.5) {};
    \node[elabel,below of=e1,color=red,label=right:\(e_2\)]  (e2) {};
    \node[elabel,below of=e2,color=green,label=right:\(e_3\)]  (e3) {};
\end{tikzpicture}
    \caption{A $3$-uniform hypergraph with $5$ vertices $\{1,\dots, 5\}$ and three hyperedges $e_1=\{ 1,2,3\}, e_2=\{ 2,3, 5\}, e_3=\{ 1,4,5\}$}
    \label{fig:hypergraph}
\end{figure}
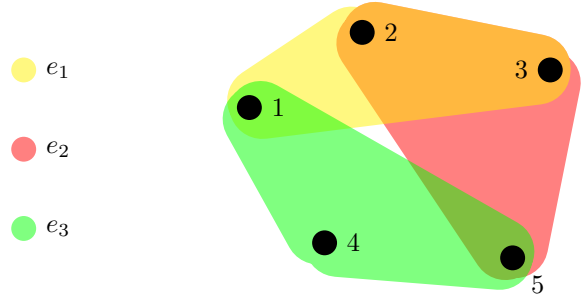

 \begin{definition}[Adjacency tensor of a uniform hypergraph]\label{def:adjacency_tensor}
   The \textit{adjacency tensor} $T$ of a $q$-uniform hypergraph $G=(V,H)$ on $n$ vertices is a $q$-th order symmetric tensor of size $n$ such that for $i_1,\dots, i_q\in V$,
 \[T_{i_1,\dots, i_q}=\mathbf{1} \{ \{i_1,\dots,i_q\} \in H\}.\]   Since  $T_{i_1,\dots, i_q}=T_{s(i_1),\dots, s(i_q)}$ for any permutation $s$ on $[q]$, we may abuse notation and write $T_{e}$ in place of $T_{i_1,\dots, i_q}$ for $e=\{i_1,\dots, i_q\}$.
 \end{definition}

\begin{definition}[Adjacency matrix of a hypergraph]\label{def:adjacency_matrix}
   Define the \textit{adjacency matrix} $A$ of a hypergraph $G=(V,H)$ as 
\begin{equation} \label{eq:DefA}
A_{xy}=\begin{cases}
\#\Set{e\in H \given \{x,y\}\subset e } & x\not=y\\
0 &x=y.
\end{cases}
\end{equation} 
\end{definition}
The definition of an adjacency matrix applies to any hypergraphs. In particular, when $G$ is a $q$-uniform hypergraph with adjacency tensor $T$, 
$ A_{xy}=\sum_{e\in H} \mathbf{1}\{x,y\in e\}T_e.$

\subsection{Non-backtracking operator for hypergraphs}

 For a given hypergraph $G=(V,H)$, let $\vec{H}$ be the \textit{oriented hyperedges} of $G$ such that 
\begin{align*}
    \vec{H}=\{ (x, e) : x\in e\cap V, e\in H \}.
\end{align*}
We shall use the notation $x\to e$ instead of  $(x,e)$ to emphasize that it is an oriented hyperedge. Now we can define the hypergraph non-backtracking operator as follows.
\begin{definition}[Non-backtracking operator for  hypergraphs] For a given  hypergraph $G=(V, H)$, let  $B$ be a matrix indexed by $\vec{H}$ such that
\begin{align} \label{eq:defB}
B_{(x \to e), (y \to f)} = \begin{cases}
    1 & \textif y \in e \setminus \Set{x}, f \neq e,  \\
    0 & \otherwise.
  \end{cases}
\end{align}
  For a $q$-uniform hypergraph, $B$ is of size $q|H|\times q |H|$.  This is a direct extension of the $q = 2$ case, identifying the index $x \to \Set{x, y}$ with the directed edge $(x, y)$.  
\end{definition}
  $B_{(x\to e), (y\to f)}=1$ in \eqref{eq:defB} corresponds to a non-backtracking walk of length $1$ in $G$. See Figure \ref{fig:NB_hypergraph} for an example of a non-backtracking walk corresponding to $(x,e,y,f)$.

\begin{figure} 
    \centering
    \includegraphics[width=0.4\linewidth]{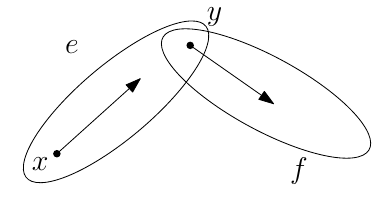}
    \caption{$(x,e,y,f)$  is a non-backtracking walk of length $1$}
    \label{fig:NB_hypergraph}
\end{figure}

 \subsection{Hypergraph stochastic block model}\label{sec:def_HSBM}
We place ourselves in the HSBM as defined in \cite{angelini2015spectral}:
\begin{definition}[Hypergraph stochastic block model]
Consider  an order-$q$  \textit{symmetric probability tensor} $\mathbf P\in \mathbb R^{r^q}$ such that
\[ p_{i_1,\dots,i_q}=p_{s(i_1),\dots, s(i_q)}
\]
for any permutation $s$ on $[q]$.  We sometimes use 
$p_{\underline i}$,  $p_{i, \underline j}$ to specify the index for an entry in $\mathbf P$.
Let the vertex set of a hypergraph be  $V = [n]$, and assign each vertex $x$ a \emph{type} $\sigma(x) \in [r]$. Then, each hyperedge $e$ of size $q$ is included in $H$ with probability
        \[ \Pb{e \in H} = \frac{p_{\underline\sigma(e)}}{\binom{n}{q - 1}}, \]
for any hyperedge $e=\{x_1,\dots, x_q \}$, where $\underline \sigma(e)=\underline\sigma(\Set{x_1, \dots, x_q}):= (\sigma(x_1), \dots, \sigma(x_q))$.
The  \textit{hypergraph stochastic block model} is defined as the distribution of a random hypergraph  $G=(V,H)$ generated according to $\mathbf{P}$ and $\sigma$, where all entries in $\mathbf P$ are constant independent of $n$.
\end{definition}

 The detection problem of HSBM is to find a  partition of the vertex set $V=[n]$ correlated with the true partition $\sigma$. To present our spectral algorithm in the next section, we introduce more assumptions and notations.  

We make the common assumption in the literature that each vertex has the same expected degree. Otherwise, a simple clustering based on vertex degrees correlates with the underlying communities. In our setting, this translates to the following assumption:

\begin{assumption}[Constant average degree]\label{assumption:degree}
    \begin{equation}\label{eq:constant_degree}
    \sum_{\underline j \in [r]^{q-1}} p_{i, \underline j} \prod_{\ell \in \underline j} \pi_{\ell} = d, \quad i \in [r],
\end{equation}
where the $\pi_i$ are the proportions of each type in $V$:
\[ \pi_{i} = \frac{\#\Set{x \in [n] \given \sigma(x) = i}}{n}. \]
\end{assumption}

We similarly define the two-type average degree matrix
\begin{align} \label{eq:defD2}
D_{ij}^{(2)} = \sum_{\underline k \in [r]^{q-2}} p_{ij, \underline k}\prod_{\ell \in \underline k}\pi_\ell.
\end{align}
By our symmetry assumption on $\mathbf P$, $D^{(2)}$ is a symmetric matrix, and we have
\begin{equation}\label{eq:perron_eigenvalue_Q}
    d = \sum_{j \in [r]} D_{ij}^{(2)} \pi_j \quad \text{for all } i \in [r].
\end{equation}

The \emph{signal matrix} $Q$ is given by 
\begin{align}\label{eq:Q=DPi}
 Q_{ij} =  D^{(2)}_{ij}\pi_j    
\end{align}
so $Q=D^{(2)}\Pi$ with $\Pi = \diag(\pi)$. Since $Q$ is similar to the symmetric matrix $ \Pi^{1/2} D\Pi^{1/2}$, its eigenvalues $(\mu_i)_{1\leq i \leq r}$ are all real, and we order them by absolute value:
\[|\mu_r|\leq \cdots \leq |\mu_2|\leq \mu_1 = d.\]
Additionally, the eigenvectors $\phi_i$ of $Q$ are equal to $\Pi^{-1/2}\psi_i$, where $\{\psi_i\}_{1\leq i\leq r}$ is a set of orthonormal eigenvectors of $ \Pi^{1/2} D\Pi^{1/2}$, and hence
\begin{align}\label{eq:pi_orthogonal}
    \langle \phi_i,\phi_j\rangle_{\pi}:=\sum_{k\in [r]}\pi_k\phi_i(k)\phi_j(k)=\langle \psi_i,\psi_j\rangle =\delta_{ij}.
\end{align}
From \eqref{eq:Q=DPi} and \eqref{eq:pi_orthogonal},
We have the following spectral decompositions:
\begin{align}\label{eq:eigen_decom}
\Pi^{1/2} D^{(2)}\Pi^{1/2}&=\sum_{i\in[r]}\mu_i  \psi_i\psi_i^*, \quad D^{(2)}=\sum_{i\in [r]}\mu_i \phi_i \phi_i^*, \quand Q=\sum_{i\in [r]}\mu_i  \phi_i(\Pi\phi_i)^*.
\end{align}

 The matrix $Q$ is significant for the following reason: the nonzero  eigenvalues of $\E A$ are exactly the $\mu_i$, with associated eigenvectors the $\tilde \phi_i \in \mathbb R^{n}$ given by
\begin{equation}\label{eq:ndimlifting}
    \tilde \phi_i(x) = \phi_i(\sigma(x)).
\end{equation}

The \emph{informative} eigenvalues (or \emph{outliers}) are defined as the $\mu_i$ satisfying
\begin{align}\label{eq:kesten_stigum}
    (q-1)\mu_i^2 > d.
\end{align}
The above threshold (and more specifically, the condition $(q-1)\mu_i^2 > d$) is a generalization of the  Kesten-Stigum threshold in the graph case \cite{krzakala2013spectral}. It was conjectured in \cite{angelini2015spectral} that this threshold marks the phase transition for the performance of belief propagation algorithms.
Equivalently, we can define the \textit{inverse  signal-to-noise ratios}
\begin{align}\label{eq:SNR}
    \tau_i=\frac{d}{(q-1)\mu_i^2},
\end{align}
so that equation \eqref{eq:kesten_stigum} is equivalent to $\tau_i < 1$. We shall denote by $r_0$ the number of informative eigenvalues, or equivalently the integer satisfying
\[ (q-1)\mu_{r_0+1}^2 \leq d < (q-1)\mu_{r_0}^2. \]
We always assume in this paper that $r_0 \geq 1$, which is equivalent to 
\begin{align}\label{eq:d_assumption}
    (q-1)d > 1.
\end{align} Otherwise, with high probability, the random hypergraph has no giant component  \cite{schmidt1985component}, and detection is impossible.

 \begin{remark}
 Since we defined $\pi_i = n_i / n$, hypothesis \eqref{eq:constant_degree} can seem unreasonable; for example, it is not satisfied when the vertex types are distributed randomly from a distribution $\hat \pi$ satisfying \eqref{eq:constant_degree}. However, in this case, we have with high probability,
\[ \mu_1 = d (1 + O(n^{-\delta})) \quad \textand \quad \norm*{Q \ind - d \ind}_{\infty} = O(n^{-\delta}), \]
for any $\delta < 1/2$, and since $\mu_1$ is raised to a power of $O(\log(n))$, the approximation errors are negligible in the proof. This case was carefully treated in \cite{bordenave2018nonbacktracking}. To simplify our presentation, we carry out our calculation under Assumption \ref{assumption:degree}.
\end{remark}

\begin{example}[Symmetric HSBM]\label{ex:symmetric_HSBM}
Take $\pi$ uniform over $[r]$, and 
\[ p_{i_1, \dots, i_q} = \begin{cases} c_{\mathrm{in}} &\textif i_1 = \dots = i_q, \\
c_{\mathrm{out}} &\otherwise. \end{cases} \]
This is the model considered in \cite{Pal_2021} for $r = 2$, and in \cite{angelini2015spectral} for general $r\geq 2$. In this case, the eigenvalues of $Q$ are
\[ \mu_1=d = \frac{1}{r^{q-1}} c_{\mathrm{in}} + \left(1 - \frac{1}{r^{q-1}}\right)c_{\mathrm{out}} \quand \mu_2 = \frac{c_{\mathrm{in}} - c_{\mathrm{out}}}{r^{q-1}}, \]
with $\mu_2$ having multiplicity $r-1$. Hence, the Kesten-Stigum threshold is given by
\[ (q-1)(c_{\mathrm{in}} - c_{\mathrm{out}})^2 > r^{q-1}(c_{\mathrm{in}} + (r^{q-1}-1)c_{\mathrm{out}}). \]
\end{example}

\section{Main results}
\subsection{Spectrum of the non-backtracking matrix}
Our first main result is a precise description of the spectrum of $B$, that generalizes the classical results for random graphs \cite{bordenave2018nonbacktracking,stephan2020non,bordenave2020new,brito2021spectral,coste2021simpler,abbe2020learning}:

\begin{theorem}[Spectrum of $B$]\label{thm:main}
Let $G$ be a hypergraph generated according to the HSBM with $m$ hyperedges, and $B$ be its non-backtracking matrix. Denote 
$|\lambda_1(B)|\geq |\lambda_2(B)|\geq \cdots \geq |\lambda_{qm}(B)|$ the eigenvalues of $B$ in   decreasing order of modules. Under Assumption \ref{assumption:degree}, when $n$ is large enough, the following holds with probability at least $1 - n^{-c}$ for some constant $c$:
\begin{enumerate}
    \item For any $i\in [r_0]$,  there exists a constant $c'>0$ such that 
    \[ \lambda_{i}(B) = (q-1)\mu_i + O(n^{-c'}). \]
    \item For all $r_0<i\leq qm$, 
    \[|\lambda_i(B)|\leq (1+o(1)) \sqrt{(q-1)d}.\]  
\end{enumerate}
\end{theorem}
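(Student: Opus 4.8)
The plan is to follow the now-standard strategy for non-backtracking spectra of sparse random graphs (as in \cite{bordenave2018nonbacktracking,stephan2020non,bordenave2020new}), adapted to the hypergraph operator $B$ defined above. The key point is that although $B$ acts on oriented hyperedges rather than directed edges, its combinatorics are governed by a local-tree structure: a length-$\ell$ non-backtracking path in $\vec H$ corresponds to a path in an auxiliary bipartite vertex-hyperedge incidence graph, and locally around a typical vertex this structure is a Galton--Watson tree whose offspring distribution is a mixed Poisson reflecting the probability tensor $\mathbf P$. The first step is therefore to set up this coupling and record the relevant branching-operator computation: on the limiting tree, the mean number of oriented hyperedges at depth $\ell$ reachable non-backtrackingly from a given one grows like $((q-1)d)^{\ell}$, while the ``signal'' carried by the type-vector $\underline\sigma$ is transported by an operator whose eigenvalues are exactly the $(q-1)\mu_i$. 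This explains the two scales $(q-1)\mu_i$ and $\sqrt{(q-1)d}$ appearing in the statement.

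Next I would decompose $B^{\ell}$ (for $\ell$ of order $c\log n$) into a ``low-rank deterministic part'' plus a ``fluctuation part''. Concretely, one writes a telescoping/martingale expansion of the entries of $B^{\ell}$ over non-backtracking paths, isolating the contribution of short cycles (which are rare: the local neighbourhoods are tree-like up to radius $\sim \log_{(q-1)d} n$ with high probability) and the contribution of genuinely tree-like paths. The tree-like contribution, after centering, is controlled by a second-moment / trace-moment estimate: one bounds $\E \tr\big((B^{\ell})^{*}(I - \Pi_{\mathrm{signal}}) B^{\ell}\big)$ or an analogous quantity by counting pairs of non-backtracking paths and showing that the dominant diagrams are exactly ``doubled'' paths, giving a bound of order $m \cdot ((q-1)d)^{\ell}(1+o(1))$, hence an operator-norm bound $((q-1)d)^{\ell/2}(1+o(1))$ after taking $\ell$-th roots — this yields part (ii). For part (i), the signal part of the expansion, one shows that the vectors $\tilde\phi_i$ (lifted to $\vec H$ via $u\to e \mapsto \phi_i(\sigma(u))$, up to the correct normalization and the degree/backtracking correction encoded in the Ihara--Bass structure) are approximate eigenvectors of $B^{\ell}$ with eigenvalue $((q-1)\mu_i)^{\ell}$, and that $B^{\ell}$ restricted to their span is, up to a $n^{-c'}$ error, a perturbation of the $r_0\times r_0$ diagonal matrix with these entries. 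Combining the lower bound from the approximate eigenvectors with the upper bound $((q-1)d)^{\ell/2}$ on the orthogonal complement, a standard perturbation argument (Bauer--Fike together with the fact that $(q-1)\mu_i^2 > d$ for $i \le r_0$ separates the two scales) promotes the statement about $B^{\ell}$ to the statement about eigenvalues of $B$ itself, after taking $\ell$-th roots and using $\ell \asymp \log n$ to absorb constants into the $n^{-c'}$ and $o(1)$ terms.

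The main obstacle I anticipate is the path-counting for the fluctuation bound in the hypergraph setting. Unlike graphs, where non-backtracking paths are just walks avoiding immediate reversals, here each step exits a hyperedge through one of its $q-1$ other vertices and enters a new hyperedge, so the combinatorial bookkeeping of ``tangles'' (vertices or hyperedges visited multiple times) is more delicate, and one must verify that the entropy of excursions through repeated hyperedges does not overwhelm the $((q-1)d)^{\ell}$ main term. Controlling this requires a careful definition of the weight of a path-diagram that accounts for the size-$q$ hyperedges and the fact that a single repeated hyperedge can create many coincidences among the oriented-edge indices; I would handle it by a union bound over the ``shape'' of the diagram (number and placement of repeated vertices/hyperedges) combined with the sparsity estimate $\Pb{e\in H} = \Theta(n^{-(q-1)})$, which makes each extra constraint cost a factor $n^{-1}$ that beats the number of ways to place it. A secondary technical point is handling high-degree vertices: one truncates to the event that all degrees are $O(\log n / \log\log n)$, which holds with probability $1 - n^{-c}$, and checks that removing the few exceptional vertices perturbs the top eigenvalues by at most $n^{-c'}$, as in \cite{bordenave2018nonbacktracking}.
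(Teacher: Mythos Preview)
Your proposal is essentially the paper's own strategy: coupling of local neighbourhoods with a Galton--Watson hypertree (Section~\ref{sec:local}), a telescoping/tangle-free decomposition of $B^{\ell}$ with trace-moment bounds on the centered pieces (Section~\ref{sec:decomposition} and Appendix~\ref{sec:trace_bound}), construction of approximate left/right eigenvectors from the lifted $\phi_i$ (Sections~\ref{sec:functional}--\ref{sec:near_eigenvec}), and a perturbation argument to pass from $B^\ell$ to $B$ (Section~\ref{sec:main_proof_perturb}).

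Two points to sharpen. First, the final perturbation step is not ``standard Bauer--Fike plus $\ell$-th roots'': since $B$ is non-normal, one must control the conditioning of the approximate eigenbasis, and knowing only the spectrum of $B^\ell$ leaves an $\ell$-th-root-of-unity ambiguity in the eigenvalues of $B$. The paper resolves both issues by invoking Theorem~9 of \cite{stephan2020non} (Lemma~\ref{th:bauer_powers} here), which requires the structural estimates of Proposition~\ref{prop_main} for \emph{two} coprime powers $\ell$ and $\ell'=\ell+1$; you should plan for this rather than a single-power argument. Second, no degree truncation is used or needed: the paper works directly on the tangle-free event of Lemma~\ref{lem:tangle-free}, on which $B^\ell = B^{(\ell)}$, and the moment bounds in Proposition~\ref{prop:trace} already absorb the effect of high-degree vertices.
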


Theorem \ref{thm:main} rigorously confirms the eigenvalue properties of $B$ described in \cite{angelini2015spectral}: if there are $r_0$ informative eigenvalues of $Q$ above the generalized Kesten-Stigum threshold in \eqref{eq:kesten_stigum}, then there are exactly $r_0$ outliers of $B$ outside the disk of radius $\sqrt{(q-1)d}$. See Figure \ref{fig:spectrum} for a simulation of the spectrum of $B$.

\begin{remark}[Erd\H{o}s-R\'{e}nyi hypergraphs]
If each entry in the probability tensor  $\mathbf P$ is $d$, from Theorem \ref{thm:main}, we obtain the result for   a $q$-uniform Erd\H{o}s-R\'{e}nyi hypergraph $G\left(n,q,d/\binom{n}{q-1}\right)$ with $d>(q-1)^{-1}$: with high probability, 
\[\lambda_1(B)=(q-1)d+o(1) \quand |\lambda_2(B)|\leq \sqrt{(q-1)d}+o(1).\] This can be seen as a \textit{Ramanujan property} for non-regular hypergraphs, which generalizes the result for Erd\H{o}s-R\'{e}nyi graphs proved in \cite{bordenave2018nonbacktracking}. Regular Ramanujan hypergraphs were studied in \cite{sole1996spectra,li2004ramanujan}, and it was shown in \cite{dumitriu2021spectra} that a random regular hypergraph is almost Ramanujan with high probability. 
\end{remark}

\begin{figure}    
\centering
\includegraphics[width=0.9\linewidth]{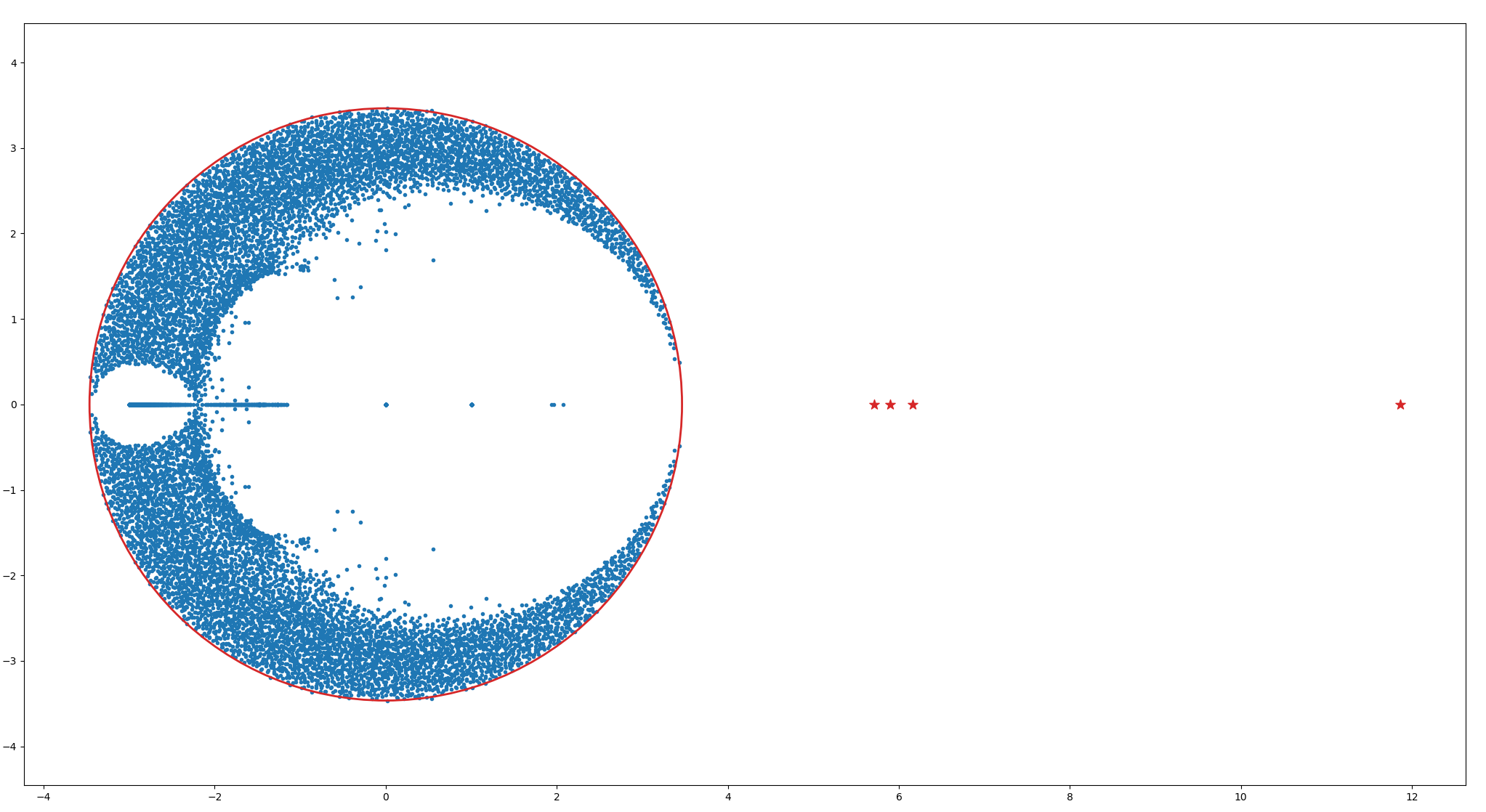}
\caption{Spectrum of a symmetric HSBM (see Example \ref{ex:symmetric_HSBM}) with $n = 6000$, $q = r = 4$. The parameters $c_\mathrm{in}$ and $c_{\mathrm{out}}$ have been chosen so that $d = 4$ and $\mu_2 = 2$. The outlier eigenvalues and the bulk circle have been outlined in red. The single eigenvalue close to $(q-1)d = 12$ and the three eigenvalues near $(q-1)\mu_2 = 6$ are clearly visible.}\label{fig:spectrum}  
\end{figure} 

\subsection{Dimension reduction via the Ihara-Bass formula}\label{sec:ihara_tildeB}

The spectral decomposition of $B$ is not well-suited for a community detection algorithm. First, $B$ has size $q |H| \sim q dn$, which becomes prohibitive compared to a matrix of size $n$. Second, we need to specify a procedure to embed the eigenvectors of $B$ from the oriented hyperedge space into the vertex space. In this section, we provide a result on a lower dimensional matrix.
Define the $2n \times 2n$ matrix $\tilde B$ as
\begin{equation}\label{eq:def_tilde_B}
\tilde B = \begin{pmatrix} 0 &  (D-I)\\  -(q-1)I & A-(q-2)I
\end{pmatrix},
\end{equation}
where $A$ is defined in \eqref{eq:DefA}, and $D$ is the diagonal \textit{degree matrix} with
\begin{align} \label{eq:def_degree_matrix}
D_{xx} = \#\{e\in H:x\in e \}, \quad \text{ or equivalently, }\quad  D_{xx}=\frac{1}{q-1}\sum_{y\in [n]}A_{xy}.
\end{align}
We first obtain an extension of the Ihara-Bass formula \cite{bass_iharaselberg_1992} to $q$-uniform hypergraphs, and show the following relation between $B$ and $\tilde B$. Similar formulas were obtained in \cite{storm2006zeta} for regular hypergraphs, and we provide a simple linear algebra proof for general $q$-uniform hypergraphs, following the strategy from \cite{kempton2016non}.

\begin{lemma}[Ihara-Bass formula for uniform hypergraphs]\label{lem:Ihara-Bass}
The following identity holds for any $z\in \mathbb C$:
\begin{align*}
    \det(B-zI)=(z-1)^{(q-1)m-n}(z+(q-1))^{m-n}\det \left( z^2+(q-2)z-zA+(q-1)(D-I)\right).
\end{align*}
 In particular,  the spectrum of $\tilde B$ is identical to that of $B$, except for possible trivial eigenvalues at $-1$ and $-(q-1)$.
  If additionally $m\geq n$, then the trivial eigenvalue of $B$ located at  $1$ has multiplicity $(q-1)m-n$, and the eigenvalue $-(q-1)$ has multiplicity $m-n$. 
\end{lemma}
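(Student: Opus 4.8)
The plan is to follow the linear-algebra strategy of Kempton \cite{kempton2016non} adapted to the $q$-uniform setting: exhibit two rectangular matrices whose products in the two orders give, respectively, a perturbation of $B$ and a perturbation of a small matrix, then apply Sylvester's determinant identity $\det(I - XY) = \det(I - YX)$ (in its weighted form with the spectral parameter $z$).

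First I would set up the incidence-type operators. For an oriented hyperedge $u \to e$, I want to factor the non-backtracking constraint ``$v \in e\setminus\{u\}$ and $f \neq e$'' through the vertex space and the hyperedge space. Concretely, introduce matrices indexed by $\vec H$ on one side and by $V$ (and/or $H$) on the other: a ``start'' matrix $S$ with $S_{(u\to e),v} = \mathbf 1[v = u]$ or similar, a ``target'' matrix recording $v \in e$, and correction terms handling the $f\neq e$ and $v \neq u$ exclusions. The subtlety compared to the graph case is that a single hyperedge $e$ gives rise to $q$ oriented pairs rather than $2$, so the natural identity $B = S T^\top - (\text{backtracking correction})$ will involve the degree matrix $D$ with shifts by $q-2$ and multiplicities $q-1$: a walk entering hyperedge $e$ via $u$ may exit via any of the $q-1$ other vertices, and when we ``unfold'' $B$ through the vertex space we overcount by exactly these factors, which is the source of the $(q-2)z$ and $(q-1)(D-I)$ terms. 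I would write $zI - B$ as a Schur complement of an explicit $(\,|\vec H| + 2n\,)\times(\,|\vec H| + 2n\,)$ block matrix — with blocks built from $A$, $D$, the incidence maps, and $zI$ — and compute its determinant two ways (expanding along the $\vec H$-block versus the $2n$-block). Equating the two Schur-complement expressions yields the claimed factorization, with the prefactor $(z-1)^{(q-1)m-n}(z+(q-1))^{m-n}$ emerging from the determinant of the $\vec H \times \vec H$ corner.

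The consequences are then bookkeeping. The identity of spectra of $\tilde B$ and $B$ up to eigenvalues at $1$ and $-(q-1)$ follows by comparing $\det(B - zI)$ with $\det(\tilde B - zI)$: one checks directly from the block form \eqref{eq:def_tilde_B} that $\det(\tilde B - zI) = \det\!\big(z^2 + (q-2)z - zA + (q-1)(D-I)\big)$ (expand the $2\times 2$ block determinant, using that the top-left block is $0$ so the determinant is $\det$ of $(q-1)I$ times the lower-right Schur complement, up to the $(q-1)^n$ factor which cancels appropriately), so the two characteristic polynomials agree up to the explicit power of $(z-1)(z+(q-1))$. For the multiplicity statement when $m \geq n$: the total degree of $\det(B - zI)$ in $z$ is $qm$; the degree of the $\det(\cdots)$ factor is $2n$; hence the trivial part has degree $qm - 2n = ((q-1)m - n) + (m - n)$, and since $(z-1)$ and $(z+(q-1))$ are coprime and $\det(\tilde B - zI)$ does not vanish at $z = 1$ or $z = -(q-1)$ generically, the exponents $(q-1)m - n$ and $m - n$ are exactly the multiplicities of $1$ and $-(q-1)$ as eigenvalues of $B$.

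The main obstacle I anticipate is getting the unfolding/refolding matrices exactly right so that the cross-terms produce precisely $(q-2)z$ and $(q-1)(D-I)$ rather than some other shift — in the graph case $q = 2$ these reduce to $0$ and $(D - I)$, recovering the classical Ihara–Bass formula $\det(B - zI) = (z^2-1)^{m-n}\det(z^2 - zA + D - I)$, which is a useful sanity check to keep the combinatorial constants honest. A secondary nuisance is the non-invertibility of $z^2 + (q-2)z - zA + (q-1)(D-I)$ at the special values, which must be handled by a density/continuity argument in $z$ (both sides are polynomials in $z$, so it suffices to prove the identity for $z$ outside a finite set) rather than by literal matrix inversion.
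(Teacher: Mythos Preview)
Your proposal is correct and takes essentially the same approach as the paper: factor $B$ through the vertex space via start/terminal incidence matrices and apply a Sylvester/Schur-complement identity. The paper's execution is more direct than your generic $(|\vec H|+2n)$-block setup because it identifies your ``backtracking correction'' explicitly as the edge-reversal operator $P$ with $P_{(u\to e),(v\to f)}=\mathbf 1\{e=f,\,u\neq v\}$, writes $B=T^*S-P$, and exploits the quadratic relation $P^2=(q-2)P+(q-1)I$ to get $(zI+P)^{-1}$ in closed form; the prefactor then drops out immediately as $\det(-(zI+P))=(z-1)^{(q-1)m}(z+q-1)^m$ since $P$ has eigenvalues $q-1$ (multiplicity $m$) and $-1$ (multiplicity $(q-1)m$), which is exactly the $\vec H\times\vec H$ corner you anticipated but did not compute.
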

\begin{remark}
In the sparse HSBM setting, we have $m=\overline{d}n/q$, where $\overline{d} = d+o(1)$ is the average degree in $G$; therefore it's not always true that $m\geq n$ and $-(q-1)$ is not always an eigenvalue of $B$. In particular, if $q\geq 3$ and $(q-1)^{-1}< d < q-1$, with  probability $1-n^{-c}$ for sufficiently large $n$, $B$ does not have an eigenvalue at $-(q-1)$. This is a consequence of Theorem \ref{thm:main}, since otherwise, the trivial eigenvalue $-(q-1)$ would be outside the bulk of radius $\sqrt{(q-1)d}$. 
\end{remark}

With Lemma \ref{lem:Ihara-Bass}, we  provide an eigenvector overlap result for $\tilde B$.  This implies that to get spectral information of $Q$, computing eigenvalues and eigenvectors of $\tilde{B}$ (instead of $B$) is sufficient.

\begin{theorem}[Eigenvector overlaps]\label{th:main_reduced}
Under Assumption \ref{assumption:degree}, for $i \in [r_0]$, let $\tilde u_i$ be the last $n$ entries of the $i$-th   eigenvector of $\tilde B$. Then with probability $1-n^{c'}$ for some constant $c'>0$, there exists an eigenvector $\phi_i'$ of $Q$ associated to $\mu_i$ such that
    \begin{equation}\label{eq:scalar_characterization}
    \frac{\langle \tilde u_i, \tilde \phi_i' \rangle}{\norm{\tilde u_i}\,\norm{\tilde \phi_i'}} = \sqrt{\frac{1 - \tau_i}{1 + \frac{q-2}{(q-1)\mu_i}}} + O(n^{-c})\quad \text{where}\quad \tilde \phi_i'(x) = \phi_i'(\sigma(x)), ~\forall x\in [n].
    \end{equation}
\end{theorem}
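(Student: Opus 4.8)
The plan is to transfer the spectral information about the large non-backtracking matrix $B$ obtained in Theorem~\ref{thm:main} down to the $2n\times 2n$ matrix $\tilde B$ via the Ihara--Bass correspondence of Lemma~\ref{lem:Ihara-Bass}, and then to read off the vertex-space overlap from the explicit shape of the eigenvectors of $\tilde B$. First I would make the eigenvector correspondence in Lemma~\ref{lem:Ihara-Bass} explicit: if $\lambda = \lambda_i(B)$ is a non-trivial eigenvalue, then the eigenvector of $\tilde B$ for $\lambda$ has the block form $(x, y)$ with $\big(\lambda^2 + (q-2)\lambda - \lambda A + (q-1)(D-I)\big) y = 0$ and $x = \lambda^{-1}(D-I)y$; the last $n$ coordinates of the $\tilde B$-eigenvector are thus (a normalization of) $y$. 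Conversely, the relation $B\xi = \lambda\xi$ on oriented hyperedges can be collapsed onto vertices: setting $y(v) = \sum_{e \ni v} \xi(v\to e)$ (or the appropriate ``incoming'' sum), the non-backtracking eigenvector equation forces $y$ to satisfy exactly the quadratic pencil above. So the content is: identify $\tilde u_i$, up to the needed normalization, with the vertex-projection of the $B$-eigenvector $\xi_i$.

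Next I would use the perturbative description of $\xi_i$ that must underlie the proof of Theorem~\ref{thm:main}(i): the eigenvalue $\lambda_i(B) = (q-1)\mu_i + O(n^{-c'})$, and the associated eigenvector is, to leading order, supported on the ``tree-like'' structure and aligns with $\tilde\phi_i$ in the sense that its vertex projection is close to $\tilde\phi_i$ (this is the hypergraph analogue of the computation in \cite{bordenave2018nonbacktracking}). The key quantitative step is then a second-moment / norm computation: I need $\norm{y_i}^2$ and $\langle y_i, \tilde\phi_i\rangle$. Writing $y_i(v) = \sum_{e\ni v}\xi_i(v\to e)$, expanding $\norm{y_i}^2$ produces a diagonal term $\sum_v \sum_{e\ni v}\xi_i(v\to e)^2$ plus cross terms $\sum_v\sum_{e\neq f\ni v}\xi_i(v\to e)\xi_i(v\to f)$; on a locally tree-like hypergraph the non-backtracking equation propagates $\xi_i$ multiplicatively with factor $\approx 1/((q-1)\mu_i)$ per step across a hyperedge, and the degree-weighted counting of these terms is what generates the factor $\frac{q-2}{(q-1)\mu_i}$ in the denominator (the $(q-2)$ counting the other vertices of a shared hyperedge) and the $\tau_i = d/((q-1)\mu_i^2)$ correction (the ratio of the typical walk growth $\sqrt{(q-1)d}$ to the signal growth $(q-1)\mu_i$, squared). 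Normalizing $\tilde u_i = y_i/\norm{y_i}$ then yields $\langle \tilde u_i,\tilde\phi\rangle = \sqrt{(1-\tau_i)\big/(1 + \tfrac{q-2}{(q-1)\mu_i})} + O(n^{-c})$.

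Concretely, the steps in order: (1) from Lemma~\ref{lem:Ihara-Bass}, establish the explicit block form of $\tilde B$-eigenvectors and identify the last-$n$ block with the vertex projection $y_i$ of the $B$-eigenvector $\xi_i$; (2) invoke the (local-neighborhood) description of $\xi_i$ from the proof of Theorem~\ref{thm:main}: its restriction to a large tree-like ball around a vertex behaves like a weighted sum of $\tilde\phi_i$-values along non-backtracking paths with per-step multiplier $((q-1)\mu_i)^{-1}$, with error controlled by the bulk radius $\sqrt{(q-1)d}$; (3) compute $\langle y_i,\tilde\phi_i\rangle$ by summing the contributions of length-$\ell$ paths, getting a geometric series in $\tau_i$ whose value is $\propto \tilde\phi_i$-mass times $(1-\tau_i)^{-1}$-type factors; (4) compute $\norm{y_i}^2$ the same way, tracking separately the ``same-hyperedge'' overlap contributions that produce the $1 + \tfrac{q-2}{(q-1)\mu_i}$ factor; (5) combine and control all errors by $O(n^{-c})$ using the high-probability tree-likeness of a $O(\log n)$-ball and the spectral gap from Theorem~\ref{thm:main}. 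The main obstacle I expect is step~(4): getting the constant in the denominator exactly right requires careful bookkeeping of how the degree matrix $D$ in the definition of $\tilde B$ interacts with the hyperedge structure — i.e. correctly accounting for the $q-2$ ``sibling'' vertices within each hyperedge when passing from $\xi_i$ on oriented hyperedges to $y_i$ on vertices, and making sure the cross-terms in $\norm{y_i}^2$ are dominated exactly by the claimed factor rather than by spurious lower-order but non-negligible contributions. A secondary technical point is justifying that the non-tree-like part of the graph and the bulk eigenvector components contribute only $O(n^{-c})$ to both inner products, which should follow from the same path-counting estimates that prove Theorem~\ref{thm:main}(ii).
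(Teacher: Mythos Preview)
Your plan matches the paper's approach closely: the paper passes from $B$ to $\tilde B$ via an explicit eigenvector correspondence (Lemma~\ref{lem:bprime_spec}, which is the eigenvector-level companion to Lemma~\ref{lem:Ihara-Bass}), then uses the perturbation result underlying Theorem~\ref{thm:main} to replace the true $B$-eigenvector by the explicit pseudo-eigenvector $u_i \propto B^\ell P\chi_i$, so that $\tilde u_i = S B^\ell P\chi_i / \norm{S B^\ell P\chi_i} + O(n^{-c})$ with $S$ the start matrix; the inner product and norm are then computed via the Galton--Watson hypertree functionals (this is Lemma~\ref{lem:eigen_reduction}). One correction to your step~(3): the inner product $\langle y_i,\tilde\phi_i\rangle$ does \emph{not} produce a geometric series in $\tau_i$ --- it is simply $n[(q-1)\mu_i]^{\ell+1}$ to leading order --- and all of the $\tau_i$- and $(q-2)$-dependence enters through $\norm{y_i}^2 = n[(q-1)\mu_i]^{2(\ell+1)}\gamma_i^{(\ell)}$, where $\gamma_i^{(\ell)} \to \bigl(1+\tfrac{q-2}{(q-1)\mu_i}\bigr)/(1-\tau_i)$; your identification of the $(q-2)$ as coming from sibling vertices within a hyperedge is exactly how the paper derives it (via the $Q^{(3)}$ term in Proposition~\ref{prop:martingale_correlation}).
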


\begin{remark}[Removing the simple eigenvalues condition in \cite{bordenave2018nonbacktracking}]\label{remark:Remove}
When $q=2$, the eigenvector overlap result in \cite[Theorem 4]{bordenave2018nonbacktracking} only applies to eigenvectors associated with simple eigenvalues of $Q$, and such restriction was lifted in \cite{stephan2019robustness,stephan2020non}. Equation \eqref{eq:scalar_characterization} gives a more precise eigenvector overlap estimate compared to \cite{bordenave2018nonbacktracking}. Our proof of Theorem \ref{th:main_reduced} relies on the more refined eigenvector perturbation analysis in \cite[Theorem 9]{stephan2020non}, and it works without the simple eigenvalue assumption on $Q$.  
\end{remark}

\begin{remark}
The  quantity $\frac{(q-2)}{(q-1)\mu_i}$ in \eqref{eq:scalar_characterization} is nonzero only when $q\geq 3$. This is because two distinct hyperedges in a random hypergraph can share more than $1$ vertices only when $q\geq 3$, and such higher-order correlation appears in the analysis of eigenvector overlaps. See Section \ref{sec:functional} for further details. 
\end{remark}

 Theorem \ref{th:main_reduced} reduces the eigenvector problem of $B$ to a $2n \times 2n$ matrix $\tilde B$ and provides a natural embedding into the vertex space. This characterization is given in the graph case in \cite{stephan2020non}. The dimension reduction procedure was proposed for HSBM in \cite{angelini2015spectral}, and we rigorously proved it. See Figure \ref{fig:eigenvec} for a simulation on $\tilde u_i$\footnote{The code for simulating Figures \ref{fig:spectrum} and \ref{fig:eigenvec} can be found at \url{https://github.com/Aufinal/hsbm/}.}.

 \begin{figure} 
     \centering
\includegraphics[width=0.9\textwidth]{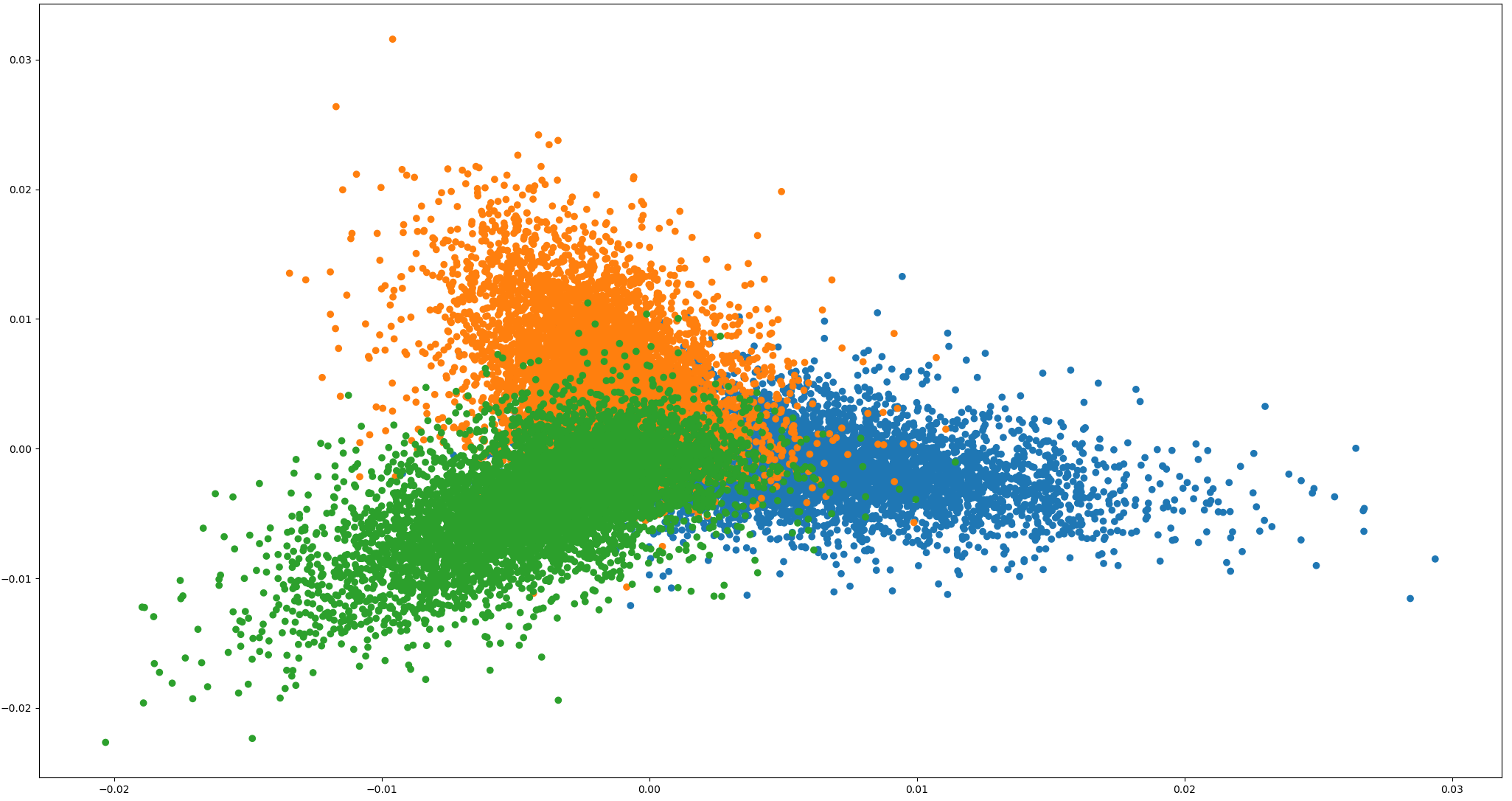}
     \caption{Scatter plot of the last $n$ entries for the second and third eigenvector of $\tilde B$ under the symmetric HSBM (see Example \ref{ex:symmetric_HSBM}) with $q=4$, $r=3$ and $n=20000$. The parameters $c_\mathrm{in}$ and $c_{\mathrm{out}}$ have been chosen so that $d = 4$ and $\mu_2 = 2$. The colors correspond to the actual label of each vertex. Despite overlap near $(0, 0)$ (which is expected due to the presence of isolated vertices and small connected components), the three communities are easily recognizable.}\label{fig:eigenvec}
 \end{figure}

\subsection{Spectral reconstruction algorithm}\label{sec:alg}

A classical measure of the performance of an estimator $\hat\sigma$ is the \emph{overlap}, defined as
\begin{align} \label{eq:def_ov}
\mathrm{ov}(\sigma, \hat\sigma) = \max_{\tau \in \mathfrak S_r} \frac1n \sum_{x\in[n]} \ind_{\tau \circ \sigma(x) = \hat \sigma(x)}, 
\end{align}
where $\mathfrak S_r$ is the set of permutations on $r$ elements.
We say that an estimator achieves \emph{weak reconstruction} if it does better than the ``dummy'' estimator that assigns everyone to the largest community or, equivalently if
\[ \mathrm{ov}(\sigma, \hat \sigma) > \max_{i \in [r]} \pi_i  + \eps\]
with high probability, for some $\eps > 0$.

In direct line with \cite{bordenave2018nonbacktracking,stephan2019robustness}, we provide an algorithm that provably achieves weak reconstruction in the HSBM above the Kesten-Stigum threshold. This procedure is described in Algorithm \ref{alg:provable_algorithm}; we stress that this is not an optimal algorithm by any means, but it is more amenable to theoretical study than other commonly used methods. The theoretical guarantees for Algorithm \ref{alg:provable_algorithm} are summarized in the following theorem:

\begin{algorithm}
\caption{Provably efficient reconstruction algorithm when $\pi_i=\frac{1}{r}, i\in [r]$}\label{alg:provable_algorithm}

\begin{algorithmic}[1]
\Require hypergraph $G=([n], H)$, thresholding parameter $K$

\State Form the $2n\times 2n$ matrix $\tilde B$ as in \eqref{eq:def_tilde_B}.
\State Compute the eigenvector $\xi$ associated with $\lambda_2(\tilde B)$.
\State Define $\tilde u$ the last $n$ entries of $\xi$, normalized so that $\norm{\tilde u}^2 = n$.
\State Partition the vertex set $[n]$ in two sets $(V^+, V^-)$ randomly, such that
\[ \Pb{x \in V^+ \given u} = \frac{1}{2} + \frac1{2K} u(x) \ind_{|u(x)| \leq K}. \]
\State Assign vertices in $V^+$ to the community $1$ and vertices in $V^-$ to the community $2$.
\end{algorithmic}
\end{algorithm}

\begin{theorem}[Weak reconstruction algorithm]\label{thm:weakreconstruction} Under Assumption \ref{assumption:degree} and the extra condition that the communities are balanced, i.e., $\pi_i = 1/r$ for all $i\in [r]$, there exists a deterministic threshold $K=2\sqrt{2}r\gamma_2^{1/2}$  such that for some constants $c,c'>0$, with probability at least $1-n^{-c}$,
the estimator $\hat\sigma$ output by Algorithm \ref{alg:provable_algorithm} satisfies
\[ \mathrm{ov}(\sigma, \hat\sigma) \geq  \frac 1r + \frac1{8r\gamma_2} + O(n^{-c'}), \]
where $\gamma_2$ is defined in \eqref{eq:def_gamma_i}.
\end{theorem}

Our Theorems \ref{thm:main} and \ref{th:main_reduced} work for general unbalanced $\pi_i$. However, translating the correlated eigenvector estimators in Theorem \ref{th:main_reduced} to a weak recovery algorithm with guarantee is still a challenging open question for general $\pi_i$. 

{Theorem \ref{thm:weakreconstruction} generalizes results for the 2-block case of equal size in \cite{Pal_2021} to $r$ blocks with general model parameters given by a probability tensor.  Even in the 2-block case, Theorem \ref{thm:weakreconstruction}  is more efficient compared to the algorithm in \cite{Pal_2021}, which counts the number of self-avoiding walks in a hypergraph. In addition, compared to \cite{Pal_2021}, the eigenvector overlap in Theorem \ref{th:main_reduced} and weak recovery guarantee now depend on the signal-to-noise ratio explicitly.}

The proof of Theorem \ref{thm:weakreconstruction} is deferred to Appendix \ref{sec:weakreconstruction}. It follows from a form of weak convergence of the eigenvectors of $\tilde B$ to a limiting random variable. The proof is similar to \cite{stephan2019robustness}, but with stronger probability estimates and a simpler overall bound. As expected, the difference in performance between our estimator and the naive one goes to zero as we approach the Kesten-Stigum threshold.

\subsection{Discussion}

\paragraph{Reconstruction in the HSBM}

We believe that our results are of interest for real-world algorithms, especially for $k$-means. Indeed, as in \cite{lei2015consistency}, the performance of $k$-means can be related to the distance between the $\tilde u_i$ and a matrix of appropriately clustered vectors (here, the vector $\tilde \phi_i$). Hence, \eqref{eq:scalar_characterization} directly influences the performance of reconstruction: the further away from the Kesten-Stigum threshold we are, the better the performance. A practical reconstruction algorithm above the detection threshold is given in Algorithm \ref{alg:practical_algorithm}. {It is an interesting open problem to provide theoretical guarantees for Algorithm~\ref{alg:practical_algorithm}.}

\begin{algorithm}
\caption{Practical reconstruction algorithm}\label{alg:practical_algorithm}

\begin{algorithmic}[1]
\Require hypergraph $G=([n], H)$

\State Form the $2n\times 2n$ matrix $\tilde B$ as in \eqref{eq:def_tilde_B}.
\State Compute the $r_0$ eigenvectors $\xi_1, \dots \xi_{r_0}$ outside the ``bulk" of radius $\sqrt{(q-1)d}$.
\State Define $\tilde u_i$ the last $n$ entries of $\xi_i$, normalized so that $\norm{\tilde u_i}^2 = n$.
\State Run your favorite clustering algorithm (e.g. $k$-means) on $\tilde u_1, \dots, \tilde u_{r_0}$ to get $r_0$ clusters.
\end{algorithmic}
\end{algorithm}

\paragraph{Beyond the sparse setting} We intentionally choose the simplest setting not to complicate further an already cumbersome proof. However, as in \cite{stephan2020non,bordenave2020detection}, it is possible to keep track of how the bounds in Theorem \ref{thm:main} depend on the problem parameters $r, d, p, \tau$. In particular, Theorem \ref{thm:main} still holds whenever
\[ r, d, p = \mathrm{polylog}(n)\quand (1-\tau_i)^{-1} = \mathrm{polylog}(n). \]
An important special case happens whenever $P = \alpha P_0$, with $P_0$ constant and $\alpha = \omega(1)$ is a scaling parameter. In this case, the RHS of \eqref{eq:scalar_characterization} goes to $1$ as $n \to \infty$, and hence we can conjecture that Algorithm \ref{alg:practical_algorithm} achieves asymptotically perfect reconstruction.

\paragraph{Non-uniform hypergraphs} Another natural extension would be to consider models beyond the uniform hypergraph case and allow for different edge sizes. Although the proof of Theorem \ref{thm:main} does not immediately generalize, we believe that our results still hold in the regime where the edge size is upper bounded by a fixed $K > 0$, and the probability of the presence of a size-$q$ edge scales like $c n^{-(q-1)}$ for any $q \leq K$. In this case, the theoretical eigenvalues $(q-1)\mu_i$ are replaced by the eigenvalues of the signal matrix
$$ Q_{ij} = \sum_{q = 2}^K (q-1) \sum_{\underline k \in [r]^{q-2}} \frac{p_{ij,\underline k}}{\binom{n}{q-1}} \prod_{\ell \in \underline k} \pi_\ell.  $$
Some progress has been made in \cite{dumitriu2021partial} based on the spectral analysis of the adjacency matrix for non-uniform hypergraphs with bounded expected degrees, but the detection threshold was not achieved via their method.
After our work appeared on arXiv, the authors in \cite{chodrow2022nonbacktracking} studied spectral clustering algorithms for non-uniform hypergraphs using the non-backtracking operator, and an Ihara-Bass formula for non-uniform hypergraphs was derived.
We believe our analysis based on the non-backtracking operator can be applied to the non-uniform HSBM and reach the threshold conjectured in \cite{chodrow2022nonbacktracking}.

\paragraph{Possible extensions} The definition of $\tilde B$ in \eqref{eq:def_tilde_B} and $\tilde u_i$ in Theorem \ref{th:main_reduced} imply that for any $\lambda\in \mathbb R$, $\tilde u_i$ is an eigenvector of the Hermitian matrix
\begin{equation}\label{eq:def_bethe_hessian}
    \Delta(\lambda) = \lambda(q-2+\lambda)I - \lambda A + (q-1)(D - I). 
\end{equation} 
This is reminiscent of the arguments justifying the use of the Bethe-Hessian \cite{saade2014spectral,dall2019revisiting,dall2020community,dall2021nishimori} in graph community detection; we conjecture that an appropriate form of a Bethe-Hessian algorithm can be of use in the hypergraph case.

\paragraph{Other spectral properties}
It remains open to characterize the limiting spectral distribution of $B$ even when $q=2$ \cite{bordenave2018nonbacktracking}. When $d\to\infty$ and $q=2$, it is known that the real part of the bulk spectral converges to the semicircle law \cite{wang2017limiting,coste2021eigenvalues}.  For Erd\H{o}s-R\'{e}nyi graphs with fixed $d$, an Alon–Boppana-type bound was conjectured in \cite{bordenave2018nonbacktracking} that $|\lambda_2(B)|\geq  \sqrt{d}+o(1)$, and we see the same phenomenon from our simulation for Erd\H{o}s-R\'{e}nyi hypergraphs. We can also see isolated eigenvalues inside the disk of radius $\sqrt{(q-1)d}$ in our HSBM simulation, which corresponds to the ratio between $\lambda_1(B)$ and $\lambda_r(B)$ for $2\leq r\leq r_0$. This ``eigenvalue insider" phenomenon for graph SBMs was first observed in \cite{dall2019revisiting} when $d=O(1)$, and proved in \cite{coste2021eigenvalues} when $d=\omega(\log n)$. We conjecture similar results hold for the HSBM.

\subsection{Proof overview and technical novelties}
We now provide an overview of the proof and highlight several technical novelties. We start with a few generalities on the non-backtracking matrix and non-backtracking walks in Section \ref{sec:prelim}.

In Section \ref{sec:structure}, we summarize all the technical results about the structure of the matrix $B^{\ell}$, where $\ell=c\log n$ for some suitably chosen constant $c$. Proposition \ref{prop_main} gives us approximate eigenvalues and eigenvectors of $B^{\ell}$ and separations of approximate eigenvalues from the bulk eigenvalues.  In Section \ref{sec:main_proof_perturb},
based on the structure of $B^{\ell}$, we prove Theorems \ref{thm:main} and \ref{th:main_reduced} by a perturbation analysis for non-normal matrices used in \cite{stephan2020non}.

Theorem \ref{th:main_reduced} rigorously justifies a computationally efficient dimension reduction procedure based on the new Ihara-Bass formula (Lemma \ref{lem:Ihara-Bass}), which simplifies the non-backtracking spectral clustering to an eigenvector problem of a $2n\times 2n$ matrix. This is the first result for $\tilde B$ in the literature, even for the graph case: the eigenvector overlap results from \cite{bordenave2018nonbacktracking} are for eigenvectors of $B$, and the asymptotic limits were not given. In addition, the simple eigenvalues condition from \cite{bordenave2018nonbacktracking} is removed, see Remark \ref{remark:Remove}.

The rest of the paper is devoted to proving Proposition \ref{prop_main}. In Section \ref{sec:decomposition}, we generalize the tangle-free matrix decomposition of $B^{\ell}$ in the graph case from \cite{bordenave2018nonbacktracking} to our hypergraph setting (Lemma \ref{lem:expansionBl}), and summarize the operator norm bounds we need in Proposition \ref{prop:trace}. This is essential to proving the separation of the outlier eigenvalues from the bulk.

In Section \ref{sec:local}, we describe the local structure of an HSBM-generated hypergraph; in particular, we prove that it is \emph{tangle-free} and \emph{locally tree-like}, two essential properties in analyzing the spectral algorithms. We also provide a coupling between local neighborhoods and an object called a \textit{Galton-Watson hypertree}, which is important for studying the pseudo-eigenvectors of $B^\ell$. The local analysis of HSBM generalizes previous results in \cite{Pal_2021} from the balanced 2-block HSBM to the more general case with $r$ blocks whose model parameters are from a probability tensor.

Section \ref{sec:functional} is devoted to the study of this hypertree. We construct a family of hypergraph functionals that will be used to approximate eigenvectors of $B$ and analyze their properties, particularly their first and second moments. In particular, compared to the graph case in \cite{bordenave2018nonbacktracking}, a new higher order correlation term appears when $q\geq 3$ (see Proposition \ref{prop:martingale_correlation}).

We then use concentration arguments in Section \ref{sec:spatial_avg} to relate those hypertree eigenvectors to pseudo-eigenvectors of the non-backtracking matrix.
Finally, Sections \ref{sec:near_eigenvec} and \ref{sec:prop_proof} wrap up all the previous arguments to finish the proof of Proposition \ref{prop_main}.

In Appendix \ref{sec:trace_bound}, we prove Proposition \ref{prop:trace} using the high trace method. Such a method was first applied to the non-backtracking operator in \cite{bordenave2018nonbacktracking,bordenave2020new}. For hypergraphs, it was first developed in \cite{Pal_2021} for the self-avoiding matrix, and we generalize it to the non-backtracking matrix. Such a generalization leads to a much more efficient algorithm, while \cite{Pal_2021} is based on counting self-avoiding walks in an HSBM of length $\ell=c\log n$.
The moment method is based on a bipartite representation of the non-backtracking walk on hypergraphs, and such a connection between hypergraphs and bipartite graphs was also used in \cite{dumitriu2021spectra,dumitriu2020global} for random regular hypergraphs. 

Different from the non-backtracking moment proofs in \cite{bordenave2018nonbacktracking,stephan2020non,bordenave2020detection} applied to a random weighted graph, our path-counting combinatorial estimates are done on an associated bipartite graph whose right vertices corresponds to hyperedges in a hypergraph spanned by the concatenations of non-backtracking walks. In addition, handling the dependence among overlapped hyperedges is one of the major challenges in the proof.

Appendix \ref{sec:Misc} contains omitted proofs for several lemmas. In Appendix \ref{sec:weakreconstruction}, we prove Theorem \ref{thm:weakreconstruction}.

\section{Preliminaries}
\label{sec:prelim}
We first introduce more definitions of hypergraphs.
\begin{definition}[Walk and distance]
	A \emph{walk} of length $\ell$ on a hypergraph $G$ is a sequence \[(x_0,e_1,x_1,\cdots ,e_{\ell}, x_{\ell})\] such that $x_{j-1}\not=x_{j}$ and $\{x_{j-1},x_j\}\subset e_j$ for all $1\leq j\leq \ell$. A walk is closed if $x_0=x_{\ell}$. 
		The distance between two vertices $x, y$ in $G$ is the minimal length of all walks between $x,y$. Let $(G,o)_{t}$ be  the set of vertices in $G$ within distance $t$ from $o$, and $\partial (G,o)_t$ be the set of vertices in $G$ at distance $t$ from $o$.
	\end{definition}	

\begin{definition}[Cycle and hypertree]\label{hypertree}
	A \textit{cycle} of length $\ell$ with $\ell\geq 2$ in a hypergraph $H$  is a walk \[(x_0,e_1,\dots, x_{\ell-1},e_{\ell},x_0)\] such that $x_0,\dots x_{\ell-1}$ are distinct vertices and $e_1\dots e_{\ell}$ are distinct hyperedges. A \emph{hypertree} is a hypergraph that contains no cycles.
\end{definition}

Define the \textit{edge reversal operator} $J$ such that
\begin{equation}\label{eq:def_P}
    J_{(x\to e), (y\to f)} = \ind_{e = f, x \neq y}
\end{equation}
Equivalently, for any $u \in \dR^{\vec H}$,
\[ [{J}u](x\to e) = \sum_{x\in e, y\neq x} u_{y \to e}. \]
When $q=2$, {$J$} corresponds to the edge involution operator  in \cite{bordenave2018nonbacktracking} which satisfies $J^2 = I$. {For  $q\geq 3$}, this is not the case, but the following holds:

\begin{lemma}\label{lem:formula_P}
Let {$J$} be the matrix defined in \eqref{eq:def_P}. The following identity holds:
\[ {J}^2 = (q-2){J} + (q-1)I. \]
\end{lemma}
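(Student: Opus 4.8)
The key structural observation is that $P$ is block-diagonal with respect to the partition of $\vec H$ into the groups $\vec e := \Set{u \to e \given u \in e}$, one for each hyperedge $e \in H$. Indeed, $P_{(u\to e),(v\to f)}$ vanishes unless $e = f$, so $P$ preserves each subspace $\dR^{\vec e}$ and acts as zero between distinct ones. Since $G$ is $q$-uniform, $|\vec e| = q$, and identifying $\vec e$ with the vertex set of $e$, the block of $P$ on $\dR^{\vec e}$ has entries $\ind_{u \neq v}$, i.e.\ it equals $J_q - I_q$, where $J_q$ is the $q \times q$ all-ones matrix. Hence $P = \bigoplus_{e \in H}(J_q - I_q)$ and $P + I = \bigoplus_{e \in H} J_q$.

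The plan is then to square this. Using $J_q^2 = q J_q$ (each entry of $J_q^2$ is a sum of $q$ ones), we get
\[
(P+I)^2 = \bigoplus_{e\in H} J_q^2 = \bigoplus_{e\in H} q J_q = q(P+I).
\]
Expanding the left-hand side gives $P^2 + 2P + I = qP + qI$, and rearranging yields $P^2 = (q-2)P + (q-1)I$, which is the claimed identity. (As a sanity check one can also verify the identity entrywise: for $(u\to e)$ and $(w\to g)$ with $e = g$, the $(P^2)$-entry counts vertices $v \in e$ with $v \notin \Set{u,w}$, giving $q-1$ when $u = w$ and $q-2$ when $u \neq w$, matching $(q-1)I + (q-2)P$; when $e \neq g$ all three matrices vanish.)

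There is essentially no obstacle here: the only thing to get right is the bookkeeping that $P$ does not mix oriented hyperedges with different underlying hyperedges and that each diagonal block is exactly $J_q - I_q$ because of $q$-uniformity. (In the non-uniform case the blocks would have varying sizes and no single polynomial identity would hold, which is implicitly why the lemma is stated for $q$-uniform hypergraphs.)
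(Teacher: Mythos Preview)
Your proof is correct. The paper's own proof is precisely the direct entrywise computation you sketch as a sanity check: it expands $(P^2)_{(u\to e),(v\to f)}$ as a sum over intermediate oriented hyperedges $(w\to g)$, reduces to the case $e=f=g$, and counts the number of $w\in e$ with $w\notin\{u,v\}$, obtaining $q-1$ when $u=v$ and $q-2$ when $u\neq v$.

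Your main argument takes a slightly different, more structural route: you identify $P$ as a block-diagonal matrix with each block equal to $J_q - I_q$, use the idempotent-like relation $J_q^2 = qJ_q$ to get $(P+I)^2 = q(P+I)$, and then expand. This has the advantage of making the spectrum of $P$ (eigenvalues $q-1$ and $-1$ with the expected multiplicities) immediately visible, which is exactly what the paper later exploits in the Ihara--Bass computation of $\det(-P-zI)$. The paper's entrywise approach is more hands-on but equally short; either argument suffices here.
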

The matrix {$J$} is useful in the study of the non-backtracking matrix $B$ in part due to the following formula, known as the \emph{parity-time} symmetry:
\begin{lemma}\label{lem:BPPB}
  For any $k\geq 0$,
  \[ B^kJ=J{B^*}^k.\]
\end{lemma}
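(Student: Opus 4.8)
The plan is to reduce the claim to the base case $k = 1$, i.e.\ to the identity $BP = PB^*$, and then obtain the general statement by a one-line induction on $k$.

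For the base case I would compare the two matrices entrywise. Since $B$ has real $0$--$1$ entries we have $B^*_{(u\to e),(v\to f)} = B_{(v\to f),(u\to e)} = \ind_{u \in f \setminus \{v\},\ e \neq f}$, and $P$ is real and symmetric. Fixing oriented hyperedges $u\to e$ and $v\to f$ and expanding,
\[ (BP)_{(u\to e),(v\to f)} = \sum_{w\to g} B_{(u\to e),(w\to g)}\,P_{(w\to g),(v\to f)}, \]
the summand is nonzero exactly when $w \in e\setminus\{u\}$, $g\neq e$, $g = f$ and $w \neq v$; collecting these constraints forces $e \neq f$ and $w \in (e\cap f)\setminus\{u,v\}$, so the entry equals $\ind_{e\neq f}\,\bigl|(e\cap f)\setminus\{u,v\}\bigr|$. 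Expanding $(PB^*)_{(u\to e),(v\to f)} = \sum_{w\to g} P_{(u\to e),(w\to g)}\,B^*_{(w\to g),(v\to f)}$ instead, the summation runs over $w\to g$ with $g = e$, $w\neq u$, $w\in f\setminus\{v\}$ and $e\neq f$, which again amounts to $e\neq f$ and $w\in(e\cap f)\setminus\{u,v\}$, giving the same value. Hence $BP = PB^*$.

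The induction is then immediate: the case $k=0$ is trivial since $B^0 = I = (B^*)^0$ and $IP = P = PI$, and if $B^{k-1}P = P(B^*)^{k-1}$ then
\[ B^k P = B\,(B^{k-1}P) = B\,P\,(B^*)^{k-1} = (BP)\,(B^*)^{k-1} = P\,B^*\,(B^*)^{k-1} = P\,(B^*)^k. \]
I do not expect any genuine obstacle; the only point needing a little care is the bookkeeping in the entrywise computation, since in a general $q$-uniform hypergraph two distinct hyperedges may meet in as many as $q-1$ vertices, so the matching entry of $BP$ and $PB^*$ is the cardinality $\bigl|(e\cap f)\setminus\{u,v\}\bigr|$ rather than a bare indicator. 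Equivalently one may phrase the base case combinatorially: for $e \neq f$, both $(BP)_{(u\to e),(v\to f)}$ and $(PB^*)_{(u\to e),(v\to f)}$ count the vertices $w \in (e\cap f)\setminus\{u,v\}$, the first by routing through the intermediate oriented hyperedge $w\to f$ and the second through $w\to e$.
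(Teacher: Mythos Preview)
Your proof is correct and essentially identical to the paper's own argument: both establish $BP=PB^*$ by an entrywise computation showing each side equals $\ind_{e\neq f}\,\bigl|(e\cap f)\setminus\{u,v\}\bigr|$, and then extend to general $k$ by the obvious induction. The only cosmetic difference is that the paper writes the induction as the chain $B^kP=B^{k-1}PB^*=\cdots=P(B^*)^k$ rather than spelling out the inductive step.
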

This formula is fairly easy to prove by itself, but it is also the consequence of some deeper structure of $B$. Define the \emph{start} and \emph{terminal} matrices {$S,T\in \mathbb R^{V\times \vec H}$} such that
\begin{equation}\label{eq:def_S_T}
    S_{x, y\to e} = \ind_{x = y} \quand T_{x, y\to e} = \ind_{x\in e, x\neq y} .
\end{equation} 
 The following matrix identities are then easy to check:
\begin{equation}\label{eq:T_S_P_relations}
\begin{aligned}
    SS^* &= D,  &\qquad  TT^*&=(q-2)A+(q-1)D, &\qquad ST^*&= A, \\
    SJ &= T, &\qquad T^*S &= B + J . & 
\end{aligned}
\end{equation}
Equation \eqref{eq:T_S_P_relations} emphasizes the deep connections between $B$ and the matrices $A$ and $D$, which are at the core of Lemma \ref{lem:Ihara-Bass} and Theorem \ref{th:main_reduced}.

\section{Spectral structure of the matrix $B^\ell$}\label{sec:structure}

In this section, we state the main proposition about the structure for $B^{\ell}$ used to show Theorem \ref{thm:main}.  Denote 
        \[p_{\max}=\max_{e} p_{\underline\sigma(e)}> (q-1)^{-1},\]
       { where the inequality is due to \eqref{eq:d_assumption}.}
And we take 
    \[ \ell = \frac1{25}\log_{(q-1)p_{\max}}(n). \]
Recall $\tilde \phi_i$ from \eqref{eq:ndimlifting}. The corresponding lifted eigenvectors of $\chi_i\in \mathbb R^{\vec{H}}$ are defined {for $i \in [r]$} as
\begin{align}\label{eq:defchi}
  {\chi_i = S^*\tilde \phi_i}, \quad \text{or equivalently} \quad \chi_i(x \to e) =  \phi_i(\sigma(x)).
\end{align} 
For $i\in [r_0]$, we define the pseudo-eigenvectors of $B$ as 
\begin{align}\label{def:u_i} u_i = \frac{B^\ell J \chi_i}{[(q-1)\mu_i]^{\ell+1}\sqrt{n}} \qquand v_i = \frac{(B^*)^\ell \chi_i}{[(q-1)\mu_i]^{\ell}\sqrt{n}}.
\end{align}
Denote $U$ (resp. $V$) the $m \times r_0$ matrix whose columns are the $u_i$ (resp $v_i$).
Define
     \begin{align}\label{eq:def_gamma_i_l}
  \gamma_i^{(\ell)} = \frac{1 - \tau_i^{\ell+1}}{1 - \tau_i} + \frac{q-2}{(q-1)\mu_i}\frac{1-\tau_i^{{ \ell}}}{1-\tau_i} = \gamma_i + O(n^{-c}),
    \end{align}
where 
\begin{equation}\label{eq:def_gamma_i}
    \gamma_i := \frac{1 + \frac{(q-2)}{(q-1)\mu_i}}{1-\tau_i}.
\end{equation}
We will also need a slightly different parameter for $U$:
\begin{equation}\label{eq:def_GammaiU}
        \gamma_{i, U}^{(\ell)} = \frac{(d\gamma_i^{(\ell)} + (q-2)\mu_i)}{(q-1)\mu_i^2}.
\end{equation}

Define $\Sigma$   the $r_0\times r_0$ diagonal matrix such that
\[\Sigma_{ij}=(q-1)\mu_i\delta_{ij}.
\] 
Let $P_{\img(U)^\bot}$and $P_{\img(V)^\bot}$ be the projection onto the orthogonal complement of the linear space spanned by the column vectors of $U,V$, respectively. The following proposition holds.
\begin{proposition} \label{prop_main}
There exists a  constant $C = C(r, p_{\max}, q)$ such  with probability at least $1 - n^{-c}$ for a constant $c>0$, the following inequalities hold:
 \begin{align}
     \norm{U^*U - \diag(\gamma_{i, U}^{(\ell)})} &\leq C   n^{-1/4}, \label{eq:norm_bound1}\\
     \norm{V^*V - \diag(d \gamma_{i}^{(\ell)})} &\leq C   n^{-1/4} , \label{eq:norm_bound2}\\
          \norm{U^*V - I_{r_0}} &\leq C   n^{-1/4} ,\label{eq:norm_bound3}\\
     \norm{ V^* B^\ell U  - \Sigma^\ell} &\leq C   n^{-1/4}, \label{eq:norm_bound4}\\
          \norm{B^\ell P_{\img(V)^\bot}} &\leq C\log(n)^{9}[(q-1)d]^{\ell/2},\label{eq:norm_bound6}\\
     \norm{P_{\img(U)^\bot}B^\ell} &\leq C\log(n)^{9}[(q-1)d]^{\ell/2}\label{eq:norm_bound5}\\ 
     \norm{B^\ell} &\leq C \log(n)^{10} [(q-1)d]^\ell \label{eq:norm_bound7}.
 \end{align}
\end{proposition}

{The crucial inequalities for estimating the top $r_0$ eigenvalues are \eqref{eq:norm_bound3} and \eqref{eq:norm_bound4}. Indeed, assuming that $U$ and $V$ are exactly orthogonal (i.e. the RHS of \eqref{eq:norm_bound3} is zero), then the matrix
$B' = U \Sigma V^*$ exactly satisfies \eqref{eq:norm_bound4}, and the eigenvalues of $B'$ are exactly the $(q-1)\mu_i, i\in [r_0]$. \eqref{eq:norm_bound6} and \eqref{eq:norm_bound5} essentially imply part (ii) of Theorem \ref{thm:main} that all other eigenvalues are confined in the disk of radius $\sqrt{(q-1)d}$ asymptotically.}

{Theorems \ref{thm:main}  and \ref{th:main_reduced} ensue from a rigorous proof of these relations through tools from perturbation theory for non-symmetric matrices.}
The proof of this proposition occupies the remainder of this paper. We prove Proposition \ref{prop_main} in Section \ref{sec:prop_proof}.
In the next section, we first show how Proposition \ref{prop_main} implies Theorems \ref{thm:main} and \ref{th:main_reduced}.

\section{Proof of Theorems \ref{thm:main} and \ref{th:main_reduced}}\label{sec:main_proof_perturb}

Our main result follows from Proposition  \ref{prop_main} via perturbation theory arguments. We use the following result from \cite{stephan2020non}:
\begin{lemma}[Theorem  9 from \cite{stephan2020non}]\label{th:bauer_powers}
Let \( \Sigma = \diag(\theta_1, \dots, \theta_r) \) with
\[ 1 = |\theta_1| \geq \cdots \geq |\theta_r|, \]
$A \in \dR^{m \times m}$ and \( U, U', V, V' \in \dR^{m \times r} \). We set
\[ {M} = U\Sigma^\ell V^* \quand {M'} = U'\Sigma^{\ell'}{(V')}^*, \]
for two integers \( \ell, \ell' \).
  Assume the following:
  \begin{enumerate}
    \item the integers \( \ell, \ell' \) are relatively prime,\label{item:bauer_power_cond_i}

    \item the matrices \( U, U', V, V' \) are well-conditioned:\label{item:bauer_power_cond_ii}
          \begin{itemize}
            \item they all are of rank \( r \),
            \item for some \( \alpha, \beta \geq 1 \), for \( X \) in \( \{U, V, U', V' \} \),
                  \[ \norm{X^*X} \leq \alpha \quand \norm{{(X^*X)}^{-1}} \leq \beta, \]
            \item for some small \( \delta < 1 \),
                  \[ \norm{U^*V - I_r} \leq \delta \quand  \norm{{(U')}^*V' - I_r} \leq \delta, \]
          \end{itemize}
    \item there exists a small constant \( \eps > 0 \) such that\label{item:bauer_power_cond_iii}
          \[ \norm{A^\ell - {M}} \leq \eps \quand \norm{A^{\ell'} - {M'} } \leq \eps, \]
    \item if we let\label{item:bauer_power_cond_iv}
          \begin{equation}\label{eq:def_sigma0}
               \sigma_0 := 84 r^3\alpha^{7/2}\beta(\eps + 5r \alpha^2 \beta \delta),
          \end{equation}
          then
          \begin{equation}\label{eq:power_perturbation_bound}
            \sigma_0 < \ell \, |\theta_r|^\ell \quand \sigma_0 < \ell'\, |\theta_r|^{\ell'}.
          \end{equation}
  \end{enumerate}
  Assume without loss of generality that \( \ell \) is odd, and let
  \[ \sigma := \frac{\sigma_0}{\ell |\theta_r|^\ell}. \]
  Then, the \( r \) largest eigenvalues of \( A \) are close to the \( \theta_i \) in the following sense: there exists a permutation $\pi$ of $[r]$ such that for \( i\in [r] \),
  \[ \left| \lambda_{\pi(i)} - \theta_{i} \right| \leq 4\sigma, \]
  and all other eigenvalues of \( A \) have modulus less that \( \sigma_0^{1/\ell} \).
  
  Additionally, for $i \in [r_0]$, let $\xi$ be a unit eigenvector associated to \( \lambda_{\pi(i)} \). Let $\mathrm{Vect}(\Set{u_j \given \theta_j = \theta_i})$ be the vector space spanned by the pseudo-eigenvectors $u_j$ such that $\theta_j=\theta_i$. Then there exists a unit vector  $\zeta \in \mathrm{Vect}(\Set{u_j \given \theta_j = \theta_i})$  such that
  \begin{equation}\label{eq:bauer_eigenvector_bound}
      \norm{\xi - \zeta} \leq \frac{3\sigma}{\delta_i - \sigma},
  \end{equation}
  where $\delta_i:=\min_{\theta_j\neq\theta_i}|\theta_j-\theta_i|$  is the smallest gap between distinct eigenvalues. 
\end{lemma}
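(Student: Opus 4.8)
The plan is to exploit the structure $A^\ell = S + E_1$ with $\rank S \le r$ and $\norm{E_1} \le \eps$ (and likewise $A^{\ell'} = S' + E_1'$): reduce the problem for the large eigenvalues of $A^\ell$ to an $r\times r$ determinantal equation, localize its solutions near the $\theta_i^\ell$, and then intersect the information coming from the two coprime powers to recover the eigenvalues of $A$ itself. (The full argument is carried out in \cite{stephan2020non}; I outline its structure.) First, the spectral picture of $S$: with $W := V^*U$, assumption \ref{item:bauer_power_cond_ii} gives $\norm{W - I_r}\le\delta$; since $U$ has full column rank, the nonzero eigenvalues of $S = U\Sigma^\ell V^*$ are precisely those of the $r\times r$ matrix $\Sigma^\ell W$, with right eigenvectors in $\img(U)$, and as $\Sigma^\ell$ is diagonal with $\norm{\Sigma^\ell W - \Sigma^\ell}\le\delta$, Bauer--Fike places them within $\delta$ of the $\theta_i^\ell$.

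For the localization of the spectrum of $A^\ell$: when $|\mu| > \eps$ the operator $\mu I - E_1$ is invertible, and Sylvester's determinant identity applied to the rank-$r$ factorization of $S$ yields
\[ \det(\mu I - A^\ell) = \det(\mu I - E_1)\,\det\bigl(I_r - \Sigma^\ell M_0(\mu)\bigr), \qquad M_0(\mu) := V^*(\mu I - E_1)^{-1}U, \]
so the eigenvalues of $A^\ell$ in $\Set{|\mu| > \eps}$ are exactly the zeros of $f(\mu) := \det\bigl(I_r - \Sigma^\ell M_0(\mu)\bigr)$. Expanding the resolvent, $M_0(\mu) = \mu^{-1}W + R(\mu)$ with $\norm{R(\mu)} \le \alpha\eps\,|\mu|^{-2}(1-\eps/|\mu|)^{-1}$, so $f$ is a perturbation of $\prod_{i=1}^r(1-\theta_i^\ell/\mu)$ by an amount governed by $\delta$ and $\eps$ and amplified by the conditioning constants $\alpha,\beta$. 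A Rouché argument on circles of radius $\asymp\sigma_0$ about the points $\theta_i^\ell$ (collapsing coincident values into clusters and bounding the remaining factors away from $0$ on those circles) then shows $f$ has exactly the right number of zeros near each $\theta_i^\ell$, all within $\sigma_0$ of it, and no zeros of modulus $\ge\sigma_0$ elsewhere; the region $\Set{|\mu| < \eps}$ can only harbour eigenvalues of $A^\ell$ of modulus $<\sigma_0$. Hypothesis \ref{item:bauer_power_cond_iv}, $\sigma_0 < \ell|\theta_r|^\ell$, is exactly what is needed here: it keeps the radius $\sigma_0$ small relative to $|\theta_r|^\ell$ and to the inter-cluster gaps, making the Rouché comparison legitimate. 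The same analysis applied to $A^{\ell'}$ localizes its large eigenvalues within $\sigma_0'$ of the $\theta_j^{\ell'}$.

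Next comes the root extraction. An eigenvalue $\lambda$ of $A$ with $|\lambda| > \sigma_0^{1/\ell}$ has $|\lambda^\ell - \theta_i^\ell|\le\sigma_0$ for some $i$ and $|\lambda^{\ell'} - \theta_j^{\ell'}|\le\sigma_0'$ for some $j$; factoring $\lambda^\ell - \theta_i^\ell = \prod_{\omega^\ell=1}(\lambda - \omega\theta_i)$ and isolating the nearest factor gives $|\lambda - \omega\theta_i|\le 4\sigma$ for some $\ell$-th root of unity $\omega$ (the remaining product being $\approx \ell|\theta_i|^{\ell-1}\ge\ell|\theta_r|^\ell$, which accounts for the constant $4$), and likewise $|\lambda - \omega'\theta_j|\le 4\sigma'$. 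Since $\ell,\ell'$ are coprime and $\sigma,\sigma'$ are small, the two statements are simultaneously consistent only when $\theta_i = \theta_j$ and $\omega = \omega' = 1$ — this is where choosing $\ell$ odd removes the last real ambiguity $\omega = -1$ — giving $|\lambda - \theta_i|\le 4\sigma$; counting eigenvalues of $A^\ell$ near $S$ (total multiplicity $r$) fixes the permutation $\pi$. For the eigenvector statement, if $A\xi = \lambda_{\pi(i)}\xi$ with $\norm{\xi}=1$, then $\xi = \lambda_{\pi(i)}^{-\ell}\bigl(U\Sigma^\ell V^*\xi + E_1\xi\bigr)$ shows $\xi$ lies within $O(\sigma)$ of $\img(U)$; writing its $\img(U)$-component as $Uy$, the relation $\Sigma^\ell W y \approx \lambda_{\pi(i)}^\ell y$ and the separation (of order $\delta_i$ after the $O(\sigma)$ perturbation) of $\theta_i^\ell$ from the other eigenvalues of $\Sigma^\ell W$ force $y$ into the corresponding eigenspace, i.e.\ $\xi$ to within $3\sigma/(\delta_i-\sigma)$ of $\mathrm{Vect}(\Set{u_j \given \theta_j = \theta_i})$. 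Throughout, the conditioning constants $\alpha,\beta$ are what convert norm estimates between $\img(U)$-coordinates and ambient coordinates, and tracking them through all three steps produces the explicit $84r^3\alpha^{7/2}\beta$ in the definition of $\sigma_0$.

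The main obstacle is the middle step. Since $A^\ell$ is genuinely non-normal, its spectrum cannot be localized by a crude perturbation bound, so the determinant/Rouché analysis has to be run with fully explicit constants, arranged so that condition \ref{item:bauer_power_cond_iv} is precisely the hypothesis making the radius-$\sigma_0$ circles simultaneously capture the correct zeros and exclude spurious ones. Coordinating this bookkeeping with the root-extraction and the eigenvector perturbation, uniformly in all the parameters, is the delicate part of the proof.
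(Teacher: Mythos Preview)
The paper does not prove this lemma: it is quoted verbatim as Theorem~9 of \cite{stephan2020non} and used as a black box in Section~\ref{sec:main_proof_perturb}, so there is no ``paper's own proof'' to compare against. You correctly acknowledge this yourself (``The full argument is carried out in \cite{stephan2020non}; I outline its structure''), and your outline --- reducing to an $r\times r$ determinantal equation via Sylvester, localizing zeros by Rouch\'e, extracting roots using the coprimality of $\ell,\ell'$, and then handling eigenvectors by projecting onto $\img(U)$ --- is a faithful sketch of the approach in that reference.
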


    Lemma \ref{th:bauer_powers} can be viewed as an extension of the Bauer-Fike Theorem \cite{bauer1960norms} to powers of matrices. Simply put, having a perturbation bound on the powers of $A$ allows for an improved bound on the eigenvalues of $A$, as long as $\sigma_0 \asymp c^\ell$ for some $c < |\theta_r|$.

\begin{proof}[Proof of Theorem \ref{thm:main}]
Our goal is to check all the assumptions of Lemma \ref{th:bauer_powers}. Define $\ell' = \ell + 1$, and $U', V'$ accordingly. Proposition \ref{prop_main} also applies to $\ell'$, hence there exists an event with probability $1 - n^{-c}$ such that \eqref{eq:norm_bound1} - \eqref{eq:norm_bound6} hold for both $\ell$ and $\ell'$.

\paragraph{Condition~\ref{item:bauer_power_cond_i}} Since \( \ell' = \ell + 1 \), \( \ell \) and \( \ell' \) are relatively prime.

\paragraph{Condition~\ref{item:bauer_power_cond_ii}} {With \eqref{eq:kesten_stigum}  and \eqref{eq:d_assumption}, we have $(q-1)\mu_i>1$ for $i\in [r_0]$. Therefore, for $\ell \geq 1$,  $\gamma_{i}^{(\ell)}$ {defined in \eqref{eq:def_gamma_i_l}} satisfies
\begin{align}
  d \leq d\gamma_i^{(\ell)} \leq d\left( 1+\frac{q-2}{(q-1)\mu_i}\right)\frac{1}{1-\tau_i}\leq \frac{(q-1)d}{1 - \tau_i}, 
\end{align}
and $\gamma_{i,U}^{(\ell)}$ defined in \eqref{eq:def_GammaiU} satisfies \[ \frac{d}{(q-1)\mu_i^2} \leq \gamma_{i, U}^{(\ell)} \leq \frac{2(q-1)}{1 - \tau_i}.\]}
Hence condition \ref{item:bauer_power_cond_ii} is implied by \eqref{eq:norm_bound1} - \eqref{eq:norm_bound3} when $n$ is large enough. In particular, {let $\tau=\max_{i\in [r_0]} \tau_i$}, we can take
\[\alpha =\frac{{3d(q-1)^2}}{1 - \tau}, \quad {\beta^{-1} =\frac{1}{2} \min \left\{ d, \frac{d}{(q-1)\mu_i^2}\right\}}, \quand \delta =  C n^{-1/4} .\]

\paragraph{Condition~\ref{item:bauer_power_cond_iii}} Following \cite{bordenave2020detection}, we write
\[ I_m = P_{\img(U)} + P_{\img(U)^\bot} = P_{\img(V)} + P_{\img(V)^\bot},\]
and since {$MP_{\img(V)^\bot} = P_{\img(U)^\bot}M = 0$}, 
\begin{equation}\label{eq:decomp_Bl_S}
\norm{B^\ell - M} \leq \norm{P_{\img(U)}B^\ell P_{\img(V)}- M} + \norm{P_{\img(U)^\bot}B^\ell} + \norm{B^\ell P_{\img(V)^\bot}}.
\end{equation}
{Since $U, V$ are of rank $r$ by condition \ref{item:bauer_power_cond_ii}}, projections on $\img(U)$ and $\img(V)$ have the expressions
\[ P_{\img(U)} = U(U^*U)^{-1}U^* \quand  P_{\img(V)} = V(V^*V)^{-1}V^*.\]
Hence,
\[\norm{P_{\img(U)}B^\ell P_{\img(V)}- M} \leq \norm{U} \norm{(U^*U)^{-1}U^* B^\ell V(V^*V)^{-1} - \Sigma^\ell}\norm{V}. \]
Define
    \[ \tilde U = V(V^*V)^{-1} \qquand \tilde V = U(U^*U)^{-1}; \]
we expect from \eqref{eq:norm_bound3} that $U$ and $\tilde U$ (resp. $V$ and $\tilde V$) are close. Indeed, write
\[ U = P_{\img(V)}U + P_{\img(V)^\bot} U = \tilde U + E_1 + P_{\img(V)^\bot} U . \]
The second term $E_1$ is equal to {$V(V^*V)^{-1}(V^*U - I_{r_0})$}, therefore
\[ \norm{E_1} \leq \sqrt{\alpha}\beta\delta. \]
We similarly write
\[ V =  \tilde V + E_2 + P_{\img(U)^\bot} V \quad \text{with} \quad \norm{E_2} \leq \sqrt{\alpha}\beta\delta \]
As a result,
\begin{align*}
    \norm{\tilde V^* B^\ell \tilde U - \Sigma^\ell} &\leq \norm{V^* B^\ell U - \Sigma^\ell} + \norm{V^* B^\ell U - \tilde V^* B^\ell \tilde U} \\
    &\leq \norm{V^* B^\ell U - \Sigma^\ell} + \norm{\tilde V} \left(\norm{B^\ell} \norm{E_1} + \norm{B^\ell P_{\img(U)^\bot}} \norm{V} \right) \\
    &\phantom{\leq \norm{V^* B^\ell U - \Sigma^\ell}}\  + \norm{\tilde U} \left(\norm{B^\ell} \norm{E_2} + \norm{P_{\img(V)^\bot}B^\ell } \norm{U} \right) \\
\end{align*}
Combining with \eqref{eq:decomp_Bl_S} and using \eqref{eq:norm_bound6}, \eqref{eq:norm_bound5},  and {\eqref{eq:norm_bound7}}, we finally find
\[ \norm{B^\ell - M} \leq c_1 \log(n)^9[(q-1)d]^{\ell/2} := \eps \]

\paragraph{Condition~\ref{item:bauer_power_cond_iv}}

From \eqref{eq:def_sigma0}, we find
\[ \sigma_0 = c_2 \log(n)^9 [(q-1)d]^{\ell/2}, \]
so \eqref{eq:power_perturbation_bound} reduces to
\[ \tau^{\ell/2} \leq c_3 \log(n)^{-10}, \]
which happens whenever $n$ is large enough.
Since $\ell=\kappa'\log(n)$ for some $\kappa'>0$, $\sigma = O(n^{-c})$ for some constant $c$, this implies Theorem \ref{thm:main}. 
\end{proof}

For Theorem \ref{th:main_reduced}, we first characterize the following relation between eigenvectors of $B$ and $\tilde B$ from \cite{angelini2015spectral} with a complete proof given in Appendix \ref{sec:Misc}. 
\begin{lemma}\label{lem:bprime_spec} 
Let $v$ be an eigenvector of $B$ with eigenvalue $\lambda\not\in \{1, -(q-1)\}$, and define
 \begin{align}
     v_{\mathrm{in}}(x) &= \sum_{e \ni x} [J^{-1}v](x \to e)\notag \\
     v_{\mathrm{out}}(x) &= \sum_{e\ni x} v(x \to e),\label{eq:outransform}
 \end{align}
 where the sum is over all hyperedges $e$ containing $x$.
Then the vector $v' = \begin{pmatrix} v_{\mathrm{in}}\\ v_{\mathrm{out}} \end{pmatrix}$ is an eigenvector of $\tilde B$ with the same eigenvalue $\lambda$.
 Moreover, if $v'$ is an eigenvector of $\tilde B$ associated with an eigenvalue $\lambda\not\in \{1,-(q-1)\}$, then there is an eigenvector $v$ of $B$ associated with $\lambda$ such that $v'=\begin{pmatrix} v_{\mathrm{in}}\\ v_{\mathrm{out}}\end{pmatrix}$.
\end{lemma}

\begin{proof}[Proof of Theorem \ref{th:main_reduced}]
Recall $\tilde{u}_i$ in the statement of Theorem \ref{th:main_reduced}.
From Lemma \ref{lem:bprime_spec}, with a proper scaling of $\tilde{u}_i$, we can assume  \[\tilde u_i=(\xi_i)_{\mathrm{out}}=S\xi_i\] for some unit eigenvector $\xi$ of $B$ with eigenvalue $\lambda_i$, where  $S$ is defined in \eqref{eq:def_S_T}. Recall the definition of $u_i$  from  \eqref{def:u_i}.
From \eqref{eq:bauer_eigenvector_bound} in Lemma \ref{th:bauer_powers}, there exists a unit vector $\zeta_i$  in $\mathrm{Vect}(\Set{u_j \given \mu_j = \mu_i})$ such that
\[ \norm{\xi_i - \zeta_i} = O(n^{-c}). \]
Since the unit vector $\zeta_i$ is a linear combination of vectors in $\{u_j: \mu_j=\mu_i\}$, for  some coefficients $\eta_j$ such that $\sum_{j:\mu_j=\mu_i} \eta_j^2=1$, and a normalizing constant $Z>0$, we have
\begin{align}\label{eq:norm1}
\zeta_i= \frac{1}{Z} B^\ell J\sum_{j: \mu_j=\mu_i}   \eta_j\chi_j=\frac{1}{Z}B^\ell J\chi_i',
\end{align}
where $\chi_i'=\sum_{j: \mu_j=\mu_i}   \eta_j\chi_j$. By linearity,  $\chi_i'$ is the lifted vector of   $\phi_i'=\sum_{j:\mu_j=\mu_i} \eta_j \phi_j$ using the lifting defined in \eqref{eq:defchi}.  
On the other hand, from \eqref{eq:pi_orthogonal} and \eqref{eq:ndimlifting}, we have
\begin{align}
\norm{ \tilde\phi_i}^2 &= n\sum_{j\in[r]} \pi_i \phi_i(j)^2 = n. \notag \\
  \norm{\tilde \phi_i'}^2&=\sum_{j:\mu_j=\mu_i} \eta_j^2 \norm{\tilde{\phi}'_j}^2=n. \notag  
\end{align}

    Since the vector $\zeta_i$ is a unit vector, from \eqref{eq:norm_bound1}, the normalizing constant $Z$ in \eqref{eq:norm1} satisfies
    \begin{align}
       Z^2&= \norm{B^\ell J \chi_i'}^2 =  (1+O(n^{-c}))[(q-1)\mu_i]^{2\ell} n(q-1)(d\gamma_i^{(\ell)}+(q-1)\mu_i).\label{eq:etanorm}
    \end{align}

Lemma \ref{lem:bprime_spec}  implies that  
\begin{align}\label{eq:approx_tilde_u}
\left\|\tilde{u}_i- \frac{1}{Z}SB^\ell J\chi_i'  \right\|=\| (\xi_i)_{\mathrm{out}}-(\zeta_i)_{\mathrm{out}} \|= \| S(\xi_i-\zeta_i)\|\leq \|S\| \|\xi_i-\zeta_i\|=O(n^{-c'})
\end{align}
for a constant $c'>0$ with probability $1-n^{-c'}$, where we used the fact that $SS^*=D$ from \eqref{eq:T_S_P_relations},  and $\|D\|=O(\log n)$ with probability $1-n^{-\omega (1)}$.
As a result, by triangle inequality, the scalar product of interest satisfies
\begin{align} \label{eq:innner}
\frac{\langle\tilde u_i, \tilde \phi_i' \rangle}{\norm{\tilde u_i} \norm{\tilde \phi_i'}} = \frac{\langle\tilde \phi_i', SB^\ell J\chi_i' \rangle}{\sqrt{n} Z\norm{\tilde{u}_i}} + O(n^{-c'})\frac{1}{\norm{\tilde{u}_i}}.
\end{align}

We show in Appendix \ref{sec:lemma6} the following lemma, proven the same way as \eqref{eq:norm_bound1} and \eqref{eq:norm_bound3}:
\begin{lemma} \label{lem:eigen_reduction}
With probability $1-n^{-c'}$ for some constant $c'>0$, for any $i,j\in [r_0]$,
\begin{align}\label{eq:scal_zeta_phi}
    \langle \tilde \phi_i, SB^\ell J\chi_j \rangle &= n[(q-1)\mu_i]^{\ell+1} \delta_{ij}+ O(n^{1-c})\\ \langle SB^\ell J\chi_i,  SB^\ell J\chi_j \rangle  &= n[(q-1)\mu_i]^{2\ell+2}\gamma_i^{(\ell)}\delta_{ij} + O(n^{1-c}).\label{eq:scale_ij_term}
\end{align}
\end{lemma}
As a result, with \eqref{eq:scal_zeta_phi},  \eqref{eq:scale_ij_term}, expanding $\tilde{\phi}_i', \chi_i'$ as  linear combinations of $\tilde\phi_j,\chi_j$, respectively,  we obtain
\begin{align}
\langle\tilde \phi_i', SB^\ell J\chi_i'\rangle &=  n[(q-1)\mu_i]^{\ell+1} +O( n^{1-c})  \label{eq:tildeproduct}\\
\|SB^\ell J\chi_i'\|^2&=  n[(q-1)\mu_i]^{2\ell+2} \gamma_i^{(\ell)} +O(n^{1-c})  .\label{eq:1nc}
\end{align} 
Hence from \eqref{eq:etanorm}, \eqref{eq:approx_tilde_u} and \eqref{eq:1nc},
\begin{align}\label{eq:better_approx_tildeu}
    \|\tilde{u}_i\|=\frac{1}{Z}\sqrt{n} [(q-1)\mu_i]^{\ell+1}\sqrt{\gamma_i^{(\ell)}}+O(n^{-c}).
\end{align}
Therefore, from   \eqref{eq:innner}, \eqref{eq:tildeproduct}, and \eqref{eq:better_approx_tildeu}, we conclude that
\[   \frac{\langle\tilde u_i, \tilde \phi_i' \rangle}{\norm{\tilde u_i} \norm{\tilde \phi_i'}} = \frac{1}{\sqrt{\gamma_i^{(\ell)}}} + O(n^{-c'}) = \frac{1}{\sqrt{\gamma_i}} + O(n^{-c'}), \]
where $\gamma_i$ is defined in \eqref{eq:def_gamma_i}. This completes the proof of Theorem \ref{th:main_reduced}.
\end{proof}

\section{Matrix decomposition and operator norm bounds}\label{sec:decomposition}

We define the hypergraph non-backtracking walk, which can be seen as a vertex-hyperedge sequence starting from a vertex and ending at a hyperedge.
	\begin{definition}[Non-backtracking walk]
	A \textit{non-backtracking walk} of length $\ell$ in $G=(V,H)$ is a walk $\gamma=(x_0,e_0,x_1, e_1,\cdots, x_{\ell},e_{\ell})$ such that 
	\begin{enumerate}
	    \item $x_0\in e_0$, $x_j\in (e_{j}\setminus \{x_{j-1}\})\cap e_{j-1}$ for $1\leq j\leq \ell$,
	    \item  $e_j\not=e_{j+1}$ for $0\leq j\leq \ell-1$.
	\end{enumerate}
\end{definition}
	$\gamma$ can be seen as a walk $(\gamma_0, \gamma_1,\dots,\gamma_{\ell})$ of length $\ell$ on the space of $\vec{H}$ with $\gamma_j=(x_j,e_j)\in \vec{H}, 0\leq j\leq \ell$.
Different from the graph case, our non-backtracking walks on hypergraphs do not have the interpretation as walks on the space of $V=[n]$. However, this is the correct definition to match the combinatorial interpretation for the matrix  power $B^{\ell}$.

We develop a useful way to expand $B^{\ell}$ and control the bulk eigenvalues of $B$ using the moment method.  These are the main ingredients to show \eqref{eq:norm_bound6} and \eqref{eq:norm_bound5}  in Proposition \ref{prop_main}.

For a $q$-uniform hypergraph $G=(V,H)$, let $H(V)$ and $\vec{H}(V)$ be the set of all hyperedges and oriented hyperedges, respectively, in the complete hypergraph indexed by $V$. Define the \textit{incidence matrix} of $G$ in $\mathbb R^{q\times |H(V)|}$ as 
\begin{align*}
    \mathcal A_{x\to e}=\begin{cases}
     1  & \text{if }   e\in H,\\
     0 & \otherwise
    \end{cases}
\end{align*}
for any $x\in e, e\in H(V)$.  Note that in our random hypergraph model $G$,   $\mathcal A_{x\to e}$ are independent for different hyperedges $e$. 
For convenience, we extend matrix $B$ and vector $\chi_i$ defined in \eqref{eq:defchi} to $\mathbb C^{\vec{H}(V)}$. Now we can also write $B$ as 
\begin{align}\label{eq:BtoA}
    B_{(x\to e), (y\to f)}
    &= \mathcal  A_{x\to e}  \mathcal A_{y\to f} \mathbf{1}\{x\in e, f\not=e, y\not=x\}.
\end{align}
  
Define $\Gamma_{(x\to e), (y\to f)}^k$ to be the set of non-backtracking walks of length $k$ denoted by $\gamma=(\gamma_0,\dots,\gamma_{k})$ with $\gamma_0=(x\to e)$ and $\gamma_{k}=(y\to f)$ in $\vec{H}(V)$. According to \eqref{eq:BtoA}, we have for $k\geq 1$,
\begin{align}
    B^k_{(x\to e), (y\to f)}=\sum_{\gamma \in \Gamma_{(x\to e), (y\to f)}^{k}} \prod_{s=0}^{k}\mathcal A_{\gamma_s}.\notag
\end{align}
A hypergraph spanned by $\gamma$ is given by all vertices and hyperedges from  $\gamma$. We will need the following definition of tangle-free property.

\begin{definition}[Tangle-free property]\label{def:tangle_free}
We say $\gamma$ is a \textit{tangle-free} path if the hypergraph spanned by $\gamma$ contains at most one cycle. Otherwise, we call $\gamma$ a tangled path. A hypergraph $G$   is called  $\ell$-tangle-free  if  for any $x\in [n]$, there is at most one cycle in $(G,x)_{\ell}$.
\end{definition}

Let $F_{(x\to e), (y\to f)}^k\subseteq \Gamma_{(x\to e), (y\to f)}^k$ be the subset of all tangle-free paths. 
If $G$ is $\ell$-tangle free, then for all $1\leq k\leq \ell$, we must have $B^k=B^{(k)}$, with  
\begin{align}
    B^{(k)}_{(x\to e), (y\to f)}=\sum_{\gamma \in F_{(x\to e), (y\to f)}^{k}} \prod_{s=0}^{k}\mathcal A_{\gamma_s}. \notag
\end{align}
For any oriented hyperedge $(x\to e)\in \vec{H}(V)$ , define the centered random variable
\[\underline{\mathcal A}_{x\to e}=\mathcal A_{x\to e}-\frac{p_{\underline\sigma(e)}}{\binom{n}{q-1}}. \]
We then define $\Delta^{(k)}$, a centered version of $B^{(k)}$, as 
\begin{align}
  \Delta^{(k)}_{(x\to e), (y\to f)}=  \sum_{\gamma \in F_{(x\to e), (y\to f)}^{k}} \prod_{s=0}^{k}\underline {\mathcal A}_{\gamma_s}.\notag
\end{align}
From the telescoping sum formula
\begin{align}
    \prod_{s=0}^{\ell} a_s=\prod_{s=0}^{\ell} b_s+\sum_{t=0}^{\ell} \prod_{s=0}^{t-1}b_s(a_t-b_t)\prod_{s=t+1}^{\ell} a_s,\notag
\end{align}
we can decompose $B^{(\ell)}$ as 
\begin{align}\label{eq:Bexpansion}
    B^{(\ell)}_{(x\to e), (y\to f)}=\Delta^{(\ell)}_{(x\to e), (y\to f)}+ \sum_{\gamma \in F_{(x\to e), (y\to f)}^{\ell}}\sum_{t=0}^{\ell} \prod_{s=0}^{t-1}\underline{\mathcal A}_{\gamma_s}\frac{p_{\underline\sigma(\gamma_t)}}{\binom{n}{q-1}}\prod_{s=t+1}^{\ell} \mathcal A_{\gamma_s}.
\end{align}
For $0\leq t\leq \ell$, define $R_t^{(\ell)}$ as
\begin{align}
    (R_t^{(\ell)})_{(x\to e), (y\to f)}=\sum_{\gamma \in F_{t,(x\to e), (y\to f)}^{\ell}}\prod_{s=0}^{t-1}\underline {\mathcal A}_{\gamma_s} p_{\underline\sigma(\gamma_t)}\prod_{s=t+1}^{\ell}  \mathcal A_{\gamma_s}.\notag
\end{align}
 
Here the set $F_{t,(x\to e), (y\to f)}^{\ell}\subseteq \Gamma_{(x\to e), (y\to f)}^{\ell}$ is the set of non-backtracking tangled paths $\gamma=(\gamma_0,\dots,\gamma_{\ell})$ such that:
\begin{itemize}
\item if $1 \leq t \leq \ell -1$, both $(\gamma_0,\dots,\gamma_{t-1})$ and $(\gamma_{t+1},\dots,\gamma_{\ell})$ are tangle-free,
\item if $t = 0$ (resp. $t = \ell$),  $(\gamma_1,\dots,\gamma_{\ell})$ (resp. $(\gamma_0, \dots, \gamma_{\ell-1})$) is tangle-free
\end{itemize}

 For $(x\to e), (y\to f) \in \vec{H}$, define
\begin{align}\label{eq:defK}
    K_{(x\to e), (y\to f)}=p_{\underline{\sigma}(e)}~\mathbf{1}((x,e)\to (y,f)) ,
\end{align} 
where $(x,e)\to (y,f)$ represents the non-backtracking property $y\in e, f\not=e$.  We also define  $ K^{(2)}$ as  
\begin{align}\label{eq:defK2}
K^{(2)}_{(x\to e), (y\to f)} =\frac{1}{\binom{n}{q-2}} \sum_{(x, e) \to (w, g) \to (y, f)} p_{\underline \sigma(g)},
\end{align}
where the sum is over all $(w,g)\in \vec{H}$ such that $(x,e)\to(w,g)\to(y,f)$ is a non-backtracking walk of length $2$. The scaling $\frac{1}{\binom{n}{q-2}}$ is to make each entry in $K^{(2)}$  of  order $1$.

By adding and subtracting $\frac{1}{\binom{n}{q-1}}  R_t^{(\ell)}$, to the $t$-th term of the summand in \eqref{eq:Bexpansion} for $0\leq t\leq \ell$, we then have  the following expansion for $B^{(\ell)}$. 
\begin{lemma} \label{lem:expansionBl}
For any $\ell\geq 1$, $B^{(\ell)}$ can be expanded as 
 \begin{align}
   & \Delta^{(\ell)}+\frac{1}{\binom{n}{q-1}}KB^{(\ell-1)}+\frac{q-1}{n-q+2}\sum_{t=1}^{\ell-1} \Delta^{(t-1)}K^{(2)} B^{(\ell-t-1)}+\frac{1}{\binom{n}{q-1}} \Delta^{(\ell-1)}K -\frac{1}{\binom{n}{q-1}}\sum_{t=0}^{\ell} R_t^{(\ell)}.  \notag
\end{align} 
\end{lemma}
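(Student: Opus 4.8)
The plan is to start from the telescoping decomposition \eqref{eq:Bexpansion} and regroup the terms according to whether the "pivot" index $t$ lies at the boundary ($t=0$ or $t=\ell$) or in the interior ($1\le t\le \ell-1$), since these three cases produce structurally different factors. Recall that the summand at index $t$ is $\prod_{s=0}^{t-1}\underline A_{\gamma_s}\cdot\frac{p_{\underline\sigma(\gamma_t)}}{\binom{n}{q-1}}\cdot\prod_{s=t+1}^{\ell}A_{\gamma_s}$, summed over $\gamma\in F^\ell_{(u\to e),(v\to f)}$. The key observation is that if we \emph{drop} the tangle-free constraint and let $\gamma$ range over all non-backtracking walks $\Gamma^\ell$, the sum over $\gamma$ factorizes at the pivot: the prefix $(\gamma_0,\dots,\gamma_{t-1})$ followed by the step to $\gamma_t$ contributes $\Delta^{(t-1)}$ composed with the transition encoded in $K$ (for $t\ge 1$) or $K^{(2)}$ when we further split off two steps, while the suffix $(\gamma_{t+1},\dots,\gamma_\ell)$ contributes $B^{(\ell-t-1)}$. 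The correction term $-\frac{1}{\binom{n}{q-1}}\sum_{t=0}^\ell R_t^{(\ell)}$ is then exactly what is needed to subtract off the walks that \emph{are} tangled, i.e. the difference between summing over $F^\ell$ and summing over $\Gamma^\ell$ — this is why $R_t^{(\ell)}$ is defined using the tangled sets $F^\ell_{t,(u\to e),(v\to f)}$.

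Concretely, I would proceed as follows. First, for the boundary term $t=0$: the prefix is empty (so no $\underline A$ factors), the pivot contributes $\frac{p_{\underline\sigma(e)}}{\binom{n}{q-1}}\mathbf 1((u,e)\to(\gamma_1))$, and the suffix is a (a priori not necessarily tangle-free) non-backtracking walk of length $\ell-1$; summing over all such walks and adding back the tangled ones via $R_0^{(\ell)}$ gives $\frac{1}{\binom{n}{q-1}}KB^{(\ell-1)}$. Symmetrically, $t=\ell$ gives $\frac{1}{\binom{n}{q-1}}\Delta^{(\ell-1)}K$. Second, the pure $\Delta^{(\ell)}$ term in \eqref{eq:Bexpansion} is carried over unchanged. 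Third, for each interior $1\le t\le \ell-1$, I split the walk at the pivot into prefix$+$pivot$+$suffix; the subtlety is that a \emph{single} step through $\gamma_t$ with the normalization $\frac{1}{\binom{n}{q-1}}$ does not reproduce $K^{(2)}$, so one actually peels off \emph{two} consecutive steps $(w,g)\to$ around the pivot to form $K^{(2)}$ with its scaling $\frac{1}{\binom{n}{q-2}}$; tracking the combinatorial factor $\binom{n}{q-1}^{-1}$ versus $\binom{n}{q-2}^{-1}$ produces the prefactor $\frac{q-1}{n-q+2}=\binom{n}{q-2}/\binom{n}{q-1}\cdot\frac{1}{\text{(something)}}$ — here I would just verify $\binom{n}{q-1}=\binom{n}{q-2}\cdot\frac{n-q+2}{q-1}$, giving the stated coefficient. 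Summing the interior contributions over $t$ yields $\frac{q-1}{n-q+2}\sum_{t=1}^{\ell-1}\Delta^{(t-1)}K^{(2)}B^{(\ell-t-1)}$, and collecting everything together with the single correction sum $-\frac{1}{\binom{n}{q-1}}\sum_{t=0}^\ell R_t^{(\ell)}$ completes the identity.

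The main obstacle, and the step deserving the most care, is the bookkeeping in the interior case: one must correctly identify how a length-one pivot step in \eqref{eq:Bexpansion} gets repackaged into the length-two transition matrix $K^{(2)}$, ensure that the prefix genuinely ends in a $\Delta^{(t-1)}$ (i.e. all $\underline A$'s, tangle-free) and the suffix in a $B^{(\ell-t-1)}$, and account for the boundary indices $t=1$ and $t=\ell-1$ where the prefix or suffix may be empty or of length zero. Getting the normalization constants to line up — matching $\frac{1}{\binom{n}{q-1}}$ against $\frac{1}{\binom{n}{q-2}}$ and extracting $\frac{q-1}{n-q+2}$ — is routine but error-prone, and the definition of $F^\ell_{t,(u\to e),(v\to f)}$ (which isolates precisely the tangled paths that were over- or under-counted) is designed exactly so that the residual $R_t^{(\ell)}$ closes the gap; verifying that this matching is exact is where all the content lies.
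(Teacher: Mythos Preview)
Your proposal is correct and follows exactly the paper's approach: the paper's proof is the single sentence preceding the lemma, namely ``by adding and subtracting $\frac{1}{\binom{n}{q-1}} R_t^{(\ell)}$ to the $t$-th term of the summand in \eqref{eq:Bexpansion} for $0\leq t\leq \ell$,'' and you have correctly unpacked this into the three cases $t=0$, $t=\ell$, and $1\le t\le \ell-1$. For the interior case your bookkeeping is right: after adding $R_t^{(\ell)}$ the sum ranges over all non-backtracking $\gamma$ whose prefix $(\gamma_0,\dots,\gamma_{t-1})$ and suffix $(\gamma_{t+1},\dots,\gamma_\ell)$ are each tangle-free, so it factorizes as $\Delta^{(t-1)}\cdot\bigl(\sum_{\gamma_t}p_{\underline\sigma(\gamma_t)}\bigr)\cdot B^{(\ell-t-1)}$, and the middle factor is precisely $\binom{n}{q-2}K^{(2)}$, giving the coefficient $\binom{n}{q-2}/\binom{n}{q-1}=\frac{q-1}{n-q+2}$ with no additional ``(something)'' needed.
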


{We shall see in the appendix that $K^{(2)}$ is close to the matrix
\begin{equation}\label{eq:def:barD}
    \overline D := T^* D^{(2)} S = \sum_{i=1}^r \mu_i (J\chi_i)\chi_i^*.
\end{equation}}
{Accordingly, }we define
\begin{align}\label{eq:def:LL}
    {L=K^{(2)}-\overline D}
\end{align}
and for $1\leq t\leq \ell-1$,
\begin{align}\label{eq:def_Sk_l}
    S_t^{(\ell)}=\Delta^{(t-1)}LB^{(\ell-t-1)}.
\end{align}
Then the following estimate holds.
\begin{lemma}\label{lem:Blwnorm}
  For any unit vector $w\in \mathbb C^{\vec H (V)}$,
  \begin{align}
      \|B^{(\ell)} w\|\leq &\| \Delta^{(\ell)}\| +\frac{1}{\binom{n}{q-1}}\|KB^{(\ell-1)}\|+\frac{q-1}{n-q+2} \sum_{j=1}^r  \sum_{t=1}^{\ell-1} \mu_j\| \Delta^{(t-1)}J \chi_j\|  |\langle   \chi_j, B^{(\ell-t-1)}w\rangle | \notag \\
      &+\frac{q-1}{n-q+2}\sum_{t=1}^{\ell-1} \| S_t^{(\ell)}\| +p_{\max}(q-1)\|\Delta^{(\ell-1)}\| +\frac{1}{\binom{n}{q-1}}\sum_{t=0}^{\ell} \|R_t^{(\ell)}\|. \notag 
  \end{align}
\end{lemma}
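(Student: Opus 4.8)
The plan is to derive the inequality as a purely deterministic rearrangement of the expansion in Lemma~\ref{lem:expansionBl}; no probabilistic input is needed. First I would fix a unit vector $w\in\dC^{\vec H(V)}$---viewing all of $\Delta^{(t)}$, $B^{(t)}$, $K$, $K^{(2)}$ and $R_t^{(\ell)}$ on the common index set $\vec H(V)$ so that the matrix products are legitimate---apply Lemma~\ref{lem:expansionBl} to $w$, and bound $\norm{B^{(\ell)}w}$ by the triangle inequality over the five groups of terms in that expansion. Three of them are immediate: $\norm{\Delta^{(\ell)}w}\le\norm{\Delta^{(\ell)}}$; the term $\frac{1}{\binom{n}{q-1}}\norm{KB^{(\ell-1)}w}\le\frac{1}{\binom{n}{q-1}}\norm{KB^{(\ell-1)}}$, which is kept intact since a crude split $\norm{K}\,\norm{B^{(\ell-1)}}$ would already be of the same order as $\norm{B^{(\ell)}}$; and $\frac{1}{\binom{n}{q-1}}\norm{\sum_{t=0}^{\ell}R_t^{(\ell)}w}\le\frac{1}{\binom{n}{q-1}}\sum_{t=0}^{\ell}\norm{R_t^{(\ell)}}$.

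The one genuine manipulation is on the middle sum $\frac{q-1}{n-q+2}\sum_{t=1}^{\ell-1}\Delta^{(t-1)}K^{(2)}B^{(\ell-t-1)}w$. Using definition~\eqref{eq:def:LL} I would substitute $K^{(2)}=L+\sum_{k=1}^{r}\mu_k(P\chi_k)\chi_k^*$, so that for each $t$
\[
\Delta^{(t-1)}K^{(2)}B^{(\ell-t-1)}w = S_t^{(\ell)}w + \sum_{k=1}^{r}\mu_k\,\bigl\langle \chi_k,\,B^{(\ell-t-1)}w\bigr\rangle\,\Delta^{(t-1)}P\chi_k,
\]
where the first summand is $S_t^{(\ell)}w$ by \eqref{eq:def_Sk_l}, and in the rank-one terms I have used that $\chi_k^*B^{(\ell-t-1)}w=\langle\chi_k,B^{(\ell-t-1)}w\rangle$ is a scalar, which collapses the factor $(P\chi_k)\chi_k^*$ onto $\Delta^{(t-1)}P\chi_k$. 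Taking norms gives $\norm{S_t^{(\ell)}w}\le\norm{S_t^{(\ell)}}$ and, for the $k$-th rank-one term, the bound $|\mu_k|\,\bigl|\langle\chi_k,B^{(\ell-t-1)}w\rangle\bigr|\,\norm{\Delta^{(t-1)}P\chi_k}$; summing over $k\in[r]$ and $t\in\{1,\dots,\ell-1\}$ and multiplying by $\frac{q-1}{n-q+2}$ yields exactly the third and fourth terms of the claimed bound (the $\mu_j$ there being read in absolute value).

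For the remaining term $\frac{1}{\binom{n}{q-1}}\Delta^{(\ell-1)}K$ I would use $\norm{\Delta^{(\ell-1)}Kw}\le\norm{\Delta^{(\ell-1)}}\,\norm{K}$ together with the deterministic estimate $\norm{K}\le(q-1)p_{\max}\binom{n}{q-1}$, obtained from a routine row-and-column sum ($\ell^1/\ell^\infty$) bound: each row of $K$ indexed by $(u\to e)$, and likewise each column, has at most $(q-1)\bigl(\binom{n-1}{q-1}-1\bigr)$ nonzero entries (choose $v\in e\setminus\{u\}$, then a hyperedge $f\ni v$ with $f\neq e$), each bounded by $p_{\max}$. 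This converts that term into $p_{\max}(q-1)\norm{\Delta^{(\ell-1)}}$, and assembling the six contributions by the triangle inequality gives the lemma.

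Since every step is an equality or an application of the triangle inequality / submultiplicativity of the operator norm, there is no real obstacle here; the only things that need care are bookkeeping: threading the normalizations $\binom{n}{q-1}$, $\binom{n}{q-2}$, $n-q+2$ consistently from \eqref{eq:defK2}--\eqref{eq:def_Sk_l} and Lemma~\ref{lem:expansionBl} into the final bound, and the single non-trivial estimate $\norm{K}\le(q-1)p_{\max}\binom{n}{q-1}$. The genuine work of the section is deferred: the later bounds on $\norm{\Delta^{(t)}}$, $\norm{S_t^{(\ell)}}$, $\norm{KB^{(\ell-1)}}$, $\norm{\Delta^{(t)}P\chi_j}$, $\norm{R_t^{(\ell)}}$ and on the overlaps $\langle\chi_j,B^{(\ell-t-1)}w\rangle$ rely on the tangle-free matrix decomposition and the trace-method estimates of Proposition~\ref{prop:trace}.
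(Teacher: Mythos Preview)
Your proposal is correct and follows essentially the same route as the paper: apply Lemma~\ref{lem:expansionBl} to $w$, split $K^{(2)}=L+\sum_k\mu_k(P\chi_k)\chi_k^*$ via \eqref{eq:def:LL} to produce the $S_t^{(\ell)}$ and rank-one pieces, and bound the $\Delta^{(\ell-1)}K$ term through $\norm{K}\le p_{\max}(q-1)\binom{n}{q-1}$. The only cosmetic difference is that the paper obtains this last estimate from $\norm{K}\le\norm{K}_F$, whereas you use a row/column-sum (Schur) bound; your argument is in fact the more transparent of the two, and your remark that the $\mu_j$ in the statement should be read as $|\mu_j|$ is well taken.
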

\begin{proof}
  It follows from Lemma \ref{lem:expansionBl}, decomposing $K^{(2)}$ into $L+{\overline D}$ given in \eqref{eq:def:LL}, and the fact that $\|K\|\leq \sqrt{\|K\|_{1} \|K\|_{\infty}}\leq  p_{\max}(q-1) \binom{n}{q-1}.$
\end{proof}

 {It remains to bound all terms present in Lemma \ref{lem:Blwnorm}. For most of them, this is done in the following proposition:}
 \begin{proposition}\label{prop:trace}
 Let $\chi$ be any vector among $\chi_1,\dots, \chi_r\in \mathbb C^{\vec{H}(V)}$.
  Let $\ell=\kappa \log_{p_{\max}(q-1)}(n)$ with $\kappa \in (0,1/6)$. For sufficiently large $n$, with  probability at least $1-n^{-c}$ for some constant $c>0$, the following norm bounds hold for all $1\leq k\leq \ell$:
  \begin{align}
      \| \Delta^{(k)}\| &\leq (\log n)^{10} [(q-1)d]^{k/2}, \label{eq:Delta}\\ 
      \| \Delta^{(k)}J \chi\| &\leq \sqrt n(\log n)^5 [(q-1)d]^{k/2},\label{eq:Delta_chi}  \\
      \|R_k^{(\ell)}\| &\leq \binom{n}{q-2}(\log n)^{25} n^{\kappa}[(q-1)p_{\max}]^{-k/2}, \label{eq:Rk}\\
      \| KB^{(k)}\| &\leq  \binom{n}{q-1}^{1/2} (\log n)^{10} [(q-1)p_{\max}]^{k}, \quad  \|B^{(k)}\|\leq (\log n)^{10} [(q-1)p_{\max}]^k. \label{eq:KBk}
  \end{align}
  And for all $1\leq k\leq \ell-1$, 
  \begin{align}
      \|S_{k}^{(\ell)}\|\leq  (\log n)^{30} n^{\kappa+1/2}[(q-1)p_{\max}]^{-k/2}.\label{eq:Sk}
  \end{align}
\end{proposition}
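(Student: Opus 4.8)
The plan is to establish all of \eqref{eq:Delta}--\eqref{eq:Sk} by the \emph{high-trace} (moment) method, which was first applied to non-backtracking operators in \cite{bordenave2018nonbacktracking,bordenave2020new} and adapted to hypergraphs in \cite{Pal_2021}. For each random matrix $M=M(G)$ in the list — an explicit polynomial in the independent entries $(A_{u\to e})$, or in their centred versions $(\underline A_{u\to e})$ — we bound a high even moment $\dE\,\tr\!\big((MM^*)^m\big)$ (and, for the vector $\Delta^{(k)}P\chi$, the quantity $\dE\big[(P\chi)^*(M^*M)^m(P\chi)\big]$) with $m\asymp\log n/\log\log n$, and then pass to the claimed high-probability bound by Markov's inequality together with a union bound over the at most $\ell$ values of $k$. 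Expanding the trace turns it into a weighted sum over closed necklaces of $2m$ non-backtracking walks of length $k$, glued at their endpoints; throughout we use the \emph{bipartite representation} (as in \cite{dumitriu2021spectra}), under which a $q$-uniform hypergraph becomes its vertex--hyperedge incidence graph and such a necklace becomes a walk on that bipartite graph, so that the bookkeeping of ``which vertices and hyperedges are visited, and how often'' reduces to an Euler-type count on the spanned sub-bipartite-graph. The choice of $m$ makes all purely combinatorial enumeration factors — the number of necklace ``shapes'' and the number of hyperedges through a vertex within a tangle-free ball — contribute only $(\log n)^{O(1)}$ after a $2m$-th root.

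The core is the centred estimate \eqref{eq:Delta} and its vector version \eqref{eq:Delta_chi}. The decisive point is that, since the $\underline A_{u\to e}$ are centred and independent across hyperedges, a necklace contributes nothing to the expectation unless every hyperedge it visits occurs in at least two of the $2m$ walks; thus the distinct hyperedges carry at most half the total length budget. Writing $v,h$ for the numbers of distinct vertices and hyperedges spanned, this constraint together with the Euler relation — each hyperedge introduces at most $q-1$ new vertices, and $v$ is further reduced by the cyclic excess — controls the surviving power of $n$, while the $\ell$-tangle-free property (Definition \ref{def:tangle_free}) and the concatenation structure of the sets $F^k$ bound the cyclic excess of the necklace by $O(m)$. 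Each distinct hyperedge then carries a weight $\dE[\underline A_e^{\,j}]\asymp p_{\underline\sigma(e)}\binom{n}{q-1}^{-1}$, and summing over walks produces a per-step factor $(q-1)\binom{n}{q-1}^{-1}\sum_{e\ni i}p_{\underline\sigma(e)}$, which the constant-degree hypothesis \eqref{eq:constant_degree} pins to $(q-1)d$ independently of the type of $i$ — this is exactly why the growth constant is $(q-1)d$ rather than $(q-1)p_{\max}$, the (rare) high-degree vertices of $G$ being absorbed into the $(\log n)^{O(1)}$ incidence factor. Taking $2m$-th roots gives $(\log n)^{O(1)}[(q-1)d]^{k/2}$. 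For \eqref{eq:Delta_chi} the $2m$ walks are open rather than closed; their free endpoints contribute bounded factors $\le\|P\chi\|_\infty=O(1)$ and one uncompensated summation over $\vec H$ survives, producing the extra $\sqrt n$, and since one index of $\Delta^{(k)}$ is contracted away fewer incidences need tracking, which lowers the power of $\log n$.

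The remaining bounds are variations. For the uncentred matrices in \eqref{eq:KBk} a hyperedge may be visited once, so the edge budget is the full walk length and the growth constant becomes $(q-1)p_{\max}$ — bounding each appearance probability crudely by $p_{\max}\binom{n}{q-1}^{-1}$ — this being in effect the dominant row-sum (Perron-type) growth of $\dE B^{(k)}$; the tangle-free property again caps the cyclic excess at $O(m)$ so the polylog stays $(\log n)^{10}$, and left-multiplication by the deterministic $K$, with $\|K\|_F\le p_{\max}(q-1)\binom{n}{q-1}$ as used in Lemma \ref{lem:Blwnorm}, supplies the factor $\binom{n}{q-1}^{1/2}$. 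For \eqref{eq:Rk} the paths in the sets $F^\ell_t$ are \emph{tangled}, i.e.\ contain at least two cycles within a radius-$\ell$ ball; the extra cyclic structure, combined with the centred left segment of the path, forces additional edge coincidences, which yields the decaying factor $[(q-1)p_{\max}]^{-k/2}$, while retaining the $p_{\underline\sigma}$ weights and crudely enumerating tangled configurations inside the ball gives the prefactor $\binom{n}{q-2}(\log n)^{25}n^{\kappa}$, in which $n^{\kappa}=[(q-1)p_{\max}]^{\ell}$ is the cost of the full admissible excess over the ball. Finally \eqref{eq:Sk} follows from a joint trace expansion for $S^{(\ell)}_t=\Delta^{(t-1)}LB^{(\ell-t-1)}$ — centred on the $\Delta^{(t-1)}$ block, uncentred on the $B^{(\ell-t-1)}$ block, and using that $L=K^{(2)}-\sum_k\mu_k(P\chi_k)\chi_k^*$ carries no rank-$\le r$ signal term (the purpose of subtracting the $\mu_k(P\chi_k)\chi_k^*$), so $L$ behaves like a ``bulk'' factor of size $O(\sqrt{(q-1)d})$ — combining the resulting estimates with \eqref{eq:Delta} and \eqref{eq:KBk} and using $d\le p_{\max}$ to collapse the product into the stated form.

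I expect the main obstacle to lie in the combinatorics behind \eqref{eq:Delta} and \eqref{eq:Delta_chi}: making precise the Euler-type inequality between $v$, $h$ and the cyclic excess on the spanned bipartite graph; controlling that excess by $O(m)$ via the tangle-free property and the concatenation structure of the $F^k$; and, most delicately, arguing that the growth constant is $\sqrt{(q-1)d}$ — which uses \eqref{eq:constant_degree} through $\binom{n}{q-1}^{-1}\sum_{e\ni i}p_{\underline\sigma(e)}=d$ — while keeping the shape counts, the tangle-free incidence factors and the endpoint weights within a fixed power of $\log n$ uniformly in $1\le k\le\ell$. Pinning down the exact decay exponent in \eqref{eq:Rk} and the ``bulk''-size of $L$ used in \eqref{eq:Sk} are the other places where genuine care is needed, though both follow the same template.
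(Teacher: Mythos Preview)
Your plan for \eqref{eq:Delta}--\eqref{eq:KBk} matches the paper's approach closely: high-trace expansion, bipartite factor-graph representation of the non-backtracking walks, enumeration of canonical paths with excess $\eta=e-v-h+1$ controlled via tangle-freeness (the paper's Lemma \ref{lem:bipartite_NB}), $m\asymp\log n/\log\log n$, and Markov plus a union bound over $k$. Your explanation of why the growth constant for $\Delta^{(k)}$ is $(q-1)d$ through \eqref{eq:constant_degree}, of the extra $\sqrt n$ in \eqref{eq:Delta_chi} from the open endpoint, and of the exponent in \eqref{eq:Rk} via the tangled constraint, are all in line with what the paper does.

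The treatment of \eqref{eq:Sk}, however, has a genuine gap. You describe $L$ as behaving ``like a bulk factor of size $O(\sqrt{(q-1)d})$'' and propose a joint trace expansion combined with \eqref{eq:Delta} and \eqref{eq:KBk}. But $L=K^{(2)}-\sum_k\mu_k(P\chi_k)\chi_k^*$ is a \emph{deterministic} matrix of dimension $q\binom{n}{q}$ with $O(1)$ entries and enormous row sums; it has no ``bulk'' in any random-matrix sense, and a naive trace expansion through it would not close. The paper's route is quite different and is the longest part of the appendix: it computes $L$ \emph{entrywise} by expressing both $K^{(2)}$ and $\overline D=\sum_k\mu_k(P\chi_k)\chi_k^*$ in terms of the two-type degree $D'_{\sigma(w)\sigma(v)}$, observes that the leading sums $\sum_{w\in e,\,w\neq u,v}D'_{\sigma(w)\sigma(v)}$ cancel exactly, and decomposes the remainder as $-L=L^{(1)}+L^{(2)}+L^{(3)}+L^{(4)}$. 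The first three pieces are entrywise dominated by simple deterministic products such as $\binom{n}{q-2}^{-1}(P+I)(K+p_{\max}I)$ or $p_{\max}P$, so $\|\Delta^{(t-1)}L^{(i)}B^{(\ell-t-1)}\|$ is bounded by submultiplicativity and the already-proven \eqref{eq:Delta}, \eqref{eq:KBk}. Only the piece $L^{(4)}_{(u\to e),(v\to f)}=\ind\{v\in e,\,v\neq u,\,e\neq f\}D'_{\sigma(v)\sigma(v)}$, which preserves the non-backtracking structure across the middle step, requires a separate moment computation (of the same shape as for $R_k^{(\ell)}$ but without the tangled constraint). This explicit entrywise cancellation---a hypergraph-specific complication absent in the $q=2$ case---is the missing idea in your sketch; you also flag \eqref{eq:Delta} as the main obstacle, whereas in the paper the most delicate argument is precisely this decomposition for \eqref{eq:Sk}.
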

The proof of Proposition \ref{prop:trace} is based on the moment method generalized for random hypergraphs, and we present it in Appendix \ref{sec:trace_bound}.

\section{Local analysis}\label{sec:local}

In this section,  we will mainly modify the results in \cite{bordenave2018nonbacktracking, bordenave2020detection} to our hypergraph setting and characterize the local structure of sparse random hypergraphs. We construct a hypertree growth process in the following way. The hypertree is used to approximate the local neighborhoods in the random hypergraph $G$ generated by the HSBM.

\begin{definition}[Galton-Watson hypertree]\label{def:gw_tree}
We define the random Galton-Watson hypertree as follows:
\begin{itemize}
  \item Start from a \emph{root} $\rho$ with a given spin $\sigma(\rho)$;
  \item Generate $k = \Poi(d)$ hyperedges intersecting only at $\rho$, yielding $k(q-1)$ \emph{children} of $\rho$;
  \item For each hyperedge, we assume a fixed ordering of the $(q-1)$ associated children $v_1, \dots, v_{q-1}$, we denote the ordered children by $v=(v_1,\dots, v_{q-1})$. Then, we assign a type to each $v_i$ randomly such that
        \begin{align}\label{eq:prob_add}
        \Pb{\underline\sigma(v) = \underline j} = \frac 1 d \cdot  p_{\sigma(\rho), \underline j} \cdot \prod_{\ell \in \underline j} \pi_\ell.
        \end{align}
        The definition of $d$ in \eqref{eq:constant_degree} implies that the probabilities defined in \eqref{eq:prob_add} add up  to 1;
  \item Repeat the process for each child of $\rho$, treating as the root of an i.i.d Galton-Watson hypertree.
\end{itemize}
\end{definition}

{See Figure \ref{fig:GWH} for an illustration of a  Galton-Watson hypertree.}
A broader class of the growth process on hypertrees, including the Galton-Watson hypertree defined above, was described as ``broadcasting through hypergraphs" in \cite[Section 7]{mezard2006reconstruction}.
\begin{figure}
    \centering
\includegraphics[width=0.5\linewidth]{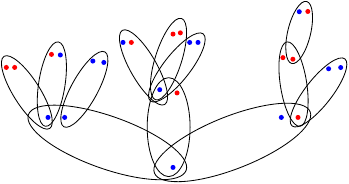}
    \caption{A Galton-Watson hypertree with $q=3$}
    \label{fig:GWH}
\end{figure}



\subsection{Growth property for Galton-Watson hypertrees}

Let $S_t$ be the size of the $t$-th generation in the Galton-Watson hypertree branching process.
\begin{lemma}\label{lem:GWupperbound}
 Assume $S_0=1$. There exist constants $c_1, c_2>0$ such that for all $s\geq 0$,
 \begin{align}\label{eq:GWupperbound}
     \mathbb P \left( \forall t\geq 1, S_t\leq s [(q-1)p_{\max}]^t\right)\geq  1-c_2e^{-c_1s} .
 \end{align}

 Moreover, there exists a universal constant $c_3>0$ such that for every $p\geq 1$,
 \begin{align}\label{eq:Stpmoment}
 \mathbb E \max_{ t\geq 0} \left( \frac{S_t}{[(q-1)p_{\max}]^t}\right)^p&\leq (c_3p)^p.
 \end{align}
 We also have
 \begin{align}
    (\mathbb E |(T,x)_t |^p)^{1/p}&\leq 2c_3p [(q-1)p_{\max}]^t.\notag
\end{align}

\end{lemma}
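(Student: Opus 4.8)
The plan is to treat the three assertions in sequence, with the growth bound \eqref{eq:GWupperbound} being the essential ingredient from which the moment bounds follow. First I would set up the right supermartingale. Write $b = (q-1)p_{\max}$, so $b$ strictly upper bounds the mean offspring number $(q-1)d$ (recall $p_{\max} > d$ is only possible if the tensor is non-constant; in general $p_{\max} \geq d$, and we use $b \geq (q-1)d$). The offspring distribution at a vertex of type $i$ is: generate $\Poi(d)$ hyperedges, each contributing $q-1$ children whose types are drawn jointly from \eqref{eq:prob_add}. Summing over all vertices in generation $t$, the conditional expectation of $S_{t+1}$ given $S_t$ and the types in generation $t$ is exactly $\sum_{v \in \partial(T,\rho)_t} d(q-1) = (q-1)d\, S_t \leq b\, S_t$. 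Hence $M_t := S_t / b^t$ is a nonnegative supermartingale, and more importantly $S_{t+1} \mid S_t$ is stochastically dominated by a sum of $S_t$ i.i.d.\ copies of $(q-1)\Poi(d)$, a sub-exponential random variable with all exponential moments finite in a neighborhood of $0$.

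For \eqref{eq:GWupperbound}, the standard approach (as in \cite{bordenave2018nonbacktracking}) is a union bound over $t$ combined with an exponential Markov inequality at each level, exploiting that the fluctuations of $S_{t+1}$ around $b S_t$ are controlled by the sub-exponential tails. Concretely, fix $\theta > 0$ small enough that $\phi(\theta) := \log \E e^{\theta (q-1)\Poi(d)} < \infty$; then for a vertex of type $i$ in generation $t$ the moment generating function of its offspring count is at most $e^{\phi(\theta)}$, so conditionally $\E[e^{\theta b^{-(t+1)} S_{t+1}} \mid \mathcal F_t] \leq \exp\!\big(S_t(\phi(\theta b^{-(t+1)}) )\big)$; using $\phi(\eta) \leq (q-1)d\,\eta + C\eta^2$ for $\eta$ small gives, after iterating, a uniform bound $\E e^{\theta_0 \max_{t} M_t} \leq e^{C' \theta_0}$ for some fixed $\theta_0$. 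Inserting this into Markov's inequality with threshold $s$ yields $\Pb{\max_t M_t > s} \leq c_1 e^{-c_0 s}$, which is precisely \eqref{eq:GWupperbound}. I expect this exponential-moment bookkeeping — in particular making the doubly-infinite product over generations converge — to be the main obstacle; the key point that makes it work is that the sub-exponential corrections $\eta^2$ are summable once divided by $b^{2t}$ with $b > 1$.

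Given a uniform bound $\E e^{\theta_0 X} \leq C$ for $X := \max_t M_t \geq 0$, the $p$-th moment bound \eqref{eq:Stpmoment} is routine: by the layer-cake formula $\E X^p = \int_0^\infty p x^{p-1} \Pb{X > x}\,dx \leq \int_0^\infty p x^{p-1} C e^{-\theta_0 x}\,dx = C p!\, \theta_0^{-p} \leq (c_3 p)^p$, using $p! \leq p^p$. Finally, for the last inequality I would note that $|(T,x)_t| = \sum_{s=0}^{t} S_s \leq (t+1) b^t \max_{s} M_s \cdot b^{s-t} \leq$ — more carefully, $|(T,x)_t| = \sum_{s \leq t} S_s \leq \sum_{s\le t} b^s X \leq X \cdot \frac{b^{t+1}}{b-1} \leq 2 b^t X$ when $b \geq 2$ (and a similar constant-adjusted bound for $1 < b < 2$, absorbing the geometric-series constant into $c_3$); taking $L^p$ norms and applying \eqref{eq:Stpmoment} gives $(\E |(T,x)_t|^p)^{1/p} \leq 2 b^t (c_3 p) $, as claimed. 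The only subtlety is the case $b$ close to $1$, where the geometric series $\sum_s b^s$ up to level $t$ is of order $t\, b^t$ rather than $b^t$; since $b = (q-1)p_{\max}$ and we always assume $(q-1)d > 1$ but not $(q-1)p_{\max} \geq 2$, I would handle this by replacing $b$ by $b' = \max(b, 2)$ throughout (legitimate since $M_t$ with base $b'$ is still a supermartingale), at the cost of enlarging the constants $c_0, c_1, c_3$.
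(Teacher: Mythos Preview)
Your proposal is essentially correct and reaches the same conclusions, but the technical packaging for \eqref{eq:GWupperbound} differs from the paper's and your sketch glosses over exactly the step where the work lies. The paper does not attempt to bound $\E e^{\theta_0 \max_t M_t}$ directly; instead it introduces explicit inflation factors $\epsilon_t = p_{\max}^{-t/2}\sqrt{t}$, sets $f_t=\prod_{l\le t}(1+\epsilon_l)$ (a convergent product), and applies a Chernoff bound for Poisson sums to control $\Pb{S_{k+1}> s f_{k+1}[(q-1)p_{\max}]^{k+1} \mid S_k\le s f_k[(q-1)p_{\max}]^k}\le e^{-c_0'(k+1)s}$; summing over $k$ gives the exponential tail on $\max_t M_t$.

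Your outline mixes two routes: you first (correctly) name the union-bound-plus-Chernoff strategy of \cite{bordenave2018nonbacktracking}, but then state as its outcome a bound on $\E e^{\theta_0 \max_t M_t}$. Iterating the conditional MGF only gives $\sup_t \E e^{\theta M_t}<\infty$, and a naive union bound over infinitely many $t$ with a uniform tail does not converge. To get control of the \emph{maximum} from the MGF route you need one more idea: choose a decreasing sequence $\theta_t\downarrow \theta_\infty>0$ with $\theta_{t+1}(1+C\theta_0 b^{-(t+2)})\le \theta_t$ (possible precisely because $\sum_t b^{-t}<\infty$, which you identify), so that $e^{\theta_t M_t}$ is a genuine supermartingale, and then apply Doob's maximal inequality. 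This is the exact analogue of the paper's inflation factors $f_t$, just expressed on the Laplace-transform side. Either packaging works; the paper's is slightly more elementary, yours is a bit more structural.

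The moment bound \eqref{eq:Stpmoment} via the layer-cake formula and the bound on $|(T,x)_t|$ via $\sum_{s\le t} S_s\le (\max_s M_s)\sum_{s\le t}b^s$ are handled the same way in both. Your concern about $b=(q-1)p_{\max}$ close to $1$ is real; the paper's line $|(T,x)_t|\le 2(\max_k M_k)\,b^t$ also implicitly needs $b\ge 2$ for the displayed constant $2$, so in either approach that constant should really be $b/(b-1)$ and is absorbed into $c_3$. Your proposed fix of replacing $b$ by $\max(b,2)$ would change the exponent in the stated inequality, so it is cleaner to keep $b$ and enlarge the constant.
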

\begin{proof}
  For $t\geq 1$, set $\epsilon_t=[(q-1)p_{\max}]^{-t/2}\sqrt{t}$, $f_t=\prod_{l=1}^t (1+\epsilon_l)$. Let $Y_i$ be i.i.d. $\textnormal{Poi}(p_{\max})$ random variables. There exists constants $c_0,c_1>0$ such that
  \[ c_0\leq f_t\leq c_1, \quad \epsilon_t\leq c_1.
  \]By Chernoff bound, for any integer $\ell\geq 1$, and real number $s>1$,
  \begin{align*}
      \mathbb P \left(  \sum_{i=1}^{\ell} Y_i \geq \ell p_{\max}s\right)\leq \exp(-\ell p_{\max} h(s)),
  \end{align*}
  where $h(s)=s\log s-s+1$.    On the event that $\{S_k\leq sf_k[(q-1)p_{\max}]^k\}$, we have for any $s>c_0^{-1}$,
  \begin{align*}
   &\mathbb P \left( S_{k+1}\geq sf_{k+1}[(q-1)p_{\max}]^{k+1}\mid \{S_k\leq sf_k[(q-1)p_{\max}]^k\} \right)  \\
   &\leq\mathbb P\left( (q-1) \sum_{i=1}^{sf_k[(q-1)p_{\max}]^k} Y_i\geq  sf_{k+1}[(q-1)p_{\max}]^{k+1}\right)\\
   &\leq \exp(-sp_{\max}f_k[(q-1)p_{\max}]^kh(1+\epsilon_{k+1}))\leq \exp(-c_0'(k+1)s),
  \end{align*}
for some constant $c_0'>0$,  where we use the inequality that for $x\in [0,c_1], h(1+x)\geq \theta x^2$ for some constant $\theta>0$.
 Hence for any $s>\max\{(c_0')^{-1},c_0^{-1}\}$,
  \begin{align}
      \mathbb P \left ( \exists t\geq 1, S_t>sc_1[(q-1)p_{\max}]^t  \right)\leq \sum_{\ell=1}^{\infty}\exp(-c_0'\ell s)\leq \frac{e^{-c_0's}}{1-e^{-c_0's}}\leq c_1'e^{-c_0's}\notag
  \end{align}
  for some constant $c_1'>0$, this implies \eqref{eq:GWupperbound}. For $s\leq  \max\{(c_0')^{-1},c_0^{-1}\}$, by taking $c_2$ sufficiently large, \eqref{eq:GWupperbound} also holds.

  The second claim follows by using $\mathbb E |X|^p=p\int_{0}^{\infty} x^{p-1} \mathbb P(|X|\geq x) dx$.
 Since
\begin{align}
    |(T,x)_t|=1+S_1+\cdots S_{t}\leq 2\max_{k\geq 0}([(q-1)p_{\max}]^{-k}S_k) [(q-1)p_{\max}]^t,\notag
\end{align}
taking expectation, we obtain from  \eqref{eq:Stpmoment} that
\begin{align}\label{eq:Etree}
    (\mathbb E |(T,x)_t |^p)^{1/p}\leq 2c_3p [(q-1)p_{\max}]^t.
\end{align}

\end{proof}

\subsection{Growth property for  random hypergraphs}

\begin{definition}[Exploration process]\label{def:exploration_process}
Denote $S_t(v)$ the set of vertices at distance $t$ from $v$. We consider the exploration process of the neighborhood of $v$ which starts with $A_0=\{v\}$ and at stage $t\geq 0$, if $A_t$ is not empty, take a vertex in $A_t$ at minimal distance from $v$, denoted by $v_t$, reveal its neighborhood $N_{t+1}$ in $[n]\setminus A_t$, and update $A_{t+1}= (A_t\cup N_{t+1})\setminus \{v_t\}$. Denote $\mathcal F_t$ the filtration generated by $(A_0,\dots, A_t)$. The set of discovered vertices at time $t$ is denoted by $D_t=\cup_{0\leq s\leq t}A_s$.
\end{definition}

\begin{lemma}\label{lem:growthhypergraph}
 There exist $c_0,c_1>0$ such that for all $s\geq 0$ and for any $v\in [n]$,
 \begin{align}\label{eq:ERupperbound}
     \mathbb P(\forall t\geq 0, S_t(v)\leq s[(q-1)p_{\max}]^t)\geq 1-c_1e^{-c_0s}.
 \end{align}
 Consequently, for any $p\geq 1$, there exists $c_3>0$ such that
 \begin{align}
      \mathbb E \max_{ t\geq 0} \left( \frac{S_t(v)}{[(q-1)p_{\max}]^t}\right)^p&\leq (c_3p)^p,
     \label{eq:ERupperboundconsequence1} \\
     \mathbb E \max_{v\in [n], t\geq 0} \left( \frac{S_t(v)}{[(q-1)p_{\max}]^t}\right)^p&\leq (c_3\log n)^p+(c_3p)^p,\label{eq:ERupperboundconsequence}\\
    \mathbb E [|(G,x)_t|^p]^{1/p}&\leq 2c_3p [(q-1)p_{\max}]^t,\\
     \label{eq:neighbourhood_max_expectation}\mathbb E [\max_{x\in [n]}|(G,x)_t|^p]^{1/p}&\leq 2c_3(\log n+p) [(q-1)p_{\max}]^t.
 \end{align}
\end{lemma}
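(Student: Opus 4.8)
The plan is to reduce the growth bound for the exploration process on the random hypergraph to the Galton-Watson hypertree bound of Lemma \ref{lem:GWupperbound} via a stochastic domination argument. First I would set up the exploration process of Definition \ref{def:exploration_process} and observe that at each step $t$, when we reveal the neighborhood $N_{t+1}$ of the active vertex $v_t$ inside $[n]\setminus A_t$, the number of new hyperedges attached to $v_t$ is stochastically dominated by the number of hyperedges incident to a fixed vertex in the \emph{complete} model where every $q$-subset containing $v_t$ is present independently with probability $p_{\max}/\binom{n}{q-1}$. That count is $\mathrm{Bin}\!\left(\binom{n-1}{q-1}, p_{\max}/\binom{n}{q-1}\right)$, which is stochastically dominated by (and converges to) $\mathrm{Poi}(p_{\max})$ up to a negligible correction; each such hyperedge contributes at most $q-1$ new vertices. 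Comparing generation by generation, $S_t(v)$ is therefore stochastically dominated by $S_t$, the size of generation $t$ in a Galton-Watson hypertree whose offspring law is the $(q-1)$-fold inflation of a $\mathrm{Poi}(p_{\max})$ variable — exactly the object controlled in \eqref{eq:GWupperbound}. (One must be slightly careful that conditioning on $\mathcal F_t$ only removes potential neighbors, so the domination is preserved at every step; this is the standard coupling used in \cite{bordenave2018nonbacktracking}, adapted to hyperedges.) This yields \eqref{eq:ERupperbound} directly from \eqref{eq:GWupperbound}, and \eqref{eq:ERupperboundconsequence1} follows from it by the layer-cake formula $\mathbb E|X|^p = p\int_0^\infty x^{p-1}\mathbb P(|X|\ge x)\,\dd x$ exactly as in the proof of \eqref{eq:Stpmoment}.

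For the union bound \eqref{eq:ERupperboundconsequence}, I would write $M = \max_{v\in[n]}\max_{t\ge 0} S_t(v)[(q-1)p_{\max}]^{-t}$ and use \eqref{eq:ERupperbound} together with a union over $v\in[n]$: $\mathbb P(M > s) \le n c_1 e^{-c_0 s}$. Splitting the integral $\mathbb E M^p = p\int_0^\infty s^{p-1}\mathbb P(M>s)\,\dd s$ at $s = (2/c_0)\log n$, the contribution below the split is at most $((2/c_0)\log n)^p$ and the contribution above is bounded, after the change of variables $s = (2/c_0)\log n + u$ and using $n c_1 e^{-c_0 s} \le c_1 e^{-c_0 u}/n \le c_1 e^{-c_0 u}$, by a term of order $(c_3 p)^p$; combining gives \eqref{eq:ERupperboundconsequence}. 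Finally, for the ball sizes, I would use $|(G,x)_t| = 1 + S_1(x) + \dots + S_t(x) \le 2\max_{k\ge 0}\big(S_k(x)[(q-1)p_{\max}]^{-k}\big)[(q-1)p_{\max}]^t$ (valid since $[(q-1)p_{\max}] > 1$, as $p_{\max} > (q-1)^{-1}$), and then take $L^p$ norms: \eqref{eq:neighbourhood_max_expectation} follows from \eqref{eq:ERupperboundconsequence1} and its maximal counterpart \eqref{eq:ERupperboundconsequence}, while the non-maximal ball bound follows from \eqref{eq:ERupperboundconsequence1} in the same way.

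I expect the main obstacle to be making the stochastic domination fully rigorous at the level of the whole tree rather than a single step: one needs that the exploration process, with its dependence between successive neighborhoods through the shrinking vertex pool $[n]\setminus D_t$, is dominated by an i.i.d.\ branching process. The clean way to handle this is to couple step-by-step, revealing at each active vertex a fresh batch of hyperedges in the complete model and discarding (as "ghosts") those that would reuse an already-discovered vertex; since discarding only decreases $S_t(v)$, the domination is monotone and propagates through all generations. A secondary technical point is the $\mathrm{Bin}$-to-$\mathrm{Poi}$ comparison: $\mathrm{Bin}(\binom{n-1}{q-1}, p_{\max}/\binom{n}{q-1})$ has mean $p_{\max}(1 - (q-1)/n) \le p_{\max}$, so it is dominated by $\mathrm{Poi}(p_{\max})$ after the usual binomial-Poisson coupling (or one simply absorbs the $o(1)$ discrepancy into the constants $c_0, c_1$), which is routine.
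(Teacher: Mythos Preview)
Your proposal is correct and follows essentially the same approach as the paper: stochastic domination of the exploration offspring, then the layer-cake and union-bound arguments for the moment consequences, and the geometric-sum bound $|(G,x)_t|\leq 2\max_{k\geq 0}\big(S_k(x)[(q-1)p_{\max}]^{-k}\big)[(q-1)p_{\max}]^t$ for the ball sizes. The only difference is in how the domination is packaged: the paper does not pass through a Binomial-to-Poisson coupling to invoke Lemma~\ref{lem:GWupperbound} as a black box; instead it observes that, conditionally on $\mathcal F_t$, the number of new neighbours of $v_t$ is dominated by $(q-1)V$ with $V\sim\mathrm{Bin}\big(\binom{n}{q-1},p_{\max}/\binom{n}{q-1}\big)$, and then \emph{repeats} the generation-by-generation Chernoff argument from the proof of Lemma~\ref{lem:GWupperbound} verbatim, using the Binomial Chernoff bound in place of the Poisson one. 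This sidesteps the stochastic-domination subtlety you flag (that $\mathrm{Bin}(N,\lambda/N)$ is not literally dominated by $\mathrm{Poi}(\lambda)$ without a small correction), at the cost of re-running rather than citing the earlier proof; your route is slightly cleaner conceptually, and your suggested fix of absorbing the $o(1)$ discrepancy into $c_0,c_1$ is exactly right.
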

\begin{proof}
  For the first statement, consider the exploration process. Given $\mathcal F_t$, the number of neighbors of $v_t$ in $[n]\setminus D_t$ is stochastically dominated by $(q-1)V$, where
  \begin{align}
      V\sim \text{Bin}\left(\binom{n}{q-1}, \frac{p_{\max}}{\binom{n}{q-1}}\right).\notag
  \end{align}
  
 By applying Chernoff inequality for Binomial variables and repeating the proof of Lemma \ref{lem:GWupperbound}, \eqref{eq:ERupperbound} holds. \eqref{eq:ERupperboundconsequence1}  follows by using $\mathbb E |X|^p=p\int_{0}^{\infty} x^{p-1} \mathbb P(|X|\geq x) dx$. 
  \eqref{eq:ERupperboundconsequence} follows from the inequality that 
\begin{align*}
     \mathbb E \max_{v\in [n], t\geq 0} \left( \frac{S_t(v)}{[(q-1)p_{\max}]^t}\right)^p &\leq \int_{0}^{\infty} 
      s^{p-1} \left( 1\wedge nc_1 e^{-c_0s}\right) ds\\
      &= \int_{0}^{\frac{1}{c_0}\log(c_1n)} s^{p-1} ds +\int_{\frac{1}{c_0}\log(c_1n)}^{\infty} s^{p-1} c_1 ne^{-c_0s} ds.
\end{align*}
Similarly to \eqref{eq:Etree}, we get the following estimates for $(G,x)_t$:
\begin{align}
    \mathbb E [|(G,x)_t|^p]^{1/p}&\leq 2c_3p [(q-1)p_{\max}]^t\notag\\
    \mathbb E [\max_{x\in [n]}|(G,x)_t|^p]^{1/p}&\leq 2c_3(\log n+p) [(q-1)p_{\max}]^t.\notag
\end{align}
\end{proof}

In the remaining part of this section, we take $\ell=\kappa \log_{(q-1)p_{\max}} n.$ for some constant $\kappa$. Recall the tangle-free property from Definition \ref{def:tangle_free}.

\begin{lemma}[tangle-freeness]\label{lem:tangle-free}
  For some constants $c_1,c_2$, the following holds:
  \begin{enumerate}
      \item {$G$} is $\ell$-tangle-free with probability at least $1-\log^4(n)c_1n^{4\kappa-1}.$
      \item The probability that a given vertex $v$ has a cycle in its $\ell$-neighborhood is at most 
      $ c_2\log^2(n) n^{2\kappa-1}.$
  \end{enumerate}
\end{lemma}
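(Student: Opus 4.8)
The plan is to control both statements by a union bound over first-moment estimates for the expected number of "bad" subconfigurations in the $\ell$-neighborhood of a vertex. The key combinatorial observation is that a hypergraph whose $\ell$-neighborhood around some vertex contains two distinct cycles must contain a subhypergraph on $a$ vertices and $a-1$ hyperedges (in the "bipartite incidence" sense) — more precisely, a connected subhypergraph with at least two independent cycles, which forces the number of hyperedges to exceed the number of vertices by at least a fixed amount. I will set this up by working with the bipartite representation of the hypergraph (vertices on one side, hyperedges on the other, with incidence edges), so that the relevant notion of "cycle rank" becomes the familiar first Betti number of the bipartite graph. In that language, $(G,v)_\ell$ containing a cycle means the induced bipartite subgraph on the $\ell$-ball has at least one independent cycle, i.e. $\#\text{hyperedges} \cdot (q-1) \geq \#\text{vertices} - 1 + (\text{number of cycles})$ after accounting for the $q$-uniform structure.

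\textbf{First step: reduce to counting subhypergraphs.} For part (ii), a cycle in $(G,v)_\ell$ implies there is a sequence of at most $\ell$ hyperedges forming a connected subhypergraph containing $v$ whose bipartite incidence graph has a cycle, hence at most $2\ell$ vertices involved (counting vertices and hyperedges). I will enumerate such configurations by the number $k \le \ell$ of hyperedges: there are at most $n^{k(q-1)}$ ways to pick the non-$v$ vertices appearing in the $k$ hyperedges along a path-with-a-cycle skeleton rooted at $v$, times a combinatorial factor $\mathrm{poly}(\ell, q)$ for the shape, and each specified hyperedge is present with probability $p_{\max}/\binom{n}{q-1} = O(n^{-(q-1)})$. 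Having a cycle means we use one "extra" hyperedge relative to a tree, so the expected count is bounded by $\sum_{k \le \ell} \mathrm{poly}(\ell,q) \cdot n^{k(q-1)} \cdot n^{-(k+1)(q-1)} \cdot (\text{extra vertex choices} \le n^{q-2})$; the net power of $n$ is $-(q-1) + (q-2) = -1$, and the polynomial factors in $\ell = \kappa\log_{(q-1)p_{\max}}n$ contribute the $\log^2 n$ term, giving the $c_2 \log^2(n) n^{2\kappa-1}$ bound. (The exponent $2\kappa$ rather than just $1$ comes from being slightly more careful: the number of shape choices for a walk skeleton of length up to $\ell$ is $(q\ell)^{O(\ell)}$... no — more carefully, it is bounded by a polynomial in $\ell$ times a bounded power, because a subhypergraph with $k$ hyperedges and one cycle has only $O(1)$ topological types up to the lengths of its arms, so the count of types is $\mathrm{poly}(k) \le \mathrm{poly}(\log n)$; the $n^{2\kappa}$ slack absorbs the vertex-labeling overcounts and any looseness, and is harmless since $\kappa$ is small.)

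\textbf{Second step: union bound for part (i).} $H$ is $\ell$-tangle-free iff no vertex $v$ has two cycles in $(G,v)_\ell$. Two cycles force a subhypergraph with $k \le 2\ell$ hyperedges and at least two extra hyperedges relative to a tree, so the same enumeration gives an expected count of such configurations rooted at a fixed $v$ of order $\mathrm{poly}(\log n) \cdot n^{-2} \cdot n^{q-2} \cdot (\text{another } n^{q-2})$... I need to be careful that each extra cycle gains a factor $n^{-(q-1)}$ from the extra hyperedge but only $n^{q-2}$ from new vertices, so two extra hyperedges give net $n^{-2}$ before the root sum. Summing over the $n$ choices of $v$ yields $\log^4(n) c_1 n^{4\kappa - 1}$; the power $\log^4$ versus $\log^2$ reflects the two cycles (each contributing a $\log^2$ from the arm-length sums), and the $4\kappa$ exponent again absorbs labeling slack. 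I would phrase this cleanly by noting $\mathbb{P}(H \text{ not } \ell\text{-tangle-free}) \le n \cdot \mathbb{P}(v \text{ has} \ge 2 \text{ cycles in } (G,v)_\ell)$ and then bounding the latter by the first-moment count above.

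\textbf{Main obstacle.} The genuinely fiddly part is the careful bookkeeping of the combinatorial factors: correctly identifying how many hyperedges and vertices a connected subhypergraph with one (resp. two) independent cycle must have in the $q$-uniform setting, and bounding the number of "shapes" (topological types with labeled arm-lengths) by $\mathrm{poly}(\ell)$ so that the dominant contribution is the power-of-$n$ term. This requires setting up the bipartite-incidence-graph language precisely — one must not double-count the $q-1$ children of a hyperedge, and must track that a tangle-free local structure with $k$ hyperedges has exactly $(q-1)k + 1$ vertices (rooted), so each cycle shifts this by $-1$ in the vertex count or $+1$ in the hyperedge count. Once that accounting is pinned down, the probability estimates are routine applications of $\mathbb{P}(e \in H) = O(n^{-(q-1)})$ and independence across hyperedges, exactly as in the graph case of \cite{bordenave2018nonbacktracking}. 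I would therefore lean on that reference for the structure of the argument and only spell out the hypergraph-specific counting. Note also that part (ii) is what feeds the coupling with the Galton–Watson hypertree in Definition \ref{def:gw_tree}, and part (i) is what guarantees $B^k = B^{(k)}$ for $k \le \ell$ as used in Section \ref{sec:decomposition}, so both bounds need the $n^{-1+o(1)}$ rate with room to spare under $\kappa < 1/6$ (indeed $4\kappa - 1 < -1/3 < 0$).
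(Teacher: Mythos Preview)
Your approach --- direct first-moment enumeration of bad subhypergraph configurations --- is genuinely different from the paper's, which instead conditions on the BFS exploration process of Definition~\ref{def:exploration_process} and bounds the number of ``extra'' hyperedges closing a cycle \emph{after} the tree-like neighborhood has been revealed. Concretely: if $(G,v)_\ell$ is not a hypertree, some undiscovered hyperedge contains at least two vertices of $(G,v)_\ell$; the number of such potential hyperedges is at most $|(G,v)_\ell|^2\binom{n}{q-2}$, each present with probability at most $p_{\max}/\binom{n}{q-1}$, so the probability is bounded by $(c/n)\,\mathbb E|(G,v)_\ell|^2$, and Lemma~\ref{lem:growthhypergraph} gives $\mathbb E|(G,v)_\ell|^2 \leq c\log^2(n)\,[(q-1)p_{\max}]^{2\ell}=c\log^2(n)\,n^{2\kappa}$. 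For two cycles one needs either two such extra hyperedges (contributing $(c/n)^2\,\mathbb E|(G,v)_\ell|^4$) or a single hyperedge meeting the ball in three vertices (contributing $(c/n^2)\,\mathbb E|(G,v)_\ell|^3$); the first dominates and yields $n^{4\kappa-2}$ per vertex, hence $n^{4\kappa-1}$ after the union bound over $v$.

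Your direct enumeration can be made to work, but the execution has a real gap: you have dropped the growth factor $[(q-1)p_{\max}]^k$. When you write the expected count of a $(k+1)$-hyperedge configuration as $n^{k(q-1)}\cdot n^{-(k+1)(q-1)}\cdot n^{q-2}=n^{-1}$, you are treating the hyperedge probability as exactly $n^{-(q-1)}$ and ignoring the constants. In fact each path-step hyperedge contributes a factor $(q-1)p_{\max}$ (not $1$) to the expected count: the $\sim\binom{n}{q-2}$ completions times the probability $p_{\max}/\binom{n}{q-1}$ gives $\sim(q-1)p_{\max}/n$, and multiplying by the $\sim n$ choices for the next skeleton vertex leaves $(q-1)p_{\max}$. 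Since $(q-1)p_{\max}>1$ here, summing over configurations with up to $2\ell$ hyperedges (the correct range: the closing edge joins two vertices each within distance $\ell$ of $v$) gives a geometric series of order $[(q-1)p_{\max}]^{2\ell}/n=n^{2\kappa-1}$. This is the true source of the exponent, not ``slack.'' The same mechanism produces $n^{4\kappa}$ in part~(i). You also omit the hypergraph-specific case where a \emph{single} extra hyperedge meets $(G,v)_\ell$ in three or more vertices and creates two cycles at once; this is subdominant but must be checked.
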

\begin{proof}

  Let $\tau$ be the time at which all vertices of $(G,v)_{\ell}$ have been revealed. In the exploration process, we only discover hyperedges that bring in $(q-1)$ new vertices at each step. Given $\mathcal F_{\tau}$, the number of undiscovered hyperedges containing at least 2 vertices in $(G,v)_{\ell}$ is stochastically dominated my $\textnormal{Bin}\left(m,\frac{p_{\max}}{\binom{n}{q-1}}\right)$ with $m=|(G,v)_{\ell}|^2\binom{n}{q-2}$. Then, by Markov inequality,
  \begin{align}
      \mathbb P\left( (G,v)_{\ell} \text{ is not a hypertree}\right)\leq \frac{p_{\max}}{\binom{n}{q-1}}\mathbb E|(G,v)_{\ell}|^2\binom{n}{q-2}\leq \frac{c_2[(q-1)p_{\max}]^{2\ell}\log^2 n}{n},\notag
  \end{align}
  which proves the second claim.

  There are two possibilities that a given vertex $v$ has at least 2 cycles in the $\ell$-neighborhood:
  (1) there are at least two undiscovered hyperedges;
  (2) there is at least one undiscovered hyperedge containing 3 vertices in $(G,v)_{\ell}$.
  Since for binomial random variables, the probability that $\text{Bin}(m,p)\not\in \{0,1\}$ is bounded by $p^2m^2$, the first case happens with probability at most \[\frac{p_{\max}^2}{\binom{n}{q-1}^2} \binom{n}{q-2}^2 \mathbb E|(G,v)_{\ell}|^4\leq \frac{c_1[(q-1)p_{\max}]^{4\ell}\log^4 n}{n^2},\]
  and the second case happens with probability at most
  \[\frac{p_{\max}}{\binom{n}{q-1}} \binom{n}{q-3} \mathbb E|(G,v)_{\ell}|^3\leq \frac{c_2[(q-1)p_{\max}]^{3\ell}\log^3 n}{n^2},\]
   Taking a union bound over $v\in [n]$ implies that the first claim holds. 
\end{proof}

\subsection{Coupling between random hypertrees and  hypergraphs}

Using Lemma \ref{lem:tangle-free}, we are now equipped to show one of the main intermediate results: bounding the total variation distance between the $2\ell$-neighbourhoods of $(G, x)$ and $(T, x)$. We recall that this distance $d_\mathrm{var}$ is defined as
\[ d_\mathrm{var}(\mathbb P_1, \mathbb P_2) = \min_{X_1, X_2} \mathbb P(X_1 \neq X_2), \]
where the min ranges over all couplings $(X_1, X_2)$ such that $X_1 \sim \mathbb P_1$ and $X_2 \sim \mathbb P_2$.

The probability space we are interested in is the space $\mathcal G_{*}$  of \emph{rooted} labeled $q$-uniform hypergraphs, i.e., labeled hypergraphs with a distinguished vertex. Denoting by $\mathcal L(X)$ the law of a random variable $X$, we aim to show the following:

\begin{proposition}\label{prop:coupling}
  Let $\ell = \kappa \log_{(q-1)p_{\max}}(n)$ for some constant $\kappa > 0$. Then, for every $x \in V$, we have
  \[ d_\mathrm{var}(\mathcal L((G, x)_\ell), \mathcal L((T, x)_\ell)) \leq c_0  p_{\max} \log(n)\,  n^{2\kappa - 1}.\]
\end{proposition}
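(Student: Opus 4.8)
The plan is to construct an explicit coupling between the exploration process of $(G,x)_\ell$ and a Galton--Watson hypertree $(T,x)_\ell$, revealing both simultaneously generation by generation, and to bound the probability that they disagree. The coupling follows the exploration process of Definition \ref{def:exploration_process}: we process vertices in order of increasing distance from $x$, and at each step we reveal the hyperedges incident to the current vertex $v_t$. On the hypertree side, the number of new hyperedges at $v_t$ is $\mathrm{Poi}(d)$, and the types of the $q-1$ children of each new hyperedge are drawn from the distribution in \eqref{eq:prob_add}; on the hypergraph side, the number of hyperedges containing $v_t$ and exactly $q-1$ undiscovered vertices is a sum of independent Bernoullis, and conditioned on that number the labels/types of the new vertices are (almost) uniform among the undiscovered ones. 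I would couple these two so that they agree as long as: (i) the Poisson count matches the Bernoulli-sum count of ``good'' hyperedges, and (ii) every hyperedge explored from $G$ brings in exactly $q-1$ fresh vertices (i.e. no collisions, which is exactly the event that $(G,x)_\ell$ is a hypertree).

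The key steps, in order, are as follows. First, record that the two processes can be coupled to agree unless one of a small number of bad events occurs during the (at most $|(G,x)_\ell|$-many) exploration steps. Second, quantify the discrepancy from the \emph{count}: the number of hyperedges through $v_t$ hitting at least two vertices of the currently-revealed set is dominated by $\mathrm{Bin}\!\left(m_t, \tfrac{p_{\max}}{\binom{n}{q-1}}\right)$ with $m_t \leq |(G,x)_\ell|^2\binom{n}{q-2}$, so by the same argument as in Lemma \ref{lem:tangle-free} the chance this is nonzero is $O\!\left(\tfrac{[(q-1)p_{\max}]^{2\ell}\log^2 n}{n}\right)$; and the number of ``new'' hyperedges through $v_t$ is $\mathrm{Bin}(M, \tfrac{p_{\underline\sigma}}{\binom{n}{q-1}})$ for $M$ close to $\binom{n-|D_t|}{q-1}$, whose total variation distance to $\mathrm{Poi}(d)$ is $O(|D_t|/n) = O([(q-1)p_{\max}]^\ell/n)$ per step by a standard Poisson approximation / Le Cam bound. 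Third, bound the discrepancy in the \emph{labels}: conditioned on counts, selecting $q-1$ distinct undiscovered vertices uniformly versus drawing $q-1$ i.i.d. types from $\pi$ differ in total variation by $O(|D_t|/n)$ per hyperedge. Fourth, sum these per-step errors over the $\leq |(G,x)_\ell|$ steps, take expectations using the moment bound $\mathbb{E}|(G,x)_\ell| = O([(q-1)p_{\max}]^\ell)$ from Lemma \ref{lem:growthhypergraph}, and invoke Lemma \ref{lem:tangle-free} for the collision term; each contribution is $O\!\left(\tfrac{[(q-1)p_{\max}]^{2\ell}\log^2 n}{n}\right) = O(p_{\max}\log(n)\,n^{2\kappa-1})$, which gives the claimed bound.

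The main obstacle is handling the \emph{dependence between consecutive exploration steps}: the set of undiscovered vertices $D_t$ — and hence the conditional law of $v_t$'s neighborhood — is random and grows along the process, so one cannot simply multiply independent per-step failure probabilities. The clean way around this is to condition on the filtration $\mathcal F_t$ and use the growth bound from Lemma \ref{lem:growthhypergraph} to control $|D_t| \leq |(G,x)_\ell|$ uniformly on a high-probability event, then build the coupling step-by-step using the conditional total-variation bounds and sum them via a union bound over the (at most $|(G,x)_\ell|$) steps. One must also be slightly careful that the hypertree exploration and the hypergraph exploration visit vertices in a compatible order; this is arranged by using the same breadth-first ordering on both sides and noting that a mismatch can only happen after a disagreement has already occurred, so it does not contribute a new error term. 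The remaining estimates — Poisson-versus-binomial and uniform-without-replacement-versus-i.i.d. — are routine total variation computations of order $|D_t|/n$.
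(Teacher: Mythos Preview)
Your proposal is essentially the same strategy as the paper's proof: couple the exploration process step by step, identify failure as either (i) a collision (the revealed graph is not a tree) or (ii) a Poisson--Binomial mismatch at the current vertex, control the number of steps via the growth bound of Lemma~\ref{lem:growthhypergraph}, and sum the per-step errors.

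One point in your write-up needs correction: your ``count then labels'' decomposition does not work as stated, because in the HSBM the hyperedge inclusion probability $p_{\underline\sigma(e)}$ depends on the type composition of $e$, so conditioned on the total number of new hyperedges at $v_t$ the labels are \emph{not} uniformly drawn from the undiscovered vertices (they are biased toward compositions with larger $p_{\underline\sigma}$). The paper sidesteps this by decomposing instead by the type multiset $\underline\tau=(\tau_1,\dots,\tau_r)$ of the $q-1$ new vertices: for each fixed $\underline\tau$ the count in $G$ is a genuine binomial $\mathbb P_{\underline\tau}$, the count in $T$ is a Poisson $\mathbb Q_{\underline\tau}$ (by thinning), the two are independent across $\underline\tau$, and Lemma~\ref{lem:tvar_poisson_binomial} gives $d_{\mathrm{var}}(\mathbb P_{\underline\tau},\mathbb Q_{\underline\tau})=O(n^{\kappa-1})$; summing over the $O_{q,r}(1)$ many $\underline\tau$ and then over the $O(\log(n)[(q-1)p_{\max}]^\ell)$ steps on the high-probability event $E_\ell$ yields the claimed bound. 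With this adjustment your argument is complete and coincides with the paper's.
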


 \begin{proof}
  Let $E_\ell$ be the event under which $|(G, x)|_\ell \leq c_1 \log(n) [(q-1)p_{\max}]^\ell$; using Lemma \ref{lem:growthhypergraph}, we can choose $c_1$ such that \[\mathbb P(E_\ell) \geq 1 - \frac{[(q-1)p_{\max}]^\ell}n\geq 1-n^{\kappa-1}. \]
  
   We consider the exploration process of Definition \ref{def:exploration_process}; the goal is to couple the versions of this process on the hypergraph and the Galton-Watson hypertree. Assume that the coupling holds until time $t$, and let $i$ be the label of the vertex $v_t$. There are two ways that the coupling can fail at time $t$:
   
   \begin{enumerate}
       \item the graph spanned by the revealed hyperedges of $G$ at $t+1$ is not a hypertree,
       \item for some $\underline j \in [r]^{q-1}$, the coupling between the edges of type $(i, \underline j)$ adjacent to $v_t$ fails.
   \end{enumerate}
   
   Since condition (i) implies that $(G, x)_\ell$ is not a hypertree, the overall failure probability of this event is upper bounded using Lemma \ref{lem:tangle-free}. Our goal is thus to exhibit a coupling between the edges of $(G, x)$ and $(T, x)$ adjacent to $v_t$, or equivalently to bound the total variation distance between the two. Let
   \begin{equation}\label{eq:def_tau}
   \underline \tau = (\tau_1, \dots, \tau_r) \quad \text{with} \quad \tau_1 + \dots + \tau_r = q-1,
   \end{equation}
   and define $\mathbb P_{\underline \tau}$ (resp. $\mathbb Q_{\underline \tau}$) the distribution of the number of edges with $\tau_k$ vertices of type $k$ for all $k$, adjacent to $v_t$ in $G$ (resp. $T$). From Poisson thinning and the definition of $G$ and $T$, we find that
   \begin{align*}
       \mathbb P_{\underline \tau} &= \mathrm{Bin}\left(\prod_{k=1}^r \dbinom{n_k(t)}{\tau_k}, \frac{p_{i, \underline j}}{\dbinom{n}{q-1}} \right) \\
       \mathbb Q_{\underline \tau} &= \mathrm{Poi}\left(p_{i, \underline j} \cdot \dbinom{q-1}{\tau_1, \dots, \tau_k} \prod_{k=1}^r \pi_k^{\tau_k}\right),
   \end{align*}
   where $n_k(t)$ is the number of vertices of type $k$ in $[n] \setminus D_t$, and $\underline j$ is any $q-1$-tuple of $[r]$ such that $k$ is repeated $\tau_k$ times. The multinomial coefficient in $\mathbb Q_{\underline \tau}$ counts the number of permutations of $\underline j$ that yield the same hyperedge type. Bounding the total variation distance between those two distributions is a tedious but straightforward computation using classical Poisson approximation bounds:
   
   \begin{lemma}\label{lem:tvar_poisson_binomial}
    For any $\tau$ satisfying \eqref{eq:def_tau}, we have
    \[ d_\mathrm{var}(\mathbb P_{\underline \tau}, \mathbb Q_{\underline \tau}) \leq c_3   p_{\max} r(q-1)! n^{\kappa - 1}. \]
   \end{lemma}
   \noindent The proof of Lemma \ref{lem:tvar_poisson_binomial} is deferred to Section \ref{sec:Misc}.      Now, there are at most 
   \[\dbinom{r+q - 2}{q-1} \leq \frac{(r+q-2)^{q-1}}{(q-1)!} \]
   tuples $\tau$ satisfying \eqref{eq:def_tau}, and the coupling has to hold for all $t \leq c_1 \log(n) [(q-1)p_{\max}]^\ell$ under $E_\ell$. As a result, we find
   \begin{align*} 
   d_\mathrm{var}(\mathcal L((G, x)_\ell), \mathcal L((T, x)_\ell)) &\leq c_3 p_{\max}r(q-1)! n^{\kappa-1} \cdot \frac{(r+q-2)^{q-1}}{(q-1)!} \cdot c_1 \log(n) [(q-1)p_{\max}]^\ell \\
   &\leq c_4 p_{\max} \log(n) n^{2\kappa -1}.
   \end{align*}
   
 \end{proof}

\section{Functionals on Galton-Watson hypertrees}\label{sec:martingale}\label{sec:functional}

The goal of this section is to study specific functionals on random hypertrees, which will be the backbone of our pseudo-eigenvectors for $B$. We define, for a vector $\xi \in \dR^r$ and $t \geq 0$, the functional
\begin{equation}\label{eq:def_tree_functional}
   f_{\xi, t}(g, o) = \sum_{x_t \in \partial(g, o)_t} \xi_{\sigma(x_t)}.
\end{equation}
We establish estimates for the expectation, variance, and covariance of such functionals with two different points of view, which will both be useful in the later parts of the proof. An important quantity is the three-dimensional equivalent of $Q$, denoted $Q^{(3)}$:
\[ Q^{(3)}_{ijk} = \pi_j \pi_k \sum_{\underline \ell \in [r]^{q-3}} p_{ijk, \underline \ell} \prod_{m \in \underline \ell} \pi_m. \]
This is technically only defined for $q > 2$, but will always be prefaced by a factor of $(q-2)$ so this problem can be safely ignored.

\subsection{Martingale approach}
Let $(T, \rho)$ be a Galton-Watson hypertree, and $\mathcal F = (\mathcal F_t)_{t \geq 0}$ the filtration adapted to the random variables $(T, \rho)_t$.
Our first proposition is a martingale equation on the process $f_{\xi, t}$:
\begin{proposition}\label{prop:martingale_equation}
  Let $(\mu, \phi)$ an eigenpair of $Q$, and consider the random process
  \[ Z_t = [(q-1)\mu]^{-t}f_{\phi, t}(T, \rho). \]
  The process $(Z_t)_{t\geq 0}$ is an $\mathcal F$-martingale, with common expectation $\phi_{\sigma(\rho)}$.
\end{proposition}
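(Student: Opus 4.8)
The plan is to verify the martingale property directly from the branching structure of the Galton-Watson hypertree, by conditioning on $\mathcal F_t$ and computing the conditional expectation of $f_{\phi, t+1}(T, \rho)$. First I would observe that $\partial(T, \rho)_{t+1}$ decomposes according to the vertices in $\partial(T, \rho)_t$: each vertex $x_t \in \partial(T, \rho)_t$ is the root of an independent Galton-Watson hypertree (in the subtree hanging off $x_t$), and its children at the next level are obtained by generating $\Poi(d)$ hyperedges at $x_t$, each contributing $q-1$ new vertices whose joint type is distributed according to \eqref{eq:prob_add}. Hence
\[
\E\!\left[ f_{\phi, t+1}(T, \rho) \,\middle|\, \mathcal F_t \right] = \sum_{x_t \in \partial(T, \rho)_t} \E\!\left[ \sum_{\text{children } y \text{ of } x_t} \phi_{\sigma(y)} \,\middle|\, \sigma(x_t) \right].
\]

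Next I would evaluate the inner expectation. Given $\sigma(x_t) = i$, the expected number of hyperedges at $x_t$ is $d$; for a single hyperedge with ordered children $v = (v_1, \dots, v_{q-1})$, summing $\phi$ over the $q-1$ children and taking expectation against \eqref{eq:prob_add} gives
\[
\sum_{\underline j \in [r]^{q-1}} \frac{1}{d}\, p_{i, \underline j} \prod_{\ell \in \underline j} \pi_\ell \cdot \sum_{m=1}^{q-1} \phi_{j_m} = \frac{q-1}{d} \sum_{\underline j} p_{i, \underline j} \phi_{j_1} \prod_{\ell \in \underline j}\pi_\ell,
\]
using the symmetry of $\mathbf P$ to identify each of the $q-1$ coordinate sums. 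Recognizing $\sum_{\underline j} p_{i,\underline j}\phi_{j_1}\prod_\ell \pi_\ell = \sum_{j_1} \phi_{j_1}\pi_{j_1}\sum_{\underline k \in [r]^{q-2}} p_{i j_1, \underline k}\prod_{\ell \in \underline k}\pi_\ell = \sum_{j} D^{(2)}_{ij}\pi_j \phi_j = (Q\phi)_i = \mu \phi_i$, the inner expectation (after multiplying by the Poisson mean $d$) equals $(q-1)\mu\, \phi_i$. Therefore $\E[f_{\phi, t+1}(T,\rho) \mid \mathcal F_t] = (q-1)\mu \sum_{x_t} \phi_{\sigma(x_t)} = (q-1)\mu\, f_{\phi, t}(T, \rho)$, which is exactly the statement that $Z_t = [(q-1)\mu]^{-t} f_{\phi, t}(T, \rho)$ is an $\mathcal F$-martingale. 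The common expectation is $Z_0 = f_{\phi, 0}(T, \rho) = \phi_{\sigma(\rho)}$, and since $Z_t$ is a martingale, $\E Z_t = \E Z_0 = \phi_{\sigma(\rho)}$ (with $\sigma(\rho)$ fixed by the construction in Definition \ref{def:gw_tree}).

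The only genuinely delicate points are bookkeeping ones rather than conceptual obstacles: one must be careful that conditioning on $\mathcal F_t$ genuinely makes the subtrees rooted at distinct $x_t \in \partial(T,\rho)_t$ independent i.i.d.\ Galton-Watson hypertrees (this is built into the last bullet of Definition \ref{def:gw_tree}), and that the use of symmetry to collapse the $q-1$ coordinate sums into one is legitimate — this requires that the ordering of children within a hyperedge is uniform given the unordered type multiset, which again follows from \eqref{eq:prob_add} being symmetric under permuting the coordinates of $\underline j$ only up to the multinomial weighting, so care is needed to sum over \emph{ordered} tuples throughout. I would also note that integrability of $Z_t$ is immediate from the moment bounds of Lemma \ref{lem:GWupperbound}, since $|f_{\phi,t}| \leq \|\phi\|_\infty |(T,\rho)_t|$ has finite expectation. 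This last step is where I expect the bulk of the (minor) technical writing to go, but no real difficulty arises.
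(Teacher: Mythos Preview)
Your proposal is correct and follows essentially the same approach as the paper: the paper factors your inner-expectation computation into a separate Lemma~\ref{lem:one_step_propagation} (which it reuses later), but the substance---conditioning on $\mathcal F_t$, using the Poisson branching with mean $d$, exploiting the permutation symmetry of $\mathbf P$ to collapse the $q-1$ coordinate sums, and identifying the result as $(q-1)(Q\phi)_i = (q-1)\mu\phi_i$---is identical. Your explicit remark on integrability via Lemma~\ref{lem:GWupperbound} is a welcome addition that the paper leaves implicit.
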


 Throughout this section, we will denote by $\dE_t$ the conditional expectation $\E*{\cdot \given \mathcal F_t}$. We first reproduce a lemma from Lemma 13 in \cite{stephan2020non} about Poisson sums of random variables:
\begin{lemma}\label{lem:poisson_sums}
    Let \( N \) be a \( \Poi(d) \) random variable, and \( (X_i) \), \( (Y_i) \) two iid sequences of random variables, independent from \( N \), such that \( X_i \) and \( Y_j \) are independent whenever \( i \neq j \). Denote by \( A, B \) the random variables
    \[ A = \sum_{i=1}^N X_i \quand B = \sum_{i=1}^N Y_i. \]
Then the following identities hold:
    \begin{align}
       & \E A = d\cdot \E {X_1}, \quad \E B = d \cdot \E {Y_1}, \label{eq:poisson_sum_expectation}  \\
       & \mathrm{Cov}(A, B) = d\cdot \E {X_1Y_1}. \label{eq:poisson_sum_correlation}
    \end{align}
\end{lemma}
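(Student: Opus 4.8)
The plan is to prove both identities by conditioning on $N$ and applying the tower property, handling the diagonal and off-diagonal parts of the product $AB$ separately. Throughout I would assume $X_1, Y_1 \in L^2$ (or at least enough integrability for Fubini--Tonelli), which is implicit in the statement since a covariance is being computed.

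I would first handle the expectations. Conditionally on $N$, the variable $A$ is a sum of $N$ copies of the $X_i$, and since $(X_i)_{i \geq 1}$ is independent of $N$ and identically distributed, $\E{A \given N} = N\,\E{X_1}$ almost surely. Taking expectations and using $\E N = d$ gives $\E A = d\,\E{X_1}$, and the same argument applied to $B$ gives $\E B = d\,\E{Y_1}$; this is \eqref{eq:poisson_sum_expectation}.

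For \eqref{eq:poisson_sum_correlation}, I would write $AB = \sum_{i=1}^{N}\sum_{j=1}^{N} X_i Y_j$ and split the double sum according to whether $i = j$. Conditionally on $N$, the diagonal sum $\sum_{i=1}^{N} X_i Y_i$ has conditional expectation $N\,\E{X_1 Y_1}$ (the pairs $(X_i, Y_i)$ being identically distributed), while each off-diagonal term with $i \neq j$ has $\E{X_i Y_j} = \E{X_1}\,\E{Y_1}$ by the cross-independence hypothesis, and there are $N(N-1)$ such ordered pairs. Hence
\[ \E{AB \given N} = N\,\E{X_1 Y_1} + N(N-1)\,\E{X_1}\,\E{Y_1}. \]
Taking expectations and using the moment identities $\E N = d$ and $\E{N(N-1)} = d^2$ for $N \sim \Poi(d)$ yields $\E{AB} = d\,\E{X_1 Y_1} + d^2\,\E{X_1}\,\E{Y_1}$. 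Subtracting $\E A\,\E B = d^2\,\E{X_1}\,\E{Y_1}$, which follows from the expectations just computed, leaves $\Cov(A, B) = d\,\E{X_1 Y_1}$, which is \eqref{eq:poisson_sum_correlation}.

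There is no genuine obstacle here; the one step deserving care will be the independence bookkeeping — the hypothesis only provides $X_i \perp Y_j$ for $i \neq j$, so the diagonal terms must keep the full cross-moment $\E{X_1 Y_1}$ whereas the off-diagonal terms factorize, and conflating the two would produce the wrong constant. The only background fact invoked is the Poisson factorial-moment identity $\E{N(N-1)} = d^2$, and the exchange of the random double sum with the expectation is legitimate by Fubini--Tonelli under the assumed square-integrability.
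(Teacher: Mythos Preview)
Your proof is correct and complete; the conditioning argument with the diagonal/off-diagonal split and the Poisson factorial moment $\E{N(N-1)}=d^2$ is exactly the standard route. The paper itself does not give a proof of this lemma at all---it simply cites it as Lemma~13 of \cite{stephan2020non}---so there is nothing to compare against beyond noting that your argument is the natural one.
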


We will also need a result regarding specific functionals on $T$:

\begin{lemma}\label{lem:one_step_propagation}
 Let $x_t \in \partial (T, \rho)_t$, with $\sigma(x_t) = i$, and $\xi \in \dR^r$. Then
 \[ \dE_t\left[ \sum_{x_{t+1}: x_t \to x_{t+1}} \xi_{\sigma(x_{t+1})} \right] = (q-1)[Q\xi](i), \]
 where $x_{t}\to x_{t+1}$ means $x_{t+1}$ is a child of $x_{t}$ in the Galton-Watson hypertree.
\end{lemma}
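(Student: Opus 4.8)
The plan is to unpack the definition of the Galton-Watson hypertree and compute the conditional expectation directly, using the Poisson-sum lemma to handle the random number of hyperedges.

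\textbf{Setup.} Fix $x_t \in \partial(T,\rho)_t$ with $\sigma(x_t) = i$. By the branching construction in Definition \ref{def:gw_tree}, given $\mathcal F_t$, the vertex $x_t$ generates $N \sim \Poi(d)$ new hyperedges intersecting only at $x_t$; label these $e^{(1)}, \dots, e^{(N)}$. Each hyperedge $e^{(m)}$ contributes $q-1$ children $v^{(m)} = (v^{(m)}_1, \dots, v^{(m)}_{q-1})$, whose joint type vector $\underline \sigma(v^{(m)})$ is drawn (independently across $m$) according to \eqref{eq:prob_add}:
\[ \Pb{\underline\sigma(v^{(m)}) = \underline \jmath} = \frac 1 d \, p_{i, \underline \jmath} \prod_{\ell \in \underline \jmath} \pi_\ell. \]
Thus the inner sum decomposes as
\[ \sum_{x_{t+1}: x_t \to x_{t+1}} \xi_{\sigma(x_{t+1})} = \sum_{m=1}^N X_m, \quad \text{where} \quad X_m = \sum_{s=1}^{q-1} \xi_{\sigma(v^{(m)}_s)}. \]

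\textbf{Main computation.} First I would compute $\dE_t[X_1]$ (the $X_m$ are i.i.d.\ given $\mathcal F_t$). By linearity,
\[ \dE_t[X_1] = \sum_{s=1}^{q-1} \dE_t\!\left[\xi_{\sigma(v^{(1)}_s)}\right] = \sum_{s=1}^{q-1} \sum_{\underline \jmath \in [r]^{q-1}} \xi_{j_s} \cdot \frac 1 d \, p_{i, \underline \jmath} \prod_{\ell \in \underline \jmath} \pi_\ell. \]
By the symmetry of $\mathbf P$ each of the $q-1$ terms over $s$ is equal, so this is
\[ \frac{q-1}{d} \sum_{\underline \jmath \in [r]^{q-1}} \xi_{j_1} \, p_{i, \underline \jmath} \prod_{\ell \in \underline \jmath} \pi_\ell = \frac{q-1}{d} \sum_{j_1 \in [r]} \xi_{j_1} \pi_{j_1} \sum_{\underline k \in [r]^{q-2}} p_{i j_1, \underline k} \prod_{\ell \in \underline k} \pi_\ell = \frac{q-1}{d} \sum_{j \in [r]} \xi_j \pi_j D^{(2)}_{ij}, \]
recognizing the definition of $D^{(2)}$. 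Since $Q_{ij} = D^{(2)}_{ij}\pi_j$ by \eqref{eq:Q=DPi}, this equals $\frac{q-1}{d}[Q\xi](i)$. Then I would apply \eqref{eq:poisson_sum_expectation} of Lemma \ref{lem:poisson_sums} with $N \sim \Poi(d)$ to conclude
\[ \dE_t\!\left[\sum_{m=1}^N X_m\right] = d \cdot \dE_t[X_1] = (q-1)[Q\xi](i), \]
which is exactly the claim. (Strictly speaking Lemma \ref{lem:poisson_sums} should be invoked conditionally on $\mathcal F_t$, with the types and $N$ being $\mathcal F_t$-conditionally independent of the rest; this is immediate from the i.i.d.\ branching structure.)

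\textbf{Main obstacle.} There is no serious obstacle here — the result is a one-step computation. The only point requiring care is the bookkeeping with the symmetric tensor: making sure the factor $1/d$ from the type distribution \eqref{eq:prob_add} cancels against the $d$ from the Poisson mean, and that collapsing the $q-1$ identical symmetrized terms correctly reproduces $D^{(2)}$ rather than introducing a spurious combinatorial factor (e.g.\ an extra multinomial coefficient of the kind appearing in $\mathbb Q_{\underline\tau}$ in Section \ref{sec:local}). One should verify that summing $\xi_{j_1} p_{i,\underline\jmath}\prod \pi_\ell$ over all ordered tuples $\underline\jmath$ — as opposed to over unordered type multisets — is precisely what yields $\sum_j \xi_j \pi_j D^{(2)}_{ij}$, which it does since $D^{(2)}_{ij}$ is itself defined as a sum over ordered $(q-2)$-tuples $\underline k$. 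With that verified, the proof is complete; Proposition \ref{prop:martingale_equation} will then follow by applying this lemma to each $x_t \in \partial(T,\rho)_t$ with $\xi = \phi$ and summing.
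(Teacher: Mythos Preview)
Your proof is correct and essentially identical to the paper's: both decompose the sum over children into a Poisson sum of i.i.d.\ per-hyperedge contributions $X_m$, compute the expectation via \eqref{eq:prob_add}, use the symmetry of $\mathbf P$ to collapse the $q-1$ terms, and identify the result with $(q-1)[Q\xi](i)$. The only cosmetic difference is that the paper applies Lemma~\ref{lem:poisson_sums} first and then computes $d\,\dE[X_1]$, whereas you compute $\dE_t[X_1]$ first and multiply by $d$ at the end.
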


\begin{proof}
    Let $e_1, \dots, e_N$ be the hyperedges adjacent to $x_t$, with $N \sim \Poi(d)$. For $1\leq k\leq N$, let
   \begin{equation} \label{eq:def_xk}
       X_k = \sum_{y \in e_k, y \neq x_t} \xi_{\sigma(y)}.
   \end{equation}
   It is clear the $X_i$ are i.i.d, so that using \eqref{eq:poisson_sum_expectation},
   \begin{align*} 
   \dE_t\left[ \sum_{x_{t+1}:x_t \to x_{t+1}} \xi_{\sigma(x_{t+1})} \right] &=\dE_t \left[\sum_{k=1}^N X_k\right]= d \dE[X_1] \\
   &= d \cdot \frac 1d \sum_{\underline j \in [r]^{q-1}} p_{i, \underline j} \prod_{\ell \in \underline j} \pi_\ell \sum_{\ell \in \underline j} \xi_\ell = \sum_{k =1}^{q-1} \sum_{\underline j \in [r]^{q-1}} p_{i, \underline j} \left(\prod_{\ell \in \underline j} \pi_\ell \right) \xi_{j_k}.
   \end{align*}
   Here we denote $\underline j=(j_1,\dots,j_{q-1})$.
 All terms in the last sum except for $\xi_{j_k}$ are invariant with respect to permutations of $\underline j$. As a result,
   \begin{align*}
       \dE_t\left[ \sum_{x_{t+1}:x_t \to x_{t+1}} \xi_{\sigma(x_{t+1})} \right] &= (q-1) \sum_{\underline j \in [r]^{q-1}} p_{i, \underline j} \left(\prod_{\ell \in \underline j} \pi_\ell \right) \xi_{j_1} \\
       &= (q-1) \sum_{j \in [r]} \, \left[\sum_{ \underline k \in [r]^{q-2}} p_{ij, \underline k} \prod_{\ell \in \underline k} \pi_\ell \right] \cdot \pi_j \xi_j= (q-1)[Q\xi](i).
   \end{align*}
\end{proof}

Now we are ready to finish the proof of Proposition \ref{prop:martingale_equation}.
\begin{proof}[Proof of Proposition \ref{prop:martingale_equation}]
   Let $t \geq 0$ be fixed. We have
   \[ Z_{t+1} - Z_t = [(q-1)\mu]^{-(t+1)}  \sum_{x_t \in \partial(T, \rho)_t}\left[ \sum_{x_t \to x_{t+1}} \phi_{\sigma(x_{t+1})} - (q-1)\mu \phi_{\sigma(x_t)} \right]. \]
    Lemma \ref{lem:one_step_propagation} implies that for any $x_t \in \partial (T, \rho)_t$, 
    \[  \dE_t\left[\sum_{x_t \to x_{t+1}} \phi_{\sigma(x_{t+1})}\right] = (q-1)[Q\phi](\sigma(x_t)) = (q-1)\mu \phi_{\sigma(x_t)}, \]
  where the last equation is from the eigenvector equation for $\phi$ and $\mu$. As a result, each term in the sum above has a zero expectation; therefore, $(Z_t)_{t\geq 0}$ is a martingale. The common expectation is  $Z_0 = \phi_{\sigma(\rho)}$.
\end{proof}

Our next results concern the correlation of two such martingales, which reduces to the study of their increments.

\begin{proposition}\label{prop:martingale_correlation}
  Let $(\mu, \phi)$ and $(\mu', \phi')$ be two (not necessarily distinct) eigenpairs of $Q$, and consider the martingales $(Z_t)_{t \geq 0}$ and $(Z'_t)_{t \geq 0}$ as in Proposition \ref{prop:martingale_equation}. Define the vector
  \begin{equation}\label{eq:def_y_phi_phip}
      y^{(\phi, \phi')} = Q(\phi \circ \phi') + (q-2) Q^{(3)}(\phi \otimes \phi'),
  \end{equation}
 where
 \[ (\phi\circ \phi')(i)= \phi_i\phi'_i,\quad   (\phi \otimes \phi')(i, j) = \phi_i\phi'_j,\quad [Q^{(3)}(\phi \otimes \phi')](i)=\sum_{j,k\in [r]}Q^{(3)}_{ijk} \phi_j\phi_k'.
 \]
  Then,
  \begin{equation} \label{eq:martingale_increments_correlation}
    \E*{(Z_{t+1} - Z_t)(Z'_{t+1} - Z'_t)} = \left[(q-1) \mu \mu'\right]^{-(t+1)} [Q^t y^{(\phi, \phi')}](\sigma(\rho)).
  \end{equation}
  As a result, the martingale $(Z_t)_{t \geq 0}$ converges in $\mathcal L^2$ whenever $(q-1)\mu^2 > d$. Further, for any $0 \leq t \leq t'$,
  \begin{align} \label{eq:EZtZt'}
  \dE[Z_t Z'_{t'}] = \phi_{\sigma(\rho)} \phi'_{\sigma(\rho)} + \sum_{s=0}^{t-1} \frac{\left[ Q^s y^{(\phi, \phi')} \right](\sigma(\rho))}{[(q-1)\mu \mu']^{s+1}}. 
  \end{align}
\end{proposition}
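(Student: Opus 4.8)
The plan is to prove Proposition \ref{prop:martingale_correlation} by first computing the one-step increment correlation, then iterating the tower property, and finally summing a geometric-type series.

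\textbf{Step 1: one-step increment correlation.} Write out $Z_{t+1} - Z_t$ and $Z'_{t+1} - Z'_t$ as in the proof of Proposition \ref{prop:martingale_equation}, so that each is $[(q-1)\mu]^{-(t+1)}$ (resp. $[(q-1)\mu']^{-(t+1)}$) times a sum over $x_t \in \partial(T,\rho)_t$ of the centered quantity $\sum_{x_t \to x_{t+1}} \phi_{\sigma(x_{t+1})} - (q-1)\mu\,\phi_{\sigma(x_t)}$. Conditioning on $\mathcal F_t$, the contributions from distinct $x_t$ are independent (different subtrees), and each has mean zero by Lemma \ref{lem:one_step_propagation} together with the eigenvector equation. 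Hence the conditional covariance collapses to a sum over a single $x_t$. For a fixed $x_t$ with $\sigma(x_t) = i$, I would apply Lemma \ref{lem:poisson_sums} with $N = \Poi(d)$ hyperedges and $X_k, Y_k$ the per-hyperedge sums $\sum_{y \in e_k, y \neq x_t} \phi_{\sigma(y)}$ and $\sum_{y \in e_k, y \neq x_t} \phi'_{\sigma(y)}$: by \eqref{eq:poisson_sum_correlation} the covariance of the two hyperedge-sums equals $d\cdot\E[X_1 Y_1]$, and the centering terms cancel in the covariance. Expanding $\E[X_1 Y_1] = \frac1d \sum_{\underline j} p_{i,\underline j}\prod_\ell \pi_\ell \big(\sum_{\ell\in\underline j}\phi_\ell\big)\big(\sum_{m\in\underline j}\phi'_m\big)$ and splitting into diagonal terms (giving, after the permutation-invariance argument as in Lemma \ref{lem:one_step_propagation}, a factor $(q-1)[Q(\phi\circ\phi')](i)$) and off-diagonal terms (giving $(q-1)(q-2)[Q^{(3)}(\phi\otimes\phi')](i)$) yields exactly $(q-1)\,y^{(\phi,\phi')}(i)$ for the conditional covariance of the bracketed quantities. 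Collecting the prefactors gives
\[ \E*{(Z_{t+1}-Z_t)(Z'_{t+1}-Z'_t) \given \mathcal F_t} = [(q-1)\mu\mu']^{-(t+1)} \sum_{x_t \in \partial(T,\rho)_t} y^{(\phi,\phi')}(\sigma(x_t)) = [(q-1)\mu\mu']^{-(t+1)} f_{y^{(\phi,\phi')},t}(T,\rho). \]
Taking expectations and using that $\E[f_{\xi,t}(T,\rho)] = [Q^t \xi](\sigma(\rho))$ (which follows by iterating Lemma \ref{lem:one_step_propagation}, or is Proposition \ref{prop:martingale_equation} applied with $\mu = 1$ componentwise — more precisely $\E[f_{\xi,t}] = \sum_s [\text{...}]$; cleanest is direct induction on $t$ via Lemma \ref{lem:one_step_propagation}) gives \eqref{eq:martingale_increments_correlation}.

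\textbf{Step 2: from increments to $\E[Z_t Z'_{t'}]$ and $\mathcal L^2$ convergence.} Since $Z$ and $Z'$ are both $\mathcal F$-martingales, for $0\le t\le t'$ the increments $Z'_{s+1}-Z'_s$ for $s\ge t$ are $\mathcal F_{t}$-orthogonal to $Z_t$ when we further condition; more directly, $\E[Z_t Z'_{t'}] = \E[Z_t Z'_t]$ by the martingale property of $Z'$ (tower property: $\E[Z_t Z'_{t'}\mid \mathcal F_{t'}]$ then down to $\mathcal F_t$... actually $\E[Z_t(Z'_{t'}-Z'_t)] = \E[Z_t\,\E_t(Z'_{t'}-Z'_t)] = 0$). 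Then telescoping, $\E[Z_t Z'_t] = \E[Z_0 Z'_0] + \sum_{s=0}^{t-1}\E[(Z_{s+1}-Z_s)(Z'_{s+1}-Z'_s)]$ because all cross terms $\E[(Z_{a+1}-Z_a)(Z'_{b+1}-Z'_b)]$ with $a\neq b$ vanish (orthogonality of martingale increments, again via the tower property applied to whichever index is larger). With $Z_0 Z'_0 = \phi_{\sigma(\rho)}\phi'_{\sigma(\rho)}$ and \eqref{eq:martingale_increments_correlation}, this is exactly \eqref{eq:EZtZt'}. For $\mathcal L^2$ convergence, take $\phi = \phi'$, $\mu = \mu'$: then $y^{(\phi,\phi)}$ has nonnegative-ish structure and, decomposing $y^{(\phi,\phi)}$ in the eigenbasis of $Q$ (all eigenvalues bounded by $d$ in modulus since $Q\ind = d\ind$ and $Q$ is similar to a symmetric substochastic-type matrix — more carefully, $\|Q^s\|$ grows at most like $d^s$ up to a polynomial), we get $|[Q^s y^{(\phi,\phi)}](\sigma(\rho))| \le C d^s$, so the series $\sum_s [(q-1)\mu^2]^{-(s+1)} d^s$ converges geometrically precisely when $(q-1)\mu^2 > d$. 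Hence $\sup_t \E[Z_t^2] < \infty$, and an $\mathcal L^2$-bounded martingale converges in $\mathcal L^2$.

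\textbf{Main obstacle.} The genuinely delicate point is Step 1: correctly disentangling the combinatorics of the per-hyperedge sum $\E[X_1 Y_1]$ into the "diagonal" part that feeds $Q(\phi\circ\phi')$ and the "off-diagonal" part that feeds $(q-2)Q^{(3)}(\phi\otimes\phi')$, while tracking the multinomial/permutation factors exactly as in Lemma \ref{lem:one_step_propagation}. One must be careful that the cross term $\big(\sum_{\ell\in\underline j}\phi_\ell\big)\big(\sum_{m\in\underline j}\phi'_m\big) = \sum_{\ell}\phi_\ell\phi'_\ell + \sum_{\ell\neq m}\phi_\ell\phi'_m$ splits into $(q-1)$ equal diagonal contributions and $(q-1)(q-2)$ equal off-diagonal contributions after averaging over permutations of $\underline j$, and that the $\pi$-weights assemble into $Q$ and $Q^{(3)}$ respectively (using the definitions $Q_{ij} = D^{(2)}_{ij}\pi_j$ and $Q^{(3)}_{ijk} = \pi_j\pi_k\sum_{\underline\ell} c_{ijk,\underline\ell}\prod\pi_m$, with $c$ the tensor entries of $\mathbf P$). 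Everything else — the martingale orthogonality arguments, the geometric summation, the $\mathcal L^2$-boundedness criterion — is routine.
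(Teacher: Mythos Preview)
Your approach is correct and essentially identical to the paper's proof: condition on $\mathcal F_t$, use independence across subtrees to collapse to a single-vertex sum, apply Lemma \ref{lem:poisson_sums} to get $d\,\dE[X_1Y_1]$, split into diagonal and off-diagonal pieces yielding $(q-1)y^{(\phi,\phi')}(i)$, then iterate Lemma \ref{lem:one_step_propagation} down to the root; Step 2 (martingale-increment orthogonality, telescoping, geometric bound via the spectral radius of $Q$) is exactly what the paper does. One bookkeeping slip worth fixing: your displayed conditional-expectation formula and your claim $\dE[f_{\xi,t}(T,\rho)] = [Q^t\xi](\sigma(\rho))$ are each off by a factor of $(q-1)^t$ --- each application of Lemma \ref{lem:one_step_propagation} contributes a factor $(q-1)$, so in fact $\dE[f_{\xi,t}(T,\rho)] = (q-1)^t[Q^t\xi](\sigma(\rho))$ and the conditional expectation carries the prefactor $(q-1)^{-(2t+1)}(\mu\mu')^{-(t+1)}$; the two errors cancel, so your final formula \eqref{eq:martingale_increments_correlation} is correct.
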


\begin{proof}
  We denote by $\Delta_t = Z_{t+1} - Z_t$ (resp. $\Delta'_t = Z'_{t+1} - Z'_t$) the martingale increments.  We first compute the conditional expectation $\dE_t[\Delta_t \Delta'_t]$:
  \begin{equation*}
      \dE_t[\Delta_t \Delta'_t] =[(q-1)^2 \mu \mu']^{-(t+1)}\sum_{x_t, x'_t \in \partial (T, \rho)_t} E(x_t, x_t'),
  \end{equation*}
  where the correlation term $E(x_t, x'_t)$ is given by
  \[ E(x_t, x'_t) = \dE \left[\left(\sum_{x_t \to x_{t+1}} \phi_{\sigma(x_{t+1})} - (q-1)\mu \phi_{\sigma(x_t)} \right) \left(\sum_{x'_t \to x'_{t+1}} \phi'_{\sigma(x'_{t+1})} - (q-1)\mu' \phi'_{\sigma(x'_t)} \right) \right] .\]
  By the independence property of the Galton-Watson hypertree, $E(x_t, x'_t)$ is zero except if $x_t = x'_t$. Consider a fixed $x_t$ in $(T, \rho)_t$, with type $\sigma(x_t) = i$ and adjacent edges $e_1, \dots, e_N$. Defining $X_k$ and $X'_k$ as in \eqref{eq:def_xk}, we can use \eqref{eq:poisson_sum_correlation} to find
  \begin{equation}\label{eq:e_x_x}
      E(x_t, x_t) = d \dE[X_1 X'_1],
  \end{equation} 
  so we only have to compute this last expectation. Expanding the product, we have
  \begin{align*}
      X_1X'_1 &= \left( \sum_{y \in e_1, y \neq x_t} \phi_{\sigma(y)} \right)\left( \sum_{y' \in e_1, y' \neq x_t} \phi'_{\sigma(y')} \right) \\
      &= \sum_{y \neq x_t} \phi_{\sigma(y)} \phi'_{\sigma(y)} + \sum_{y\neq y'\neq x_t} \phi_{\sigma(y)} \phi'_{\sigma(y')}.
  \end{align*}
  
  Using Lemma \ref{lem:one_step_propagation}, the expectation of the first sum is easily computed:
  \[ d \dE_t\left[ \sum_{y \neq x_t} \phi_{\sigma(y)} \phi'_{\sigma(y)} \right] = (q-1)[Q(\phi \circ \phi')](\sigma(x_t)). \]
  For the second one, we use the same method as in the proof of Lemma \ref{lem:one_step_propagation}. We have, letting $\sigma(x_t) = i$,
  \begin{align*}
      d\dE_t\left[\sum_{y\neq y'\neq x_t} \phi_{\sigma(y)} \phi'_{\sigma(y')} \right] &= d \cdot \frac 1d \sum_{\underline j \in [r]^{q-1}} p_{i, \underline j} \prod_{\ell \in \underline j} \pi_\ell \sum_{\ell \neq \ell' \in \underline j} \phi_\ell \phi'_{\ell'} \\
      &= \sum_{k \neq k'} \sum_{\underline j \in [r]^{q-1}} p_{i, \underline j} \left(\prod_{\ell \in \underline j} \pi_\ell \right) \phi_{j_k} \phi_{j_{k'}}.
  \end{align*}
  Again, each term except for the last two is invariant with respect to permutations of $\underline j$. Hence,
  \begin{align*}
      d\dE_t\left[\sum_{y\neq y'\neq x_t} \phi_{\sigma(y)} \phi'_{\sigma(y')} \right] &= (q-1)(q-2)\sum_{\underline j \in [r]^{q-1}} p_{i, \underline j} \left(\prod_{\ell \in \underline j} \pi_\ell \right) \phi_{j_1} \phi_{j_{2}} \\
      &= (q-1)(q-2) \sum_{j, k \in [r]} \sum_{\underline \ell \in [r]^{q-3}} p_{ijk, \underline \ell} \left(\prod_{m \in \underline \ell} \pi_m \right) \cdot \pi_j \pi_k \phi_j \phi_k \\
      &= (q-1)(q-2)\left[ Q^{(3)}(\phi \otimes \phi') \right](i).
  \end{align*}
  
  Putting all this together,
  \begin{equation}\label{eq:e_x_x_2}
      \begin{split}
          E(x_t, x_t) &= (q-1)[Q(\phi \circ \phi')](\sigma(x_t)) + (q-1)(q-2)\left[ Q^{(3)}(\phi \otimes \phi') \right](\sigma(x_t))\\
          &= (q-1) y^{(\phi, \phi')}(\sigma(x_t)).
      \end{split} 
    \end{equation}

  Now, since $\dE[\Delta_t \Delta'_t] = \dE_0[\dE_1[ \dots \dE_t[\Delta_t \Delta'_t]\dots]]$, repeated applications of Lemma \ref{lem:one_step_propagation} imply \eqref{eq:martingale_increments_correlation}.
  
  For the second part, since both $Z_t$ and $Z_t'$ are $\mathcal F$-martingales, if $t\leq t'$,
  \begin{align*} 
  \E*{Z_t Z'_{t'}} &= Z_0Z'_0 + \sum_{s = 0}^{t-1} \E*{\Delta_t \Delta'_t} = \phi_{\sigma(\rho)} \phi'_{\sigma(\rho)} + \sum_{s=0}^{t-1} \frac{\left[ Q^s y^{(\phi, \phi')} \right](\sigma(\rho))}{[(q-1)\mu \mu']^s}.
  \end{align*}
  By the Doob's second martingale convergence theorem, $(Z_t)_{t \geq 0}$ converges in $\mathcal L^2$ whenever $\dE[Z_t^2]$ is uniformly bounded. Since the spectral radius of $Q$ is $d$, this happens in particular when $(q-1) \mu^2 > d$.
\end{proof}

\subsection{A top-down approach: Galton-Watson transforms}

We now introduce another point of view on hypertree functionals, which recovers the same result as the previous section, but without being able to prove the martingale property in  Proposition \ref{prop:martingale_correlation}.

\paragraph{Galton-Watson transforms} From Definition \ref{def:gw_tree}, it is clear that the law of a Galton-Watson tree $(T, \rho)$ only depends on $\sigma(\rho)$. For a functional $f: \cG_* \to \dR$, we can therefore define its \emph{Galton-Watson transform} $\overline f: [r] \to \dR$ via
\begin{equation}\label{eq:GW_transform_definition}
    \overline f(k) = \dE_{\sigma(\rho) = k}[f(T, \rho)].
\end{equation}
Although this is not a reversible transformation, for a large class of functionals, $\overline f$ encapsulates all the relevant properties of $f$. The results proved in the above section lead to the following proposition:

\begin{proposition}\label{prop:gw_transform_expectation}
 Let $(\mu, \phi)$ and $(\mu', \phi')$ two eigenpairs of $Q$, and define $f_{\xi, t}=f_{\xi,t}(T,\rho)$ as in \eqref{eq:def_tree_functional}. Then
 \begin{align}
     \overline{f_{\phi, t}} &= [(q-1)\mu]^t \phi, \label{eq:fphit} \\
     \overline{f_{\phi, t}f_{\phi', t}} &= [(q-1)^2\mu\mu']^t \left( \phi \circ \phi' + \sum_{s=0}^{t-1} \frac{Q^s y^{(\phi, \phi')}}{[(q-1)\mu \mu']^{s+1}} \right),\label{eq:ffffphit}
 \end{align}
 and if $F_{\phi, t} = (f_{\phi, t+1} - (q-1)\mu f_{\phi, t})^2$, then  
 \begin{align}\label{eq:Fphit} 
 \overline{F_{\phi, t}} = (q-1)^{t+1}Q^t y^{(\phi, \phi)}.
 \end{align}
\end{proposition}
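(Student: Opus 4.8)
The plan is to obtain all three identities by unpacking the definition \eqref{eq:GW_transform_definition} and substituting into the martingale statements already proved in the previous subsection, since $\overline{(\cdot)}$ is by definition the expectation conditioned on the root spin $\sigma(\rho)$ --- exactly the quantity controlled by Propositions \ref{prop:martingale_equation} and \ref{prop:martingale_correlation}. No new probabilistic argument is needed; the only content is matching up the normalizing powers of $(q-1)$, $\mu$ and $\mu'$.

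First I would prove \eqref{eq:fphit}. Writing $f_{\phi,t}(T,\rho) = [(q-1)\mu]^t Z_t$ with $Z_t$ the martingale of Proposition \ref{prop:martingale_equation}, one conditions on $\sigma(\rho) = k$ and takes expectations: $\overline{f_{\phi,t}}(k) = [(q-1)\mu]^t\,\dE_{\sigma(\rho)=k}[Z_t] = [(q-1)\mu]^t\,\dE_{\sigma(\rho)=k}[Z_0] = [(q-1)\mu]^t\phi_k$, using $Z_0 = \phi_{\sigma(\rho)}$. If a self-contained derivation is preferred, \eqref{eq:fphit} also follows by induction on $t$ directly from Lemma \ref{lem:one_step_propagation}: split $\partial(T,\rho)_{t+1}$ according to the parent in $\partial(T,\rho)_t$, apply the one-step identity, and close the recursion with $Q\phi = \mu\phi$.

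Next I would treat \eqref{eq:ffffphit}. Introducing the analogous martingale $f_{\phi',t}(T,\rho) = [(q-1)\mu']^t Z'_t$ for the eigenpair $(\mu',\phi')$, we get $f_{\phi,t}f_{\phi',t} = [(q-1)^2\mu\mu']^t Z_t Z'_t$; applying \eqref{eq:EZtZt'} of Proposition \ref{prop:martingale_correlation} with $t' = t$, conditioned on $\sigma(\rho) = k$, yields precisely $[(q-1)^2\mu\mu']^t\bigl( \phi_k\phi'_k + \sum_{s=0}^{t-1} [(q-1)\mu\mu']^{-(s+1)} [Q^s y^{(\phi,\phi')}](k) \bigr)$, which is \eqref{eq:ffffphit}. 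For \eqref{eq:Fphit} I would note that $f_{\phi,t+1} - (q-1)\mu f_{\phi,t} = [(q-1)\mu]^{t+1}(Z_{t+1} - Z_t)$, so $F_{\phi,t} = [(q-1)\mu]^{2(t+1)} (Z_{t+1}-Z_t)^2$; conditioning on $\sigma(\rho)=k$ and inserting \eqref{eq:martingale_increments_correlation} with $\phi' = \phi$, $\mu' = \mu$ gives $\dE_{\sigma(\rho)=k}[(Z_{t+1}-Z_t)^2] = [(q-1)\mu^2]^{-(t+1)}[Q^t y^{(\phi,\phi)}](k)$, and multiplying by $[(q-1)\mu]^{2(t+1)}$ the powers of $\mu$ cancel, leaving $(q-1)^{t+1}[Q^t y^{(\phi,\phi)}](k)$. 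Equivalently, \eqref{eq:Fphit} is the $s=t$ term obtained by subtracting \eqref{eq:ffffphit} at level $t$ from its analogue at level $t+1$ and using orthogonality of martingale increments $\dE[(Z_{t+1}-Z_t)^2] = \dE[Z_{t+1}^2] - \dE[Z_t^2]$.

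I do not anticipate a genuine obstacle: every step is a substitution of definitions into identities already established in the previous subsection. The only point that warrants care is keeping the normalizing exponents consistent across the three target formulas and the martingale identities --- in particular that the denominator in \eqref{eq:EZtZt'} carries the exponent $s+1$, as forced by \eqref{eq:martingale_increments_correlation}.
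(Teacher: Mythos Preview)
Your proposal is correct and follows essentially the same approach as the paper: each of the three identities is obtained by rewriting $f_{\phi,t}$, $f_{\phi,t}f_{\phi',t}$, and $F_{\phi,t}$ in terms of the martingales $Z_t,Z'_t$ and then invoking Propositions \ref{prop:martingale_equation} and \ref{prop:martingale_correlation} (specifically \eqref{eq:EZtZt'} for the second identity and \eqref{eq:martingale_increments_correlation} with $\phi'=\phi$ for the third). Your bookkeeping of the exponent $s+1$ is also accurate.
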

\begin{proof}
  For the first claim, from the martingale property of $Z_t$ in Proposition \ref{prop:martingale_equation},
  \begin{align*}
      \overline{f_{\phi, t}}(k)&=\dE_{\sigma(\rho)=k}[f_{\phi,t} (T,\rho)]=\dE_{\sigma(\rho)}\left[ \sum_{x_t\in \alpha (T,\rho)_t} \phi_{\sigma(x_t)}\right]=\dE_{\sigma(\rho)=k}\left[ [(q-1)\mu]^tZ_t\right]\\
      &=[(q-1)\mu]^{t+1}\dE_{\sigma(\rho)=k}\left[ Z_0\right]=[(q-1)\mu]^t\phi(k).
  \end{align*}
  The second claim follows from \eqref{eq:EZtZt'}.
  By definition,
  \[ F_{\phi,t}=[(q-1)\mu]^{2t+2}(Z_{t+1}-Z_t)^2.
  \]
  and the third claim is derived from \eqref{eq:martingale_increments_correlation} by taking $\phi=\phi'$.
\end{proof}
\paragraph{A simple transformation} Most properties of graph functionals on $G$ can be understood in terms of the following transformation $\partial$, defined for any functional $f: \cG_* \to \dR$:
\begin{equation}\label{eq:def_partial_functional}
    \partial f (g, o) = \sum_{o'\in \mathcal N(o)} f(g \setminus \Set{o} ,o'),
\end{equation}
where the sum ranges over all neighbors $o'$ of $o$ denoted by $\mathcal N (o)$. For a hypertree $(T, \rho)$, this operation corresponds to
\[ \partial f(T, \rho) = \sum_{i\in \mathcal N(\rho)} f(T_i, \rho_i) \]
where $(T_i, \rho_i)$ are the subtrees attached to the root $\rho$. The Galton-Watson transforms of $f$ and $\partial f$ are related as follows:
\begin{lemma}\label{lem:gw_transform_recurrence}
 We have, for any functional $f: \mathcal G_{*} \to \mathbb R$,
$ \overline{\partial f}= (q-1) Q \overline f.
 $
\end{lemma}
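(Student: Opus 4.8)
The plan is to unfold the definition of the Galton–Watson transform using the recursive structure of $(T,\rho)$, and then recognize the resulting expression as an instance of Lemma \ref{lem:one_step_propagation}.

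First I would fix $k \in [r]$ and write $\overline{\partial f}(k) = \mathbb{E}_{\sigma(\rho)=k}[\partial f(T,\rho)]$. Since $(T,\rho)$ is a hypertree, deleting the root disconnects it into the subtrees $(T_y, y)$ hanging from each child $y \in \mathcal N(\rho)$, so $\partial f(T,\rho) = \sum_{y \in \mathcal N(\rho)} f(T_y, y)$. By the inductive construction in Definition \ref{def:gw_tree}, conditionally on the first generation the subtrees $(T_y, y)$ are independent Galton–Watson hypertrees whose laws depend only on $\sigma(y)$; hence $\mathbb{E}[f(T_y,y) \mid \mathcal F_1] = \overline f(\sigma(y))$, where $\mathcal F_1$ is generated by $(T,\rho)_1$. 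Applying the tower property termwise gives
\[ \overline{\partial f}(k) = \mathbb{E}_{\sigma(\rho)=k}\Bigl[ \sum_{y \in \mathcal N(\rho)} \overline f(\sigma(y)) \Bigr]. \]

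Next I would observe that the right-hand side is precisely the quantity controlled by Lemma \ref{lem:one_step_propagation} with $t=0$, $x_0 = \rho$, $\sigma(\rho) = k$, and test vector $\xi = \overline f$, because the children of $\rho$ are exactly the vertices $x_1$ with $\rho \to x_1$. That lemma then yields $\overline{\partial f}(k) = (q-1)[Q\overline f](k)$, and since $k$ is arbitrary this is the claimed identity $\overline{\partial f} = (q-1)Q\overline f$ (the second functional $h$ in the statement is not used).

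The only delicate point is the conditional-independence step: one must verify that, given the hyperedges incident to $\rho$ together with the types of the first-generation vertices, the subtrees rooted at distinct children are mutually independent, each distributed as a Galton–Watson hypertree with the corresponding root spin. This is exactly what the ``repeat the process for each child'' clause of Definition \ref{def:gw_tree} encodes, so once it is spelled out the rest of the argument is a direct appeal to Lemma \ref{lem:one_step_propagation} with no additional computation.
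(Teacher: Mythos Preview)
Your proof is correct and follows the same approach as the paper, which simply states that the lemma is an easy consequence of Lemma~\ref{lem:one_step_propagation} for $t=0$. You have spelled out the intermediate steps (the subtree decomposition and the tower property reducing to the Galton--Watson transform $\overline f(\sigma(y))$) that the paper leaves implicit, and your observation that the functional $h$ in the hypothesis is unused is also accurate.
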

\begin{proof}
  This lemma is an easy consequence of Lemma \ref{lem:one_step_propagation} for $t = 0$.
\end{proof}

Lemma \ref{lem:gw_transform_recurrence} actually allows us to recover all the results of Proposition \ref{prop:gw_transform_expectation} using the recurrence formula
$ f_{\xi, t} = \partial f_{\xi, t-1}, $
which holds for all $\xi$ and $t \geq 1$. However, it is not sufficient to obtain the martingale convergence property in Proposition \ref{prop:martingale_correlation}.

\section{Spatial averaging of hypergraph functionals}\label{sec:spatial_avg}

We say a function $f$ from $\mathcal G_*$ to $\mathbb R$ is $t$-local if $f(G,o)$ is only a function of $(G,o)_t$. We first provide a moment inequality for hypergraph functionals using the Efron-Stein inequality. This following lemma is adapted from Proposition 12.3 in \cite{bordenave2020detection}.

\begin{lemma}\label{lem:Effron}
 Let $f,\psi: \mathcal G_* \to R$ be two $t$-local functions such that $|f(g,o)|\leq \psi(g,o)$ for all $(g,o)\in \mathcal G_*$ and $\psi$ is non-decreasing by the addition of hyperedges. Then there exists a universal constant $c_1>0$ such that for all $p\geq 2$,
 \begin{align}
   \left[  \mathbb E\left( \sum_{o\in [n]} f(G,o)-\mathbb E f(G,o)\right)^p\right]^{\frac{1}{p}}&\leq c_1\sqrt{n} p^{3/2} [(q-1)p_{\max}]^{t}\left(\mathbb E\left[\max_{o\in [n]}\psi(G,o)^{2p}\right]\right)^{\frac{1}{2p}},\label{eq:spatial_avg}
 \end{align}
\end{lemma}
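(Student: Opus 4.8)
The plan is to follow the proof of Proposition~12.3 in \cite{bordenave2020detection}, replacing edges by hyperedges throughout. Write $S=\sum_{o\in[n]}f(G,o)$ and regard $S$ as a function of the independent Bernoulli variables $(A_e)_{e\in H(V)}$. The first step is to invoke a moment form of the Efron--Stein inequality: denoting by $S_e'$ the random variable obtained by resampling the coordinate $A_e$ independently, there is a universal constant $c$ such that for every $p\ge 2$,
\[
 \bigl\|S-\mathbb E S\bigr\|_p\;\le\; c\sqrt p\;\Bigl\|\;\Bigl(\sum_{e\in H(V)}\mathbb E_e'\bigl[(S-S_e')^2\bigr]\Bigr)^{1/2}\;\Bigr\|_p\;=:\;c\sqrt p\,\bigl\|V^{1/2}\bigr\|_p ,
\]
where $\mathbb E_e'$ averages over the resampled coordinate only. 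Everything then reduces to bounding $\mathbb E[V^{p/2}]$.

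For the second step I would use that $f$ is $t$-local: resampling $A_e$ only alters the summands $f(G,o)$ whose $t$-neighbourhood changes, i.e.\ those $o$ lying within graph distance $t$ of $e$, and there are at most $q\max_{w\in e}|(G\cup\{e\},w)_t|$ of them since $(G\cup\{e\},o)_t\subseteq\bigcup_{w\in e}(G,w)_t$ for such $o$. Each of these summands changes by at most $\psi(G\cup\{e\},o)+\psi(G\setminus\{e\},o)\le 2\max_o\psi(G\cup\{e\},o)$, using $|f|\le\psi$ and the monotonicity of $\psi$. Since $\mathbb E_e'[(S-S_e')^2]\le 2p_e\,(S(G\cup\{e\})-S(G\setminus\{e\}))^2$ with $p_e\le p_{\max}/\binom{n}{q-1}$, and since every vertex lies in $\binom{n-1}{q-1}$ hyperedges so that $\sum_{e\ni w}p_e\le p_{\max}$, summing the per-hyperedge contribution grouped by vertex yields a bound of the shape
\[
 V\;\le\; C\,q\,p_{\max}\;\Bigl(\max_{o\in[n]}\psi(G,o)\Bigr)^2\sum_{w\in[n]}|(G,w)_t|^2 ,
\]
after dealing with the augmented graphs $G\cup\{e\}$.

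The third step is to take the $L^p$ norm of $V^{1/2}$, equivalently the $L^{p/2}$ norm of $V$. Applying the triangle inequality in $L^{p/2}$ to pull the sum over $w$ outside, Cauchy--Schwarz to split $\max_o\psi(G,o)$ from $|(G,w)_t|$ at exponent $2p$, and then the \emph{per-vertex} growth estimate of Lemma~\ref{lem:growthhypergraph}, $\mathbb E\,|(G,w)_t|^{2p}\le (c_3p)^{2p}[(q-1)p_{\max}]^{2pt}$, gives
\[
 \bigl\|V^{1/2}\bigr\|_p\;\le\; C'\sqrt n\; p\; [(q-1)p_{\max}]^{t}\,\Bigl(\mathbb E\Bigl[\max_{o\in[n]}\psi(G,o)^{2p}\Bigr]\Bigr)^{1/2p},
\]
and substituting into the Efron--Stein bound produces the claimed $\sqrt n\,p^{3/2}[(q-1)p_{\max}]^t$. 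Using the per-vertex bound rather than the $\max_w$ bound of Lemma~\ref{lem:growthhypergraph} is what keeps the right-hand side free of a spurious $\log n$ factor.

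The step requiring the most care, and the main obstacle, is the passage from the augmented graphs $G\cup\{e\}$ back to $G$ in the estimate for $V$: adding one hyperedge can inflate both a $t$-neighbourhood and $\max_o\psi$. This is exactly what the hypothesis that $\psi$ be non-decreasing under the addition of hyperedges is there to control — one dominates $\psi(G\cup\{e\},o)$ and $|(G\cup\{e\},w)_t|$ for $o,w$ near $e$ by the corresponding functionals of $G$ evaluated at the $q$ vertices of $e$ together with their $t$-neighbourhoods, so that the extra contribution is of the same form and is absorbed into the constant (using $(q-1)p_{\max}>1$ to fold the $q,p_{\max}$ factors into $[(q-1)p_{\max}]^t$ when $t\ge 1$). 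The remaining estimates are routine Poisson/binomial moment computations.
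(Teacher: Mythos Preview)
Your overall strategy (Efron--Stein moment inequality, $t$-locality, then the per-vertex growth bound of Lemma~\ref{lem:growthhypergraph}) is the same as the paper's, but there is a concrete error that blocks the argument as written. The pointwise bound $\mathbb E_e'[(S-S_e')^2]\le 2p_e\,(S(G\cup\{e\})-S(G\setminus\{e\}))^2$ is false: conditioning on $G$, one has
\[
\mathbb E_e'[(S-S_e')^2]=\bigl(A_e(1-p_e)+(1-A_e)p_e\bigr)\bigl(S(G\cup\{e\})-S(G\setminus\{e\})\bigr)^2,
\]
so for $e\in H$ the prefactor is $1-p_e\approx 1$, not $O(p_e)$. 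The small factor $p_e$ only appears after a \emph{full} expectation, which is fine for the variance but not inside $\|V^{1/2}\|_p$. Without that factor, the contribution of $e\in H$ to $V$ is $\sum_{e\in H}\Delta_e^2$, and after bounding each $\Delta_e$ you are left with $\sum_{w}d_w\,|(G,w)_t|^2$, i.e.\ an extra degree weight per vertex; you can no longer invoke only the per-vertex moment $\mathbb E|(G,w)_t|^{2p}$, and the clean $p^{3/2}$ bound is lost. Separately, for $e\notin H$ your sketched domination of $\psi(G\cup\{e\},o)$ by values of $\psi(G,\cdot)$ does not follow from monotonicity alone (take $\psi(g,o)=2^{|(g,o)_t|}$), so that step is a genuine gap for general $\psi$.

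The paper sidesteps both issues by choosing a different set of independent coordinates: it groups the hyperedges by their \emph{maximal vertex}, setting $H_x=\{e\in H:\max(e)=x\}$, so that $(H_1,\dots,H_n)$ are independent and there are only $n$ of them. The comparison variable is then $Y_x=F(H_1,\dots,\emptyset,\dots,H_n)$, i.e.\ one \emph{deletes} all of $H_x$, producing $G_x\subseteq G$. Monotonicity now gives $\psi(G_x,o)\le\psi(G,o)$ directly, and $t$-locality yields $|Y-Y_x|\le 2|(G,x)_t|\max_{o}\psi(G,o)$ with no degree factor and no augmented graph. From there H\"older plus Cauchy--Schwarz and the per-vertex bound $\mathbb E|(G,x)_t|^{2p}\le(c p)^{2p}[(q-1)p_{\max}]^{2pt}$ give \eqref{eq:spatial_avg}. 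If you want to keep a per-edge argument, the fix is to use deletion ($S_e=S(G\setminus\{e\})$) rather than resampling and then control the extra degree; but the vertex grouping is cleaner and is what makes the stated hypotheses on $\psi$ sufficient.
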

\begin{proof}
  For $x\in [n]$, denote 
  \begin{align}
      H_x=\{\{y_1,\dots, y_{q-1}, x\}\in H \mid y_i\leq x, \text{ for all } i\in [q-1]  \}.\notag
  \end{align}
  Then the vector $(H_1,\dots, H_n)$ has independent entries, and  there is a measurable function $F$ such that
  \begin{align}
      Y:=\sum_{o\in [n]} f(G,o)=F(H_1,\dots, H_n).\notag
  \end{align}
  Define  $G_x$ the hypergraph with vertex set $V$ and hyperedge set $\cup_{y\not=x} H_y$, and
  \begin{align*}
      Y_x=\sum_{o\in V} f(G_{x},o)=F(H_1,\dots, H_{x-1},\emptyset, H_{x+1},\dots, H_n ).
  \end{align*}
  Then $Y_x$ is $\cup_{y\not=x} H_y$-measurable. By Efron-Stein inequality (see Theorem 15.5 in \cite{boucheron2013concentration}), for all $p\geq 2$,
 \begin{align}\label{eq:EF}
    ( \dE|Y-\E Y|^p) \leq (c_1\sqrt{p})^p\mathbb E  \left[\left(\sum_{x\in [n]}(Y-Y_x)^2\right)^{p/2}\right].
 \end{align}
 Since $f$ is $t$-local, for a given $x\in V$, the difference $f(G,o)-f(G_x,o)$ is always zero except possibly for $x\in (G,o)_t$. Consequently,
 \begin{align*}
     |Y-Y_x|&\leq \sum_{o\in (G,x)_t} |f(G,o)-f(G_x,o)|\\
      &\leq \sum_{o\in (G,x)_t}\psi(G,o)+\psi (G_x,o)\leq 2|(G,x)_t|\cdot \max_{o\in V}\psi(G,o),
 \end{align*}
where we used the assumption that  $\psi$ is non-decreasing by adding hyperedges. By H\H{o}lder's inequality 
\[ \left(\sum_{i=1}^n x_i^2\right)^{p/2}\leq n^{p/2-1} \sum_{i=1}^n |x_i|^p ,
\]
 we find 
\begin{align*}
   \mathbb E  \left[\left(\sum_{x\in [n]}(Y-Y_x)^2\right)^{p/2}\right]&\leq n^{p/2-1}2^{p}\mathbb E\left[ \sum_{x\in [n]}|(G,x)_t|^p\cdot \max_{o\in V}|\psi(G,o)|^p\right]\\
   &\leq n^{p/2}(\mathbb E[|(G,x)_t|^{2p}])^{1/2}  \left(\mathbb E \left[\max_{o\in V}|\psi(G,o)|^p\right]\right)^{1/2}.
\end{align*}
Then the desired bound follows from \eqref{eq:EF}.
 \end{proof}

Next, we compare the expectation of functionals on $G$ and the random hypertrees.
\begin{lemma}\label{lem:expdifference}
 Let $\ell=\kappa \log_{(q-1)p_{\max}} n$. Let $f:\mathcal G_{*}\to \mathbb R$ be a $\ell$-local function. Then there exists a universal constant $c_1>0$ such that for all $x\in [n]$,
 \begin{align}
    | \dE f(G,x)-\dE f(T,x)|\leq  c_1 \sqrt{ p_{\max}\log(n)\, n^{2\kappa -1} }\left(\sqrt{\dE |f(G,x)|^2}\vee \sqrt{\dE |f(T,x)|^2}\right).
 \end{align}
\end{lemma}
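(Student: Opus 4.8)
The plan is to deduce the lemma directly from the coupling estimate of Proposition \ref{prop:coupling}, using $\ell$-locality of $f$ to collapse the difference of expectations onto the (low-probability) event where the coupling fails, and then applying Cauchy--Schwarz.

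First I would invoke Proposition \ref{prop:coupling}: with $\varepsilon := c_0\, p_{\max}\log(n)\, n^{2\kappa - 1}$ we have $d_\mathrm{var}(\mathcal L((G, x)_\ell), \mathcal L((T, x)_\ell)) \leq \varepsilon$, so there is a coupling of $(G,x)_\ell$ and $(T,x)_\ell$ on a common probability space, preserving both marginal laws, under which the two rooted hypergraphs coincide outside an event $\mathcal E$ with $\mathbb P(\mathcal E) \leq \varepsilon$. Since $f$ is $\ell$-local, $f(G,x)$ is a function of $(G,x)_\ell$ alone and $f(T,x)$ is a function of $(T,x)_\ell$ alone; in particular on $\mathcal E^c$ the two neighbourhoods agree, hence $f(G,x) = f(T,x)$ there. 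Because the coupling preserves marginals, $\mathbb E f(G,x)$ and $\mathbb E f(T,x)$ are unchanged when evaluated on this common space, so
\[ \mathbb E f(G,x) - \mathbb E f(T,x) = \mathbb E\big[(f(G,x) - f(T,x))\,\mathbf 1_{\mathcal E}\big]. \]

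Next I would bound the right-hand side by Cauchy--Schwarz (equivalently Hölder with exponents $2,2$):
\[ \big|\mathbb E[(f(G,x) - f(T,x))\mathbf 1_{\mathcal E}]\big| \leq \big(\mathbb E|f(G,x) - f(T,x)|^2\big)^{1/2}\,\mathbb P(\mathcal E)^{1/2} \leq \big(2\,\mathbb E|f(G,x)|^2 + 2\,\mathbb E|f(T,x)|^2\big)^{1/2}\,\varepsilon^{1/2}, \]
using $|a-b|^2 \leq 2|a|^2 + 2|b|^2$ and again that the marginal second moments under the coupling coincide with the original ones. Finally, bounding $2\,\mathbb E|f(G,x)|^2 + 2\,\mathbb E|f(T,x)|^2 \leq 4\big(\mathbb E|f(G,x)|^2 \vee \mathbb E|f(T,x)|^2\big)$ and substituting the value of $\varepsilon$ yields the claim with $c_1 = 2\sqrt{c_0}$, so that $\varepsilon^{1/2} = \sqrt{c_0}\,\sqrt{p_{\max}\log(n)\,n^{2\kappa-1}}$ produces exactly the stated form of the bound.

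I do not expect a genuine obstacle here: the entire content of the lemma is already packaged in Proposition \ref{prop:coupling}, and the remaining argument is a one-line Cauchy--Schwarz estimate. The only point deserving a moment of care is the measure-theoretic bookkeeping --- namely, checking that $\mathbb E f(G,x)$, $\mathbb E|f(G,x)|^2$ and their hypertree counterparts are preserved upon passing to the coupling --- which is immediate since a coupling by definition preserves the marginal distributions, and $f(G,x)$, $f(T,x)$ are measurable functions of $(G,x)_\ell$, $(T,x)_\ell$ respectively. (The argument is insensitive to whether $f$ is real- or complex-valued, since only $|\cdot|$ enters.)
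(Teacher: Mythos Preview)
Your proposal is correct and follows essentially the same argument as the paper: invoke the coupling from Proposition~\ref{prop:coupling}, use $\ell$-locality to reduce the difference of expectations to the coupling-failure event, and apply Cauchy--Schwarz. The only cosmetic difference is that the paper bounds $\sqrt{\dE|f(G,x)-f(T,x)|^2}$ via the $L^2$ triangle inequality $\sqrt{\dE|f(G,x)|^2}+\sqrt{\dE|f(T,x)|^2}$ rather than your $|a-b|^2\leq 2|a|^2+2|b|^2$ followed by passing to the maximum, which affects only the value of the absolute constant.
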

\begin{proof}
  Let $\mathcal E_{\ell}$ be the event that the coupling between $(G,x)_{\ell}$ and $(T,x)_{\ell}$ fails. Then on $\mathcal E_{\ell}^c$, we have $f(G,x)=f(T,x)$ because $f$ is $\ell$-local. Then
  \begin{align*}
      | \dE f(G,x)-\dE f(T,x)|&\leq  \dE [|f(G,x)- f(T,x)|\mathbf{1}_{\mathcal E_{\ell}}]\\
      &\leq \sqrt{\dP (\mathcal E_{\ell})}\sqrt{\dE |f(G,x)-f(T,x)|^2}\\
      &\leq \sqrt{\dP (\mathcal E_{\ell})}\left(\sqrt{\dE |f(G,x)|^2}+\sqrt{\dE |f(T,x)|^2}\right)\\
      &\leq c_0 \sqrt{ p_{\max} \log(n)n^{2\kappa -1} }\left(\sqrt{\dE |f(G,x)|^2}+\sqrt{\dE |f(T,x)|^2}\right),
  \end{align*}
  where the last inequality is from Proposition \ref{prop:coupling}.
\end{proof}

Finally, we prove a concentration bound between the functional defined on hypergraphs and its expectation on hypertrees. 
\begin{lemma}\label{lem:functional_concentration}
 Let $\ell = \kappa \log_{(q-1)p_{\max}}(n)$, and $f : \mathcal  G_* \to \mathbb R$ be an $\ell$-local function such that $f(g, o) \leq \alpha |(g, o)_\ell |^{\beta}$ for some $\alpha, \beta$ not depending on $g$. Then, for all $s>0$, with probability $1 - n^{-s}$,
 \[ \left| \sum_{x\in [n]}f(G,x)-\mathbb E  \sum_{x\in [n]} f(T,x) \right| \leq c_3 \alpha s^{3/2+\beta}\log(n)^{3/2 + \beta} n^{\kappa(1 + \beta) + 1/2}. \]
\end{lemma}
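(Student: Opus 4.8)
The natural strategy is to interpose the quantity $\mathbb E\sum_{x}f(G,x)$ and use the triangle inequality to split
\[ \Bigl| \sum_{x\in[n]} f(G,x) - \mathbb E\sum_{x\in[n]} f(T,x) \Bigr| \leq \underbrace{\Bigl| \sum_{x\in[n]}\bigl(f(G,x)-\mathbb E f(G,x)\bigr)\Bigr|}_{(\mathrm{I})} + \underbrace{\sum_{x\in[n]}\bigl|\mathbb E f(G,x)-\mathbb E f(T,x)\bigr|}_{(\mathrm{II})}. \]
Term $(\mathrm I)$ is a fluctuation term, to be controlled by the Efron--Stein moment inequality of Lemma~\ref{lem:Effron}, while term $(\mathrm{II})$ is a bias term, to be controlled by the hypertree coupling through Lemma~\ref{lem:expdifference}.

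For $(\mathrm I)$, set $\psi(g,o)=\alpha\,|(g,o)_\ell|^{\beta}$. Adding a hyperedge can only enlarge $(g,o)_\ell$, so $\psi$ is non-decreasing under hyperedge addition, and it dominates $|f|$; hence Lemma~\ref{lem:Effron} applies with this $\psi$ and $t=\ell$. Using $[(q-1)p_{\max}]^{\ell}=n^{\kappa}$ together with the maximal-neighbourhood moment bound \eqref{eq:neighbourhood_max_expectation}, which gives $\bigl(\mathbb E\max_{o}|(G,o)_\ell|^{2p\beta}\bigr)^{1/(2p\beta)}\leq 2c_3(\log n+2p\beta)\,n^{\kappa}$, Lemma~\ref{lem:Effron} yields, for every even integer $p\geq2$,
\[ \bigl(\mathbb E\,(\mathrm I)^p\bigr)^{1/p} \leq c_1\,\alpha\,\sqrt n\,p^{3/2}\,n^{\kappa}\,\bigl(2c_3(\log n+2p\beta)\,n^{\kappa}\bigr)^{\beta}. \]
Choosing $p$ an even integer of order $s\log n$ and applying Markov's inequality (so the residual factor $e^{-p}$ becomes at most $n^{-s}$) converts this into: with probability at least $1-n^{-s}$, $(\mathrm I)\leq C\,\alpha\,(\log n)^{3/2+\beta}\,n^{1/2+\kappa(1+\beta)}$ for a constant $C$ depending only on $s,\beta,q,p_{\max}$, which is within the bound claimed in the lemma.

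For $(\mathrm{II})$, apply Lemma~\ref{lem:expdifference} to each summand; it then remains to bound $\mathbb E|f(G,x)|^2$ and $\mathbb E|f(T,x)|^2$. By the domination hypothesis these are at most $\alpha^2\,\mathbb E|(G,x)_\ell|^{2\beta}$ and $\alpha^2\,\mathbb E|(T,x)_\ell|^{2\beta}$, both of which are $O_\beta(n^{2\kappa\beta})$ by the $p$-th moment estimates of Lemma~\ref{lem:growthhypergraph} and \eqref{eq:Etree} taken at $p=2\beta$. Hence each summand is at most $c\sqrt{p_{\max}\log(n)\,n^{2\kappa-1}}\cdot\alpha\,O_\beta(n^{\kappa\beta})$, and summing over the $n$ vertices gives $(\mathrm{II})\leq C\,\alpha\,(\log n)^{1/2}\,n^{1/2+\kappa(1+\beta)}$, of lower order than the bound on $(\mathrm I)$. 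Combining the two bounds completes the argument.

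Since every ingredient is already in place, there is no deep obstacle here; the delicate points are technical. First, in the moment-to-tail conversion one must balance the order $p\sim\log n$ against the $\log n$ factors in \eqref{eq:neighbourhood_max_expectation} and check that the constants stay uniform over this range (in particular that the $c_3$ in that estimate does not degrade). Second, one should confirm that $f$ is genuinely dominated by the monotone functional $\psi$: the hypothesis is written as a one-sided bound $f(g,o)\leq\alpha|(g,o)_\ell|^{\beta}$, and one uses the matching two-sided bound $|f(g,o)|\leq\alpha|(g,o)_\ell|^{\beta}$, which is what Lemma~\ref{lem:Effron} requires and which holds in all the applications.
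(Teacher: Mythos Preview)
Your proposal is correct and follows essentially the same route as the paper: the same triangle-inequality split into fluctuation and bias, the same dominating functional $\psi(g,o)=\alpha|(g,o)_\ell|^\beta$ fed into Lemma~\ref{lem:Effron} together with \eqref{eq:neighbourhood_max_expectation} and a Markov/Chebyshev moment-to-tail conversion at $p\asymp\log n$, and the same use of Lemma~\ref{lem:expdifference} for the bias term. Your closing remark about the one-sided versus two-sided bound on $f$ is well taken; the paper tacitly uses $|f|\leq\psi$ as well.
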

\begin{proof} We shall use the following version of the Chebyshev inequality: for a random variable $X$ and any $p \geq 1$,
\begin{equation}\label{eq:chebyshev_generalized}
    \Pb*{|X| > a\, \E{X^{2p}}^{\frac 1 {2p}}} \leq a^{-2p}.
\end{equation}
Let $\psi(g, o) = \alpha |(g, o)_\ell |^{\beta}$; it is easily checked that $\psi$ satisfies the conditions of Lemma \ref{lem:Effron}. Further, using \eqref{eq:neighbourhood_max_expectation}, we have
\[ \E*{\max_{o\in [n]}\psi(G,o)^{2p}}^{\frac{1}{2p}} \leq c_1 \alpha (\log(n) \vee p)^\beta [(q-1)p_{\max}]^{\beta \ell}. \]
We apply \eqref{eq:chebyshev_generalized} with $a = e$ and $2p \geq s \log(n)$: from Lemma \ref{lem:Effron} ,with probability at least $1 - n^{-s},$
\[ \left| \sum_{x\in [n]}f(G,x)-  \dE f (G,x) \right| \leq c_2 \alpha s^{3/2+\beta} \log(n)^{3/2 + \beta} n^{\kappa(1 + \beta) + 1/2} . \]
The exponent in $n$ above is more than the one in Lemma \ref{lem:expdifference}, and hence using the triangle inequality

\[ \left| \sum_{x\in [n]}f(G,x)-\mathbb E  \sum_{x\in [n]} f(T,x) \right| \leq c_3 \alpha s^{3/2+\beta}\log(n)^{3/2 + \beta} n^{\kappa(1 + \beta) + 1/2}. \]
\end{proof}

\begin{proposition}\label{prop:functional_concentration_final}
 Let $f$ be a $\ell$-local function satisfying the hypotheses of Lemma \ref{lem:functional_concentration}, and define $\overline f$ as in \eqref{eq:GW_transform_definition}.
Then for all $s>0$, with probability $1 - n^{-s}$,
 \[ \left | \frac1n \sum_{x \in [n]} f(G, x) - \sum_{i \in [r]} \pi_i \overline f(i) \right | \leq c_3 \alpha s^{3/2+\beta} \log(n)^{3/2 + \beta} n^{\kappa(1 + \beta) - 1/2}. \]
\end{proposition}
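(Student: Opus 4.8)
The plan is to deduce this statement directly from Lemma~\ref{lem:functional_concentration}; the only extra ingredient needed is an \emph{exact} formula for the hypertree expectation $\mathbb E \sum_{x\in[n]} f(T,x)$ in terms of the Galton-Watson transform $\overline f$. First I would record that, by Definition~\ref{def:gw_tree}, the law of the rooted Galton-Watson hypertree $(T,x)$ depends on the vertex $x$ only through its type $\sigma(x)$; hence, by the definition \eqref{eq:GW_transform_definition}, $\mathbb E f(T,x) = \overline f(\sigma(x))$ for every $x\in[n]$. Summing over $x$, grouping vertices by type, and using $n_i = \pi_i n$, this gives the exact identity
\[ \mathbb E \sum_{x\in[n]} f(T,x) \;=\; \sum_{x\in[n]} \overline f(\sigma(x)) \;=\; \sum_{i\in[r]} n_i\,\overline f(i) \;=\; n \sum_{i\in[r]} \pi_i\,\overline f(i). \]

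Next I would invoke Lemma~\ref{lem:functional_concentration}: since $f$ is $\ell$-local with $f(g,o)\le \alpha|(g,o)_\ell|^\beta$, for any fixed $s>0$ we have, with probability at least $1-n^{-s}$,
\[ \Bigl| \sum_{x\in[n]} f(G,x) - \mathbb E\!\sum_{x\in[n]} f(T,x) \Bigr| \;\le\; c_3\,\alpha \log(n)^{5/2+\beta}\, n^{\kappa(1+\beta)+1/2}. \]
Substituting the identity above for $\mathbb E\sum_x f(T,x)$ and dividing both sides by $n$ yields precisely the claimed bound, with the exponent of $n$ lowered from $\kappa(1+\beta)+1/2$ to $\kappa(1+\beta)-1/2$.

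All the analytic work has already been carried out inside Lemma~\ref{lem:functional_concentration}, which combines the Efron-Stein moment bound of Lemma~\ref{lem:Effron}, the neighbourhood growth estimates of Lemma~\ref{lem:growthhypergraph}, and the tree coupling of Proposition~\ref{prop:coupling} (through Lemma~\ref{lem:expdifference}); consequently there is essentially no obstacle in this proposition. The one point meriting a moment's care is that the hypertree expectation identity must be genuinely exact rather than approximate, which is exactly where the normalization $\pi_i = n_i/n$ of the HSBM is used — and where, under the alternative model with i.i.d.\ random types discussed in the remark following \eqref{eq:pi_orthogonal}, one would instead pick up a negligible $O(n^{-\delta})$ correction that is absorbed into the right-hand side.
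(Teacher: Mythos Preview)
Your proposal is correct and follows exactly the same approach as the paper: establish the exact identity $\frac1n\sum_{x\in[n]}\mathbb E f(T,x)=\sum_{i\in[r]}\pi_i\overline f(i)$ from the definitions of $\overline f$ and $\pi$, then apply Lemma~\ref{lem:functional_concentration} and divide by $n$. Your write-up simply spells out the identity in more detail than the paper does.
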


\begin{proof}
  From the definitions of $\overline f$ and $\pi$, we have
  \[ \frac1n \sum_{x \in [n]} \E{f(T, x)} = \sum_{i \in [r]} \pi_i \overline f(i), \]
  and the result ensues from an application of Lemma \ref{lem:functional_concentration}.
\end{proof}

\section{Structure of near eigenvectors}\label{sec:near_eigenvec}

Throughout this section, we will make use of the following functional, defined for $t \geq 0$ and $\xi \in \dR^r$:
 \begin{align}\label{eq:defh}
 h_{\xi, t}(g, o) = \ind_{(g, o)_t \text{ is tangle-free}} \sum_{x_0 = o, \dots, x_t} \xi(\sigma(x_t)), 
 \end{align}
  where the sum ranges over all non-backtracking paths of length $t$ starting at $x_0 = o$.
  \begin{lemma}\label{lem:non_backtracking_functional}
  The functional $h_{\xi, t}$ satisfies the following properties:
      \begin{enumerate}
          \item $|h_{\xi, t}(g, o)| \leq 2 \norm{\xi}_\infty |(g, o)_t|$ 
          \item $h_{\xi, t}(T, \rho) = f_{\xi, t}(T, \rho)$ if $(T, \rho)$ is a hypertree, where $f_{\xi, t}(T, \rho)$ was defined in \eqref{eq:def_tree_functional}.
      \end{enumerate}
  \end{lemma}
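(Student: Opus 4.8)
The plan is to verify the two claims directly from the definition \eqref{eq:defh}, since both are essentially bookkeeping statements about non-backtracking paths.

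For the first claim, I would observe that the number of non-backtracking paths of length exactly $t$ starting at $o$ in $(g,o)_t$ is bounded by the number of vertices (or rather vertex-hyperedge sequences) reachable within distance $t$. More carefully: each non-backtracking path $(x_0, e_0, x_1, e_1, \dots, x_t, e_t)$ of length $t$ is determined by its sequence of visited vertices and hyperedges, all of which lie in $(g,o)_t$. Since the summand $\xi(\sigma(x_t))$ is bounded in absolute value by $\norm{\xi}_\infty$, it suffices to bound the number of such paths by $2|(g,o)_t|$. When $(g,o)_t$ is tangle-free (which is the only case where $h_{\xi,t}$ is nonzero), the paths live in a hypergraph with at most one cycle; a standard counting argument shows that in a tangle-free neighbourhood the number of non-backtracking paths of a given length from a fixed root is at most twice the number of vertices at that distance — the factor $2$ accounting for the at-most-one cycle that may be traversed in either direction. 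Hence $|h_{\xi,t}(g,o)| \leq 2\norm{\xi}_\infty |(g,o)_t|$. (If one is not worried about the sharp constant, one can bound the count even more crudely by the total number of distinct oriented hyperedges in $(g,o)_t$, which is $O(|(g,o)_t|)$ by $q$-uniformity.)

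For the second claim, suppose $(T,\rho)$ is a tree. Then in particular $(T,\rho)_t$ contains no cycle, so the indicator $\ind_{(T,\rho)_t \text{ is tangle-free}}$ equals $1$ and $h_{\xi,t}(T,\rho) = \sum \xi(\sigma(x_t))$ over all non-backtracking paths of length $t$ from $\rho$. On a hypertree, a non-backtracking path starting at the root never revisits a hyperedge, and since there are no cycles the endpoint $x_t$ ranges exactly over $\partial(T,\rho)_t$, with each such vertex reached by a unique non-backtracking path from $\rho$ (the unique geodesic). Therefore $\sum_{x_0=\rho,\dots,x_t}\xi(\sigma(x_t)) = \sum_{x_t \in \partial(T,\rho)_t}\xi(\sigma(x_t)) = f_{\xi,t}(T,\rho)$ by the definition \eqref{eq:def_tree_functional}.

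The only mildly delicate point — and the step I expect to require the most care — is the path-counting bound in (i) on a tangle-free (rather than tree) neighbourhood: one must argue that allowing a single cycle can only multiply the number of length-$t$ non-backtracking paths by a bounded factor, which follows because any such path either stays in the tree part of $(g,o)_t$ or passes through the unique cycle, and the cycle can be entered/traversed in only a bounded number of ways. Everything else is a direct unwinding of definitions.
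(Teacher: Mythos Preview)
Your proposal is correct and follows essentially the same approach as the paper. The paper's proof of (i) is simply the one-line observation that in a tangle-free $t$-neighbourhood there are at most two non-backtracking paths from $o$ to any fixed vertex $x_t \in (g,o)_t$, so summing over endpoints gives the bound; your ``standard counting argument'' is exactly this, though note your phrase ``twice the number of vertices \emph{at that distance}'' should read ``within that distance,'' since the endpoint of a length-$t$ non-backtracking path need not lie on $\partial(g,o)_t$ when a cycle is present. Part (ii) is identical to the paper's argument.
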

  
  \begin{proof}
    If $(g, o)_t$ is tangle-free, there are at most two paths from $o$ to $x_t$ for any vertex $x_t$ in $(g, o)_t$. Hence the first part of the lemma holds. For the second part, notice that if $(T, \rho)$ is a hypertree, then there is a path from $o$ to $x_t$ if and only if $x_t$ is at depth $t$ in $T$, and this path is unique.
  \end{proof}

\begin{lemma}\label{lem:scal_u_v}
  Let $\ell\leq \kappa \log_{(q-1)p_{\max}}n$. With probability at least $1 - c_1 n^{12\kappa - 1}$, the following inequality holds for any $i, j \in [r]$ and $0 \leq t \leq 3\ell$:
 \begin{equation}
      |\langle B^t J \chi_i, \chi_j\rangle - n [(q-1)\mu_i]^{t+1} \delta_{ij} |\leq c_2 \log(n)^{7/2} n^{6\kappa + 1/2} \label{eq:scal_u_v}.
 \end{equation}
\end{lemma}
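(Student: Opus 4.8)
The plan is to rewrite $\langle B^t P \chi_i,\chi_j\rangle$ as a spatial average of a local hypergraph functional, and then to invoke the concentration statement Proposition~\ref{prop:functional_concentration_final} together with the Galton--Watson transform computation of Proposition~\ref{prop:gw_transform_expectation}.

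\textbf{Step 1: a combinatorial identity.} Unfolding the definitions \eqref{eq:BtoA}, \eqref{eq:def_P} and \eqref{eq:defchi}, one checks that
\[
\langle B^t P \chi_i,\chi_j\rangle \;=\; \sum_{o\in V}\phi_j(\sigma(o))\sum_{\gamma}\phi_i(\sigma(x_{t+1})),
\]
where the inner sum ranges over all non-backtracking walks $\gamma=(x_0=o,e_0,x_1,e_1,\dots,x_t,e_t,x_{t+1})$ of length $t+1$ in $G$ starting at $o$: a length-$t$ non-backtracking walk in $\vec H$ starting from an oriented hyperedge $(o\to e_0)$, followed by the action of $P$ — which appends a vertex $x_{t+1}\in e_t\setminus\{x_t\}$ — is exactly such an object, and conversely. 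Comparing with the functional $h_{\xi,s}$ of \eqref{eq:defh}, this means that on the event $\mathcal T$ that $G$ is $(3\ell+1)$-tangle-free (so that every ball $(G,o)_{t+1}$ with $t\le 3\ell$ carries at most one cycle, making the tangle-free indicator in $h_{\phi_i,t+1}$ equal to $1$),
\[
\langle B^t P \chi_i,\chi_j\rangle \;=\; \sum_{o\in V} f^{(t)}(G,o),\qquad f^{(t)}(g,o):=\phi_j(\sigma(o))\,h_{\phi_i,t+1}(g,o).
\]
By Lemma~\ref{lem:tangle-free}(i) applied with $\ell$ replaced by $3\ell$, $\mathbb P(\mathcal T)\ge 1-c\log^4(n)\,n^{12\kappa-1}$.

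\textbf{Step 2: concentration and identification of the main term.} The functional $f^{(t)}$ is $(t+1)$-local, and by Lemma~\ref{lem:non_backtracking_functional}(i) it obeys $|f^{(t)}(g,o)|\le 2\|\phi_i\|_\infty\|\phi_j\|_\infty\,|(g,o)_{t+1}|$; since $t+1\le 3\ell+1$ it satisfies the hypotheses of Lemma~\ref{lem:functional_concentration} and Proposition~\ref{prop:functional_concentration_final} with $\beta=1$, $\alpha=O(1)$, and $\kappa$ replaced by $3\kappa$. Since a Galton--Watson hypertree is almost surely a tree, Lemma~\ref{lem:non_backtracking_functional}(ii) gives $f^{(t)}(T,\rho)=\phi_j(\sigma(\rho))\,f_{\phi_i,t+1}(T,\rho)$, so by \eqref{eq:fphit} its Galton--Watson transform is $\overline{f^{(t)}}(k)=[(q-1)\mu_i]^{t+1}\phi_i(k)\phi_j(k)$. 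Using the $\pi$-orthonormality \eqref{eq:pi_orthogonal},
\[
\sum_{k\in[r]}\pi_k\,\overline{f^{(t)}}(k)\;=\;[(q-1)\mu_i]^{t+1}\langle\phi_i,\phi_j\rangle_\pi\;=\;[(q-1)\mu_i]^{t+1}\delta_{ij},
\]
and Proposition~\ref{prop:functional_concentration_final} yields, with probability at least $1-n^{-s}$ for any prescribed $s$,
\[
\Bigl|\tfrac1n\langle B^t P \chi_i,\chi_j\rangle-[(q-1)\mu_i]^{t+1}\delta_{ij}\Bigr|\;\le\; c\,\log(n)^{7/2}\,n^{6\kappa-1/2}.
\]

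\textbf{Step 3: union bound.} Multiplying by $n$ recovers \eqref{eq:scal_u_v} for a fixed triple $(i,j,t)$. Taking a union bound over the $r^2$ pairs $(i,j)$ and the $O(\log n)$ indices $0\le t\le 3\ell$, intersecting with $\mathcal T$, and choosing $s>1-12\kappa$ (possible since $\kappa$ is a fixed small constant) so that the concentration failures are negligible against $\mathbb P(\mathcal T^c)$, the overall failure probability is $O(\log^{5}(n)\,n^{12\kappa-1})$, absorbed into $c_1 n^{12\kappa-1}$ (the polylogarithmic factor being lower order).

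\textbf{Expected main difficulty.} Everything after Step~1 is a direct application of the machinery of Sections~\ref{sec:functional}--\ref{sec:spatial_avg}; the one delicate point is the combinatorial identity of Step~1, where one must precisely reconcile length-$t$ walks in the oriented-hyperedge space $\vec H$ composed with the operator $P$ — which, unlike in the graph case, does not square to the identity (Lemma~\ref{lem:formula_P}) — with length-$(t+1)$ non-backtracking walks in $G$, checking that the $P$-step appends a genuine extra vertex without creating backtracking and that each walk is counted with the correct multiplicity.
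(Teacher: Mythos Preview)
Your proof is correct and follows essentially the same route as the paper: rewrite $\langle B^tP\chi_i,\chi_j\rangle$ as a spatial sum of a local functional, compute its Galton--Watson transform via Proposition~\ref{prop:gw_transform_expectation} and the $\pi$-orthonormality \eqref{eq:pi_orthogonal}, and conclude with Proposition~\ref{prop:functional_concentration_final} and the tangle-free bound from Lemma~\ref{lem:tangle-free}. The only cosmetic difference is that the paper writes the functional as $\phi_j(\sigma(o))\,\partial h_{\phi_i,t}(g,o)$ and invokes Lemma~\ref{lem:gw_transform_recurrence} to compute $\overline f$, whereas you use $\phi_j(\sigma(o))\,h_{\phi_i,t+1}(g,o)$ directly; your choice makes the combinatorial identity in Step~1 exact on the tangle-free event and avoids the extra lemma, but the two computations are otherwise identical.
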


\begin{proof}
  Define the functional
  \[ f(g, o) = \phi_j(\sigma(o))\partial h_{\phi_i, t}(g , o), \]
  where $\phi_i,\phi_j$ are  eigenvectors of $Q$, $\partial h$ is defined in \eqref{eq:def_partial_functional}, and $h_{\phi_i,t}$ is defined in \eqref{eq:defh}. Recall the definition of $\chi_i$ in \eqref{eq:defchi}.
  When $(G, x)_t$ is tangle-free, we have
  \begin{align*} 
  f(G, x) &= \phi_j(\sigma(x)) \sum_{x_0=x, \dots, x_{t+1}} \phi_i(x_{t+1}) \\
  &=\sum_{e: x\in e}\phi_j(\sigma(x))  [B^{t} J \chi_i](x \to e)=\sum_{e:x \in e} \chi_j(x \to e) [B^{t} J \chi_i](x \to e),
  \end{align*}
  and hence on the event described in Lemma \ref{lem:tangle-free} (i) that $H$ is $3\ell+1$-tangle free,
  \[ \sum_{x \in [n]} f(G, x) = \langle \chi_j, B^{t}J \chi_i \rangle. \]
  On the other hand, it is straightforward to check that $f$ is $(t+1)$-local and satisfies
  \[ |f(g, o)| \leq 2|(g, o)_{t+1}|,\]
  using the properties of $h$ outlined above.
  Additionally, from Lemma \ref{lem:gw_transform_recurrence}, 
  \[ \overline f(k) =\phi_j(k) \overline{\partial h_{\phi_i,t}} =\phi_j(k) (q-1) [Q\overline{h_{\phi_i, t}}](k), \]
  and Lemma \ref{lem:non_backtracking_functional} implies
  \[ \phi_j(k) (q-1) [Q\overline{h_{\phi_i, t}}](k)= \phi_j(k) (q-1) [Q\overline{f_{\phi_i, t}}](k).
  \]
  And we can apply \eqref{eq:fphit} in Proposition \ref{prop:gw_transform_expectation}:
  \begin{align*} 
    \sum_{k \in [r]} \pi(k) \overline f(k)&=(q-1)\sum_{k\in [r]} \pi(k) \phi_j(k)[Q\overline{h_{\phi_i, t}}](k) \\
    &= [(q-1)\mu_i]^{t+1} \sum_{k \in [r]} \pi(k) \phi_i(k)\phi_j(k)= [(q-1)\mu_i]^{t+1} \delta_{ij},
  \end{align*}
  where the last equation is from \eqref{eq:pi_orthogonal}.
  Then \eqref{eq:scal_u_v} follows from Proposition \ref{prop:functional_concentration_final}.
  \end{proof}

  \begin{lemma}\label{lem:scal_uu_vv}
    Let $\ell\leq \kappa \log_{(q-1)p_{\max}}n$, and define
    \begin{align}\label{eq:def_gamma_i_t}
  \gamma_i^{(t)} = \frac{1 - \tau_i^{t+1}}{1 - \tau_i} + \frac{q-2}{(q-1)\mu_i}\frac{1-\tau_i^t}{1-\tau_i}. 
    \end{align}
  Then for any $i, j \in [r]$ and $t\leq\ell$, the following holds:
    \begin{align}
        \left| \langle B^t J \chi_i, B^t J \chi_j \rangle - [(q-1)\mu_i]^{2t}n(q-1)(d\gamma_i^{(t)} + (q-2)\mu_i) \delta_{ij} \right| &\leq c_1 \log(n)^{9/2} n^{3\kappa - 1/2}, \label{eq:scal_uuvv1}\\
        \left| \langle (B^*)^t \chi_i, (B^*)^t \chi_j \rangle - [(q-1)\mu_i]^{2t}n d\gamma_i^{(t)} \delta_{ij} \right| &\leq  c_2 \log(n)^{9/2} n^{3\kappa - 1/2}.\label{eq:scal_uuvv2}
    \end{align}
  \end{lemma}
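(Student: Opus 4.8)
The plan is to write each inner product as a spatial average $\sum_{x \in [n]} F(G,x)$ of an $(\ell+1)$-local hypergraph functional $F$, to compute the Galton--Watson transform $\overline F$ of $F$ with the tree calculus of Section~\ref{sec:functional}, and to conclude via the concentration estimate of Proposition~\ref{prop:functional_concentration_final}; this is the scheme already used for Lemma~\ref{lem:scal_u_v}. For \eqref{eq:scal_uuvv1}, grouping oriented hyperedges by their base vertex gives $\langle B^tP\chi_i, B^tP\chi_j\rangle = \sum_{o \in [n]} F_1(G,o)$ with $F_1(g,o) = \sum_{e \ni o} [B^tP\chi_i](o\to e)\,[B^tP\chi_j](o\to e)$, an identity valid once $H$ is $(\ell+1)$-tangle-free; exactly as in Lemma~\ref{lem:scal_u_v} one formally replaces $B^t$ by its tangle-free power and uses the functionals $h_{\phi_i,t}$ of \eqref{eq:defh}, so that the identity fails only off an event of probability $O(\log^4(n)\, n^{4\kappa-1})$ by Lemma~\ref{lem:tangle-free}(i). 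On a tangle-free hypergraph $[B^tP\chi_i](o\to e)$ is the sum of $\phi_i(\sigma(\cdot))$ over the depth-$(t+1)$ vertices joined to $o$ by a non-backtracking path whose first hyperedge is $e$; writing $m_e$ for the number of such vertices (counted with multiplicity at most $2$), we have $\sum_e m_e \le 2|(g,o)_{t+1}|$, whence $|F_1(g,o)| \le \|\phi_i\|_\infty\|\phi_j\|_\infty \sum_e m_e^2 \le \|\phi_i\|_\infty\|\phi_j\|_\infty\, |(g,o)_{t+1}|^2$, so $F_1$ satisfies the hypotheses of Lemma~\ref{lem:functional_concentration} with exponent $\beta = 2$.

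To compute $\overline{F_1}$, note that on a hypertree rooted at $\rho$ one has $F_1(T,\rho) = \sum_{e \ni \rho} c^{(i)}_e c^{(j)}_e$ with $c^{(i)}_e = \sum_{y \in e \setminus \rho} f_{\phi_i,t}(T_y,y)$, the $c^{(i)}_e$ being i.i.d.\ over the $\Poi(d)$ hyperedges at $\rho$; Lemma~\ref{lem:poisson_sums} then yields $\overline{F_1} = d\,\dE[c^{(i)}_{e_1} c^{(j)}_{e_1}]$. Splitting $c^{(i)}_{e_1} c^{(j)}_{e_1}$ into its diagonal part $\sum_{y} f_{\phi_i,t}(T_y,y) f_{\phi_j,t}(T_y,y)$ and its off-diagonal part $\sum_{y\neq y'} f_{\phi_i,t}(T_y,y) f_{\phi_j,t}(T_{y'},y')$, and applying Lemma~\ref{lem:one_step_propagation} together with the moment formulas \eqref{eq:fphit}--\eqref{eq:ffffphit} of Proposition~\ref{prop:gw_transform_expectation} (the off-diagonal part producing the $Q^{(3)}$ contribution exactly as in the proof of Proposition~\ref{prop:martingale_correlation}), gives
\[ \overline{F_1} = (q-1)\, Q\,\overline{f_{\phi_i,t}f_{\phi_j,t}} + (q-1)(q-2)\,[(q-1)^2\mu_i\mu_j]^t\, Q^{(3)}(\phi_i\otimes\phi_j). \]
Then $\sum_k \pi_k \overline{F_1}(k)$ collapses to a multiple of $\delta_{ij}$ using $\pi^\top Q = d\,\pi^\top$, $\langle \phi_i,\phi_j\rangle_\pi = \delta_{ij}$ (from \eqref{eq:pi_orthogonal}), and $\langle \pi, Q^{(3)}(\phi_i\otimes\phi_j)\rangle = \mu_i\delta_{ij}$; the geometric factors $\sum_{s=0}^{t-1}\tau_i^s$ arising from \eqref{eq:ffffphit} recombine into $\gamma_i^{(t)}$ as in \eqref{eq:def_gamma_i_t}, so that $\sum_k \pi_k \overline{F_1}(k) = (q-1)[(q-1)\mu_i]^{2t}(d\gamma_i^{(t)}+(q-2)\mu_i)\,\delta_{ij}$. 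Proposition~\ref{prop:functional_concentration_final} with $\alpha = O(\|\phi_i\|_\infty\|\phi_j\|_\infty)$ and $\beta = 2$, combined with the tangle-free failure probability, then delivers \eqref{eq:scal_uuvv1}.

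For \eqref{eq:scal_uuvv2} the same scheme applies. Grouping oriented edges by base vertex and using that on a tree $[(B^*)^t\chi_i](o\to f) = f_{\phi_i,t}(T,o) - c^{(i)}_f$, one gets $\langle (B^*)^t\chi_i,(B^*)^t\chi_j\rangle = \sum_o F_2(G,o)$ with, on a hypertree, $F_2(T,o) = (\deg(o)-2)\, f_{\phi_i,t}(T,o) f_{\phi_j,t}(T,o) + \sum_{f\ni o} c^{(i)}_f c^{(j)}_f$; expanding the sums over $f$ as Poisson sums and using the factorial moments $\dE N = d$, $\dE N(N-1) = d^2$, $\dE N(N-1)(N-2) = d^3$ for $N \sim \Poi(d)$ together with \eqref{eq:fphit}--\eqref{eq:ffffphit}, the same simplifications produce $\sum_k \pi_k \overline{F_2}(k) = d\,[(q-1)\mu_i]^{2t}\gamma_i^{(t)}\,\delta_{ij}$, and Proposition~\ref{prop:functional_concentration_final} concludes. (Alternatively, one may reduce \eqref{eq:scal_uuvv2} directly to \eqref{eq:scal_uuvv1}-type quantities via the parity-time symmetry $B^tP = P(B^*)^t$ of Lemma~\ref{lem:BPPB} together with $P^{-1} = \tfrac1{q-1}(P-(q-2)I)$ from Lemma~\ref{lem:formula_P}.)

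The main obstacle is the Galton--Watson bookkeeping: one must split the product of two non-backtracking path-sums according to whether the two paths share their endpoint, branch inside an intermediate hyperedge (the source of the $Q^{(3)}$ terms), or branch at the root (the source of the $(\deg(o)-2)$ correction in $F_2$), and then verify that all contributions recombine into the closed form governed by $\gamma_i^{(t)}$ — including the off-diagonal cancellations at every order, which rely on the $\pi$-orthogonality \eqref{eq:pi_orthogonal} and the contractions of $Q$ and $Q^{(3)}$ against $\pi$. A secondary technical point is the $\deg(o)$ factor in $F_2$, which a priori costs an extra power of $|(g,o)_t|$ (hence of $\log n$) in the Efron--Stein step of Lemma~\ref{lem:functional_concentration} and must be absorbed, e.g.\ by conditioning on the high-probability event $\max_o \deg(o) = O(\log n)$.
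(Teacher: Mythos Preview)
Your argument for \eqref{eq:scal_uuvv1} is essentially the paper's own proof: the same functional $F_1$, the same decomposition of its Galton--Watson transform into the $(q-1)Q\overline{f_{\phi_i,t}f_{\phi_j,t}}$ and $(q-1)(q-2)Q^{(3)}$ pieces, and the same contraction against $\pi$ via Lemma~\ref{lem:gw_transform_product}, followed by Proposition~\ref{prop:functional_concentration_final} with $\beta=2$.

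For \eqref{eq:scal_uuvv2} you and the paper diverge. The paper does \emph{not} introduce a second functional $F_2$; it simply inserts the identity $I = (P^2 - (q-2)P)/(q-1)$ from Lemma~\ref{lem:formula_P} and uses $P(B^*)^t = B^tP$ to get
\[
\langle (B^*)^t\chi_i,(B^*)^t\chi_j\rangle \;=\; \tfrac1{q-1}\langle B^tP\chi_i,B^tP\chi_j\rangle \;-\; \tfrac{q-2}{q-1}\langle B^{2t}P\chi_i,\chi_j\rangle,
\]
after which \eqref{eq:scal_uuvv1} and Lemma~\ref{lem:scal_u_v} (at $2t$) finish the job in two lines. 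This is exactly the alternative you mention parenthetically; it sidesteps every complication you flag (the $\deg(o)$ factor, the extra $\log n$ loss in Efron--Stein, the third factorial moment of $N$, the delicate branching case analysis). Your direct route via $F_2$ is in principle workable, but note a small slip: the $c^{(i)}_f$ appearing in $[(B^*)^t\chi_i](o\to f)=f_{\phi_i,t}-c^{(i)}_f$ must be built from $f_{\phi_i,t-1}$ on the subtrees, not $f_{\phi_i,t}$ as in your part~(i) definition, since only then does $\sum_{f\ni o} c^{(i)}_f = f_{\phi_i,t}(T,o)$ and your $(\deg(o)-2)$ formula hold. The algebraic reduction is both shorter and avoids this bookkeeping entirely.
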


  \begin{proof}[Proof of Lemma \ref{lem:scal_uu_vv}]
  We begin with a lemma that will be proven in Appendix \ref{sec:Misc}:
  \begin{lemma}\label{lem:gw_transform_product}
   For any vectors $\phi, \phi'\in \mathbb R^{r}$,
   \begin{equation}\label{eq:general_vec_product}
       \langle \pi, Q^{(3)}(\phi \otimes \phi') \rangle = \phi^* \Pi Q \phi'.
   \end{equation}
   In particular, if $\phi_i, \phi_j$ are two eigenvectors of $Q$ for $\mu_i,\mu_j$,  we have 
   \begin{align}\label{eq:eigen_vec_product}
    \langle \pi, Q^{(3)}(\phi_i \otimes \phi_j) \rangle &= \mu_i\delta_{ij},  \\
   \langle \pi, Q^s y^{(\phi_i, \phi_j)} \rangle &=d^{s+1}\delta_{ij}+(q-2)d^s\mu_i\delta_{ij}.\label{eq:pi_yphi_phi}
   \end{align}
   As a result, for any $i, j \in [r]$ and $t \in \dN$, 
   \begin{align}\label{eq:pi_eigen_vec_product}
     \langle \pi, \overline{f_{\phi_i, t}f_{\phi_j, t}} \rangle = [(q-1)\mu_i]^{2t} \gamma_{i}^{(t)} \delta_{ij}.  
   \end{align} 
  \end{lemma}
  
  Now define
  \[ f(g, o) = \sum_{e:  o\in e} \left(\sum_{o' \in e} h_{\phi_i, t}(g, o')\right)\left(\sum_{o' \in e} h_{\phi_j, t}(g, o')\right). \]
  When $G$ is tangle-free, we have
  \[ \sum_{x\in [n]} f(G, x) = \sum_{x\in [n]} \sum_{x \to e} [B^t J \chi_i](x \to e) [B^t J \chi_j](x \to e) = \langle B^t J \chi_i, B^t J \chi_j \rangle. \]
  Additionally, using \eqref{eq:e_x_x} and \eqref{eq:e_x_x_2},
  \[ \overline f = (q-1)Q\overline{f_{\phi_i, t}f_{\phi_j, t}} + (q-1)(q-2) Q^{(3)}(\overline{f_{\phi_i, t}} \otimes \overline{f_{\phi_j, t}}). \]
Since $\pi^*Q=d\pi^*$,  we can now use Lemma \ref{lem:gw_transform_product} to get:
  \begin{align*}
  \langle \pi, \overline f \rangle &= (q-1)d[(q-1) \mu_i]^{2t} \gamma_i^{(t)} \delta_{ij}+ (q-1)(q-2)[(q-1)\mu_i]^{2t}  \mu_i\delta_{ij}\\
  &= [(q-1) \mu_i]^{2t} (q-1) (d \gamma_i^{(t)} + (q-2)\mu_i)\delta_{ij}.
  \end{align*}
  Proposition \ref{prop:functional_concentration_final} then implies the first result.   For the second one,  with Lemma \ref{lem:formula_P}, we can  write
  \begin{align*}
  \langle (B^*)^t \chi_i, (B^*)^t \chi_j \rangle &= \left\langle \frac{J^2 - (q-2)J}{q-1}(B^*)^t \chi_i, (B^*)^t \chi_j \right\rangle \\
  &= \frac1{q-1} \langle B^t J \chi_i, B^t J \chi_j \rangle - \frac{q-2}{q-1}\langle B^{2t} J \chi_i, \chi_j \rangle.
  \end{align*}
  \eqref{eq:scal_uuvv2} then results from \eqref{eq:scal_uuvv1}, as well as Lemma \ref{lem:scal_u_v}.
  \end{proof}

Next, we show a lemma that quantifies the pseudo-eigenvector scaling of the $J \chi_i$:
\begin{lemma}\label{lem:telescopediff}
 Let $\ell\leq \kappa \log_{(q-1)p_{\max}}n$. With probability at least $1 - c_1 n^{4\kappa - 1}$, the following inequality holds for any $i \in [r]$  and $0 \leq t \leq 2\ell$:
 \begin{equation}\label{eq:telescopediff}
     \norm{B^{t+1}J \chi_i - (q-1)\mu_i B^t J \chi_i}^2 \leq (q-1)[(q-1)d]^{t+3}n + c_1 \log(n)^{9/2} n^{6\kappa + 1/2}.
 \end{equation}
 {In particular when $\kappa \leq 1/12$,
 \begin{equation}\label{eq:telescopediff2}
     \norm{B^{t+1}J \chi_i - (q-1)\mu_i B^t J \chi_i} \leq c_2 [(q-1)d]^{t/2}\sqrt n.
 \end{equation}}
\end{lemma}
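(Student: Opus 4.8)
The plan is to treat $B^{t+1}P\chi_i-(q-1)\mu_iB^{t}P\chi_i$ as the non-backtracking analogue of the martingale increment $Z_{t+1}-Z_t$ from Proposition~\ref{prop:martingale_equation}, and to push its squared norm through the same local-to-global scheme used in the proof of Lemma~\ref{lem:scal_uu_vv}. I would introduce the functional
\[
  f_t(g,o)=\ind_{(g,o)_{t+2}\text{ tangle-free}}\sum_{e\ni o}\Bigl([B^{t+1}P\chi_i](o\to e)-(q-1)\mu_i\,[B^{t}P\chi_i](o\to e)\Bigr)^2,
\]
where each scalar $[B^{s}P\chi_i](o\to e)$ is read off the local ball $(g,o)_{s+1}$ as a sum of $\phi_i$ over endpoints of non-backtracking paths, exactly as in the proof of Lemma~\ref{lem:scal_u_v} via the functionals $h_{\phi_i,\cdot}$ of \eqref{eq:defh}; thus $f_t$ is $(t+2)$-local. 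On the event that $G$ is tangle-free up to radius $2\ell+2$ --- which by Lemma~\ref{lem:tangle-free}(i) holds with probability at least $1-c_1n^{4\kappa-1}$ up to polylog factors --- every indicator is $1$ and $\sum_{o\in[n]}f_t(G,o)=\norm{B^{t+1}P\chi_i-(q-1)\mu_iB^{t}P\chi_i}^2$ for all $0\le t\le 2\ell$ at once. Using Lemma~\ref{lem:non_backtracking_functional}(i), the bounds $|\mu_i|\le d\le p_{\max}$, and $\sum_e a_e^2\le(\sum_e a_e)^2$ for $a_e\ge 0$, one checks $|f_t(g,o)|\le\alpha\,|(g,o)_{t+2}|^2$ with $\alpha=\alpha(q,r,p_{\max})$, so $f_t$ meets the hypotheses of Proposition~\ref{prop:functional_concentration_final} with $\beta=2$ and local radius at most $2\ell+2$.

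The heart of the argument is the Galton--Watson transform $\overline{f_t}$. On a tree $(T,\rho)$, $[B^{s}P\chi_i](\rho\to e)$ is the restriction of $f_{\phi_i,s+1}(T,\rho)$ to the branch below $e$, so $\Delta^{(e)}:=[B^{t+1}P\chi_i](\rho\to e)-(q-1)\mu_i[B^{t}P\chi_i](\rho\to e)$ is the branch-$e$ piece of the increment $f_{\phi_i,t+2}-(q-1)\mu_if_{\phi_i,t+1}$, and $\sum_{e\ni\rho}\Delta^{(e)}$ is that full increment. Writing $\Delta^{(e)}$ as a sum over depth-$(t+1)$ vertices $x$ in branch $e$ of the terms $\sum_{x\to y}\phi_i(\sigma(y))-(q-1)\mu_i\phi_i(\sigma(x))$, Lemma~\ref{lem:one_step_propagation} and the eigenvector equation for $(\mu_i,\phi_i)$ show that each such term is centered conditionally on $\mathcal F_{t+1}$; and since distinct branches live on disjoint, hence conditionally independent, subtrees, the cross terms $\dE[\Delta^{(e)}\Delta^{(e')}\mid\sigma(\rho)]$ vanish for $e\ne e'$. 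Hence $\overline{f_t}=\overline{F_{\phi_i,t+1}}$ in the notation of Proposition~\ref{prop:gw_transform_expectation}, and \eqref{eq:Fphit} followed by \eqref{eq:pi_yphi_phi} gives
\[
  \overline{f_t}=(q-1)^{t+2}Q^{t+1}y^{(\phi_i,\phi_i)}
  \quad\text{and}\quad
  \langle\pi,\overline{f_t}\rangle=(q-1)^{t+2}\bigl(d^{t+2}+(q-2)d^{t+1}\mu_i\bigr).
\]

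To conclude I would apply Proposition~\ref{prop:functional_concentration_final} to each $f_t$ and union bound over $0\le t\le2\ell$ and $i\in[r]$ (a polylog factor, harmless for the failure probability), which yields $\bigl|\tfrac1n\sum_{o}f_t(G,o)-\langle\pi,\overline{f_t}\rangle\bigr|\le c\log(n)^{9/2}n^{6\kappa-1/2}$ on the event above. Since $|\mu_i|\le d$ gives $\langle\pi,\overline{f_t}\rangle\le(q-1)^{t+3}d^{t+2}=(q-1)[(q-1)d]^{t+2}\le(q-1)[(q-1)d]^{t+3}$, combining with $\sum_o f_t(G,o)=\norm{B^{t+1}P\chi_i-(q-1)\mu_iB^{t}P\chi_i}^2$ gives \eqref{eq:telescopediff}, and \eqref{eq:telescopediff2} follows by taking square roots and using $\sqrt{a+b}\le\sqrt a+\sqrt b$ together with $\sqrt{(q-1)[(q-1)d]^{t+3}n}=(q-1)^{1/2}[(q-1)d]^{(t+3)/2}\sqrt n$. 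The one genuinely delicate point should be this tree computation: one must argue carefully that the branch increments $\Delta^{(e)}$ are centered \emph{conditionally} on $\mathcal F_{t+1}$ and supported on independent subtrees, so that $\overline{f_t}$ collapses to the clean $\overline{F_{\phi_i,t+1}}$ and \eqref{eq:Fphit} applies; the pointwise bound on $f_t$, the tracking of the doubled local radius through the concentration estimate, and the union-bound bookkeeping are routine.
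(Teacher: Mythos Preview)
Your proposal is correct and follows the same strategy as the paper: express the squared norm as $\sum_{o}f_t(G,o)$ for a local functional $f_t$, compute its Galton--Watson transform, bound $\langle\pi,\overline{f_t}\rangle$, and invoke Proposition~\ref{prop:functional_concentration_final}. The only substantive difference is in the transform computation. The paper writes its functional as $f=\partial H$ with $H=(h_{\phi_i,t+2}-(q-1)\mu_ih_{\phi_i,t+1})^2$ and applies Lemma~\ref{lem:gw_transform_recurrence} to get $\overline f=(q-1)Q\,\overline H$, then~\eqref{eq:Fphit}; you instead argue directly that the per-edge increments $\Delta^{(e)}$ are $\mathcal F_{t+1}$-centered and, given $\mathcal F_{t+1}$, independent across edges, so the cross terms vanish and $\overline{f_t}$ collapses to $\overline{F_{\phi_i,t+1}}$. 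Your route is slightly more probabilistic and actually yields $\langle\pi,\overline{f_t}\rangle\le(q-1)[(q-1)d]^{t+2}$, one factor of $(q-1)d$ sharper than the paper's bound, which is harmless here; the paper's $\partial$-trick trades that constant for cleaner bookkeeping via the ready-made Lemma~\ref{lem:gw_transform_recurrence}.

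One minor cosmetic point: the inequality $\sum_e a_e^2\le(\sum_ea_e)^2$ you invoke needs $a_e\ge0$, and the $\Delta^{(e)}$ are not. Take $a_e=|\Delta^{(e)}|$ instead and bound $\sum_e|\Delta^{(e)}|\le c\,|(g,o)_{t+2}|$ by counting non-backtracking walks branch by branch (each vertex of $(g,o)_{t+2}$ lies in a single branch on a tree, and at most two walks reach it under tangle-freeness); then the $\beta=2$ bound on $f_t$ goes through unchanged.
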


\begin{proof}
  Define
  \[ f(g, o) = \partial H(g, o) \qquad \text{where} \qquad H(g, o) = (h_{\phi_i, t+2}(g, o) - (q-1)\mu_i h_{\phi_i, t+1}(g, o))^2. \]
  As before, it is easy to check that when $(G, x)$ is tangle-free,
  \[ f(G, x) = \sum_{e: x \in e} \left([B^{t+1}J \chi_i](x \to e) - (q-1)\mu_i [B^t J \chi_i](x \to e)\right)^2.  \]
  On the other hand, Lemma \ref{lem:gw_transform_recurrence} and \eqref{eq:Fphit} in Proposition \ref{prop:gw_transform_expectation} imply that 
  \[ \overline f =(q-1)Q\overline{H}= (q-1)^{t+3}Q^{t+2}y^{(\phi_i, \phi_i)}. \]
  Using \eqref{eq:pi_yphi_phi} in Lemma \ref{lem:gw_transform_product} and the fact that $\mu_i \leq d$, we find
  \[ |\langle \pi, \overline f \rangle | \leq (q-1)[(q-1)d]^{t+3}. \]
  Using Proposition \ref{prop:functional_concentration_final}, as well as the  inequality that
  \[   \norm{B^{t+1}J \chi_i - (q-1)\mu_i B^t J \chi_i}^2=\sum_{x\in [n]} f(G,x)\leq n\left|\frac{1}{n}\sum_{x\in [n]} f(G,x)-\sum_{i}\pi_i \overline{f}(i) \right|+n |\langle \pi, \overline{ f}\rangle |,
  \]
  we finish the proof.
\end{proof}

\begin{lemma}\label{lem:chiBw}
Let $\ell\leq \kappa \log_{(q-1)p_{\max}}(n)$, $w$ be any unit vector orthogonal to all $(B^{*})^{\ell}\chi_i, i\in [r_0]$. With probability at least   $1 - c_1 n^{4\kappa - 1}$, for any $t\leq \ell$, there are constants $c_2,c_3>0$ such that 
\begin{align}\label{eq:robound}
    |\langle \chi_i, B^{t}w\rangle  |\leq c_2(q-1)^{1/2} [(q-1)d]^{t/2+1}\sqrt{n}+c_3\log^3(n) n^{3\kappa+1/4}.
\end{align}
\end{lemma}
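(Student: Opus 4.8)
The plan is to mimic the telescoping argument that appears in the graph case (e.g. in \cite{bordenave2018nonbacktracking,stephan2020non}): since $w$ is orthogonal to $(B^*)^\ell \chi_i$ for all $i \in [r_0]$, the quantity $\langle \chi_i, B^\ell w \rangle$ vanishes, and we want to propagate this down to $\langle \chi_i, B^t w \rangle$ for $t \leq \ell$ by writing $B^\ell = B^{\ell - t} \cdot B^t$ and controlling the difference between $\langle \chi_i, B^t w\rangle$ and $[(q-1)\mu_i]^{-(\ell-t)}\langle \chi_i, B^\ell w \rangle$. More precisely, I would use the parity-time symmetry from Lemma \ref{lem:BPPB}, namely $B^k P = P (B^*)^k$, together with Lemma \ref{lem:formula_P} ($P^2 = (q-2)P + (q-1)I$) to rewrite $\langle \chi_i, B^t w\rangle$ in terms of $\langle (B^*)^t \chi_i, w\rangle$ (up to the $P$-correction terms), so that the orthogonality hypothesis on $w$ is directly about the vectors appearing.

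The key steps, in order: (i) Express $\langle \chi_i, B^t w \rangle$ via the adjoint as $\langle (B^*)^t \chi_i, w\rangle$; then decompose $(B^*)^t \chi_i = \frac{1}{q-1}(P^2 - (q-2)P)(B^*)^t\chi_i$ using Lemma \ref{lem:formula_P}, and push the $P$'s through using Lemma \ref{lem:BPPB} to relate everything to $B^{2t}P\chi_i$ and $B^t P \chi_i$ paired against suitable vectors. (ii) Write a telescoping identity
\[
(B^*)^\ell \chi_i = [(q-1)\mu_i]^{\ell - t}(B^*)^t \chi_i + \sum_{s=t}^{\ell-1}[(q-1)\mu_i]^{\ell-1-s}\left((B^*)^{s+1}\chi_i - (q-1)\mu_i (B^*)^s \chi_i\right),
\]
and pair with $w$. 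Since $\langle (B^*)^\ell \chi_i, w\rangle = 0$, this gives
\[
[(q-1)\mu_i]^{\ell-t}\langle (B^*)^t\chi_i, w\rangle = -\sum_{s=t}^{\ell-1}[(q-1)\mu_i]^{\ell-1-s}\langle (B^*)^{s+1}\chi_i - (q-1)\mu_i(B^*)^s\chi_i, w\rangle.
\]
(iii) Bound each increment $\norm{(B^*)^{s+1}\chi_i - (q-1)\mu_i(B^*)^s\chi_i}$ using Lemma \ref{lem:telescopediff} (after again converting the $B^*$-increment to a $B$-increment via the $P$-trick, or by an entirely parallel statement proved the same way). Then use Cauchy–Schwarz on each term $\langle \cdot, w\rangle \leq \norm{\cdot}$, sum the geometric-type series in $s$, and divide by $[(q-1)\mu_i]^{\ell-t}$; the dominant contribution comes from $s$ near $\ell$, giving a factor $[(q-1)d]^{t/2}$ after accounting for $\mu_i^2 \leq d$ and the $[(q-1)\mu_i]^{-(\ell-t)}$ normalization, and the error term picks up the $n^{3\kappa + 1/4}$ polylog factor from Lemma \ref{lem:telescopediff}. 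This should be run on the intersection of the high-probability events from Lemmas \ref{lem:tangle-free} and \ref{lem:telescopediff}, which has probability $\geq 1 - c_1 n^{4\kappa - 1}$.

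The main obstacle I anticipate is bookkeeping the $P$-corrections cleanly: because $P^2 \neq I$ for $q > 2$, converting between $B$ and $B^*$ introduces lower-order terms like $(B^*)^s(P\chi_i)$ and cross terms that must themselves be shown to be of the right order — this is where one genuinely uses the structure of Lemmas \ref{lem:formula_P} and \ref{lem:BPPB} rather than the clean involution available when $q=2$. A secondary subtlety is checking that the factor of $\mu_i$ (rather than $d$) in the normalization $[(q-1)\mu_i]^{-(\ell-t)}$ does not blow up the estimate: since $(q-1)\mu_i^2 > d$ for $i \leq r_0$ one has $[(q-1)\mu_i]^{-(\ell - 1 - s)} [(q-1)d]^{(s+3)/2} \leq C [(q-1)d]^{(t+3)/2} \tau_i^{-(\ell - 1 - s)/2}\cdots$, and one must verify the sum over $s$ stays bounded by a constant times $[(q-1)d]^{t/2+1}$ — this uses precisely $\tau_i < 1$, i.e. the Kesten–Stigum condition, and is the reason the statement is restricted to $i \in [r_0]$.
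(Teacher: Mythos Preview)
Your proposal is correct and matches the paper's proof essentially step for step: telescope $\langle (B^*)^t\chi_i, w\rangle$ from the vanishing term at level $\ell$, convert each $(B^*)^s$-increment into a $B^s P$-increment via $I = \frac{1}{q-1}(P^2 - (q-2)P)$ and $P(B^*)^s = B^s P$, apply Lemma~\ref{lem:telescopediff} to each increment, and sum the resulting geometric series using $(q-1)\mu_i > \sqrt{(q-1)d}$. One small slip: since the common ratio $\sqrt{(q-1)d}/[(q-1)\mu_i]$ is less than $1$, the dominant term in the sum over $s$ is at $s = t$, not $s$ near $\ell$, but this does not affect your final bound.
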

\begin{proof}
For $i\in [r_0]$,
\begin{align}
   [ (q-1)\mu_i]^{-t}\langle (B^*)^{t}\chi_i, w\rangle = [ (q-1)\mu_i]^{-t}\langle (B^*)^{t}\chi_i, w\rangle-[ (q-1)\mu_i]^{-\ell}\langle B^{*\ell}\chi_i, w\rangle.\notag
\end{align}
Hence
\begin{align}\label{eq:sumB*}
[ (q-1)\mu_i]^{-t}| \langle (B^*)^{t}\chi_i, w\rangle|
\leq & \sum_{s=t}^{\ell-1} [ (q-1)\mu_i]^{-(s+1)}|\langle (B^*)^{s+1}\chi_i,w\rangle -[(q-1)\mu_i]\langle (B^*)^{s}\chi_i,w\rangle|.  
\end{align}
Using Lemma \ref{lem:formula_P}, we find
\begin{align*}
  &|\langle ({B^{*}})^{s+1}\chi_i,w\rangle -[(q-1)\mu_i]\langle (B^{*})^s\chi_i,w\rangle|\\
  \leq& \frac{1}{q-1} |\langle B^{s+1}J \chi_i -(q-1)\mu_i  B^s J \chi_i,Jw\rangle |+\frac{q-2}{q-1}|\langle B^{s+1} J \chi_i -(q-1)\mu_i B^s J \chi_i, w\rangle| \\
  \leq & 2 \|B^{s+1}J \chi_i-(q-1)\mu_i B^s J \chi_i\|
  \leq  2(q-1)^{1/2}[(q-1)d]^{(s+3)/2}\sqrt{n}+c_1 \log(n)^{9/4}n^{3\kappa+1/4},
\end{align*}
where we use the fact that $\|P\|\leq q-1$ and $\|w\|=1$, and the last inequality is from \eqref{eq:telescopediff2}.

Since $(q-1)\mu_i>\sqrt{(q-1)d}$ for $i\in [r_0]$,  from \eqref{eq:sumB*}, 
\begin{align*}
     |\langle \chi_i, B^{t}w\rangle|&=|\langle (B^*)^{t}\chi_i,w\rangle|\\
     &\leq \sum_{s=t}^{\ell-1}[(q-1)\mu_i]^{t-s-1}\left( 2(q-1)^{1/2}[(q-1)d]^{(s+3)/2}\sqrt{n}+c_1 \log(n)^{9/4}n^{3\kappa+1/4}\right)\\
     &\leq \sum_{s=t}^{\ell-1}[(q-1)d]^{(t-s-1)/2}\left( 2(q-1)^{1/2}[(q-1)d]^{(s+3)/2}\sqrt{n}+c_1 \log(n)^{9/4}n^{3\kappa+1/4}\right)\\
     &\leq  c_2(q-1)^{1/2} [(q-1)d]^{t/2+1}\sqrt{n}+c_3\log^3(n) n^{3\kappa+1/4}.
\end{align*}
\end{proof}

 \section{Proof of Proposition \ref{prop_main}}\label{sec:prop_proof}
 
 After proving all the lemmas in Section \ref{sec:near_eigenvec}, we summarize the estimates and finish the proof Proposition \ref{prop_main} in this section.
    
{\subsection{Proof of \eqref{eq:norm_bound1}-\eqref{eq:norm_bound4}}}

    For any $r_0\times r_0$ matrix $M$, we have 
   \begin{align}\label{eq:spectral_entrywise}
       \|M\|\leq \|M\|_F\leq r_0 \max_{ij}|M_{ij}|.
   \end{align} 
   Hence \eqref{eq:norm_bound1} and \eqref{eq:norm_bound2} follow from the entrywise bounds in Lemma \ref{lem:scal_uu_vv} by taking $\kappa=1/25$ and $t=\ell$.  Now for any $i,j\in [r_0]$,
 \begin{align}
     \langle u_i,v_j\rangle =\frac{\langle B^{\ell}J \chi_i, (B^*)^{\ell}\chi_j\rangle  }{n(q-1)^{2\ell+1}\mu_i^{\ell+1} \mu_j^{\ell} }=\frac{\langle B^{2\ell}J \chi_i, \chi_j\rangle  }{n(q-1)^{2\ell+1}\mu_i^{\ell+1} \mu_j^{\ell} }.\notag
 \end{align}
 Then \eqref{eq:norm_bound3} is due to  Lemma \ref{lem:scal_u_v} by taking $t=2\ell$.   To prove \eqref{eq:norm_bound4}, we use \eqref{eq:spectral_entrywise} again.
 \begin{align}
     (V^* B^{\ell} U)_{ij}=\langle v_i, B^{\ell} u_j\rangle=\frac{\langle B^{3\ell}J \chi_j, \chi_i\rangle  }{n(q-1)^{2\ell+1}\mu_i^{\ell} \mu_j^{\ell+1} }.\notag
 \end{align}
 By taking $t=3\ell$ in  Lemma \ref{lem:scal_u_v}, 
 \[ |(V^* B^{\ell} U)_{ij}-\Sigma_{ij}^{\ell}|\leq c_2\log (n)^{7/2} n^{6\kappa-1/2}.
 \]
 Hence \eqref{eq:norm_bound4} holds by taking $\kappa=1/25$. 

 {
 \subsection{Proof of \eqref{eq:norm_bound6}-\eqref{eq:norm_bound7}}
 }

Bound \eqref{eq:norm_bound6} is equivalent to the following lemma:
 \begin{lemma}\label{lem:bulk_B}
Let $\ell\leq \kappa \log_{(q-1)p_{\max}}(n)$ with $\kappa<1/24$, $w$ be any unit vector orthogonal to all $B^{*\ell}\chi_i, i\in [r]$. For sufficiently large $n$, with probability at least   $1 -n^{-c}$ for a constant $c>0$,
\begin{align}
    \|B^{\ell}w\|\leq (\log n)^{9}n^{7\kappa/2-1/4}.\notag
\end{align}
\end{lemma}

\begin{proof}
  By  Lemma \ref{lem:tangle-free} and Proposition \ref{prop:trace}, with probability at least $1-n^{-c}$, $H$ is $\ell$-tangle free and the bounds in Proposition \ref{prop:trace} holds.   Therefore by Lemma \ref{lem:Blwnorm} and Proposition \ref{prop:trace},
  \begin{align}
      \|B^{\ell}w\|&=\|B^{(\ell)}w\| \leq c_1(\log n)^{10} n^{\kappa/2}+c_1n^{-1/2}(\log n)^5\sum_{j=1}^r \sum_{t=1}^{\ell-1}[(q-1)d]^{t/2}|\langle   \chi_j, B^{(\ell-t-1)}w\rangle|.\label{eq:lasttermrobound}
  \end{align}
  For $j\in [r_0]$, we use \eqref{eq:robound} to get
  \begin{align*}
     c_1 (\log n)^5 n^{-1/2} \sum_{j=1}^{r_0} \sum_{t=1}^{\ell-1}[(q-1)d]^{t/2}|\langle   \chi_j, B^{(\ell-t-1)}w\rangle|
     &\leq c_2(\log n)^6n^{\kappa/2}+c_2(\log n)^{8} n^{7\kappa/2-1/4}.
  \end{align*}
For $j>r_0$ in \eqref{eq:lasttermrobound}, we have  $(q-1)\mu_j\leq \sqrt{(q-1)d}$, and conditioned on  $G$ being $\ell$-tangle free, we have
\[ |\langle   \chi_j, B^{(\ell-t-1)}w\rangle|\leq \|{(B^*)}^{\ell-t-1}\chi_j\|\leq  c_3 n^{\kappa/2+1/2}[(q-1)d]^{-t/2}
+c_3 \log^{9/4}n^{2\kappa-1/4}.\]
where the last inequality is given by \eqref{eq:scal_uuvv2} and the triangle inequality. This implies
\begin{align*}
   &   c_1 (\log n)^5 n^{-1/2} \sum_{j=r_0+1}^{r} \sum_{t=1}^{\ell-1}[(q-1)d]^{t/2}|\langle   \chi_j, B^{(\ell-t-1)}w\rangle|\leq c_4(\log n)^6 n^{\kappa/4} +c_4(\log n)^{9/4} n^{5\kappa/2-1/4}.
\end{align*}

Therefore \eqref{eq:lasttermrobound} is bounded by $(\log n)^{9}n^{7\kappa/2-1/4}$ for sufficiently large $n$.
\end{proof}

A similar bound holds for $(B^{\ell})^*$. The proof is similar to the proof of Lemma \ref{lem:bulk_B}, and we defer it to Appendix \ref{sec:Misc}.
\begin{lemma}\label{lem:bulk_B_left}
Let $\ell\leq \kappa \log_{(q-1)p_{\max}}(n)$ with $\kappa<1/24$, $w$ be any unit vector orthogonal to all $B^{\ell} J \chi_i, i\in [r]$.For sufficiently large $n$, with probability at least   $1 -n^{-c}$ for a constant $c>0$,
\begin{align}
    \| (B^\ell)^{*}w\|\leq  (\log n)^{9}n^{7\kappa/2-1/4}.\notag
\end{align}
\end{lemma}

{\eqref{eq:norm_bound6} follows  directly from Lemma \ref{lem:bulk_B} and \eqref{eq:norm_bound5} follows similarly from Lemma \ref{lem:bulk_B_left}. \eqref{eq:norm_bound7} is due to \eqref{eq:KBk} in Proposition \ref{prop:trace}, whose proof is in Section \ref{sec:KBk}. This completes the proof of Proposition \ref{prop_main}.}

 \subsection*{Acknowledgments}
Y.Z. was partially supported by  NSF-Simons Research Collaborations on the Mathematical and Scientific Foundations of Deep Learning. Y.Z. also acknowledges support from NSF  DMS-1928930 during his participation in the program “Universality and Integrability in Random Matrix
Theory and Interacting Particle Systems” hosted by the Mathematical Sciences Research Institute
in Berkeley, California, during the Fall semester of 2021. Both authors thank Simon Coste for helpful discussions.

\bibliographystyle{plain}
\bibliography{bib_all}

\appendix

\section{Proof of Proposition \ref{prop:trace}}\label{sec:trace_bound}

\subsection{Proof of \eqref{eq:Delta} on $\Delta^{(k)}$}\label{sec:Deltak}

Let $\vec{e}_1,\dots,\vec {e}_{2m}$ be oriented hyperedges. 
With the convention that $\vec e_{2m+1}=\vec e_{1}$, we have the following trace expansion bound:
\begin{align}
    \|\Delta^{(k)}\|^{2m}&\leq \tr \left(\Delta^{(k)} {\Delta^{(k)}}^*\right)^m  \notag \\
    &=\sum_{\vec{e}_1,\dots,\vec{e}_{2m}}\prod_{i=1}^m \Delta^{(k)}_{\vec{e}_{2i-1},\vec{e}_{2i}} \Delta^{(k)}_{\vec{e}_{2i+1},\vec{e}_{2i}} =\sum_{\gamma\in W_{k,m}}\prod_{i=1}^{2m}\prod_{s=0}^k \underline{\mathcal A}_{\gamma_{i,s}}, \label{eq:DA}
\end{align}
where $W_{k,m}$ is the set of sequence of paths $(\gamma_1,\dots,\gamma_{2m})$ such that 
$ \gamma_i=(\gamma_{i,0},\dots,\gamma_{i,k})$ is a non-backtracking tangle-free walk of length $k$ for all $i=1,\dots,2m$, and for all $1\leq i\leq m$,  
\begin{align}\label{eq:requivalent}
    \gamma_{2i-1,k}=\gamma_{2i,k}, \quad \gamma_{2i,0}=\gamma_{2i+1,0},
\end{align}
with the convention that $\gamma_{2m+1}=\gamma_1$. See Figure \ref{fig:gamma}.
\begin{figure} 
    \centering
\includegraphics[width=0.5\linewidth]{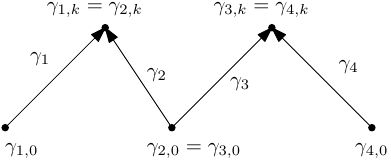}
    \caption{concatenation of non-backtracking walks}
    \label{fig:gamma}
\end{figure}
Taking the expectation yields,
\begin{align}\label{eq:EDelta}
    \mathbb E\|\Delta^{(k)}\|^{2m} \leq  \sum_{\gamma\in W_{k,m}'}\mathbb E\prod_{i=1}^{2m}\prod_{s=0}^k \underline{\mathcal A}_{\gamma_{i,s}},
\end{align}
where $W_{k,m}'$ is the subset of $W_{k,m}$ such that each distinct hyperedge is visited at least twice, and the rest of the terms in \eqref{eq:DA} are zero after taking the expectation.

For each $\gamma\in W_{k,m}$, we associate a bipartite graph $G(\gamma)=(V(\gamma), H(\gamma), E(\gamma))$ of visited vertices and hyperedges. We call $V(\gamma)$ the left vertex set and $H(\gamma)$ the right vertex set; see Figure \ref{fig:represent} for a bipartite representation of a hypergraph.

\begin{figure}
    \centering
\includegraphics[width=0.5\linewidth]{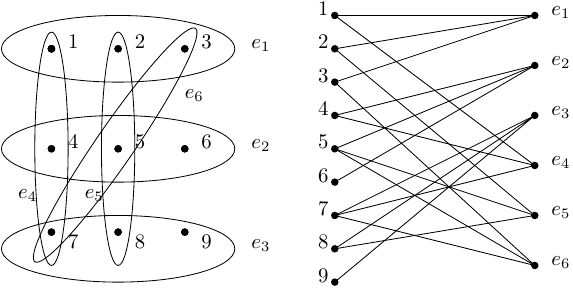}
    \caption{A bipartite representation (left) of a 3-uniform hypergraph (right)}
    \label{fig:represent}
\end{figure}

There is an edge between $(x,e)$ for $x\in V(\gamma), e\in H(\gamma)$ if and only if $(x,e)$ is an oriented hyperedge appearing in $\gamma$ or $x\in e$ and there exists a path $(w,e)\to (x,f)$ in $\gamma$ for some $w,f$.
See Figure \ref{fig:bipartite}. 

\begin{figure} 
    \centering
    \includegraphics[width=0.3\linewidth]{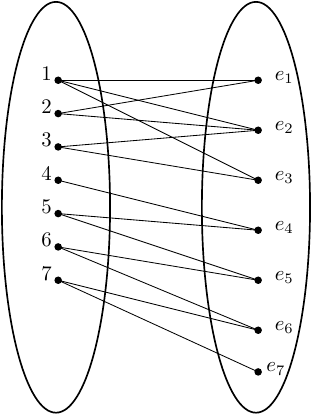}
    \caption{A bipartite representation of non-backtracking walks. $(1\to e_1), (2 \to e_2), (3 \to e_3), (1 \to e_2)$ is a non-backtracking walk of length $3$ on the hypergraph, which corresponds to the non-backtracking walk  $(1,e_1,2,e_2,3,e_3,1,e_2)$  of length $7$ in the bipartite graph. Similarly, $(4,e_4,5,e_5,6,e_6,7,e_7)$ is a non-backtracking walk of length $7$ in the bipartite graph, which corresponds to the non-backtracking walk  $(4\to e_4), (5\to e_5),(6\to e_6), (7\to e_7)$ of length $3$ in the hypergraph.}
    \label{fig:bipartite}
\end{figure}

Now a non-backtracking walk on the hypergraph is represented as a corresponding non-backtracking walk on the vertex set of the bipartite graph. Then $\gamma$ is a concatenation of non-backtracking tangle-free walks of length $(2k+1)$ in the corresponding bipartite graph. We call $G(\gamma)$ a \textit{factor graph representation} of $\gamma$.

 A path given by a vertex-hyperedge sequence is \textit{canonical} if $V(\gamma)=\{1,\dots,v \}$, $H(\gamma)=\{e_1,\dots, e_h \}$ and the vertices and hyperedges are first visited in the increasing order.  Let  $\mathcal W_{k,m}$ be the set of canonical paths in $W_{k,m}$.

\begin{lemma}\label{lem:bipartite_NB}
 Let $\mathcal W_{k,m}(v,h,e)$ be the set of canonical paths in $W_{k,m}$. We have 
 \begin{align}
     |\mathcal W_{k,m}(v,h,e)|\leq (3k)^{2m}(6km)^{6m(e-v-h+1)}.\notag
 \end{align}
\end{lemma}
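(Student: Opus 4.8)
The plan is to count canonical paths $\gamma = (\gamma_1, \dots, \gamma_{2m}) \in \mathcal W_{k,m}(v,h,e)$ by encoding each step of each constituent walk according to how it interacts with the part of the factor graph $G(\gamma)$ discovered so far, and summing the number of choices at each step. I would first fix the correspondence between a path $\gamma \in W_{k,m}$ and a closed non-backtracking walk of combined length $2m(2k+1)$ on the bipartite factor graph, as described before the lemma: each $\gamma_i$ becomes a non-backtracking walk of $2k+1$ bipartite-edge steps, and the gluing conditions \eqref{eq:requivalent} mean consecutive walks share an endpoint, so altogether we trace a single closed walk in $G(\gamma)$. The factor graph has $v + h$ vertices and, since $\gamma$ is tangle-free, each constituent walk contributes at most one ``excess'' bipartite edge (an edge closing a cycle); I would set up the Euler-type bookkeeping so that the number of bipartite edges is $v + h - 1 + (\text{number of independent cycles})$, and track the cycle budget against $e - v - h + 1$.

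The heart of the argument is the step-by-step exploration. Walking along the closed bipartite walk, at each of the $2m(2k+1)$ steps I classify the move as either a \textbf{tree step} (the step traverses a bipartite edge leading to a vertex not yet seen, or more precisely extends the discovered factor graph without closing a cycle) or a \textbf{non-tree step} (the step revisits an already-discovered vertex, or jumps, i.e. closes or re-uses structure). For a tree step there are essentially $q-1$ or fewer choices encoded purely by the hypergraph structure, and across the whole walk these simply reconstruct the canonical labelling — so tree steps contribute a bounded factor. For a non-tree step, the walk must specify which previously-seen vertex (at most $v+h \le 6km$ of them, crudely bounded using $v \le 2km$, $h \le 2km$) it lands on, contributing a factor of at most $6km$ per non-tree step; and one must also pay a factor for choosing \emph{where} along the walk (at most $2k$ positions within a constituent walk) the non-tree steps occur. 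The key combinatorial fact, exactly as in the tangle-free moment method of \cite{bordenave2018nonbacktracking, Pal_2021}, is that the number of non-tree steps is at most $6m(e - v - h + 1) + O(m)$: each constituent walk is tangle-free so contributes a controlled number of them, and $e - v - h + 1$ counts the total cyclic excess of the factor graph. Putting the factors together: a $(3k)^{2m}$ term from choosing the starting points and the within-walk positions of the exceptional steps (here $3k \ge 2k+1$ absorbs the length), and a $(6km)^{6m(e-v-h+1)}$ term from the non-tree step choices, which is exactly the claimed bound.

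The main obstacle I anticipate is the precise accounting of the non-tree steps: one must argue carefully that, because each $\gamma_i$ is individually tangle-free and the $2m$ walks are concatenated at shared endpoints, the total number of steps that fail to extend the current spanning forest of $G(\gamma)$ is linearly controlled by the first Betti number $e - v - h + 1$ of the factor graph (with an $O(m)$ slack coming from the $2m$ gluing points and the at-most-one-cycle-per-walk allowance). This requires a clean invariant: as the exploration proceeds, the number of discovered bipartite edges minus the number of discovered vertices should be nondecreasing and, by tangle-freeness, increments only at non-tree steps, bounded ultimately by the cycle count. Once that invariant is stated correctly, the rest is a routine ``choices per step'' multiplication, and one should be generous with constants (using $3k$, $6km$, $6m$ rather than optimal values) so the bound holds uniformly for all $k \le \ell$ and all admissible $v, h, e$.
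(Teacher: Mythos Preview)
Your overall strategy --- encode each canonical path by an exploration of the bipartite factor graph, distinguish ``tree'' steps from ``non-tree'' steps, and bound the choices at each non-tree step --- is exactly the approach the paper takes (following Lemma 17 of \cite{bordenave2018nonbacktracking}). However, the key combinatorial claim you rely on is false as stated, and this is precisely the subtle point on which the proof hinges.

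You assert that ``the number of non-tree steps is at most $6m(e-v-h+1) + O(m)$'', where a non-tree step is one that ``fails to extend the current spanning forest''. But a single tangle-free constituent walk can wind around a short cycle of bipartite length $c$ roughly $(2k+1)/c$ times; after the first $c-1$ steps, \emph{every} step lands on an already-discovered vertex. So even with $\epsilon = e - v - h + 1 = 1$, one constituent walk can have $\Theta(k)$ non-tree steps, not $O(1)$. Your proposed invariant does hold, but it only increments when a \emph{new excess edge} is discovered, not at every non-tree step --- and a single excess edge can be traversed many times.

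The paper's fix is a compressed encoding. Each important (non-first) time $(i,t)$ is marked not only by the vertex $x_{i,t}$ but by a pair $(x_{i,t}, x_{i,\tau})$, where $\tau$ is the next time the walk uses a non-tree edge; between $t$ and $\tau$ the walk follows the unique tree path and needs no further encoding. Tangle-freeness is then used to classify important times into \emph{short cycling} (at most one per $\gamma_i$, carrying one extra mark for where the walk exits the cycle), \emph{long cycling} (at most $\epsilon - 1$ per $\gamma_i$), and \emph{superfluous} (fully determined by the short-cycling data, hence free). This yields at most $2m\epsilon$ cycling times to position (cost $(2k+1)^{2m\epsilon}$), each with a mark of size at most $(v+h)^2$, plus the $2m$ short-cycling extra marks of size $(v+h)^2(2k+1)$. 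Using $v+h \le 6km$ and $2k+1 \le 3k$ then collapses to the stated bound. The missing idea in your proposal is this two-coordinate mark that absorbs the tree-following stretches, together with the short/long/superfluous trichotomy that lets tangle-freeness cap the number of marks at $O(m\epsilon)$ rather than $O(mk)$.
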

The proof follows steps in Lemma 17 of \cite{bordenave2018nonbacktracking}, and we make some necessary changes and include the proof for completeness. 
\begin{proof}
  We need to find an injective way to encode canonical paths $\gamma\in W_{k,m}(v,h,e)$. We represent $\gamma$ as a concatenation of non-backtracking walks on the factor graph.
  Let 
  $\gamma=(x_{i,t})_{1\leq i\leq 2m, 0\leq t\leq 2k+1}\in \mathcal W_{k,m}(v,h,e)$. $x_{i,t}$ comes from the left vertex set when $t$ is even, and from the right vertex set when $t$ is odd.
  Let $y_{i,t}$ be the edge of $\gamma$.
  
  We explore the sequence $x_{i,t}$ in lexicographic order and consider each index $(i,t)$ as a time. For $0\leq t\leq 2k$, we say $(i,t)$ is a first time if $x_{i,t+1}$ has not been seen before.
  If $(i,t)$ is a first time we call the edge $y_{i,t}$ a tree edge. By construction, the set of tree edges forms a tree $T$ with vertex set $\{1,\dots,v\}\cup \{e_1,\dots,e_h \}$.
  The edges not in $T$ are called the excess edges of $\gamma$. It follows that the number of excess edges is $\epsilon=e-v-h+1$.

  We build a first encoding of $\mathcal W_{k,m}(v,e)$.   if $(i,t)$ is not a first time, we say $(i,t)$ is an important time and mark the time $(i,t)$ by the vector $(x_{{i,t}},x_{i,\tau})$,  where $(i,\tau)$ is the next time that $x_{i,\tau}$ will not be a tree edge (by convention $\tau=2k+1$ if $x_{i,s}$ remains on the tree for all $2\leq s\leq 2k+1$). 
  Since there is a unique non-backtracking path between two vertices of a tree, we can reconstruct $\gamma\in W_{k,m}$ from the position of the important times and their mark. 
  
  Now we use the fact that each path $x_i$ is tangle free. We partition important times into three categories, short cycling, long cycling, and superfluous times.
    First, consider the case where the $i$-th path $\gamma_i$ contains a cycle. For each $i$, the first time $(i,t)$ such that $x_{i,t+1}\in \{x_{i_0},\dots, x_{i,t}\}$ is called a short cycling time. Let $0\leq \sigma\leq t$ be such that $x_{i,t+1}=x_{i,\sigma}$. By the assumption of tangle-freeness, $C=(x_{i,\sigma},\dots, x_{i,t+1})$ is the only cycle visited by $x_i$. We denote by $(i,\tau)$ the first time after $(i,t)$ that $y_{i,\tau}$ is not an edge of $C$ (by convention $\tau=2k+1$) if $x_i$ remains on $C$). We add the extra mark $\tau$ to the short cycling time.
  Important times $(i,t)$ with $1\leq t<\sigma$ with $1\leq t<\sigma$ or $\tau<t\leq 2k+1$ are called  long cycling times. The other important times are called superfluous.
  The key observation is that for each $1\leq i\leq 2m$, the number of long cycling times $(i,t)$ is bounded by $\epsilon-1$. Since there is at most one cycle, no edges of $x$ can be seen twice outside those of $C$, the $-1$ coming from the fact that the short cycling time is an excess edge.

  Now consider the case where $x_i$ does not contain a cycle, then all important times are called long cycling times, and their number is bounded by $\epsilon$. Now we can reconstruct $x$ from the positions of the long cycling and the short cycling times and their marks. For each $1\leq i\leq 2m$, there are at most $1$ short cycling time and $\epsilon-1$ long cycling times within $x_i$ if $x_i$ contains a cycle, or $0$ short cycling time and $\epsilon$ long cycling times if $x_i$ does not contain a cycle. There are at most $(2k+1)^{2m\epsilon}$ ways to position them in time and at most $(v+h)^2$ different possible marks for a long cycling time and $(v+h)^2(2k+1)$ possible marks for a short cycling time. We find that 
  \begin{align}
      |\mathcal W_{k,m}(v,h,e)|\leq (2k+1)^{2m\epsilon} [(v+h)^2(2k+1)]^{2m} (v+h)^{4m(\epsilon-1)}.\notag
  \end{align}
    We use $v+h\leq 2m(2k+1)\leq 6km$ to obtain the desired bound.
\end{proof}

We now set  
\begin{align}\label{eq:set_m}
    m=\left\lfloor \frac{\log n}{13 \log(\log n)}\right\rfloor .
\end{align}

 For our choice of $m$, we have for sufficiently large $n$,
\begin{align}\label{eq:masymptotic}
    n^{1/2m}&=o(\log^7 n), \quad  \ell m=o(\log^2 n), \quad (2\ell m)^{6m}\leq n^{12/13}.
\end{align}

Let 
\[ v(\gamma)=|V(\gamma)|, \quad h(\gamma)=|H(\gamma)|, \quad e(\gamma)=|E(\gamma)|. 
\]

For given $v,e,h$,  the number of possible ways to label vertices in the bipartite graph $G(\gamma)$ is at most \[n^v\binom{n}{q-d_1}\binom{n}{q-d_2}\cdots \binom{n}{q-d_h},\] where $d_i$ is the degree of $e_i$ in $G(\gamma)$.  Note that 
\begin{align}\label{eq:degree_identity}
\sum_{i=1}^h d_i=\sum_{i=1}^v \deg(v_i)=e.
\end{align}
Since each vertex except the starting point of each $\gamma_i$ has degree at least $2$, we have 
\begin{equation}\label{eq:bound_e_v}
e\geq 2m+2(v-2m)=2v-2m.
\end{equation}

So  the number of possible ways of labeling is bounded by 
\begin{align}\label{eq:total_labeling}
   n^v\binom{n}{q-d_1}\binom{n}{q-d_2}\cdots \binom{n}{q-d_h}\leq n^v \left(\frac{q-1}{n-q+2}\right)^{e-h} \binom{n}{q-1}^h.
\end{align}

(1) Suppose the remaining vertices in all hyperedges are labeled with new labels.
 Since each vertex is labeled with all possibilities and there are no constraints to label hyperedges, using the assumption in \eqref{eq:constant_degree}, 
each hyperedge contributes a factor of $\frac{d}{\binom{n}{q-1}}$. We know $h\leq (k+1)m$ since each hyperedge must appear at least twice to have a nonzero contribution. 
So the total contribution is bounded by 
\begin{align*}
  S_1 &=  \sum_{h=1}^{(k+1)m}\sum_{v=1}^{m+e/2}\sum_{e=v+h-1}^{2m(2k-1)}(3k)^{2m}(6km)^{6m(e-v-h+1)} n^v \left(\frac{q-1}{n-q+2}\right)^{e-h} \binom{n}{q-1}^h\left(\frac{d}{\binom{n}{q-1}}\right)^h\\
    &\leq c_1 (3\ell)^{2m} \sum_{h=1}^{(k+1)m} \sum_{v=1}^{m+e/2}\sum_{e=v+h-1}^{2m(2k-1)} (6km)^{6m(e-v-h+1)} n^{v + h - e}d^h (q-1)^{e-h}
\end{align*}

We introduce $\eta = e - v - h + 1$; using \eqref{eq:bound_e_v}, we have
\[ v \leq \eta + 2m + h - 1. \]
Hence, with this change of variables
\begin{align}\label{eq:bound_on_S1}
    S_1 &\leq c_1 (3\ell)^{2m} \sum_{h = 1}^{(k+1)m}  \sum_{\eta = 0}^\infty \sum_{v = 1}^{\eta + 2m + h - 1} (6km)^{6m\eta} n^{1 - \eta} d^h (q-1)^{\eta + v - 1} \\
    &\leq c_2 n (3\ell)^{2m} \sum_{h = 1}^{(k+1)m}  \sum_{\eta = 0}^\infty (6km)^{6m\eta} n^{1 - \eta} d^h (q-1)^{2\eta + 2m + h - 2} \notag \\
    &\leq c_3 n (3(q-1)\ell)^{2m} \left(\sum_{h = 1}^{(k+1)m} [(q-1)d]^h \right)\left( \sum_{\eta = 0}^\infty \left(\frac{(q-1)^2(6km)^{6m}}{n}\right)^\eta\right) \notag \\
    &\leq c_4 n (3(q-1)\ell)^{2m} [(q-1)d]^{(k+1)m}=o(\log^{19m} n)[(q-1)d]^{km}, \notag 
\end{align}
since the sum in $\eta$ converges via \eqref{eq:masymptotic}.

(2)  Suppose we have $t$ hyperedges that are not free (containing existing labels) with $1\leq t\leq h$. Let $\delta_i$ be the indicator that $e_i$ is not new.  The total number of ways to choose free labels is bounded by 
\[ n^v\binom{n}{q-d_1-\delta_1}\binom{n}{q-d_2-\delta_2}\cdots \binom{n}{q-d_h-\delta_h}\leq n^v\left(\frac{q-1}{n-q+2}\right)^{e+t-h}\binom{n}{q-1}^h.
\]
We have $\binom{h}{t}$ many ways to choose $t$ hyperedges that are not free and at most $(v+qh)^t$ many ways to label them. Then the total contribution is bounded by  
\begin{align*}
 S_2
 =& \sum_{h=1}^{(k+1)m} \sum_{t=1}^h \sum_{v=1}^{m+e/2}\sum_{e=v+h-1}^{2m(2k-1)}(3k)^{2m}(6km)^{6m(e-v-h+1)} \binom{h}{t}(v+qh)^t\\ &\cdot n^v\left(\frac{q-1}{n-q+2}\right)^{e+t-h}\binom{n}{q-1}^h\left(\frac{d}{\binom{n}{q-1}}\right)^{h}\left(\frac{p_{\max}}{\binom{n}{q-1}}\right)^{t}\\
 \leq & c_1(3\ell)^{2m} \sum_{h=1}^{(k+1)m} \sum_{v=1}^{m+e/2}\sum_{e=v+h-1}^{2m(2k-1)} (6km)^{6m(e-v-h+1)} n^{v + h - e-t}d^{h}(q-1)^{e-h}\\
 &\cdot\sum_{t=1}^h\left(\frac{h(v+qh) p_{\max}(q-1)}{nd}\right)^t \\
 \leq & c_1 \frac{\log^4(n)}{n}(3\ell)^{2m}\sum_{h=1}^{(k+1)m} \sum_{v=1}^{m+e/2}\sum_{e=v+h-1}^{2m(2k-1)} (6km)^{6m(e-v-h+1)} n^{v + h - e-t}d^{h}(q-1)^{e-h},
\end{align*}
where we use \eqref{eq:masymptotic} to show the sum involving $t$ is bounded by $\log^4(n)/n$. The last line above can be controlled by the inequalities from \eqref{eq:bound_on_S1}. Therefore 
\begin{align}
S=S_1+S_2=o(\log^{19m}n)[(q-1)d]^{km}.\notag
\end{align}
 By Markov's inequality, with \eqref{eq:set_m}, for sufficiently large $n$,
 \begin{align}
     \mathbb P\left( \|\Delta^{(k)}\| \geq (\log n)^{10} [(q-1)d]^{k/2}\right)&\leq \frac{\mathbb E\|\Delta^{(k)}\|^{2m}}{(\log n)^{20m}[(q-1)d]^{km}}\leq \left(\frac{1}{\log n}\right)^m\leq n^{-c}\notag
 \end{align}
 for some constant $0<c<\frac{1}{13}$. Hence \eqref{eq:Delta} holds.

\subsection{Proof of \eqref{eq:Delta_chi} on $\Delta^{(k)}J \chi$} \label{sec:Delta_kPchi}
\begin{proof}
  From the definition of $\chi_i$ in \eqref{eq:defchi}, and the fact that $\phi_i$ is a normalized eigenvector, we have $\|J \chi_i\|_{\infty}\leq (q-1)$ for any $i\in [r]$. 
  We write 
  \begin{align}
     \| \Delta^{(k)}J \chi\|^{2}= \sum_{\vec{e},\vec{f},\vec{g}}\Delta^{(k)}_{\vec e,\vec f}~\Delta^{(k)}_{\vec e,\vec g}~(J \chi)(\vec f)(J \chi)(\vec g). \notag
  \end{align}
  Therefore
  \begin{align}
      \mathbb E \| \Delta^{(k)}J \chi\|^{2m}\leq (q-1)^{2m}\sum_{\gamma\in W_{k,m}''} \mathbb E \prod_{i=1}^2 \prod_{s=0}^k \underline{\mathcal A}_{\gamma_i,s}, \notag
  \end{align}
  where $W_{k,m}''$ is the set of paths $(\gamma_1,\dots, \gamma_{2m})$ such that $\gamma_i=(\gamma_{i,0},\dots,\gamma_{i,k})$ is a non-backtracking walk and $\gamma_{2i-1,0}=\gamma_{2i,0}$.  The only difference with $W_{k,m}$ defined in Section \ref{sec:Deltak} is the boundary condition. However, such a condition was not used in the proof of Lemma \ref{lem:bipartite_NB}. Therefore from Lemma \ref{lem:bipartite_NB}, the number of canonical paths in $W_{k,m}''$ is  at most $(3k)^{2m}(6km)^{6m(e-v-h+1)}.$
  Following the rest of the proof for \eqref{eq:Delta}  we obtain
  \[ \mathbb E \| \Delta^{(k)}J \chi\|^{2m}=o(n(q-1)^{2m}\log^{4m}(n)) [(q-1)d]^{k}.
  \]
  Then the result follows from Markov's inequality and a union bound over $1\leq k\leq \ell$. 
\end{proof}

\subsection{Proof of \eqref{eq:Rk} on $R_k^{(\ell)}$}\label{sec:proof_Rkl}
We set  
\begin{align}
    m=\left\lfloor \frac{\log n}{25 \log(\log n)}\right\rfloor .\notag
\end{align}
For $0\leq k\leq \ell$,
\begin{align}
    \|R_k^{(\ell)}\|^{2m}&\leq \textnormal{tr}\left[R_k^{(\ell)}{R_k^{(\ell)}}^*\right]^{m}=\sum_{\gamma \in T_{\ell,m,k}'} \prod_{i=1}^{2m}\prod_{s=0}^{k-1}\underline {\mathcal A}_{\gamma_{i,s}} p_{\underline\sigma(\gamma_{i,k})}\prod_{s=k+1}^{\ell}  \mathcal A_{\gamma_{i,s}}, \label{eq:Rkl}
\end{align}
where $T_{\ell,m,k}'$ is the set of sequences $\gamma=(\gamma_1,\dots, \gamma_{2m})$ such that $\gamma_i^1=(\gamma_{i,0},\dots,\gamma_{i,k-1}),  \gamma_i^2=(\gamma_{i,k+1},\dots, \gamma_{i,\ell})$ are non-backtracking tangle-free, $\gamma_i=(\gamma_i^1,\gamma_{i,k},\gamma_i^2)$ is non-backtracking tangled. And for all $1\leq i\leq m$,
$
    \gamma_{2i-1,\ell}=\gamma_{2i,\ell}, \gamma_{2i,0}=\gamma_{2i+1,0},
$
with the convention that $\gamma_{2m+1}=\gamma_1$.

Define the bipartite graph $G(\gamma)=(V(\gamma), H(\gamma), E(\gamma))$ as the union of the bipartite graph $G(\gamma_i^1), G(\gamma_i^2)$ for all $1\leq i\leq 2m$, where  $G(\gamma_i^1), G(\gamma_i^2)$ are constructed as in Section \ref{sec:Deltak}.  Note that the edges $\gamma_{i,k}$  are not taken into account in $G(\gamma)$. And for any fixed $\gamma_{i}^1,\gamma_{i}^2,  1\leq i\leq 2m,$ there are at most $(q-1)^{2m}\binom{n}{q-2}^{2m},$ many ways to choose $\gamma_{i,k}, 1\leq i\leq 2m$.

Set $v(\gamma)=|V(\gamma)|, h(\gamma)=|H(\gamma)|$, $e(\gamma)=|E(\gamma)|$. Since $\gamma_i$ is tangled, each connected component of $G(\gamma)$ contains a cycle. Therefore $v+h\leq e$. 
Taking the expectation in \eqref{eq:Rkl} implies
\begin{align}
     \mathbb E\|R_k^{(\ell)}\|^{2m}
    &\leq p_{\max}^{2m}\binom{n}{q-2}^{2m}\sum_{\gamma \in T_{\ell,m,k}}\mathbb E \prod_{i=1}^{2m} \prod_{s=0}^{k-1} \underline{\mathcal A}_{\gamma_{i,s}} \prod_{s=k+1}^{\ell}  \mathcal A_{\gamma_{i,s}}.\label{eq:RkE}
\end{align}
where $T_{\ell,m,k}$ is a subset of $T_{\ell,m,k}'$ such that
\begin{align}
    h\leq   km+ 2m(\ell-k)=m(2\ell-k).\notag
\end{align}
This is because to have non-zero contribution in \eqref{eq:RkE}, each hyperedge in $\gamma_{i,s}$  with $1\leq s\leq k-1$ should appear at least twice.
And using \eqref{eq:degree_identity} again, we have $e\geq 2v-2m$.

\begin{lemma}\label{lem:tangledenumeration}
 Let $\mathcal T_{\ell,m,k}(v,h, e)$ be the set of canonical paths in $T_{\ell,m,k}$ with $v(\gamma)=v, h(\gamma)=h, e(\gamma)=e$. We have
 \begin{align}
     |\mathcal T_{\ell,m,k}(v,e)|\leq (8\ell m)^{12m(e-v-h+1)+8m}.\notag
 \end{align}
\end{lemma}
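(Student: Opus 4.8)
The plan is to follow the encoding strategy of Lemma \ref{lem:bipartite_NB} (which is itself an adaptation of Lemma 17 in \cite{bordenave2018nonbacktracking}), but now applied to the more complex combinatorial objects in $T_{\ell, m, k}$, which are concatenations of $2m$ \emph{tangled} paths $\gamma_i = (\gamma_i^1, \gamma_{i,k}, \gamma_i^2)$ whose two halves $\gamma_i^1, \gamma_i^2$ are each non-backtracking and tangle-free. As before, I would work with the factor-graph (bipartite) representation $G(\gamma)$, which records vertices and hyperedges visited by the $\gamma_i^1$ and $\gamma_i^2$ (the special hyperedges $\gamma_{i,k}$ are excluded from $G(\gamma)$), so that a canonical path becomes a concatenation of non-backtracking walks on the bipartite vertex set. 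First I would build a spanning tree $T$ of $G(\gamma)$ out of the ``first-time'' (tree) edges, so that the number of excess edges is exactly $\epsilon = e - v - h + 1$, and then classify the ``important times'' (times where the next vertex has already been seen) into short-cycling, long-cycling, and superfluous times, using the tangle-free property of each half-path $\gamma_i^1$ and $\gamma_i^2$ separately.

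The key structural point is that, because each individual half-path is tangle-free, within each $\gamma_i^1$ and each $\gamma_i^2$ there is at most one cycle, so the number of long-cycling times inside each half is bounded by $\epsilon$ (or $\epsilon - 1$ when a short-cycling time is present); summing over the $2 \cdot 2m = 4m$ halves gives at most $4m\epsilon$ long-cycling times and at most $4m$ short-cycling times. Reconstruction from the positions and marks of these times then proceeds exactly as in Lemma \ref{lem:bipartite_NB}: the half-path lengths are now $2k+1$ and $2(\ell - k) + 1$, both at most $2\ell + 1$, so the number of time slots per half-path is at most $2\ell + 1$, and there are at most $(2\ell+1)^{4m\epsilon}$ ways to position the long/short cycling times, at most $(v+h)^2$ marks for each long-cycling time, and $(v+h)^2(2\ell+1)$ marks for each short-cycling time. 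Collecting these and using $v + h \le 2m(2\ell+2) \le 8\ell m$ (a loose bound accounting for the two half-paths of length up to $2\ell+1$ per $\gamma_i$), I expect to arrive at a bound of the shape
\[
|\mathcal T_{\ell,m,k}(v,h,e)| \le (2\ell+1)^{4m\epsilon}\bigl[(v+h)^2(2\ell+1)\bigr]^{4m}(v+h)^{4m(\epsilon-1)} \le (8\ell m)^{12m\epsilon + 8m},
\]
where the exponent $12m\epsilon$ absorbs $4m\epsilon$ (positions) $+\,4m\epsilon$ (long-cycling marks, roughly) $+\,$extra slack, and the $8m$ absorbs the short-cycling contributions and the $(v+h)^{4m}$ from the $\epsilon$-free part; one then rounds the base up to $8\ell m$ to match the statement. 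The main obstacle will be bookkeeping the combinatorics carefully enough that the exponent comes out as $12m(e - v - h + 1) + 8m$ rather than something larger: the delicate points are (i) correctly counting long-cycling times \emph{per half-path} rather than per $\gamma_i$ (there are twice as many half-paths as in the tangle-free case, which is where the factor jumps from $6m$ to roughly $12m$), and (ii) handling the fact that the whole concatenated object $\gamma$ is tangled so that $v + h \le e$ holds globally, while tangle-freeness is only available locally on each half — this is exactly what forces the $+8m$ correction term compared to the clean $6m(e-v-h+1)$ bound of Lemma \ref{lem:bipartite_NB}. Once this counting lemma is in hand, it feeds into the trace expansion \eqref{eq:RkE} together with the labelling bound \eqref{eq:total_labeling} and the per-hyperedge factor $d/\binom{n}{q-1}$ (or $p_{\max}/\binom{n}{q-1}$ for non-free hyperedges), and summing the geometric series in the excess parameter $\eta = e - v - h + 1$ — which converges because $(8\ell m)^{12m}/n \to 0$ by the analogue of \eqref{eq:masymptotic} for the choice $m = \lfloor \log n / (25 \log\log n) \rfloor$ — yields \eqref{eq:Rk} after Markov's inequality and a union bound over $1 \le k \le \ell$.
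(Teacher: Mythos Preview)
Your high-level plan—treating each $\gamma_i$ as two tangle-free halves and doubling the short/long cycling time budget from $2m$ to $4m$—is correct in spirit, but the encoding you describe is not injective, and the gap is precisely the one that distinguishes this lemma from Lemma~\ref{lem:bipartite_NB}. Because the middle oriented hyperedge $\gamma_{i,k}$ is \emph{excluded} from $G(\gamma)$, the concatenated bipartite walk is discontinuous: after finishing $\gamma_i^1$ the exploration ``jumps'' to the start of $\gamma_i^2$, and that starting vertex may be brand new (so $G(\gamma)$ is disconnected and the first-time edges form a forest, not a tree), or it may have been seen before (in which case you must record \emph{which} vertex it is). Your statement that ``the number of excess edges is exactly $\epsilon=e-v-h+1$'' is therefore false in general, and your encoding cannot reconstruct where each second half begins.

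The paper (following Lemma~18 of \cite{bordenave2018nonbacktracking}) repairs this with three ingredients you are missing: (i) a reordering $\gamma\mapsto\hat\gamma$ that reverses even-indexed paths so that consecutive half-paths share endpoints via the boundary conditions; (ii) for each $i$, an \emph{extra mark} $(q_i,\hat x_{i,q_i})$ recording the first time and vertex at which the second half reconnects to the already-explored graph $H_i$ (or the mark $0$ if it never does), together with a partial reversal of the second half so the traversal always starts from a known vertex; and (iii) the observation that, since each $\gamma_i$ is tangled, every connected component of $G(\gamma)$ contains a cycle, which forces the per-component excess to be at most $\epsilon$ and keeps the long-cycling count under control. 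The extra-mark factor is $(4\ell(v+h))^{2m}$, and it is exactly what produces the $+8m$ in the exponent once one also corrects your bookkeeping for the long-cycling marks (each such mark is a pair, so the contribution is $(v+h)^{8m(\epsilon-1)}$, not $(v+h)^{4m(\epsilon-1)}$). Without these pieces the argument does not go through.
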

\begin{proof}
 This is a bipartite graph version of Lemma 18 in \cite{bordenave2018nonbacktracking}. Let $\gamma=(\gamma_1,\dots, \gamma_{2m})\in \mathcal T_{\ell,m,k}$. The only difference compared to the setting in Lemma 18 of \cite{bordenave2018nonbacktracking} is that each $\gamma_{i}$ now corresponds to a walk on length $2\ell-2$ in the bipartite graph, and  $\gamma_i=(\gamma_i^1,\gamma_i^2)$ in $\gamma$ corresponds to two non-backtracking tangle-free paths of length $2k-1$ and $2\ell-2k-1$, respectively.

 We start by reordering $\gamma\in \mathcal T_{\ell,m,k}$ into a new sequence that preserves the connectivity of the path as much as possible. Reorder $\gamma$ into $\hat{\gamma}=(\hat{\gamma}_1,\dots, \hat{\gamma}_{2m})$ by setting $\hat{\gamma_i}=\gamma_i$ for odd $i$ and $\hat{\gamma}_{i,t}=\gamma_{i,\ell-t}$ for even $i$. Also for $i$ odd, we  set $k_i=k-1$ and for $i$ even set $k_i=\ell -k-1$. Write $\hat{\gamma_i}=(\hat{\gamma_i}',\hat{\gamma_i}'')$ with $\hat{\gamma_i}'=(\hat{\gamma}_{i,0},\dots, \hat{\gamma}_{i,k_i-1})$ and $\hat{\gamma_i}''=(\hat{\gamma}_{i,k_i+1},\dots, \hat{\gamma}_{i,\ell})$. To each $i$, we say $\gamma_i$ is connected if $G(\hat{\gamma}_i'')$ intersects the graph $H_i=\left(\bigcup_{j<i} G(\hat\gamma_j) \right)\cup G(\hat{\gamma}_i'')$, otherwise it's disconnected.
 Now as  bipartite graph walk sequences, we write each $\hat{\gamma}_i'=(\hat{x}_{i,0}, \hat{x}_{i,1},\dots, \hat{x}_{i,2k_i},\hat{x}_{i,2k_i-1})$,  and $\hat{\gamma}_i''=(\hat{x}_{i,2k_i+2}, \hat{x}_{i,2k_i+3},\dots, \hat{x}_{i,2\ell },\hat{x}_{i,2\ell+1})$.

 If $\gamma_i$ is disconnected, for $0\leq t\leq \ell$, we set $x_{i,t}=\hat{x}_{i,t}$.  If $\gamma_i$ is connected, we define for $0\leq t\leq 2k_i-1$, $x_{i,t}=\hat{x}_{i,t}$, and if  $q_i>2k_i-1$ is the first time such that $\hat{x}_{i,q_i}\in H_i$, we set for $2k_i+2\leq t\leq q_i$,  $x_{i,t}=\hat{x}_{i,q_i+2k_i+2-t}$, and for $q_i+1\leq t\leq 2\ell+1$, $x_{i,t}=\hat{x}_{i,t}$. We then explore the sequence $(x_{i,t})$ in lexicographic order and set $y_{i,t}=\{x_{i,t},x_{i,t+1} \}$. Recall the  definition of the first time, tree edge and the excess edge are the same  in Lemma \ref{lem:bipartite_NB}. $(i,t)\not=(i,2k_i-1)$ is a first time if the end vertex of $y_{i,t}$, $x_{i,t+1}$ has not been seen before. When $\gamma_i$ is connected, we add the extra mark $(q_i, \hat{x}_{i,q_i})$; if $\gamma_i$ is disconnected, the extra mark is set to $0$. With our ordering, all vertices of $V(\gamma)\setminus \{1\}$ will have an associated tree edge, with the exception of $x_{i,2k_i+2}$ when $\gamma_i$ is disconnected. Let $\delta$ be the number of disconnected $\gamma_i$'s, then there are $\delta+e-v+1$ many excess edges. However, there are at  most $\epsilon=e-v+1$ many excess edges in each connected component of $G(\gamma)$.

 We now repeat the proof of Lemma \ref{lem:bipartite_NB}. The main difference is for each $i$, since $\hat{\gamma}_i'$ and $\hat{\gamma}_i''$ are tangle free, there are at most $2$ short cycling times and $2(\epsilon-1)$ long cycling times. Since there are at most $(2\ell)^{4m\epsilon}$ ways to position these cycling times, we have 
 \[ |\mathcal T_{\ell,m,k}(v,h,e)|\leq (4\ell (v+h))^{2m}(2\ell)^{4m(e-v-h+1)}(2(v+h)^2\ell)^{4m}(v+h)^{8m(e-v-h)},
 \]
 where the fact $(4\ell(v+h))^{2m}$ is the bound on the choice of extra marks.
 Using $v+h\leq 4\ell m$, we obtain the bound.
  \end{proof}

By independence, we must have
  \begin{align}
      \mathbb E\prod_{s=0}^{k-1} \underline {\mathcal A}_{\gamma_{i,s}} \prod_{s=k+1}^{\ell}  \mathcal A_{\gamma_{i,s}}\leq \left( \frac{p_{\max}}{\binom{n}{q-1}}\right)^{h}.\notag
  \end{align}
  By our choice of $m$, 
\begin{align}\label{eq:choiceofm}
    n^{1/2m}&=o(\log^{13} n), \quad  \ell m=o(\log^2 n), \quad (8\ell m)^{12m}\leq n^{24/25}.
\end{align}
Let $S$ be the right-hand side of \eqref{eq:Rkl}.  Using the change of variable $\eta=e-v-h+1$, we find $v\leq \eta+2m+h-1$. Then from \eqref{eq:total_labeling} and \eqref{eq:RkE}, $S$ is bounded by 
\begin{align*}
    & \sum_{h=1}^{m(2\ell-k)}\sum_{v=1}^{m+e/2}\sum_{e=v+h-1}^{2m(2k-1)}p_{\max}^{2m}(q-1)^{2m}(8\ell m)^{12m(e-v-h+1)+8m} \\
    &\cdot n^v \left(\frac{q-1}{n-q+2}\right)^{e-h} \binom{n}{q-1}^h\binom{n}{q-2}^{2m}\left( \frac{p_{\max}}{\binom{n}{q-1}}\right)^{h}\\
    &\leq  c_1[(q-1)p_{\max}]^{2m}(8\ell m)^{8m}n\binom{n}{q-2}^{2m}\sum_{h=1}^{m(2\ell-k)}\sum_{\eta=0}^{\infty}\sum_{v=1}^{\eta+2m+h-1}(8\ell m)^{12m\eta } n^{-\eta}p_{\max}^h (q-1)^{\eta+v-1}\\
   & \leq c_2[(q-1)p_{\max}]^{2m}(8\ell m)^{8m}(q-1)^{2m-2}n\binom{n}{q-2}^{2m}\sum_{h=1}^{m(2\ell-k)}p_{\max}^h (q-1)^{h}\sum_{\eta=0}^{\infty}(8\ell m)^{12m\eta } n^{-\eta}(q-1)^{2\eta}\\
   &\leq c_3[(q-1)p_{\max}]^{2m}(8\ell m)^{8m}(q-1)^{2m-2}n\binom{n}{q-2}^{2m}[(q-1)p_{\max}]^{m(2\ell-k)}.
\end{align*}
Hence 
\begin{align}
      \mathbb E\|R_k^{(\ell)}\|^{2m}=o(\log^{42m}n)[(q-1)p_{\max}]^{(2\ell-k)m}\binom{n}{q-2}^{2m}.\notag
\end{align}
Then \eqref{eq:Rk} follows from \eqref{eq:choiceofm} and Markov's inequality.

\subsection{Proof of \eqref{eq:KBk} on  $KB^{(k)}$ and $B^{(k)}$}\label{sec:KBk}
Recall the definition of $K$ from \eqref{eq:defK}. Take $m$ as in \eqref{eq:set_m}.  Then
\begin{align}
    \|KB^{(k-1)}\|^{2m}&\leq \tr[(KB^{(k-1)}(KB^{(k-1)})^*]^m \notag\\
    &= \sum_{\gamma\in W_{k,m}} \prod_{i=1}^m K_{(\gamma_{2i-1,0},\gamma_{2i-1,1})}\prod_{s=1}^k \mathcal A_{\gamma_{2i-1,s}} \prod_{s=0}^{k-1} \mathcal A_{\gamma_{2i,s}}K_{(\gamma_{2i,k},\gamma_{2i,k+1})} \notag\\
    &\leq p_{\max}^{2m}\sum_{\gamma\in W_{k,m}}\prod_{i=1}^m \prod_{s=1}^k \mathcal A_{\gamma_{2i-1,s}} \prod_{s=0}^{k-1} \mathcal A_{\gamma_{2i,s}}.\label{eq:SKB}
\end{align}

For each canonical path $\gamma \in \mathcal W_{k,m}$ with $v,e,h$ defined as before, since there are at most $m$ distinct hyperedges are covered by the union of $\{\gamma_{2i-1,0}, \gamma_{2i,k}\}$, we obtain 
\begin{align}
    \mathbb E \prod_{i=1}^m \prod_{s=1}^k \mathcal A_{\gamma_{2i-1,s}} \prod_{s=0}^{k-1} \mathcal A_{\gamma_{2i,s}}\leq \left( \frac{p_{\max}}{\binom{n}{q-1}}\right)^{h-m}.\notag
\end{align}
Let $S$ be the right-hand side of \eqref{eq:SKB}. From Lemma \ref{lem:bipartite_NB},  $S$ is bounded by
\begin{align*}
  &p_{\max}^{-m}\binom{n}{q-1} ^m \sum_{h=1}^{km}\sum_{v=1}^{m+e/2}\sum_{e=v+h-1}^{2m(2k-1)}(3k)^{2m}(6km)^{6m(e-v-h+1)}  n^v \left(\frac{q-1}{n-q+2}\right)^{e-h} \binom{n}{q-1}^h\left(\frac{p_{\max}}{\binom{n}{q-1}}\right)^{h}.
\end{align*}
By our choice of $m$, and a change of variable $\eta=e-v-h+1$, following the same steps in the proof in Section \ref{sec:Deltak}, $S$ can be further bounded by 
\begin{align}
   o(\log ^{17m} n) \binom{n}{q-1}^m  [(q-1)p_{\max}]^{2km}.\notag
\end{align}
 Then the result holds by using  Markov's inequality. The bound on $B^{(k)}$ follows similarly.

\subsection{Proof of \eqref{eq:Sk} on  $S_k^{(\ell)}$}

 Recall the definitions of $S_t^{(\ell)}$, $L$, and $K^{(2)}$ from \eqref{eq:defK2}, \eqref{eq:def:LL} and \eqref{eq:def_Sk_l}. 
 From \eqref{eq:defK2}, we have 
\begin{align}
 K^{(2)}_{(x\to e), (y\to f)} &= \frac{1}{\binom{n}{q-2}} \sum_{(x, e) \to (w, g) \to (y, f)} p_{\underline \sigma(g)}=\frac{1}{\binom{n}{q-2}} \sum_{w\in e,w\not=x, y}~\sum_{\substack{g: \{ w,y\}\subset g,\\ g\not=e,f}}p_{\underline \sigma(g)}.\label{eq:K2_further}
\end{align}


Recall the definition of $D^{(2)}$ from \eqref{eq:defD2} and \eqref{eq:eigen_decom}.
We now define a matrix $D''$ whose $(i,j)$-entry is  the expected number of hyperedges 
 containing two vertices of type $i$ and $j$.  More specifically, for $i\not=j$, and any $x,y\in [n]$ such  that $\sigma(x)=i,\sigma(y)=j$, define
\begin{align}
 D''_{ij}=\frac{1}{\binom{n}{q-2}}\sum_{g: \{x,y\}\subset g} p_{\underline{\sigma}(g)},\notag
\end{align}
And for any $i\in [r]$, and   $x'\not=x$ with $\sigma(x')=\sigma(x)=i$, define
\begin{align}
 D''_{ii}=\frac{1}{\binom{n}{q-2}}\sum_{g: \{x,x'\}\subset g} p_{\underline{\sigma}(g)}.\notag
\end{align}
 By the symmetry of the probability tensor $\mathbf{P}$, the definition of $D''$ above is independent of the choice of $x,y$, and $x'$.   
 Recall the definition of $Q$ in \eqref{eq:Q=DPi}. Let \[Q''_{ij}=D''_{ij}\pi_j\] and  $\{\mu''_i \}_{i\in [r]}$ be the eigenvalues of $Q''$. Since $|D_{ij}-D''_{ij}|=O(n^{-1})$, we have \[|\mu_i''-\mu_i|\leq \| Q-Q''\|_F=O(n^{-1})\]
 for any $i\in [r]$. $Q''$ also  has a similar spectral decomposition as $Q$ from \eqref{eq:eigen_decom}:
 \[ Q''=\sum_{i\in [r]}\mu_i''\phi_i''(\Pi \phi_i'')^*. 
 \]
 
 Following the derivations in \eqref{eq:pi_orthogonal} and \eqref{eq:eigen_decom}, 
 $D''$ can be decomposed as 
 \[D''=\sum_{i\in [r]}\mu_i''\phi_i'' \phi_i''^*\]
 such that \[   \langle \phi_i'',\phi_j''\rangle_{\pi}:=\sum_{k\in [r]}\pi_k\phi_i''(k)\phi_j''(k)=\delta_{ij}.\]
 Denote \begin{align}\label{eq:defoverlineD}
 \overline{ D}:=\sum_{k=1}^r   \mu_k (J \chi_k)   \chi_k^*.
 \end{align}
 Correspondingly, we define 
\begin{align}\label{eq:defoverlineDp}
\overline{ D''}:=\sum_{k=1}^r   \mu_k'' (J \chi_k'')   \chi_k''^*,
\end{align}
where $\chi_k''$ is the lifted vector from $\phi_k''$ according to \eqref{eq:defchi}.
 Then
 \[ L=K^{(2)}-\overline{D}= K^{(2)}-\overline{D''} +(\overline{D''}-\overline D).\]
 
By \eqref{eq:def_P} and \eqref{eq:defchi}, we get  
\begin{align}
    \overline{D''}_{(x\to e),(y\to f)}&= \sum_{k=1}^r  \mu_k'' (P \chi_k'')(x\to e)  \chi_k''(y\to f) \notag\\
    &=\sum_{w\not=x, w\in e}  D''_{\sigma(w)\sigma(y)}=\sum_{ \substack{w\not=x, y,\\ w\in e}} D''_{\sigma(w)\sigma(y)}+ \mathbf{1} \{y\in e, x\not=y\}D''_{\sigma(y),\sigma(y)}.\notag
\end{align}

And $K^{(2)}_{(x\to e), (y\to f)}$ in \eqref{eq:K2_further} can be further expanded as  
\begin{align*}
 &\sum_{\substack{w\not=x, y\\ w\in e}}D''_{\sigma(w)\sigma(y)} -\frac{1}{\binom{n}{q-2}} \sum_{w\in e,w\not=x, y}\mathbf{1}\{w,y\in e\} p_{\underline \sigma(g)}-\frac{1}{\binom{n}{q-2}} \sum_{w\in e,w\not=x, y}\mathbf{1}\{w,y\in f\} p_{\underline \sigma(f)}\\
  &=\sum_{\substack{w\not=x, y\\ w\in e}}D''_{\sigma(w)\sigma(y)}-\frac{1}{\binom{n}{q-2}}p_{\underline \sigma(g)}\mathbf{1}\{y\in e\} \cdot |e\setminus \{x, y\} |-\frac{1}{\binom{n}{q-2}}p_{\underline \sigma(f)}\cdot  |e\cap f\setminus \{x, y\} |,
\end{align*}
where $\{x,y\}$ is a set of one or two elements depending on whether $x=y$. 

Therefore $(K^{(2)}-\overline{D''})_{(x\to e),(y\to f)}$ can be written as 
\begin{align*}
&-\frac{1}{\binom{n}{q-2}}p_{\underline \sigma(g)}\mathbf{1}\{y\in e\} \cdot |e\setminus \{x, y\} |-\frac{1}{\binom{n}{q-2}}p_{\underline \sigma(f)}\cdot  |e\cap f\setminus \{x, y\} |
 -\mathbf{1} \{y\in e, y\not=x\}D''_{\sigma(y),\sigma(y)}\\
    =&-\frac{1}{\binom{n}{q-2}}p_{\underline \sigma(g)}\mathbf{1}\{y\in e\} \cdot |e\setminus \{x, y\} |-\frac{1}{\binom{n}{q-2}}p_{\underline \sigma(f)}\cdot  |e\cap f\setminus \{x, y\} |\\
    &-\mathbf{1} \{y\in e, y\not=x,e\not=f\}D''_{\sigma(y),\sigma(y)}
    -\mathbf{1}\{ y\not=x,e=f\}D''_{\sigma(y),\sigma(y)}.
\end{align*}

Now we can decompose \[-L=\overline{D''}-K^{(2)}+(\overline{D}-\overline{D''})=L^{(1)}+L^{(2)}+L^{(3)}+L^{(4)}+L^{(5)}\]
with
\begin{align*}
   L^{(1)}_{(x\to e),(y\to f)}&= \frac{1}{\binom{n}{q-2}}p_{\underline \sigma(g)}\mathbf{1}\{y\in e\} \cdot |e\setminus \{x, y\} |\\
   L^{(2)}_{(x\to e),(y\to f)}&= \frac{1}{\binom{n}{q-2}}p_{\underline \sigma(f)}\cdot  |e\cap f\setminus \{x, y\} |\\
     L^{(3)}_{(x\to e),(y\to f)}&=\mathbf{1}\{ y\not=x,e=f\}D''_{\sigma(y),\sigma(y)}\\
     L^{(4)}_{(x\to e),(y\to f)}&=\mathbf{1} \{y\in e, y\not=x,e\not=f\}D''_{\sigma(y),\sigma(y)}\\
      L^{(5)}&=\overline{D}-\overline{D''}.
\end{align*}

Let 
\begin{align}\label{eq:K1}
K^{(1)}_{x\to e,y\to f}=\ind\{y\in e,y\not=x,e\not=f \}.
\end{align}
Recall $K$ is defined in \eqref{eq:defK}. It's easy to check the following two identities hold:
\begin{align*}
 & [(J+I)(K+p_{\max} I)]_{x\to e,y\to f} 
 =  \left(p_{\underline \sigma(e)}(q-1)\ind \{y\in e, e\not=f\} +p_{\max}\ind\{e=f\}\right),\\
 &[(K^{(1)}+ I)(J+I)]_{x\to e,y\to f} 
 =\left(\ind\{e\not=f\} |e\cap f\setminus \{x\}| +\ind\{ e=f\}\right).\notag
\end{align*}
Direct calculation implies the following entry-wise bounds:
\begin{align*}
 L^{(1)}_{x\to e,y\to f}&\leq \frac{q-1}{\binom{n}{q-2}}[(J+I)(K+p_{\max} I)]_{x\to e,y\to f},\\
    L^{(2)}_{x\to e,y\to f}&\leq \frac{(q-1)p_{\max}}{\binom{n}{q-2}}[(K^{(1)} +I)(J+I)]_{x\to e,y\to f}.
\end{align*}
Since $L^{(1)}$ and $B^{(\ell)}$ all have nonnegative entries, we find
\[ \norm*{\Delta^{(t-1)}L^{(1)}B^{(\ell-t-1)}}\leq \frac{(q-1)}{\binom{n}{q-2}}\norm{\Delta^{(t-1)}} \cdot \norm{J+I} \cdot \left(\norm{KB^{(\ell - t-1)}} + p_{\max}\norm{B^{(\ell-t)}} \right).\]
Note that  $J$ is Hermitian and from Lemma \ref{lem:formula_P}, $\|J\|\leq q-1$. Using \eqref{eq:Delta} and \eqref{eq:KBk} we obtain  with probability at least $1-n^{-c}$,
\begin{align}\label{eq:boundL1}
    \norm*{\Delta^{(t-1)}L^{(1)}B^{(\ell-t-1)}}=O(\sqrt{n} (\log^{20}n) [(q-1)p_{\max}]^{\ell-t/2}).
\end{align}
Similarly,

\[ \norm*{\Delta^{(t-1)}L^{(2)}B^{(\ell-t-1)}}\leq \frac{(q-1)p_{\max}}{\binom{n}{q-2}}\norm{\Delta^{(t-1)}}  \left( \norm{K^{(1)}(J+I)B^{(\ell-t-1)}}+q\norm{B^{(\ell-t-1)}}\right)\]
Note that from \eqref{eq:K1}, $K^{(1)}$ is the non-backtracking operator of the complete $q$-uniform hypergraph, by Lemma \ref{lem:BPPB}, $K^{(1)}P=P{K^{(1)}}^*$.
Since the definition of $K$ and $K^{(1)}$ only  differs by a multiplicative factor of at most $p_{\max}$, applying the same proof argument for $KB^{(k)}$ in Section \ref{sec:KBk}, we can show that with probability at least $1-n^{-c}$, 
\begin{align}
\norm{K^{(1)*} B^{(\ell-t-1)}}&\leq \binom{n}{q-1}^{1/2} (\log n)^{10} [(q-1)p_{\max}]^{\ell-t-1},\notag
\end{align}
which implies
\begin{align}\label{eq:boundL2}
    \norm*{\Delta^{(t-1)}L^{(2)}B^{(\ell-t-1)}}=O(\sqrt{n} (\log^{20}n) [(q-1)p_{\max}]^{\ell-t/2}).
\end{align}
Similarly, we have
\[L^{(3)}_{(x\to e),(y\to f)} \leq p_{\max} P_{(x\to e),(y\to f)}, \]
so we can bound 
\begin{align}\label{eq:boundL4}
    \norm*{\Delta^{(t-1)}L^{(3)}B^{(\ell-t-1)}} \leq p_{\max}\norm{\Delta^{(t-1)}}  \norm{P}  \norm{B^{(\ell-t-1)}}=O\left( \log^{20}(n) [(q-1)p_{\max}]^{\ell-t/2}\right). 
    \end{align}

We apply the moment method to bound
\begin{align}
   M= \Delta^{(t-1)}L^{(4)} B^{(\ell-t-1)}.
\end{align}
 We have
\begin{align}\label{eq:M_tracebound}
    \|M\|^{2m}&\leq \tr[MM^*]^m = \sum_{\gamma \in F_{\ell,m,t}} \prod_{i=1}^{2m}\prod_{s=0}^{t-1}\underline {\mathcal{A}}_{\gamma_{i,s}} L_{\gamma_{i,t-1},\gamma_{i,t}}^{(4)} \prod_{s=t}^{\ell}  \mathcal A_{\gamma_{i,s}}
\end{align}
where $F_{\ell,m,t}$ is the set of sequences $\gamma=(\gamma_1,\dots, \gamma_{2m})$ such that  $\gamma_i^1=(\gamma_{i,0},\dots,\gamma_{i,t-1}),  \gamma_i^2=(\gamma_{i,t},\dots, \gamma_{i,\ell})$ are non-backtracking tangle-free, $\gamma_i=(\gamma_i^1,\gamma_i^2)$ is a non-backtracking walk, and for all $1\leq i\leq m$,
$
    \gamma_{2i-1,\ell}=\gamma_{2i,\ell}, \gamma_{2i,0}=\gamma_{2i+1,0},
$
with the convention that $\gamma_{2m+1}=\gamma_1$. Here we use the definition of $L^{(4)}$ to guarantee $(\gamma_{i,t-1}, \gamma_{i,t})$ is a non-backtracking walk.

Define $G(\gamma)=(V(\gamma), H(\gamma), E(\gamma))$ to be the bipartite representation of $\gamma$ with parameters $v,e,h$ as before. 
Taking the expectation in \eqref{eq:M_tracebound} implies
\begin{align}
     \mathbb E\|M\|^{2m}
    &\leq p_{\max}^{2m}\sum_{\gamma \in F_{\ell,m,t}}\mathbb E \prod_{i=1}^{2m} \prod_{s=0}^{t-1} \underline{\mathcal A}_{\gamma_{i,s}} \prod_{s=t}^{\ell}  \mathcal A_{\gamma_{i,s}}.\label{eq:MkE}
\end{align}
where $F_{\ell,m,t}'$ is a subset of $F_{\ell,m,t}$ such that
\begin{align}
    h \leq   tm+ 2m(\ell-t+1)=m(2\ell-t+2).\notag
\end{align}
This is because to have non-zero contribution in \eqref{eq:RkE}, each hyperedge in $\gamma_{i,s}$ with $0\leq s\leq t-1$ should appear at least twice.

\begin{lemma}
 Let $\mathcal F_{\ell,m,t}(v,h,e)$ be the set of canonical paths in $ F_{\ell,m,t}$ with given parameters $v,h,e$. We have
 \begin{align}
     |\mathcal F_{\ell,m,t}(v,h, e)|\leq (8\ell m)^{12m(e-v-h+1)+8m}. \notag 
 \end{align}
\end{lemma}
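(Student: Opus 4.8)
The plan is to follow the encoding argument from the proof of Lemma~\ref{lem:tangledenumeration} essentially verbatim. The only structural difference between $\mathcal F_{\ell,m,k}$ and $\mathcal T_{\ell,m,k}$ is that an element $\gamma=(\gamma_1,\dots,\gamma_{2m})$ of $\mathcal F_{\ell,m,k}(v,h,e)$ has each $\gamma_i=(\gamma_i^1,\gamma_i^2)$ equal to a \emph{single} non-backtracking walk on the hypergraph --- equivalently, a non-backtracking walk of bipartite length $\Theta(\ell)$ on the factor graph $G(\gamma)$ --- obtained by directly concatenating the two non-backtracking tangle-free pieces $\gamma_i^1=(\gamma_{i,0},\dots,\gamma_{i,k-1})$ and $\gamma_i^2=(\gamma_{i,k},\dots,\gamma_{i,\ell})$, rather than separating them by a hyperedge $\gamma_{i,k}$ excluded from $G(\gamma)$. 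Since $G(\gamma)=\bigcup_{i=1}^{2m}\bigl(G(\gamma_i^1)\cup G(\gamma_i^2)\bigr)$ and each piece spans at most one cycle, all the combinatorial bounds from Lemma~\ref{lem:tangledenumeration} transfer with the same exponents.

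Concretely, I would carry out the following steps. First, reorder $\gamma$ into $\hat\gamma=(\hat\gamma_1,\dots,\hat\gamma_{2m})$ exactly as in Lemma~\ref{lem:tangledenumeration} ($\hat\gamma_i=\gamma_i$ for odd $i$, $\hat\gamma_{i,t}=\gamma_{i,\ell-t}$ for even $i$), split each $\hat\gamma_i$ into $\hat\gamma_i'$ and $\hat\gamma_i''$, and declare $\gamma_i$ \emph{connected} if $G(\hat\gamma_i'')$ meets $H_i=\bigl(\bigcup_{j<i}G(\hat\gamma_j)\bigr)\cup G(\hat\gamma_i')$ and \emph{disconnected} otherwise. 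Second, relabel each $\hat\gamma_i$ so the vertices of $G(\gamma)$ are explored in lexicographic order, recording for each connected $i$ the extra mark $(q_i,\hat x_{i,q_i})$ (first return time to $H_i$) and the mark $0$ for disconnected $i$; run the exploration and extract first times, tree edges, and excess edges. With $\delta$ the number of disconnected indices there are $\delta+(e-v-h+1)$ excess edges, and at most $\epsilon:=e-v-h+1$ of them lie in any connected component of $G(\gamma)$. Third, classify the important times into short cycling, long cycling, and superfluous times; since $\hat\gamma_i'$ and $\hat\gamma_i''$ are each non-backtracking tangle-free, each produces at most one short and at most $\epsilon-1$ long cycling times, so $\gamma_i$ produces at most $2$ short and at most $2(\epsilon-1)$ long cycling times, and the superfluous times are recovered from the tree without a mark.

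Putting the reconstruction together, $\gamma$ is determined by the positions of the at most $4m\epsilon$ cycling times (at most $(2\ell)^{4m\epsilon}$ choices), the marks of the long cycling times (at most $(v+h)^2$ each) and of the short cycling times (at most $(v+h)^2(2\ell)$ each), and the connected/disconnected data with its marks (at most $(4\ell(v+h))^{2m}$), which gives
\[
|\mathcal F_{\ell,m,k}(v,h,e)|\le (4\ell(v+h))^{2m}(2\ell)^{4m(e-v-h+1)}\bigl(2(v+h)^2\ell\bigr)^{4m}(v+h)^{8m(e-v-h)}.
\]
Every vertex or hyperedge of $G(\gamma)$ lies on one of the $2m$ non-backtracking walks $\gamma_i$, which touch at most $2(\ell+1)\le 4\ell$ bipartite vertices each, so $v+h\le 8\ell m$; substituting this bound collapses the right-hand side to $(8\ell m)^{12m(e-v-h+1)+8m}$. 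The only genuinely substantive point --- hence the main obstacle --- is to check that the ``at most two short cycling times and $O(\epsilon)$ long cycling times per walk'' estimate survives even though $\gamma_i$ as a whole may span several cycles: this is precisely the observation that a tangle-free walk closes at most one cycle on itself and otherwise only re-traverses already-discovered excess edges, so the cycling-time count is additive over $\gamma_i^1$ and $\gamma_i^2$. The minor change in bipartite walk lengths compared to Lemma~\ref{lem:tangledenumeration} (length $\Theta(\ell)$ rather than $2\ell-2$) affects only numerical constants, all absorbed into $v+h\le 8\ell m$.
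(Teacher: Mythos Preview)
Your proposal is correct and takes essentially the same approach as the paper: the paper's proof is the single observation that in the argument for Lemma~\ref{lem:tangledenumeration} the assumption that each $\gamma_i$ is tangled was never used, only that $\gamma_i^1$ and $\gamma_i^2$ are non-backtracking tangle-free, so the same encoding and the same bound carry over verbatim. You have spelled out exactly this point (your final paragraph isolates the key issue), and your slightly looser vertex count $v+h\le 8\ell m$ in place of $4\ell m$ is harmless since the stated bound already uses $8\ell m$ as its base.
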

\begin{proof}
In the proof of Lemma \ref{lem:tangledenumeration}, the assumption that each $\gamma_i$ is tangled was not used, and we only use the fact that $\gamma_{i}^1$ and $\gamma_{i}^2$ are non-backtracking tangle-free for $1\leq i\leq 2m$. So the same bound holds for $|\mathcal F_{\ell,m,t}(v,h, e)|$. \end{proof}
 
 The following expectation bound holds:
  \begin{align}
      \mathbb E\prod_{s=0}^{t-1} \underline {\mathcal A}_{\gamma_{i,s}} \prod_{s=t}^{\ell}  \mathcal A_{\gamma_{i,s}}\leq \left( \frac{p_{\max}}{\binom{n}{q-1}}\right)^{h}.\notag
  \end{align}
  We set  
\begin{align}
    m=\left\lfloor \frac{\log n}{25 \log(\log n)}\right\rfloor .\notag
\end{align}
Let $S$ be the right-hand side of \eqref{eq:MkE}. We have  $S$ is bounded by
\begin{align*}
    \sum_{h=1}^{m(2\ell-t+2)}\sum_{v=1}^{m+e/2}\sum_{e=v+h-1}^{2m(2t-1)}p_{\max}^{2m} (8\ell m)^{12m(e-v-h+1)+8m}  n^v \left(\frac{q-1}{n-q+2}\right)^{e-h} \binom{n}{q-1}^h \left( \frac{p_{\max}}{\binom{n}{q-1}}\right)^{h}
\end{align*}

Then following every step in Section \ref{sec:proof_Rkl}, we have
\begin{align*}
    S&= o(\log^{42m}n)p_{\max}^{2m}[(q-1)p_{\max}]^{(2\ell-t)m}.
 \end{align*}
Hence  from Markov's inequality, with probability at least $1-n^{-c}$,
\begin{align}\label{eq:boundL3}
     \|M\|\leq  (\log n)^{25} [(q-1)p_{\max}]^{\ell-t/2}.
\end{align}

Finally, using \eqref{eq:defoverlineD} and \eqref{eq:defoverlineDp}, the term involving $L^{(5)}$ can be written as 
 \begin{align}
    \Delta^{(t-1)}(\overline{D}-\overline{D''})B^{(\ell-t-1)}= \Delta^{(t-1)}\left(  \sum_{k\in [r]} (\mu_k J \chi_k \chi_k^*-\mu_k''J \chi_k'' \chi_k''^*)\right)B^{(\ell-t-1)}. \notag 
 \end{align} 
 Following the proof in \ref{sec:Delta_kPchi}, the same estimate \eqref{eq:Delta_chi} also holds for $\|\Delta^{(t-1)}J \chi_k''\|$.
Then with probability at least $1-2n^{-c}$,
\begin{align}\label{eq:boundL5}
    \norm{ \Delta^{(t-1)}L^{(5)}B^{(\ell-t-1)}}&\leq C \sum_{k\in [r]}(\|\Delta^{(t-1)}J \chi_k\| +\|\Delta^{(t-1)}J \chi_k''\|)\|B^{\ell-t-1}\| \notag \\
    &=O( \sqrt{n}(\log n)^{15} [(q-1)p_{\max}]^{\ell-t/2}), 
\end{align}
where the last inequality is due to  \eqref{eq:Delta_chi} and \eqref{eq:KBk}. 

With \eqref{eq:boundL1}, \eqref{eq:boundL2}, \eqref{eq:boundL4},  \eqref{eq:boundL3}, and \eqref{eq:boundL5}, for sufficiently large $n$,
\[ \|S_{k}^{(\ell)}\|\leq  (\log  n)^{30} n^{\kappa+1/2} [(q-1)p_{\max}]^{-k/2}
\]with probability at least $1-7n^{-c}$.
This finishes the proof of \eqref{eq:Sk}.

\section{Miscellaneous computations}\label{sec:Misc}

\subsection{Proof of Lemma \ref{lem:Ihara-Bass}}\label{sec:IHBass}
  Recall the definition of $J$ from \eqref{eq:def_P}.  Fix any $z\in \mathbb C$ such that $zI+J$ is invertible. We have 
\begin{align}
    \det (B-zI)&=\det (-zI-J+T^*S)\notag \\
    &=\det (-J-zI)\det (I-S(zI+J)^{-1}T^*), \label{eq:detB-z}
\end{align}
where in the equation above, we use the determinant formula that when  $X$ is an invertible square matrix,
\[\det(X+YZ)=\det(X)\det(I+YX^{-1}Z).
\] 
From the identity \begin{align}\label{eq:appendix_P}
    J^2=(q-2)J+(q-1)I,
\end{align}
we find for $z\not\in \{-(q-1),1\}$,
\begin{align}\label{eq:inv_P}
    (zI+J)^{-1}=\frac{-1}{(z+q-1)(z-1)}J+\frac{z+q-2}{(z+q-1)(z-1)}I.
\end{align}
Therefore from \eqref{eq:detB-z} and \eqref{eq:inv_P},
\begin{align}
     &\det (B-zI)  \notag\\
     =&\det (-J-zI) [(z+q-1)(z-1)]^{-n}  
       \det ((z+q-1)(z-1)+SJT^*+(z+q-2)ST^*) \notag\\
     =&\det (-J-zI) [(z+q-1)(z-1)]^{-n}\det ((z^2+(q-2)z)I-zA+(q-1)(D-I)),\label{eq:BBBBBB}
\end{align}
where in the last line, we use the identities in \eqref{eq:T_S_P_relations}. 
Now it remains to calculate $\det (-J-zI)$, and we can find all the eigenvectors of $J$.
Let \[v_e=\sum_{x\in e} \mathbf{1}_{x\to e},\] where $\mathbf{1}_{x\to e}\in \mathbb C^{\vec H}$ is a vector taking $1$ at $x\to e$ and $0$ elsewhere. We find 
\[Jv_{e}=\sum_{x\in e} J\mathbf{1}_{x\to e}=(q-1)v_{e}.\]
Since all $\{v_{e}, e\in H\}$ are linearly independent, this gives $m$ linearly independent eigenvectors of $-J$ with respect to $-(q-1)$.
On the other hand, let \[v_{x\to e}=(q-1)\mathbf{1}_{x\to e}- J\mathbf{1}_{x\to e}=(q-1)\mathbf{1}_{x\to e}-\sum_{y\in e,y\not=x} (q-1)\mathbf{1}_{x\to e}.\]
Then from the identity \eqref{eq:appendix_P},
\[Jv_{x\to e}=(q-1)J\mathbf{1}_{x\to e}-J^2\mathbf{1}_{x\to e}=-v_{x\to e}.
\]
Then $v_{x\to e}$ are all eigenvectors of $-J$ with respect to $1$. Since \[ \sum_{x\in e} v_{x\to e}=0,
\]
we see the linear space spanned by  eigenvectors $\{v_{x\to e}, x\in e \}$ is of dimension  $(q-1)$ and for $e'\not=e$, $v_{x\to e}, v_{y\to e'}$ are all linearly independent. Then, we have found $(q-1)m$ many independent eigenvectors of $-J$ for the eigenvalue $1$. This gives us the factorization 
\[ \det (-J-zI)=(z-1)^{(q-1)m}(z+q-1)^m.
\]
With \eqref{eq:BBBBBB},  we have for any $z\not\in \{1,-(q-1)\}$, 
\begin{align}\label{eq:detBformulaB}
\det (B-zI)
=(z-1)^{(q-1)m-n}(z+q-1)^{m-n}\det ((z^2+(q-2)z)I-zA+(q-1)(D-I)).
\end{align}
Let $\lambda\not=1, -(q-1)$ be an eigenvalue for $B$. Then from \eqref{eq:detBformulaB}, 
\begin{align}\label{eq:BH_appendix}
  \det ((\lambda^2+(q-2)\lambda)I-\lambda A+(q-1)(D-I))=0.  
\end{align}

Using the block determinant formula that if $ZW=WZ$,
\[ \det \begin{pmatrix}
X & Y\\
Z & W
\end{pmatrix} =\det( XW-YZ),
\]
\eqref{eq:BH_appendix} can be written as  
\begin{align}\label{eq:det_formula_block}
\det  \begin{pmatrix} -\lambda I &  (D-I)\\  -(q-1)I & A-(q-2)I-\lambda I
\end{pmatrix}=\det(\tilde B-\lambda I)=0.
\end{align}
On the other hand, if $\lambda\not\in \{1,-(q-1)\}$ is an eigenvalue of $\tilde{B}$, from \eqref{eq:det_formula_block} and \eqref{eq:detBformulaB}, $\det(B-zI)=0$ therefore $\lambda$ is also an eigenvalue for $B$.
This finishes the proof for the first claim. Since for $m\geq n$, \eqref{eq:detBformulaB} is a polynomial equation in $z$, it must  holds for all $z\in \mathbb C$.
This finishes the proof of the second claim when $m\geq n$.

  \subsection{Proof of Lemma \ref{lem:formula_P}}
  We have
  \begin{align*}
     (J^2)_{(x\to e), (y\to f)}&=\sum_{w\to g} \mathbf{1} \{ e=g, x\not=w\}\mathbf{1} \{ f=g, y\not=w\}\\
     &=\sum_{w\to g} \mathbf{1} \{ e=f=g, x\not=w,y\not=w\}\\
     &=\sum_{w\to e} \mathbf{1} \{ e=f, x\not=w,y\not=w\}\\
     &=\sum_{w\to e} \mathbf{1} \{ e=f, x\not=y, w\not=x,w\not=y\}+\sum_{w\to e} \mathbf{1} \{ e=f, x=y, w\not=x\}\\
     &=(q-2)\mathbf{1}\left\{ e=f, x\not=y\right\}+(q-1)\mathbf{1} \{ e=f, x=y\}.
  \end{align*}
  The entrywise identity implies $J^2=(q-2)J+(q-1)I.$

 \subsection{Proof of Lemma \ref{lem:BPPB}}
  Note that
  \begin{align*}
      (BJ)_{(x\to e),(y\to f)}&=\sum_{s\to w} B_{(x\to e),(s\to w)} J_{(s\to w), (y\to f)}\\
      &=\sum_{s\to w}\mathbf{1}\left\{ s\in e, s\not=x,e\not=w , w=f,s\not=y \right\}\\
      &=\sum_{s\to f}\mathbf{1}\left\{  s\in e,s\not=x, e\not=f, s\not=y\right\}= |\{s: s\in e\cap f,  s\not=x, y\}  |\cdot \mathbf{1} \{ f\not=e\},
  \end{align*}
  and 
  \begin{align*}
      (JB^*)_{(x\to e),(y\to f)}&=\sum_{s\to w} J_{(x\to e),(s\to w)} B_{(y\to f), (s\to w)}\\
      &=\sum_{s\to w} \mathbf{1}\{ e=w, x\not=s, s\in f, s\not=y, w\not=f \}\\
      &=\sum_{s\to e} \mathbf{1} \{ s\not=x, s\in f, s\not=y, e\not=f\}=|\{s: s\in e\cap f, s\not=x, y \} | \cdot \mathbf{1}\{ f\not=e\}.
  \end{align*}
  So $BJ=JB^*$. Then $B^kJ=B^{k-1} JB^*=\cdots =J{B^{*}}^k$. 
  
\subsection{Proof of Lemma \ref{lem:bprime_spec}}

With the notations in Section \ref{sec:IHBass}, we have $v_{\mathrm{in}} = SJ^{-1}v$ and $v_{\mathrm{out}} = Sv$.
 Now, from the equation $Bv = \lambda v$, we have
\begin{equation}\label{eq:expansion_bx}
    T^*Sv - Jv = \lambda  v.
\end{equation}
We first multiply \eqref{eq:expansion_bx} by $SJ^{-1}$, and find
\begin{equation}\label{eq:eigen_vin}
\begin{split}
\lambda v_{\mathrm{in}} &= SJ^{-1}T^* Sv - Sv 
=  SJ^{-1} J S^* v_{\mathrm{out}} - v_{\mathrm{out}} = (D - I) v_{\mathrm{out}}
\end{split}
\end{equation}
through applications of \eqref{eq:T_S_P_relations}.  Now, we multiply \eqref{eq:expansion_bx} by $S$ instead, so
\[ \lambda v_{\mathrm{out}} = S T^* S v - SJv.  \]
Hence
\begin{equation}\label{eq:eigen_vout}
\begin{split}
    \lambda v_{\mathrm{out}} &= ST^*Sv - S((q-2)I + (q-1)J^{-1})v \\
    &= A v_{\mathrm{out}} - (q-2)v_{\mathrm{out}} - (q-1)v_{\mathrm{in}}.
\end{split}
\end{equation}

Combining \eqref{eq:eigen_vin} and \eqref{eq:eigen_vout}, we find
\[ \begin{pmatrix} 
    0 & D - I \\
    -(q-1)I & A - (q-2)I \end{pmatrix} \begin{pmatrix}v_{\mathrm{in}}\\v_{\mathrm{out}} \end{pmatrix} =\lambda \begin{pmatrix}v_{\mathrm{in}}\\v_{\mathrm{out}} \end{pmatrix}, \]
    so $\tilde B v'=\lambda v'$.
    If $v_{\mathrm{out}} = 0$ then $v \in\mathrm{Ker}(S)$, hence \eqref{eq:expansion_bx} reduces to
$-Jv = \lambda v.$
This implies  $\lambda = 1$ or $-(q-1)$, a contradiction. Therefore $v'\not=0$ and $(v',\lambda)$ is an eigenpair for $\tilde B$.
 
To show the "moreover" part of Lemma \ref{lem:bprime_spec}, it suffices to show that for a basis $v^{(1)},\dots, v^{(t)}$ of the eigenspace of $B$ with eigenvalue $\lambda$, the corresponding eigenvectors $v^{(i)'}=\begin{pmatrix} v^{(i)}_{\mathrm{in}}\\ v^{(i)}_{\mathrm{out}}\end{pmatrix}$ are linearly independent. 
Then, by counting dimensions,  the dimension of the eigenspace of $\lambda \not\in \{1,-(q-1)\}$ for $\tilde B$ is the same  as the dimension of the corresponding eigenspace  for $\tilde B$, and $v^{(i)'}, 1\leq i\leq t$ form an eigenbasis of $\tilde B$ with $\lambda$. This implies all the eigenvectors $v'$ of $\tilde{B}$ corresponding to $\lambda$ can be obtained in the form of $v'=\begin{pmatrix} v_{\mathrm{in}}\\ v_{\mathrm{out}}\end{pmatrix}$.

Suppose $w_1 v^{(1)'}+ \cdots w_t v^{(t)'}=0$ for certain weights $w_1,\dots, w_t$. Let $v=w_1v^{(1)}+\cdots w_t v^{(t)}$. Then
\begin{align}
   Sv&= S(w_1v^{(1)}+\cdots w_t v^{(t)})=w_1v_{\mathrm{out}}^{(1)}+\cdots w_t v_{\mathrm{out}}^{(t)}=0, \notag 
\end{align}
which implies $v\in \ker(S)$.
 Then from \eqref{eq:expansion_bx}, $-Jv=\lambda v$.
Since $-J$ only has eigenvalues $1$ and $-(q-1)$, we must have $v=0$. By the linear independence of $v^{(1)},\dots, v^{(t)}$, we must have $w_1=\cdots=w_t=0$. Therefore $v^{(1)'},\dots, v^{(t)'}$ are linearly independent. This finishes the proof of all the statements in  Lemma \ref{lem:bprime_spec}.

\subsection{Proof of Lemma \ref{lem:eigen_reduction}}\label{sec:lemma6}
    We have when $G$ is tangle-free (which happens with probability at least $1-O(n^{-c'})$ from Lemma \ref{lem:tangle-free}),
    \begin{align}\label{eq:SBlh}
        [SB^\ell J \chi_i](x) &= \sum_{e \ni x} [B^\ell J \chi_i](x \to e)= \sum_{x_0 = x,\dots, x_{\ell+1}} \phi_i(\sigma(x_{\ell+1})) = h_{\phi_i, \ell+1}(G, x),
    \end{align}
    where $h_{\phi_i, t}$ was defined in \eqref{eq:defh}. As a result, if we define
    \[ f_1(g, o) = \phi_j(\sigma(o)) h_{\phi_i, \ell+1}(g, o) \quand f_2(g, o) = h_{\phi_i, \ell+1}(g, o)h_{\phi_j, \ell+1}(g, o), \]
    then similarly to Lemmas \ref{lem:scal_u_v} and \ref{lem:scal_uu_vv}, we have
    \[ \langle SB^\ell J \chi_i, \tilde \phi_j \rangle = n\langle \pi, \overline{f_1}  \rangle  + O(n^{1-c}), \quad \langle SB^\ell J \chi_i, SB^\ell J \chi_j\rangle  = n\langle \pi, \overline{f_2} \rangle   + O(n^{1-c}) .\]
    For the first computation, from Lemma \ref{lem:non_backtracking_functional} and \eqref{eq:fphit},
    \[ \overline{f_1}(k) =[(q-1)\mu_i]^{\ell+1}\phi_i(k)\phi_j(k), \]
    hence
    \[ \langle \pi, \overline{f_1} \rangle = [(q-1)\mu_i]^{\ell+1}\delta_{ij}. \]
     The second case is already treated in \eqref{eq:pi_eigen_vec_product}, which shows
     \[ \langle \pi, \overline{f_2} \rangle = [(q-1)\mu_i]^{2\ell+2} \gamma_i^{(\ell)}\delta_{ij}. \]

\subsection{Proof of  Lemma \ref{lem:tvar_poisson_binomial}}

We shall make use of the following classical bounds (see, e.g. \cite{barbour_introduction_2005}):
\begin{lemma}\label{lem:stein_chen}
 Let $n$ be any integer, and $\lambda, \lambda' > 0$. Then
 \[ d_\mathrm{var}\left(\mathrm{Poi}(\lambda), \mathrm{Bin} (n, \lambda/n)\right) \leq \frac{\lambda}{n} \quad \text{and} \quad d_\mathrm{var}(\mathrm{Poi}(\lambda), \mathrm{Poi}(\lambda')) \leq |\lambda - \lambda'|. \]
\end{lemma}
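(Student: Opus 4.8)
The plan is to prove the two estimates separately, each by exhibiting an explicit coupling, since $d_\mathrm{var}$ between two laws equals the infimum of $\Pb{X \neq Y}$ over all couplings $(X,Y)$ of them.

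For the Poisson--Poisson bound I would first assume without loss of generality that $\lambda \leq \lambda'$ and use additivity of the Poisson law: take $X \sim \mathrm{Poi}(\lambda)$ and $Z \sim \mathrm{Poi}(\lambda' - \lambda)$ independent, and set $Y = X + Z \sim \mathrm{Poi}(\lambda')$. Then $(X,Y)$ is a valid coupling and $\{X \neq Y\} = \{Z \geq 1\}$, so $d_\mathrm{var}(\mathrm{Poi}(\lambda), \mathrm{Poi}(\lambda')) \leq \Pb{Z \geq 1} = 1 - e^{-(\lambda' - \lambda)} \leq \lambda' - \lambda = |\lambda - \lambda'|$, using $1 - e^{-x} \leq x$. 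This step is entirely routine.

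For the Binomial--Poisson bound I would write $\mathrm{Bin}(n, \lambda/n)$ as the law of $W = \sum_{i=1}^n \xi_i$ with i.i.d.\ $\xi_i \sim \mathrm{Ber}(p)$, $p = \lambda/n$, so that $\lambda = \sum_{i=1}^n p$, and then invoke the Stein--Chen bound for Poisson approximation of a sum of independent indicators (see \cite{barbour_introduction_2005}): $d_\mathrm{var}(\mathcal L(W), \mathrm{Poi}(\lambda)) \leq \frac{1 - e^{-\lambda}}{\lambda} \sum_{i=1}^n p^2 = \frac{1 - e^{-\lambda}}{\lambda} \cdot \frac{\lambda^2}{n} \leq \frac{\lambda}{n}$, since $1 - e^{-\lambda} \leq 1$. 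This is exactly the claimed inequality.

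The only delicate point is the constant. The naive coupling --- take the maximal coupling of each $\mathrm{Ber}(p)$ with an independent $\mathrm{Poi}(p)$ and union bound over the $n$ coordinates --- only gives $d_\mathrm{var}(\mathrm{Bin}(n,p), \mathrm{Poi}(\lambda)) \leq n\, d_\mathrm{var}(\mathrm{Ber}(p), \mathrm{Poi}(p)) = np(1 - e^{-p}) \leq np^2 = \lambda^2/n$, which is weaker than $\lambda/n$ once $\lambda > 1$. Recovering the sharp $\lambda/n$ for all $\lambda$ is precisely where the Stein--Chen factor $(1 - e^{-\lambda})/\lambda \leq \min(1, \lambda^{-1})$ is needed; if one wanted a self-contained proof, this is the single nontrivial ingredient, and I would reproduce the short Stein-equation estimate for it rather than cite it. I note in passing that in every application of the lemma in this paper one has $\lambda = O(1)$ while the binomial size tends to infinity, so even the crude $\lambda^2/n$ bound would suffice; I would nonetheless record the sharp version, as stated.
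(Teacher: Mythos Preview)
Your proposal is correct. Both arguments are standard and go through exactly as you describe: the additive Poisson coupling for the second inequality, and the Stein--Chen bound $(1-e^{-\lambda})\lambda^{-1}\sum_i p_i^2$ for the first.

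There is nothing to compare against: the paper does not prove this lemma at all, but simply states it as a classical fact with a reference to \cite{barbour_introduction_2005}. Your write-up is therefore strictly more detailed than the paper's treatment. Your side remark is also accurate: in the only place the lemma is invoked (the proof of Lemma~\ref{lem:tvar_poisson_binomial}), the Poisson parameter is $O(1)$ and the binomial size is polynomial in $n$, so the cruder $\lambda^2/n$ coupling bound would already be enough; the sharp $\lambda/n$ is stated for cleanliness, not necessity.
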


Define the intermediary distribution

\[ \mathbb{P}'_{\underline \tau} = \mathrm{Poi}\left(  \frac{p_{i, \underline j} \cdot \prod_{k=1}^r \dbinom{n_k(t)}{\tau_k}}{\dbinom{n}{q-1}}\right),\]
an immediate application of Lemma \ref{lem:stein_chen} implies
\[ d_\mathrm{var}(\mathbb{P}_{\underline \tau}, \mathbb{P}'_{\underline \tau}) \leq \frac{p_{\max}}{\dbinom{n}{q-1}}. \]
Another application of the same lemma allows us to bound the distance between $\mathbb{P}'_{\underline \tau}$ and $\mathbb{Q}_{\underline \tau}$:

\begin{align*}
d_\mathrm{var}(\mathbb{P}'_{\underline \tau}, \mathbb{Q}_{\underline \tau}) &\leq \left| \frac{p_{i, \underline j} \cdot \prod_{k=1}^r \dbinom{n_k(t)}{\tau_k}}{\dbinom{n}{q-1}} - p_{i, \underline j} \cdot \dbinom{q-1}{\tau_1, \dots, \tau_r} \prod_{k=1}^r \pi_k^{\tau_k} \right| \\
&\leq c \dbinom{q-1}{\tau_1, \dots, \tau_k} \left| \frac{\prod_{k=1}^r \frac{n_k(t)!}{(n_k(t)-\tau_r)!}}{\frac{n!}{(q-1)!}} - \prod_{k=1}^r \pi_k ^{\tau_k} \right|
\end{align*}
The rightmost term is the difference between two products of $q-1$ terms; all terms of the left product are of the form $(n_k(t)-a)/(n - b)$ with $a, b \leq q-1$, and can be paired with a term $\pi_k$ on the right. Further, we have

\begin{align*}
    \left| \frac{n_k(t)-a}{n-b} - \pi_k \right| &\leq \left| \frac{n_k(t)-a}{n-b} - \frac{n_k(t)}{n} \right| + \left| \frac{n_k(t)}{n} - \pi_k \right| \\
    &\leq \left | \frac{an - bn_k(t)}{n(n-b)} \right| + c_1 \frac{d^\ell}n \leq c_2 n^{\kappa-1},
\end{align*}
Since all terms of both products are less than one, we finally find

\[ d_\mathrm{var}(\mathbb{P}_{\underline \tau}, \mathbb{Q}_{\underline \tau}) \leq c_3  p_{\max} r(q-1)!\, n^{\kappa-1}. \]

\subsection{Proof of Lemma \ref{lem:gw_transform_product}}
For the first statement \eqref{eq:general_vec_product}, we have
  \begin{align*}
      \langle \pi, Q^{(3)}(\phi \otimes \phi') \rangle &= \sum_{i \in [r]} \pi_i \sum_{j, k \in [r]} \sum_{\underline \ell \in [r]^{q-3}} p_{ijk, \underline \ell} \left(\prod_{m \in \underline \ell} \pi_m \right) \pi_j \pi_k \phi_j \phi_k' \\
      &= \sum_{j, k \in [r]} \pi_j \phi_j \cdot \left(\sum_{\ell \in [r]^{q-2}} p_{jk, \underline \ell} \prod_{m \in \underline \ell} \pi_m\right) \cdot \pi_k \phi_k'\\
      &= \phi^* \Pi D^{(2)} \Pi \phi'=\phi^* \Pi Q\phi',
  \end{align*}
  where we used the permutation invariance of the tensor $\mathbf P$ in the second line, and the definition  $Q=D\Pi$ from \eqref{eq:Q=DPi} in the last line.
   Now let $\phi_i,\phi_j$ be two eigenvectors of $Q$.  Using the definition of 
  $\langle \cdot, \cdot \rangle_\pi$ in \eqref{eq:pi_orthogonal},
 \begin{align}\label{eq:149}
       \langle \pi, Q^{(3)}(\phi_i \otimes \phi_j)\rangle =\phi_i^* \Pi Q\phi_j=\mu_j\phi_i^* \Pi \phi_j=\mu_j \langle \phi_i, \phi_j\rangle_{\pi}=\mu_i\delta_{ij},
  \end{align}
therefore \eqref{eq:eigen_vec_product} holds. For \eqref{eq:pi_eigen_vec_product}, recall  from \eqref{eq:ffffphit},
\[ \overline{f_{\phi_i, t}f_{\phi_j, t}} = [(q-1)^2\mu_i\mu_j]^t \left( \phi_i\circ \phi_j + \sum_{s=0}^{t-1} \frac{Q^s y^{(\phi_i, \phi_j)}}{[(q-1)\mu_i \mu_j]^{s+1}} \right),
\]
and from \eqref{eq:def_y_phi_phip},
 \begin{align}\label{eq:yyyyphi}
    y^{(\phi_i, \phi_j)} = Q(\phi_i \circ \phi_j) + (q-2) Q^{(3)}(\phi_i \otimes \phi_j), 
 \end{align} 
We have 
\begin{align}\label{eq:pifff}
     \langle \pi, \overline{f_{\phi_i, t}f_{\phi_j, t}} \rangle =  [(q-1)^2\mu_i\mu_j]^t\langle \pi, \phi_i \circ \phi_j\rangle+\sum_{s=0}^{t-1} \frac{[(q-1)^2\mu_i\mu_j]^t}{[(q-1)\mu_i \mu_j]^{s+1}} \langle \pi,Q^s y^{(\phi_i, \phi_j)}\rangle.
\end{align}
  For the first term in \eqref{eq:pifff}, we have
  \[ [(q-1)^2\mu_i\mu_j]^t\langle \pi, \phi_i \circ \phi_j \rangle = [(q-1)^2\mu_i\mu_j]^t \langle \phi_i, \phi_j \rangle_\pi=[(q-1)\mu_i]^{2t}\delta_{ij}. \]
  For the second one, since $Q=D^{(2)}\Pi$, $\pi$ is a left eigenvector of $Q$ with eigenvalue $d$, we have $\pi^* Q^s = d^s \pi^*$ for any $s \geq 0$. Hence, according to \eqref{eq:yyyyphi} and \eqref{eq:149},
  \begin{align*}
      \langle \pi, Q^s y^{(\phi_i, \phi_j)} \rangle &= d^{s+1} \langle \pi, \phi_i \circ \phi_j \rangle + (q-2) d^s \langle \pi, Q^{(3)}(\phi_i\otimes \phi_j) \rangle\\
      &=d^{s+1}\delta_{ij}+(q-2)d^s\mu_i\delta_{ij}.
  \end{align*}
 Then the second statement in Lemma \ref{lem:gw_transform_product} holds.    Recall $\tau_i=\frac{d}{(q-1)\mu_i^2}$. Therefore \eqref{eq:pifff} can be simplified as
 \begin{align*}
    \langle \pi, \overline{f_{\phi_i, t}f_{\phi_j, t}} \rangle &=  [(q-1)\mu_i]^{2t}\delta_{ij}+[(q-1)\mu_i]^{2t}\delta_{ij}\sum_{s=0}^{t-1}[(q-1)\mu_i^2]^{-s-1}(d^{s+1}+(q-2)d^s\mu_i)\\
    &=[(q-1)\mu_i]^{2t}\delta_{ij} \left( 1+\sum_{s=0}^{t-1}\left(\tau_i^{s+1}+\frac{(q-2)\mu_i}{d} \tau_i^{s+1}\right)\right)\\
    &=[(q-1)\mu_i]^{2t}\delta_{ij}\left(1+ \tau_i \frac{1-\tau_i^t}{1-\tau_i}+\frac{(q-2)}{(q-1)\mu_i}\frac{1-\tau_i^t}{1-\tau_i}\right)=[(q-1)\mu_i]^{2t}\delta_{ij}\gamma_{i}^{(t)},
 \end{align*}
 where $\gamma_{i}^{(t)}$ is given in \eqref{eq:def_gamma_i_t}.
This completes the proof.

  \subsection{Proof of Lemma \ref{lem:bulk_B_left}}
From the telescoping sum formula
\begin{align}
    \prod_{s=0}^{\ell} a_s=\prod_{s=0}^{\ell} b_s+\sum_{t=0}^{\ell} \prod_{s=0}^{t-1}b_s(a_t-b_t)\prod_{s=t+1}^{\ell} a_s, \notag
\end{align} 
we can decompose $\Delta^{(\ell)}$ as 
\begin{align} \label{eq:new_decompose}
    \Delta^{(\ell)}_{(x\to e), (y\to f)}=B^{(\ell)}_{(x\to e), (y\to f)}- \sum_{\gamma \in F_{(x\to e), (y\to f)}^{\ell}}\sum_{t=0}^{\ell} \prod_{s=0}^{t-1}{A}_{\gamma_s}\frac{p_{\underline\sigma(\gamma_t)}}{\binom{n}{q-1}}\prod_{s=t+1}^{\ell} \underline{\mathcal A}_{\gamma_s}.
\end{align}
For $0\leq t\leq \ell$, define ${R'_t}^{(\ell)}$ as
\begin{align}\label{eq:new_decompose2}
    ({R'_t}^{(\ell)})_{(x\to e), (y\to f)}=\sum_{\gamma \in F_{t,(x\to e), (y\to f)}^{\ell}}\prod_{s=0}^{t-1}  \mathcal A_{\gamma_s} p_{\underline\sigma(\gamma_t)}\prod_{s=t+1}^{\ell}  \underline{\mathcal A}_{\gamma_s}.
\end{align}

Compared to Lemma \ref{lem:expansionBl}, \eqref{eq:new_decompose} and \eqref{eq:new_decompose2} imply a  similar expansion of $B^{(\ell)}$ as
\begin{align*}
  B^{(\ell)}= & \Delta^{(\ell)}+\frac{1}{\binom{n}{q-1}}K\Delta^{(\ell-1)}+\frac{q-1}{n-q+2}\sum_{t=1}^{\ell-1} B^{(t-1)}K^{(2)} \Delta^{(\ell-t-1)}
    +\frac{1}{\binom{n}{q-1}} B^{(\ell-1)}K-\frac{1}{\binom{n}{q-1}}\sum_{t=0}^{\ell} R_t'^{(\ell)}.
\end{align*}
  Similar to Lemma \ref{lem:Blwnorm}, using the definition  \[L=K^{(2)}-\sum_{k=1}^r \mu_k(J \chi_k) \chi_k^*,  \quad 
    {S_t'}^{(\ell)}=B^{(t-1)}L\Delta^{(\ell-t-1)},
\] we find  $ \|w^*B^{(\ell)}\|$ is bounded by 
    \begin{align*}
     &\| \Delta^{(\ell)}\| +\frac{1}{\binom{n}{q-1}}\|K\Delta^{(\ell-1)}\|+\frac{q-1}{n-q+2} \sum_{j=1}^r  \sum_{t=1}^{\ell-1} \mu_j |\langle  w, B^{(\ell-t-1)}J \chi_j\rangle | \|(\Delta^{(t-1)})^*\chi_j\|  \notag \\
      &+\frac{q-1}{n-q+2}\sum_{t=1}^{\ell-1} \| {S_t'}^{(\ell)}\| +\frac{1}{\binom{n}{q-1}} \|B^{(\ell-1)}K\| +\frac{1}{\binom{n}{q-1}}\sum_{t=0}^{\ell} \|{R_t'}^{(\ell)}\|.
  \end{align*}

 By the same arguments as in the proof of Proposition \ref{prop:trace}, we find with probability at least $1-n^{-c}$, for all $1\leq j\leq r$, $1\leq k\leq \ell$.
  \begin{align}
      \| (\Delta^{(k)})^* \chi_j\| &\leq  \sqrt n(\log n)^5 [(q-1)d]^{k/2},  \label{eq:154} \\
      \|{R_k'}^{(\ell)}\| &\leq \binom{n}{q-2}(\log n)^{25} n^{\kappa}[(q-1)p_{\max}]^{-(\ell-k)/2},  \\
      \| B^{(\ell-1)} K\| &\leq  \binom{n}{q-1}^{1/2} (\log n)^{10} [(q-1)p_{\max}]^{\ell}, 
  \end{align}
   And for all $1\leq k\leq \ell-1$, 
  \begin{align}
      \|{S_{k}'}^{(\ell)}\|\leq  (\log n)^{30} n^{\kappa+1/2}[(q-1)p_{\max}]^{-(\ell-k)/2}. \label{eq:157}
  \end{align}

 Now let $w$ be any unit vector orthogonal to all $B^{\ell} J \chi_i, i\in [r_0]$. With probability at least $1-n^{-c}$, $G$ is $\ell$-tangle free and the bounds in \eqref{eq:154}-\eqref{eq:157} hold. We find  
 \begin{align}\label{eq:158}
     \|w^* B^{(\ell)}\| \leq c_1(\log n)^{10} n^{\kappa/2}+c_1n^{-1/2}(\log n)^5\sum_{i=1}^r \sum_{t=1}^{\ell-1}[(q-1)d]^{t/2} |\langle B^{\ell-t-1}J \chi_i, w\rangle|.
 \end{align}

For $i\in [r_0]$,
\begin{align}
   [ (q-1)\mu_i]^{-t}\langle B^{t}J \chi_i, w\rangle = [ (q-1)\mu_i]^{-t}\langle B^{t}J \chi_i, w\rangle-[ (q-1)\mu_i]^{-\ell}\langle B^{\ell}J \chi_i, w\rangle. \notag 
\end{align}
Hence
\begin{align}\label{eq:sumBB}
&[ (q-1)\mu_i]^{-t}| \langle B^{t}J \chi_i, w\rangle|\\
\leq  &\sum_{s=t}^{\ell-1} [ (q-1)\mu_i]^{-(s+1)}|\langle B^{s+1}J \chi_i,w\rangle -[(q-1)\mu_i]\langle B^{s}J \chi_i,w\rangle| \notag \\
\leq & \sum_{s=t}^{\ell-1}[ (q-1)\mu_i]^{-(s+1)}\left((q-1)^{1/2}[(q-1)d]^{(s+3)/2}\sqrt{n}+c_1 \log(n)^{9/4}n^{3\kappa+1/4}\right), \notag 
\end{align}
where  the last inequality is from \eqref{eq:telescopediff2}.
Since $(q-1)\mu_i>\sqrt{(q-1)d}$ for $i\in [r_0]$,  from \eqref{eq:sumBB}, 
\begin{align}\label{eq:161}
    |\langle B^{t}J \chi_i, w\rangle  |\leq  c_2(q-1)^{1/2} [(q-1)d]^{t/2+1}\sqrt{n}+c_3\log^3(n) n^{3\kappa+1/4}.
\end{align}

For $j>r_0$ in \eqref{eq:lasttermrobound}, we have  $(q-1)\mu_j\leq \sqrt{(q-1)d}$. From \eqref{eq:scal_uuvv1} and the triangle inequality,
\begin{align}\label{eq:162}
     |\langle  B^{\ell-t-1}J \chi_j, w\rangle|\leq \|B^{\ell-t-1}J \chi_j\|\leq  c_4 n^{\kappa/2+1/2}[(q-1)d]^{-t/2}
+c_4 \log^{9/4}n^{2\kappa-1/4}.
\end{align}

From \eqref{eq:158}, \eqref{eq:161}, and \eqref{eq:162},  with probability $1-n^{-c}$,
\begin{align*}
    \| (B^\ell)^{*}w\|\leq  (\log n)^{9}n^{7\kappa/2-1/4}.
\end{align*}
This finishes the proof.

\section{Proof of Theorem \ref{thm:weakreconstruction}}\label{sec:weakreconstruction}
We reproduce the sketch of proof from \cite{stephan2019robustness}; all minutiae can be found in \cite[Appendix B]{stephan2019robustness}. Up to rearranging the eigenspaces of $Q$ (this reduction was already treated in the proof of Theorem \ref{th:main_reduced}), \eqref{eq:approx_tilde_u} in the proof of Theorem \ref{th:main_reduced} and Lemma \ref{lem:eigen_reduction} imply that with high probability, 
\[ \|\tilde u -\bar u_2\| = O(n^{1/2-c}), \quad \text{with} \quad \bar u_2 = \frac{SB^\ell J\chi_2}{[(q-1)\mu_2]^{\ell+1}\sqrt{\gamma_2}},\]
 where $\gamma_2$ is defined in \eqref{eq:def_gamma_i} and $\tilde{u}$ is defined in Algorithm \ref{alg:provable_algorithm}.
 From \eqref{eq:SBlh},  with  probability $1-O(n^{-c'})$, \[\bar u_2(x) =  \frac{1}{[(q-1)\mu_2]^{\ell+1}\sqrt{\gamma_2}} h_{\phi_2, \ell+1}(G, x).\]

The vector $\bar u_2$ then satisfies the weak convergence estimates of Proposition \ref{prop:functional_concentration_final}, so Lemma 16 from \cite{stephan2019robustness} directly translates to our setting but with stronger convergence estimates:
\begin{lemma}
    For any $i \in [r]$, there exists a random variable $X_i$ such that for any $K \geq 0$ that is a continuity point of $X_i$, with probability at least $1-n^{-c}$,
    \[\frac1n\sum_{x\in[n]} \ind_{\sigma(x)=i}\, \tilde u(x) \ind_{|\tilde u(x)| \leq K} = \pi_i \,\E*{X_i \ind_{|X_i| \leq K}}+O(n^{-c'}).\]
    Furthermore, we have
    \[ \sum_{i \in [r]} \pi_i \E{X_i} = 0, \quad  \sum_{i \in [r]} \pi_i \E{X_i^2} = 1 \quand \sum_{i \in [r]} \pi_i \E{X_i}^2 = \gamma_2^{-1}. \]
\end{lemma}

By the concentration bound in Proposition \ref{prop:functional_concentration_final}, for all $i$, with probability at least $1-n^{-c}$, 
\begin{align}\label{eq:mgconvergence}
\frac1n \sum_{x \in [n]} \ind_{\sigma(x) = i} \ind_{x \in V^+} =\pi_i \left( \frac12 + \frac{\E*{X_i \ind_{|X_i| \leq K}}}{2K} \right) +O(n^{-c'}). 
\end{align}
Denote $\tilde{p_i}=\frac{\E*{X_i \ind_{|X_i| \leq K}}}{2K}$.
Assume now that $\pi_i = 1/r, \forall i\in [r]$.  For any $i \in [r]$,
\begin{align*}
    \left| \E{X_i} - \E*{X_i \ind_{|X_i| \leq K}} \right| &= |\E*{X_i \ind_{|X_i| > K}} |\leq \sqrt{\E{X_i^2}\,\Pb{|X_i| > K}} \leq \frac{\E*{X_i^2}}K,
\end{align*}
having used both the Cauchy-Schwarz and Chebyshev inequalities. On the other hand, since
\[ \sum_{i \in [r]}  \E{X_i} = 0 \quand  \sum_{i \in [r]}  \E{X_i}^2 = r\, \gamma_2^{-1}, \]
we have
\[ \sum_{i, j\in [r]} (\E{X_i} - \E{X_j})^2 = 2r^2 \gamma_2^{-1}. \]
As a result, there exists $i, j \in [r]$ such that
\[ |\E{X_i} - \E{X_j}| \geq \sqrt{2} \gamma_2^{-1/2}. \]
Choosing $K = 2\sqrt{2}r\gamma_2^{1/2}$, and using that $\E{X_i^2} \leq r$,
\[ |\tilde p_i - \tilde p_j| \geq \frac1{2K}\left(\sqrt{2} \gamma_2^{-1/2} - \frac{2r}K\right) = \frac1{8r\gamma_2}. \]
Assume without generality that $\tilde p_i > \tilde p_j$. Let $\tau$ be a permutation on $[r]$ such that $\tau(i) = 1$, $\tau(j) = 2$. Assigning label $1$ to vertices in $V^+$ and label $2$ to vertices in $V^-$, from \eqref{eq:def_ov} and \eqref{eq:mgconvergence},  the overlap satisfies 
\begin{align*} \mathrm{ov}(\hat \sigma, \sigma) &\geq \frac1n \sum_{x \in [n]} \ind_{\sigma(x) = i} ~\ind_{x \in V^+} +\frac1n \sum_{x \in [n]} \ind_{\sigma(x) = j} ~\ind_{x \in V^-} \\
&= \frac{\tilde p_1}{r} + \frac{1-\tilde p_2}{r} + O(n^{-c'}) \geq \frac1r +  \frac1{8r\gamma_2} + O(n^{-c'}). 
\end{align*}
 This finishes the proof of Theorem \ref{thm:weakreconstruction}.

\end{document}